\documentclass[11pt, a4paper, leqno]{amsart}
\usepackage{amsmath,amsthm,amscd,amssymb,amsfonts, amsbsy}
\usepackage{latexsym}
\usepackage{txfonts}
\usepackage{exscale}
\usepackage{mathrsfs}

\usepackage{huncial}
\usepackage[T1]{fontenc}

\usepackage[colorlinks,citecolor=red,pagebackref,hypertexnames=false, breaklinks]{hyperref}
\usepackage[usenames,dvipsnames]{color}

\parskip=4pt
\textwidth=1.2\textwidth
\textheight=1.08\textheight

\marginparwidth=65pt
\parskip=3pt

\calclayout
\allowdisplaybreaks

\def\Xint#1{\mathchoice
    {\XXint\displaystyle\textstyle{#1}}%
    {\XXint\textstyle\scriptstyle{#1}}%
    {\XXint\scriptstyle\scriptscriptstyle{#1}}%
    {\XXint\scriptscriptstyle\scriptscriptstyle{#1}}%
    \!\int}
    \def\XXint#1#2#3{{\setbox0=\hbox{$#1{#2#3}{\int}$}
    \vcenter{\hbox{$#2#3$}}\kern-.5\wd0}}
    \def\dashint{\Xint-}

\newcommand{\mm}{\textrm{\hunclfamily m}}

\renewcommand{\chi}{{\bf 1}}




\theoremstyle{plain}
\newtheorem{theorem}[equation]{Theorem}
\newtheorem{lemma}[equation]{Lemma}

\newtheorem{proposition}[equation]{Proposition}

\theoremstyle{definition}
\newtheorem{definition}[equation]{Definition}

\theoremstyle{remark}
\newtheorem{remark}[equation]{Remark}

\numberwithin{equation}{section}


\numberwithin{equation}{section}

\def \R{ \mathbb{R} }
\def \N{ \mathbb{N} }
\def \Z{ \mathbb{Z} }

\def \Scal{ \mathcal{S} }
\def \hh{ \mathrm{H} }
\def \pp{ \mathrm{P} }
\def \Gcal { \mathcal{G} }
\def \Grm{ \mathrm{G} }
\def \Ncal { \mathcal{N} }

\def \mol{ (w,q,p,\varepsilon,M)-\textrm{molecule}}

\def \p{ (w,q,p,\varepsilon,M)-\textrm{representation}}

\def \iint{\int\!\!\!\int}
\def\div{\mathop{\rm div}}
\renewcommand{\Re}{{\rm Re}\,}

\DeclareMathOperator{\supp}{supp}

\begin{document}
\allowdisplaybreaks

\title[Weighted]{Weighted Hardy spaces associated with elliptic operators. 
\\[0.3cm]
{\small Part III: Characterizations of $H_L^{\MakeLowercase{p}}(\MakeLowercase{w})$  and the weighted Hardy space associated with the Riesz transform.}}

\author{Cruz Prisuelos-Arribas}

\address{Cruz Prisuelos-Arribas
Instituto de Ciencias Matem\'aticas CSIC-UAM-UC3M-UCM
\\
Consejo Superior de Investigaciones Cient{\'\i}ficas
\\
C/ Nicol\'as Cabrera, 13-15
\\
E-28049 Madrid, Spain} \email{cruz.prisuelos@icmat.es}

\thanks{The research leading to these results has received funding from the European Research
Council under the European Union's Seventh Framework Programme (FP7/2007-2013)/ ERC
agreement no. 615112 HAPDEGMT.
The author  acknowledges financial support from the Spanish Ministry of Economy and Competitiveness, through the ``Severo Ochoa Programme for Centres of Excellence in R\&D'' (SEV-2015-0554).
}

\date{\today}
\subjclass[2010]{47B06, 47B38, 30H10, 47G10, 44A15, 46M35}

\keywords{Conical square functions, Riesz transform, Muckenhoupt weights,
elliptic operators, heat and Poisson semigroups, off-diagonal estimates, complex interpolation, tent spaces, Hardy spaces.}

\begin{abstract}
We consider  Muckenhoupt weights $w$, and define weighted Hardy spaces $H^p_{\mathcal{T}}(w)$, where $\mathcal{T}$ denotes a conical square function or a non-tangential maximal function defined via the heat or the Poisson semigroup generated by a second order divergence form elliptic operator $L$. In the range $0<p< 1$, we give a molecular characterization of these spaces. Additionally, in the range $p\in \mathcal{W}_w(p_-(L),p_+(L))$ we see that these spaces are isomorphic to the $L^p(w)$ spaces. We also consider the Riesz transform $\nabla L^{-\frac{1}{2}}$, associated with $L$, and show that the Hardy spaces $H^p_{\nabla L^{-1/2},q}(w)$ and $H^p_{\mathcal{S}_{\hh},q}(w)$ are isomorphic, in some range of $p'$s, and $q\in \mathcal{W}_w(q_-(L),q_+(L))$.
\end{abstract}

\maketitle
\vspace*{-1cm}

\tableofcontents

\bigskip
\section{Introduction}

This work ends a series of three papers, started by \cite{MartellPrisuelos} and \cite{MartellPrisuelos:II}, and dedicated to the study of weighted Hardy spaces associated with operators that arise from a second order divergence form elliptic operator  $L$. In particular, we consider 
conical square functions, \eqref{square-H-1}--\eqref{square-P-3}, non-tangential maximal functions, \eqref{non-tangential}, and the Riesz transform, $\nabla L^{-\frac{1}{2}}$.
This generalized Hardy space theory has been started  by P. Auscher, X.T. Duong, and A. McIntosh in an unpublished work, \cite{AuscherDuongMcIntosh}. Besides, P. Auscher and E. Russ in \cite{Auscher-Russ} considered the case on which the heat kernel associated with $L$ is smooth and satisfies pointwise Gaussian bounds, this occurs for instance for real symmetric  operators.  There, among other things, it was shown that the corresponding  Hardy space associated with $L$ agrees with the classical Hardy space.  The question of replacing the Laplacian with another second elliptic operator $L$  was also considered in dimension one by P. Auscher and P. Tchamitchian in \cite{AuscherTchamitchian}.
In the setting of Riemannian manifolds satisfying the doubling volume property,
Hardy spaces associated with the Laplace-Beltrami operator are introduced in
\cite{Auscher-McIntosh-Russ} by P. Auscher, A. McIntosh, and E. Russ and it is
shown that they admit several characterizations.  Simultaneously, in the
Euclidean setting, the study of  Hardy spaces related to the conical square
functions and non-tangential maximal functions associated with the heat and
Poisson  semigroups generated by divergence form elliptic operators, was
taken by S. Hofmann and S. Mayboroda in \cite{HofmannMayboroda}, for $p=1$. The
new point was that only a form of decay weaker than pointwise bounds,
and satisfied in many occurrences, was enough to develop a theory.
Later on S. Hofmann, S. Mayboroda,
and A. McIntosh in \cite{HofmannMayborodaMcIntosh} studied that theory for a general $p$,
and
simultaneously  R. Jiang and D. Yang in \cite{JiangYang} also considered this case.
In the context on weighted Lebesgue measure spaces some progress has been done in \cite{BuiCaoKyYangYang}, \cite{BuiCaoKyYangYang:II}, and \cite{MartellPrisuelos:II}. 
The results obtained in \cite{BuiCaoKyYangYang:II} 
in the particular case $\varphi(x,t):=tw(x)$,  where $w$ is a Muckenhoupt weight, give characterizations of the weighted Hardy spaces which, however, only recover part of the results obtained in the unweighted case by simply taking $w=1$.  
In \cite{MartellPrisuelos:II}, we present a different approach to the theory of weighted Hardy spaces $H^1_L(w)$ associated with a second order divergence form elliptic operator, which naturally generalizes the unweighted setting developed in \cite{HofmannMayboroda}. We define weighted Hardy spaces associated with the conical square functions considered in \eqref{square-H-1}--\eqref{square-P-3} which are written in terms of the heat and Poisson semigroups generated by the elliptic operator. Also, we use  non-tangential maximal functions as defined in \eqref{non-tangential}. We show that the corresponding spaces are all isomorphic and admit  molecular characterizations. This is particularly useful to prove different properties of these spaces as happens in the classical setting and in the context of second order divergence form elliptic operators considered in \cite{HofmannMayboroda}.

Some of the ingredients that were crucial in \cite{MartellPrisuelos:II} and also in the present work are taken from the first part of this series of papers \cite{MartellPrisuelos}, where we already obtained optimal ranges for the weighted norm inequalities satisfied by the heat and Poisson conical square functions associated with the elliptic operator. In \cite{MartellPrisuelos:II} we obtain analogous results for the non-tangential maximal functions  associated with the heat and Poisson semigroups.
All these weighted norm inequalities for the conical square  functions and the non-tangential maximal functions, along with the important fact that our molecules belong naturally to weighted Lebesgue spaces, allow us to impose natural conditions that in particular lead to fully recover the results obtained in \cite{HofmannMayboroda} and \cite{HofmannMayborodaMcIntosh} by simply taking the weight identically one. It is relevant to note that in \cite{BuiCaoKyYangYang, BuiCaoKyYangYang:II} their molecules belong to unweighted Lebesgue spaces and also their ranges of boundedness of the conical square functions are smaller. This makes their hypothesis somehow stronger (although sometimes they cannot be compared with ours) and, despite making a very big effort to present a very general theory, the unweighted case does not follow immediately from their work.

In this  paper we continue with the study of weighted Hardy spaces associated with conical square functions and non-tangential maximal functions $H^p_{\mathcal{T}}(w)$, where $p$ is an exponent different from one. In Section \ref{section:DR}, we give a molecular characterization when $0<p< 1$  (the case $p=1$ was consider in \cite{MartellPrisuelos:II}). The proofs of these results are analogous to those done in \cite{MartellPrisuelos:II}. Therefore, we just sketch them highlighting the main changes.

In Section \ref{section:p>1}, we obtain that the Hardy spaces $H^p_{\mathcal{T}}(w)$ are isomorphic to the $L^p(w)$ spaces for $p\in \mathcal{W}_w(p_-(L),p_+(L))$. The result is the following.
\begin{theorem}\label{theorem:hp=lp}
Given $w\in A_{\infty}$,  if $\mathcal{T}$ is any of the square functions in \eqref{square-H-1}-\eqref{square-P-3} or a non-tangential maximal function in \eqref{non-tangential},
then, for all $p\in \mathcal{W}_w(p_-(L),p_+(L))$, the spaces
$H^p_{\mathcal{T}}(w)$ and $L^p(w)$ are isomorphic with equivalent norms.
\end{theorem}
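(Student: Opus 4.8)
The plan is to establish the isomorphism $H^p_{\mathcal{T}}(w)\simeq L^p(w)$ by a two-sided argument, treating the square functions and the non-tangential maximal functions in parallel once the key boundedness estimates are in place. The first inclusion, $H^p_{\mathcal{T}}(w)\hookrightarrow L^p(w)$, follows directly from the weighted norm inequalities obtained in Part~I (\cite{MartellPrisuelos}) for the heat and Poisson conical square functions and in Part~II (\cite{MartellPrisuelos:II}) for the non-tangential maximal functions: for $p\in\mathcal{W}_w(p_-(L),p_+(L))$ one has $\|\mathcal{T}f\|_{L^p(w)}\lesssim \|f\|_{L^p(w)}$, so that every $f\in L^p(w)$ lies in $H^p_{\mathcal{T}}(w)$ with $\|f\|_{H^p_{\mathcal{T}}(w)}\lesssim\|f\|_{L^p(w)}$. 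For this direction one should first check that $L^p(w)\subset L^2(w)+L^2_{\mathrm{loc}}$ densely enough that $\mathcal{T}f$ makes sense and the a priori estimate propagates; the standard device is to work first with $f\in L^p(w)\cap L^2$, where the operators are well-defined by functional calculus, and then extend by density using the boundedness just quoted.

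The reverse inclusion, $L^p(w)\hookrightarrow H^p_{\mathcal{T}}(w)$ — equivalently, the lower bound $\|f\|_{L^p(w)}\lesssim\|\mathcal{T}f\|_{L^p(w)}$ — is the heart of the matter and the step I expect to be the main obstacle. The natural route is a duality/quadratic-estimate argument: one wants a "reproducing formula" of Calder\'on type adapted to $L$, say $f=c\int_0^\infty \psi(t\sqrt{L})\,\eta(t\sqrt{L})f\,\tfrac{dt}{t}$ with suitable $\psi,\eta\in H^\infty$, together with the fact that the $L$-adapted conical square function associated to one such $\psi$ is bounded below on a dense subspace. Concretely, for $f\in L^p(w)\cap L^2$ and $g$ in the dual space $L^{p'}(w)$ (when $p\ge 1$) or, in the quasi-Banach range, testing against the appropriate tent-space dual, one pairs $\langle f,g\rangle$, inserts the reproducing formula, applies the tent-space duality $\langle T^p_2(w)\rangle^* = T^{p'}_2(w)$ (or its $p<1$ analogue $T^p_2(w)^*=T^\infty_2(w)$-type statement with the relevant $w$-Carleson condition), and bounds the resulting bilinear tent-space form by $\|\mathcal{T}f\|_{L^p(w)}\|\mathcal{T}^*g\|_{L^{p'}(w)}$, where $\mathcal{T}^*$ is the square function built from the adjoint operator $L^*$. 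Since $L^*$ is again elliptic with $p_\pm(L^*)=p_\mp(L)'$, and since $p\in\mathcal{W}_w(p_-(L),p_+(L))$ forces $p'\in\mathcal{W}_w(p_-(L^*),p_+(L^*))$, the boundedness of $\mathcal{T}^*$ on $L^{p'}(w)$ is available from Parts~I--II; combined with density this yields $\|f\|_{L^p(w)}\lesssim\|\mathcal{T}f\|_{L^p(w)}$.

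The genuinely delicate points, and where care is needed, are: (i) verifying the reproducing formula converges in the relevant $L^p(w)$ or tent-space sense — this uses the holomorphic functional calculus of $L$ on $L^2$ plus the off-diagonal estimates to push convergence to the weighted scale, restricting attention to the exponents in $\mathcal{W}_w(p_-(L),p_+(L))$; (ii) handling the quasi-Banach range $0<p<1$ (relevant when $p_-(L)<1$ and $w$ allows it), where honest duality fails and one must instead argue through the molecular/atomic decomposition of $H^p_{\mathcal{T}}(w)$ proved in Section~\ref{section:DR}: one shows each molecule is in $L^p(w)$ with uniformly controlled norm, sums the series using $\ell^p\hookrightarrow\ell^1$ and the $p$-subadditivity of $\|\cdot\|_{L^p(w)}^p$, and thereby gets $H^p_{\mathcal{T}}(w)\hookrightarrow L^p(w)$, while the opposite inclusion in this range comes from the same square-function lower bound now tested against molecules or via a wavelet-type frame; (iii) checking that the various $\mathcal{T}$ in \eqref{square-H-1}--\eqref{square-P-3} and \eqref{non-tangential} all produce the \emph{same} space with comparable norms, which by the results of Part~II reduces to comparing each with, say, the heat conical square function $\mathcal{S}_{\hh}$, so it suffices to run the argument above for one representative $\mathcal{T}$ and invoke those equivalences. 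I would organize the write-up as: Step 1, the easy direction via Parts~I--II; Step 2, the reproducing formula and its weighted convergence; Step 3, the duality estimate for $p\ge 1$ yielding the lower bound; Step 4, the $p<1$ case via molecules; Step 5, collecting the equivalences among the different $\mathcal{T}$'s.
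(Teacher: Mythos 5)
Your proposal matches the paper's argument: the easy inclusion comes from the weighted norm inequalities of Parts I--II, and the lower bound $\|f\|_{L^p(w)}\lesssim\|\mathcal{T}f\|_{L^p(w)}$ is obtained exactly as you describe, via a Calder\'on reproducing formula (the paper uses $f=C_m\int_0^\infty\bigl((t^2L)^me^{-t\sqrt{L}}\bigr)^2f\,\frac{dt}{t}$ with the Poisson square function $\Scal_{m,\pp}$ as the representative) together with duality against $g\in L^{p'}(w)\cap L^{q'}(w)$, the adjoint square function for $L^*$ being bounded because $p'\in\mathcal{W}_{w^{1-p'}}(p_-(L^*),p_+(L^*))$ (note the dual weight $w^{1-p'}$, not $w$), after which all remaining operators are handled by the comparisons from Parts I--II. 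The only superfluous item is your Step 4: since $p_-(L)\ge 1$ and $r_w\ge 1$, the interval $\mathcal{W}_w(p_-(L),p_+(L))=(p_-(L)r_w,\,p_+(L)/s_w)$ contains no $p\le 1$, so the quasi-Banach/molecular case never arises in this theorem.
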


Finally, in section \ref{sec:Riesz}, we consider another operator: the Riesz transform $\nabla L^{-\frac{1}{2}}$,  and study the Hardy spaces associated with it. In particular, we characterize the Hardy space associated with the Riesz transform through the one associated with the square function $\Scal_{\hh}$ (see below for definitions). The result is the following.
\begin{theorem}\label{thm.Rieszcharacterization}
Given $w\in A_{\infty}$ such that $\mathcal{W}_w(q_-(L),q_+(L))\neq \emptyset$, for all $\max\left\{r_w,\frac{nr_w\widehat{p}_-(L)}{nr_w+\widehat{p}_-(L)}\right\}<p<\frac{q_+(L)}{s_w}$ and $q\in \mathcal{W}_w(q_-(L),q_+(L))$, the spaces
$H^{p}_{\Scal_{\hh},q}(w)$ and $H^p_{\nabla L^{-1/2},q}(w)$ are isomorphic with equivalent norms.
\end{theorem}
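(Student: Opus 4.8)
The plan is to establish the isomorphism $H^{p}_{\Scal_{\hh},q}(w)\simeq H^p_{\nabla L^{-1/2},q}(w)$ by exploiting the identity $\nabla L^{-1/2}=c\,\nabla e^{-tL}\,(tL)^{1/2}e^{-tL}\cdots$, more precisely by relating the vertical/conical square function built from $\nabla e^{-t^2L}$ (the ingredient of $\Scal_{\hh}$) to the one built from the semigroup acting on $\nabla L^{-1/2}f$. Concretely, for a reasonable dense class of $f$ (say $f\in L^2(w)\cap L^p(w)$ with appropriate decay so that all operators below make sense), I would show
\begin{equation*}
\norm{\Scal_{\hh}f}_{L^p(w)}\approx\norm{\Scal_{q,\mathrm{heat}}\br{\nabla L^{-1/2}f}}_{L^p(w)}+\br{\text{lower order}},
\end{equation*}
where $\Scal_{q,\mathrm{heat}}$ is one of the conical square functions from \eqref{square-H-1}--\eqref{square-P-3} applied componentwise to the vector $\nabla L^{-1/2}f$. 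The point is the subordination formula: writing $\nabla e^{-t^2L}f = \nabla L^{-1/2}\cdot (t^2L)^{1/2}e^{-t^2L}f\cdot t^{-1}$ up to constants, one has $t\nabla e^{-t^2L}f = \br{\nabla L^{-1/2}}\br{(t^2L)^{1/2}e^{-t^2L}}f$; so $\Scal_{\hh}f$ is essentially the conical square function associated to $L$ (with the extra gradient absorbed) applied to $g:=\nabla L^{-1/2}f$, but composed with the bounded operator $(t^2L)^{1/2}e^{-t^2L}$ acting on $g$. This reduces matters to comparing two conical square functions of the \emph{same} function $g$, one with the "half-derivative–times–heat" kernel and one with a plain heat kernel, which is exactly the kind of comparison handled by the machinery of Part I (\cite{MartellPrisuelos}) together with the tent-space/quadratic-estimate arguments used throughout the series.

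The key steps, in order, are: (1) Fix the dense subspace and justify all manipulations of the semigroup and Riesz transform on it, using that $q\in\mathcal{W}_w(q_-(L),q_+(L))$ guarantees $L^q(w)$-boundedness of $\nabla L^{-1/2}$ and of the relevant conical square functions (from Part I), and that the range of $p$ in the statement is precisely the intersection of the admissible ranges for $\Scal_{\hh}$ and for the heat conical square function applied to gradients — this is why both $r_w$, the modified lower bound $nr_w\widehat p_-(L)/(nr_w+\widehat p_-(L))$, and the upper bound $q_+(L)/s_w$ appear. (2) Prove the "hard" direction $\norm{\nabla L^{-1/2}f}_{H^p_{\Scal,q}(w)}\lesssim\norm{f}_{H^p_{\Scal_{\hh},q}(w)}$: assuming $f\in H^p_{\Scal_{\hh},q}(w)$, one knows $f$ has a molecular decomposition (from Section \ref{section:DR} in the range $0<p<1$, and from Theorem \ref{theorem:hp=lp} / \cite{MartellPrisuelos:II} when $p\ge1$), so it suffices to check the estimate on a single molecule $\mm$, bound $\norm{\Scal_{q,\mathrm{heat}}(\nabla L^{-1/2}\mm)}_{L^p(w)}$ by splitting into the local part (handled by $L^q(w)\to L^q(w)$ boundedness of $\nabla L^{-1/2}$ and of $\Scal$, plus Hölder with the weight) and the annular tails (handled by $L^q(w)$ off-diagonal estimates for the family $\bbr{(t^2L)^{1/2}\nabla e^{-t^2L}}$ — equivalently Gaffney-type bounds — which decay fast enough against the molecule's decay parameter $M$). (3) Prove the reverse direction: start from $g\in H^p_{\Scal,q}(w)$ in the image of $\nabla L^{-1/2}$, write $f = L^{1/2}(\text{vector potential})$ in the appropriate weak sense, and run the symmetric argument, again molecule-by-molecule, now using that $L^{1/2}$ composed with the conical square function defining $\Scal_{\hh}$ is controlled by the conical square function defining $H^p_{\Scal,q}(w)$. (4) Combine (2) and (3) with density to get the isomorphism with equivalent norms.

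The main obstacle I anticipate is \textbf{Step (2)'s tail estimate} and, relatedly, pinning down exactly why the stated $p$-range is the right one. The conical square function $\Scal_{\hh}$ involves $t\nabla e^{-t^2L}$, whose natural weighted boundedness range on $L^p(w)$ has an upper endpoint governed by $q_+(L)$ (the upper end of the interval where the gradient-heat family is bounded) shifted by the weight's $A_\infty$/reverse-Hölder exponent $s_w$, giving $q_+(L)/s_w$; the lower endpoint is more delicate because passing a molecule through $\nabla L^{-1/2}$ costs one derivative's worth of scaling, which is what produces the Sobolev-type correction $nr_w\widehat p_-(L)/(nr_w+\widehat p_-(L))$ competing with the crude bound $r_w$. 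Matching these requires carefully tracking how the off-diagonal decay rate, the molecule's $(w,q,p,\varepsilon,M)$-parameters, and the weighted doubling exponent interact when one integrates $\norm{(t^2L)^{1/2}\nabla e^{-t^2L}\mm}_{L^q(w,\text{annulus})}$ in $t$ over $(0,\infty)$ and then takes the $L^p(w)$ quasi-norm of the resulting tent-space object. The rest — the abstract isomorphism bookkeeping, the density arguments, and the "easy" local estimates — should follow the template already laid down in \cite{MartellPrisuelos:II} and Section \ref{section:DR}, so, in keeping with the paper's style, I would present those parts briefly and concentrate the detailed work on the gradient-molecule tail bounds.
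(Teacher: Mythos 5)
Your plan has two genuine gaps, one in each direction of the equivalence. First, the hard direction is $\|\Scal_{\hh}f\|_{L^p(w)}\lesssim \|\nabla L^{-\frac{1}{2}}f\|_{L^p(w)}$, and it cannot be run ``molecule-by-molecule'': the hypothesis $f\in \mathbb{H}^p_{\nabla L^{-1/2},q}(w)$ carries no molecular decomposition, and your subordination identity does not do what you claim. (Note first that $\Scal_{\hh}=\Scal_{1,\hh}$ is built from $(t^2L)e^{-t^2L}$, not from $t\nabla_y e^{-t^2L}$.) The identity $t\nabla_y e^{-t^2L}f=\nabla L^{-\frac{1}{2}}\big(tL^{1/2}e^{-t^2L}f\big)$ places the Riesz transform \emph{outside} the square-function integrand for each fixed $t$; it does not rewrite the square function of $f$ as a square function of $g:=\nabla L^{-\frac12}f$, because $\nabla L^{-1/2}$ does not commute with the conical averaging. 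The paper instead sets $h:=L^{-\frac12}f$ and proves that $\widetilde{\Scal}\sqrt{L}$ (with $\widetilde{\Scal}$ the auxiliary square function built from $t\sqrt{L}e^{-t^2L}$, shown comparable to $\Scal_{\hh}$ in Proposition \ref{prop:widetildeS-heatS}) maps $\dot W^{1,p}(w)\to L^p(w)$. The strong bound holds for $\max\{r_w,\widehat p_-(L)\}<p<p_+(L)/s_w$ by \cite[Theorem 6.2]{AuscherMartell:III}; the extra range down to $\frac{nr_w\widehat p_-(L)}{nr_w+\widehat p_-(L)}$ comes from a \emph{weak-type} endpoint estimate proved with a Calder\'on--Zygmund decomposition of Sobolev functions (Lemma \ref{C-Z-decomposition}), a functional-calculus contour argument for the $B_{r_{B_i}}b_i$ pieces, and real interpolation of Sobolev spaces. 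None of this is present in your outline, and it is precisely the mechanism that produces the Sobolev-corrected lower exponent you noticed but attributed to the wrong direction.

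Second, your treatment of the direction $\|\nabla L^{-\frac12}f\|_{L^p(w)}\lesssim\|\Scal_{\hh}f\|_{L^p(w)}$ is incomplete for intermediate $p$. The molecular argument (Proposition \ref{prop:contro-mol-Riesz}) only yields this for $0<p\le 1$, since summing $\|\nabla L^{-1/2}\mm_i\|^p_{L^p(w)}$ against $\ell^p$ coefficients requires $p\le 1$ and the molecular characterization is only established in that range; and $H^p_{\mathcal{T}}(w)\approx L^p(w)$ only covers $p\in\mathcal{W}_w(p_-(L),p_+(L))=(p_-(L)r_w,p_+(L)/s_w)$, whose left endpoint can exceed $1$. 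The interval $1<p<q_-(L)r_w$ is covered in the paper by complex interpolation of the Hardy spaces $H^p_{\Scal_{\hh},q}(w)$ (Theorem \ref{thm:interpolationhardy}), which in turn rests on the weighted tent-space interpolation and the retract construction of Section \ref{sec:interpol}. Your proposal never invokes, or substitutes for, this interpolation step.
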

We observe that in view of Theorem \ref{theorem:hp=lp}, the dependence on $q$
in the above isomorphism 
can be omitted
when $p\in \mathcal{W}_w(q_-(L),q_+(L))\subset \mathcal{W}_w(p_-(L),p_+(L))$.

In order to prove Theorem \ref{thm.Rieszcharacterization} we need to use interpolation between Hardy spaces. We obtain this from an interpolation result between weighted tent spaces $T^p_q(w)$, and showing that our Hardy spaces are retracts of them (see Section \ref{sec:interpol}). 
\section*{Acknowledgements}

I want to thank my advisor Jos\'e Mar\'ia Martell for their useful comments and corrections, and Li Chen for our conversations and help with references. I also thank Pascal Auscher for some valuable comments .

\section{Preliminaries}
First of all we note that along this work, $C$ or $c$ represent general constant independent of the decisive parameters. 
\subsection{ Weights}
We work with Muckenhoupt weights $w$, which are locally integrable positive functions. We say that a weight $w\in A_1$ if, for every ball $B\subset \mathbb{R}^n$, there holds
\begin{align*}
\dashint_Bw(x) \, dx\leq C w(y), \quad \textrm{for a.e. } y\in B,
\end{align*}
or, equivalently, $\mathcal{M}_u w\le C\,w$  a.e. where $\mathcal{M}_u$ denotes the uncentered Hardy-Littlewood
 maximal operator over balls in $\R^n$.
For each $1<r<\infty$, and $r'$ such that $1/r+1/r'=1$, we say that $w\in A_r$ if 
\begin{align*}
\left(\dashint_Bw(x) \, dx\right)\left(\dashint_Bw(x)^{1-r'} \, dx\right)^{r-1}
\leq C, \quad \forall B\subset \mathbb{R}^n.
\end{align*}
The reverse H\"older classes are defined as follows: for each $1<s<\infty$,
$w\in RH_s$ if
\begin{align*}
\left(\dashint_Bw(x)^s \, dx\right)^{\frac{1}{s}}\leq C\dashint_Bw(x) \, dx \quad \forall B\subset \mathbb{R}^n.
\end{align*}
For $s=\infty$, $w\in RH_{\infty}$ provided that there exists a constant $C$ such that for every ball $B\subset \mathbb{R}^n$
\begin{align*}
w(y)\leq C\dashint_Bw(x) \, dx, \quad \textrm{for a.e. }y\in B.
\end{align*}
Note that we have excluded the case $s = 1$ since the class $RH_1$ consists of all the
weights, and that is the way $RH_1$ is understood in what follows.


\medskip

Besides,
if $w\in A_{r}$, $1\leq r<\infty$,
for every ball $B$ and every measurable set $E\subset B$,
if we denote by 
$
w(E)=\int_Ew(x)\,dx,
$
then
\begin{align}\label{pesosineq:Ap}
\frac{w(E)}{w(B)}\ge [w]_{A_{r}}^{-1}\left(\frac{|E|}{|B|}\right)^{r}.
\end{align}
This implies in particular that $w$ is a doubling measure:
\begin{align}\label{doublingcondition}
w(\lambda B)
\le
[w]_{A_r}\,\lambda^{n\,r}w(B),
\qquad \forall\,B,\ \forall\,\lambda>1.
\end{align}
Moreover, if $w\in RH_{s}$, $1< s\leq\infty$,
\begin{align}\label{pesosineq:RHq}
\frac{w(E)}{w(B)}
\leq
[w]_{RH_{s}}\left(\frac{|E|}{|B|}\right)^{\frac{1}{s'}},
\end{align}

We sum up some of the properties of these classes in the following result, see for instance \cite{GCRF85},
\cite{Duo}, or \cite{Grafakos}. 

\begin{proposition}\label{prop:weights}\
\begin{enumerate}
\renewcommand{\theenumi}{\roman{enumi}}
\renewcommand{\labelenumi}{$(\theenumi)$}
\addtolength{\itemsep}{0.2cm}

\item $A_1\subset A_p\subset A_q$, for $1\le p\le q<\infty$.

\item $RH_{\infty}\subset RH_q\subset RH_p$, for $1<p\le q\le \infty$.

\item If $w\in A_p$, $1<p<\infty$, then there exists $1<q<p$ such
that $w\in A_q$.

\item If $w\in RH_s$, $1<s<\infty$, then there exists $s<r<\infty$ such
that $w\in RH_r$.

\item $\displaystyle A_\infty=\bigcup_{1\le p<\infty} A_p=\bigcup_{1<s\le
\infty} RH_s$.

\item If $1<p<\infty$, $w\in A_p$ if and only if $w^{1-p'}\in
A_{p'}$.



\end{enumerate}
\end{proposition}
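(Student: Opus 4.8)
The plan is to reduce the six assertions to Jensen's inequality together with one genuinely analytic ingredient, the self‑improving higher integrability of Muckenhoupt and reverse‑H\"older weights; these facts are classical and in the body of the paper we simply refer to \cite{GCRF85}, \cite{Duo}, \cite{Grafakos}, but a direct argument runs as follows. Items (i) and (ii) are immediate from Jensen's inequality for the normalized average $\dashint_B$. For $A_1\subseteq A_p$ I would use that $1-p'<0$ forces $w^{1-p'}\le(\essinf_B w)^{1-p'}$ on $B$, so $\big(\dashint_B w^{1-p'}\big)^{p-1}\le(\essinf_B w)^{(1-p')(p-1)}=(\essinf_B w)^{-1}$, and multiplying by $\dashint_B w$ gives the $A_p$ condition with $[w]_{A_p}\le[w]_{A_1}$; for $A_p\subseteq A_q$ with $1<p\le q$, write $w^{1-q'}=(w^{1-p'})^{\theta}$ with $\theta=\frac{p-1}{q-1}\in(0,1]$, apply Jensen to the concave power $t\mapsto t^{\theta}$, and use the conjugate‑exponent identity $(r-1)(r'-1)=1$ to collapse the exponents into $\big(\dashint_B w^{1-q'}\big)^{q-1}\le\big(\dashint_B w^{1-p'}\big)^{p-1}$. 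Item (ii) is the monotonicity of $L^r$‑means on the probability space $(B,\tfrac{dx}{|B|})$ for $RH_q\subseteq RH_p$, plus the observation that $w\in RH_\infty$ yields $\dashint_B w^q\le([w]_{RH_\infty}\dashint_B w)^{q-1}\dashint_B w$. Item (vi) is a formal computation: with $\sigma=w^{1-p'}$ one has $w=\sigma^{1-p}=\sigma^{\,1-(p')'}$ because $(1-p)(1-p')=1$, and raising the defining inequality of $A_p$ for $w$ to the power $p'-1=\frac1{p-1}$ produces verbatim the $A_{p'}$ inequality for $\sigma$, with $[w^{1-p'}]_{A_{p'}}=[w]_{A_p}^{p'-1}$.

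The core of items (iii), (iv), (v) is the \emph{Reverse H\"older Inequality}: if $w\in A_p$ for some $1\le p<\infty$, there exist $\varepsilon>0$ and $C$, depending only on $n$, $p$, $[w]_{A_p}$, with $\big(\dashint_B w^{1+\varepsilon}\big)^{1/(1+\varepsilon)}\le C\dashint_B w$ for every ball $B$; in the companion form it is Gehring's lemma, upgrading an $RH_s$ inequality to an $RH_{s+\varepsilon}$ inequality. I would prove this by a Calder\'on--Zygmund decomposition of $w\,dx$ at heights $\lambda>\dashint_B w$ followed by a good‑$\lambda$/distribution‑function estimate, the mechanism being that the $A_p$ (respectively $RH_s$) condition forces, on each Calder\'on--Zygmund ball, a quantitative comparison between the $w$‑measure and the Lebesgue measure of the super‑level set $\{w>\lambda\}$, which after integrating $\int w^{1+\varepsilon}=\varepsilon\int_0^\infty\lambda^{\varepsilon-1}\,w(\{w>\lambda\})\,d\lambda$ absorbs the resulting geometric‑type term for $\varepsilon$ small.

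Granting this, item (iv) is Gehring's lemma applied to $g=w$ with the given exponent $s$. For item (iii), I would apply the Reverse H\"older Inequality to $\sigma=w^{1-p'}\in A_{p'}$ (by (vi)) to get $\sigma\in RH_{1+\delta}$, then choose $\epsilon'>0$ small enough that $\theta':=\frac{(p-\epsilon')'-1}{p'-1}$, which is $>1$ and tends to $1$ as $\epsilon'\to0$, satisfies $\theta'\le1+\delta$; the same exponent arithmetic as in (i)--(ii), together with Jensen and $RH_{1+\delta}$ for $\sigma$, turns $\big(\dashint_B w^{1-(p-\epsilon')'}\big)^{p-\epsilon'-1}=\big(\dashint_B\sigma^{\theta'}\big)^{p-\epsilon'-1}$ into $\le C\big(\dashint_B w^{1-p'}\big)^{p-1}$, i.e.\ $w\in A_{p-\epsilon'}$. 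For item (v), the Reverse H\"older Inequality gives $\bigcup_{1\le p<\infty}A_p\subseteq\bigcup_{1<s\le\infty}RH_s$; conversely, if $w\in RH_s$ then H\"older's inequality with exponents $s,s'$ shows $\frac{w(E)}{w(B)}\le[w]_{RH_s}\big(\frac{|E|}{|B|}\big)^{1/s'}$ for every measurable $E\subseteq B$, and this decay is one of the classical equivalent descriptions of $A_\infty=\bigcup_{1\le p<\infty}A_p$, so $w$ lies in some $A_p$; combining the two inclusions closes the circle.

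The step I expect to be the main obstacle is the Reverse H\"older / Gehring self‑improvement underlying (iii)--(v). Items (i), (ii), (vi) and every exponent manipulation are pure bookkeeping with Jensen's inequality and the identity $(r-1)(r'-1)=1$, whereas the higher‑integrability bound genuinely requires the Calder\'on--Zygmund stopping‑time construction and the super‑level‑set comparison it feeds; in the present paper it suffices to quote \cite{GCRF85}, \cite{Duo}, \cite{Grafakos}.
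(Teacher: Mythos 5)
The paper offers no proof of this proposition at all: it is stated as a summary of classical facts with a pointer to \cite{GCRF85}, \cite{Duo}, and \cite{Grafakos}, so there is nothing internal to compare your argument against. Your sketch is the standard textbook route found in exactly those references, and the parts you work out in detail are correct: the Jensen/exponent bookkeeping for (i), (ii), (vi) (including the identities $(1-p')(p-1)=-1$ and $w=\sigma^{1-p}$ for $\sigma=w^{1-p'}$), the deduction of (iii) from the reverse H\"older inequality applied to the dual weight $\sigma\in A_{p'}$ with the exponent $\theta'=\frac{(p-\epsilon')'-1}{p'-1}$, and (iv) as Gehring's lemma. Two steps remain at the level of quotation rather than proof: the reverse H\"older self-improvement itself (you indicate the Calder\'on--Zygmund/good-$\lambda$ mechanism but do not carry it out), and, in (v), the converse implication that the decay estimate $\frac{w(E)}{w(B)}\le [w]_{RH_s}\bigl(\frac{|E|}{|B|}\bigr)^{1/s'}$ (which is precisely \eqref{pesosineq:RHq} in the paper) forces $w\in A_p$ for some finite $p$; the latter is a genuine classical theorem, not a formality, and as written you invoke it as ``one of the equivalent descriptions of $A_\infty$''. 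Since the paper itself delegates the whole proposition to the literature, this level of citation is entirely acceptable, but be aware that those two quoted ingredients are where the real analytic work of the proposition lives.
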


\medskip

For a weight $w\in A_{\infty}$, define
\begin{align}\label{rw}
r_w:=\inf\{1\leq r<\infty : w\in A_{r}\},
\qquad
s_w:=\inf\{1\leq s<\infty : w\in RH_{s'}\}.
\end{align}
Note that according to our definition $s_w$ is the conjugated exponent of the one defined in \cite[Lemma 4.1]{AuscherMartell:I}.
Given $0\le p_0<q_0\le \infty$, $w\in A_{\infty}$, and according to \cite[Lemma 4.1]{AuscherMartell:I} we have
\begin{align}\label{intervalrs}
\mathcal{W}_w(p_0,q_0):=\left\{p : p_0<p<q_0, w\in A_{\frac{p}{p_0}}\cap RH_{\left(\frac{q_0}{p}\right)'}\right\}
=
\left(p_0r_w,\frac{q_0}{s_w}\right).
\end{align}
This interval could be empty. Throughout this paper, when we take a point in $\mathcal{W}_w(p_0,q_0)$,  we implicitly understand that we are working with a weight $w\in A_{\infty}$ such that $\mathcal{W}_w(p_0,q_0)\neq \emptyset$, and therefore we can take that point. 
If $p_0=0$ and $q_0<\infty$ it is understood that the only condition that stays is $w\in RH_{\left(\frac{q_0}{p}\right)'}$. Analogously, if $0<p_0$ and $q_0=\infty$ the only assumption is $w\in A_{\frac{p}{p_0}}$. Finally $\mathcal{W}_w(0,\infty)=(0,\infty)$.

Besides, by \cite[Lemma 4.4]{AuscherMartell:I}, we have that 
\begin{align}\label{setweightequivalence}
p\in \mathcal{W}_{w}(p_0,q_0)\Leftrightarrow p'\in\mathcal{W}_{w^{1-p'}}(p_0',q_0').
\end{align}
\subsection{Weighted tent spaces}\label{subsec:wts}
The weighted tent spaces that we consider are defined   
as follows: given $w\in A_{\infty}$, for $0<q,p<\infty$,
\begin{align}\label{deftent}
T^p_q(w):=\{f \textrm{ measurable in }\R^{n+1}_+:\mathcal{A}_q(f)\in L^p(w)\},
\end{align}
endowed with the norm
$
\|f\|_{T^p_q(w)}:=\|\mathcal{A}_qf\|_{L^{p}(w)},
$
when $q=2$ we just write $T^p(w)$, (these spaces were also considered in \cite{Caoetal}).
We denote by $\R^{n+1}_+=\{(y,t): y\in \R^n, 0<t<\infty\}$, the upper half space, and, for all $\alpha>0$ and $0<q<\infty$, the operator $\mathcal{A}^{\alpha}_q$,
$$
\mathcal{A}_q^{\alpha}f(x):=\left(\iint_{\Gamma^{\alpha}(x)}|f(y,t)|^q\frac{dy\,dt}{t^{n+1}}\right)^{\frac{1}{q}},
$$
where
$\Gamma^{\alpha}(x):=\{(y,t)\in \R^{n+1}_+:|x-y|<\alpha t\}$ is the cone of aperture $\alpha>0$ with vertex at $x$. We write $\mathcal{A}f$ and $\Gamma(x)$ when $q=2$ and $\alpha=1$.
By \cite[Proposition 3.39]{MartellPrisuelos} (see also \cite{Latter, Caoetal}) we have that the definition of $T^p_q(w)$ does not depend on the aperture of the cone $\Gamma$ use to define the operator $\mathcal{A}$. This is, for all $0<\alpha,\beta<\infty$,
\begin{align}\label{tentcomparison}
\|\mathcal{A}^{\alpha}_qf\|_{L^p(w)}\approx\|\mathcal{A}^{\beta}_qf\|_{L^p(w)},
\end{align}
with constant depending on the weight $\alpha$ and $\beta$.
Besides, in the same way as in the unweighted case, we can see that these spaces are quasi-Banach spaces for $1\leq q,p<\infty$.

Additionally, note that in our definition of weighted tent spaces the operator $\mathcal{A}$ is the same operator as in the unweighted case, i.e. it does not depend on the weight $w$. Consequently, we can not see $\R^n$ with a doubling measure given by a weight $w$ and apply the interpolation results obtained for tent spaces defined in metric measure spaces or spaces of homogeneous type $(X,\mu)$, since in the definition of those spaces the operator $\mathcal{A}$ is modified to depend on the  measure $\mu$. See for instance \cite{Amenta}, \cite[Lemma 4.6, Proposition 4.9]{HofmannLuMitreaMitreaYan}, or \cite{Russ}. That is why, in Section \ref{sec:interpol}, we also give an interpolation result for these weighted tent spaces.

\subsection{Elliptic operators}\label{subsec:elliptic}
Let $A$ be an $n\times n$ matrix of complex and
$L^\infty$-valued coefficients defined on $\R^n$. We assume that
this matrix satisfies the following ellipticity (or \lq\lq
accretivity\rq\rq) condition: there exist
$0<\lambda\le\Lambda<\infty$ such that
$$
\lambda\,|\xi|^2
\le
\Re A(x)\,\xi\cdot\bar{\xi}
\quad\qquad\mbox{and}\qquad\quad
|A(x)\,\xi\cdot \bar{\zeta}|
\le
\Lambda\,|\xi|\,|\zeta|,
$$
for all $\xi,\zeta\in\mathbb{C}^n$ and almost every $x\in \R^n$. We have used the notation
$\xi\cdot\zeta=\xi_1\,\zeta_1+\cdots+\xi_n\,\zeta_n$ and therefore
$\xi\cdot\bar{\zeta}$ is the usual inner product in $\mathbb{C}^n$. Note
that then
$A(x)\,\xi\cdot\bar{\zeta}=\sum_{j,k}a_{j,k}(x)\,\xi_k\,\bar{\zeta_j}$.
Associated with this matrix we define the second order divergence
form elliptic operator
\begin{align}\label{matrix:A}
L f
=
-\div(A\,\nabla f),
\end{align}
which is understood in the standard weak sense as a maximal-accretive operator on $L^2(\R^n,dx)$ with domain $\mathcal{D}(L)$ by means of a
sesquilinear form. The operator $L$ has a square root $L^{\frac{1}{2}}$, defined as the unique maximal-accretive operator such that
$$
L^{\frac{1}{2}}L^{\frac{1}{2}}=L
$$ 
as unbounded operators (see \cite{Auscher} for a deeper discussion in the operator $L^{\frac{1}{2}}$, and, for a explicit construction, the two references recommended there: \cite[Chapter XIV]{CoifmanMeyer} and \cite[p. 281]{Kato}). We use the following formula to compute $L^{\frac{1}{2}}$:
\begin{align}\label{representationsquarerootofL}
L^{\frac{1}{2}}=\frac{2}{\sqrt{\pi}}\int_{0}^{\infty}tLe^{-t^2L}\frac{dt}{t}.
\end{align}

Moreover, the operator $-L$ generates a $C^0$-semigroup $\{e^{-t L}\}_{t>0}$ of contractions on $L^2(\mathbb{R}^n)$ which is called the heat semigroup.  As in \cite{Auscher} and \cite{AuscherMartell:II}, we denote by $(p_-(L),p_+(L))$ the maximal open interval on which this semigroup is uniformly bounded on $L^p(\mathbb{R}^n)$, and by $(q_-(L),q_+(L))$ the maximal open interval on which the gradient of the heat semigroup, $\{\sqrt{t}\nabla_y e^{-tL}\}_{t>0}$, is uniformly bounded on $L^p(\mathbb{R}^n)$:
\begin{align*}
p_-(L) &:= \inf\left\{p\in(1,\infty): \sup_{t>0} \|e^{-t^2L}\|_{L^p(\mathbb{R}^n)\rightarrow L^p(\mathbb{R}^n)}< \infty\right\},
\\[4pt]
p_+(L)& := \sup\left\{p\in(1,\infty) : \sup_{t>0} \|e^{-t^2L}\|_{L^p(\mathbb{R}^n)\rightarrow L^p(\mathbb{R}^n)}< \infty\right\};
\end{align*}
\begin{align*}
q_-(L) &:= \inf\left\{p\in(1,\infty): \sup_{t>0} \|{t}\nabla_y e^{-t^2L}\|_{L^p(\mathbb{R}^n)\rightarrow L^p(\mathbb{R}^n)}< \infty\right\},
\\[4pt]
q_+(L)& := \sup\left\{p\in(1,\infty) : \sup_{t>0} \|{t}\nabla_y e^{-t^2L}\|_{L^p(\mathbb{R}^n)\rightarrow L^p(\mathbb{R}^n)}< \infty\right\}.
\end{align*}
Note that in place of the semigroup $\{e^{-t L}\}_{t>0}$ we are using its rescaling $\{e^{-t^2 L}\}_{t>0}$. We do so since all the ``heat'' square functions, defined below, are written using the latter, and also because in the context of the off-diagonal estimates it will simplify some computations.
Furthermore, for every $N\in\mathbb{N}$ and $0<q<\infty$, let us set
\begin{align}\label{qN*}
q^{N,*}:=
\left\{
\begin{array}{ll}
\dfrac{q\,n}{n-N\,q}, &\quad\mbox{ if}\quad N\,q<n,
\\[10pt]
\infty, &\quad\mbox{ if}\quad N\,q\ge n.
\end{array}
\right.
\end{align}
Corresponding to the case $N=1$, we write $q^{*}$.

Besides, from \cite{Auscher} (see also \cite{AuscherMartell:II}) we know that $p_-(L)=1$ and  $p_+(L)=\infty$ if $n=1,2$; and if $n\ge 3$ then $p_-(L)<\frac{2\,n}{n+2}$ and $p_+(L)>\frac{2\,n}{n-2}$. Moreover, $q_-(L)=p_-(L)$, $q_+(L)^*\le p_+(L)$, and we always have $q_+(L)>2$, with $q_+(L)=\infty$ if $n=1$.

As in \cite{AuscherMartell:III}, given a weight $w\in A_{\infty}$, we also consider the intervals $\widetilde{\mathcal{J}}_w(L)$ and $\widetilde{\mathcal{K}}_w(L)$ which are, respectively, 
(possibly empty) intervals of $p\in [1,\infty)$ such that $\{e^{-t^2L}\}_{t>0}$ is a bounded set in $\mathcal{L}(L^p(w))$ and 
$\{{t}\nabla e^{-t^2L}\}_{t>0}$ is a bounded set in $\mathcal{L}(L^p(w))$,
(where $\mathcal{L}(X)$ denotes the space of linear continuous maps on a Banach space $X$). 
\subsection{Off-diagonal estimates}\label{sec:od}
We briefly recall the notion of off-diagonal estimates. Let $\{T_t\}_{t>0}$ be a family of linear operators
and let $1\le p\leq q\le \infty$. We say that $\{T_t\}_{t>0}$ satisfies $L^p(\R^n)-L^q(\R^n)$ off-diagonal estimates of exponential type, denoted by $\{T_t\}_{t>0}\in \mathcal{F}_\infty(L^p\rightarrow L^q)$, if for all closed sets $E$, $F$, all $f$, and all $t>0$ we have
$$
\|T_{t}(f\,\chi_E)\,\chi_F\|_{L^q(\mathbb{R}^n)}
\leq
Ct^{-n\left(\frac{1}{p}-\frac{1}{q}\right)}e^{-c\frac{d(E,F)^2}{t^2}}\|f\,\chi_E\|_{L^p(\mathbb{R}^n)}.
$$
Analogously, given $\beta>0$, we say that $\{T_t\}_{t>0}$ satisfies $L^p(\R^n)-L^q(\R^n)$ off-diagonal estimates of polynomial type with order $\beta>0$, denoted by $\{T_t\}_{t>0}\in \mathcal{F}_{\beta}(L^p\rightarrow L^q)$ if for all closed sets $E$, $F$, all $f$, and all $t>0$ we have
$$
\|T_{t}(f\,\chi_E)\,\chi_F\|_{L^q(\mathbb{R}^n)}
\leq
Ct^{-n\left(\frac{1}{p}-\frac{1}{q}\right)}\left(1+\frac{d(E,F)^2}{t^2}
    \right)^{-\left(\beta+\frac{n}{2}\left(\frac{1}{p}-\frac{1}{q}\right)\right)}
\|f\,\chi_E\|_{L^p(\mathbb{R}^n)}.
$$

The heat and Poisson semigroups satisfy respectively off-diagonal estimates of exponential and polynomial type. 
The parameters $p_-(L)$, $p_+(L)$, $q_-(L)$, and $q_+(L)$ besides giving the maximal intervals on which either the heat semigroup or its gradient are uniformly bounded, they characterize the maximal open intervals on which off-diagonal estimates of exponential type hold (see \cite{Auscher} and \cite{AuscherMartell:II}). More precisely, for every $m\in \N_0$, there hold
$$
\{(t^2L)^me^{-t^2L}\}_{t>0}\in \mathcal{F}_\infty(L^p-L^q) \quad \textrm{ for all} \quad p_-(L)<p\leq q<p_+(L)$$
and
$$
\{t\nabla_ye^{-t^2L}\}_{t>0}\in \mathcal{F}_\infty(L^p-L^q) \quad \textrm{ for all} \quad q_-(L)<p\leq q<q_+(L).
$$
From these off-diagonal estimates we have, for every $m\in \N_0:=\N\cup\{0\}$,
\begin{align*}
\{(t\sqrt{L}\,)^{2m}e^{-t\sqrt{L}}\}_{t>0}, \
\in \mathcal{F}_{m+\frac{1}{2}}(L^p\rightarrow L^q),
\end{align*}
for all $p_-(L)<p\leq q< p_+(L)$, and
\begin{align*}
 &\{t\nabla_{y}(t^2L)^me^{-t^2L}\}_{t>0}, \ \{t\nabla_{y,t}(t^2L)^me^{-t^2L}\}_{t>0}\in \mathcal{F}_\infty(L^p\rightarrow L^q),
\\[4pt]
&  \{t\nabla_{y}(t\sqrt{L}\,)^{2m}e^{-t\sqrt{L}}\}_{t>0}\in \mathcal{F}_{m+1}(L^p\rightarrow L^q), \,
\{t\nabla_{y,t}(t\sqrt{L}\,)^{2m}e^{-t\sqrt{L}}\}_{t>0}\in \mathcal{F}_{m+\frac{1}{2}}(L^p\rightarrow L^q),
\end{align*}
for all $q_-(L)<p\leq q< q_+(L)$, (see \cite[Section 2]{MartellPrisuelos}).

Besides, if $\mathcal{W}_w(p_-(L),p_+(L))\neq \emptyset$, there exists a maximal interval of $[1,\infty]$ denoted by ${\mathcal{J}}_w(L)=(\widehat{p}_-(L),\widehat{p}_+(L))$; and if $\mathcal{W}_w(q_-(L),q_+(L))\neq \emptyset$, there exists a maximal interval of $[1,\infty]$ denoted by $\mathcal{K}_w(L)=
(\widehat{q}_-(L),\widehat{q}_+(L))$,
where for all $p,q\in {\mathcal{J}}_w(L)$ or $p,q\in {\mathcal{K}}_w(L)$ and for all $m\in \N_0$, we have that
$\{(t^2L)^me^{t^2L}\}_{t>0}$  or 
$\{{t}\nabla(t^2L)^m e^{t^2L}\}_{t>0}$ satisfy respectively off-diagonal estimates on balls and are bounded sets in $\mathcal{L}(L^p(w))$, (see \cite[Definition 3.2 and Propoition 3.4]{AuscherMartell:III}).

Moreover, in \cite{AuscherMartell:III} the authors proved the following: $\mathcal{W}_w(p_-(L),p_+(L))\subset \mathcal{J}_w(L)\subset \widetilde{\mathcal{J}}_w(L)$, $\mathcal{W}_w(q_-(L),q_+(L))\subset \mathcal{K}_w(L)\subset \widetilde{\mathcal{K}}_w(L)$, $\textrm{Int}\mathcal{J}_w(L)= \textrm{Int}\widetilde{\mathcal{J}}_w(L)$,
$\textrm{Int}\mathcal{K}_w(L)= \textrm{Int}\widetilde{\mathcal{K}}_w(L)$, 
$\textrm{inf}\,\mathcal{J}_w(L)= \textrm{inf}\,{\mathcal{K}}_w(L)$, and
$(\textrm{sup}\mathcal{K}_w(L))^*_w= \textrm{sup}{\mathcal{J}}_w(L)$,
where 
\begin{align}\label{q_wstar}
q_w^*:=\begin{cases}\frac{qnr_w}{nr_w-q}, & nr_w>q,
\\
\infty, & \textrm{otherwise}.
\end{cases}
\end{align}
\subsection{Operators}
Using the heat semigroup and the corresponding Poisson semigroup $\{e^{-t\,\sqrt{L}}\}_{t>0}$, one can define different conical square functions which all have an expression of the form
\begin{align}\label{squarealpha}
Q^{\alpha} f(x)
=\left(\iint_{\Gamma^{\alpha}(x)}|T_t f(y)|^2 \frac{dy \, dt}{t^{n+1}}\right)^{\frac{1}{2}},
\qquad
x\in\R^n,
\end{align}
when $\alpha=1$ we just write $Q f(x)$, ($\Gamma^{\alpha}(x)$ was defined on page \pageref{deftent}).
  More precisely, we introduce the following conical square functions written in terms of the heat semigroup  (hence the subscript $\hh$): for every $m\in \mathbb{N}$,
\begin{align} \label{square-H-1}
\mathcal{S}_{m,\hh}f(x) & = \left(\iint_{\Gamma(x)}|(t^2L)^{m} e^{-t^2L}f(y)|^2 \frac{dy \, dt}{t^{n+1}}\right)^{\frac{1}{2}},
\end{align}
and, for every $m\in \mathbb{N}_0$,
\begin{align}
\mathrm{G}_{m,\hh}f(x)& =\left(\iint_{\Gamma(x)}|t\nabla_y(t^2L)^m e^{-t^2L}f(y)|^2 \frac{dy \, dt}{t^{n+1}}\right)^{\frac{1}{2}},
\label{square-H-2}\\[4pt]
\mathcal{G}_{m,\hh}f(x)&
=
\left(\iint_{\Gamma(x)}|t\nabla_{y,t}(t^2L)^m e^{-t^2L}f(y)|^2 \frac{dy \, dt}{t^{n+1}}\right)^{\frac{1}{2}}.
\label{square-H-3}
\end{align}

In the same way, let us consider conical square functions associated with the Poisson semigroup (hence the subscript $\pp$):  given $K\in \mathbb{N}$,
\begin{align}
\mathcal{S}_{K,\pp}f(x)
&=
\left(\iint_{\Gamma(x)}|(t\sqrt{L}\,)^{2K} e^{-t\sqrt{L}}f(y)|^2 \frac{dy \, dt}{t^{n+1}}\right)^{\frac{1}{2}},
\label{square-P-1}
\end{align}
and for every $K\in \mathbb{N}_0$,
\begin{align}
\mathrm{G}_{K,\pp}f(x)
&=\left(\iint_{\Gamma(x)}|t\nabla_y (t\sqrt{L}\,)^{2K} e^{-t\sqrt{L}}f(y)|^2 \frac{dy \, dt}{t^{n+1}}\right)^{\frac{1}{2}},
\label{square-P-2}
\\[4pt]
\mathcal{G}_{K,\pp}f(x)
&=
\left(\iint_{\Gamma(x)}|t\nabla_{y,t}(t\sqrt{L}\,)^{2K} e^{-t\sqrt{L}}f(y)|^2 \frac{dy \, dt}{t^{n+1}}\right)^{\frac{1}{2}}.
\label{square-P-3}
\end{align}
Corresponding to the cases $m=0$ or $K=0$ we simply write $\mathrm{G}_{\hh}f:=\mathrm{G}_{0,\hh}f$,
$\mathcal{G}_{\hh}f:=\mathcal{G}_{0,\hh}f$,  $\mathrm{G}_{\pp}f:=\mathrm{G}_{0,\pp}f$, and
$\mathcal{G}_{\pp}f:=\mathcal{G}_{0,\pp}f$. Besides, we set $\Scal_{\hh}f:=\Scal_{1,\hh}f$ and $\Scal_{\pp}f:=\Scal_{1,\pp}f$.

We also consider the following non-tangential maximal functions.

\begin{align}\label{non-tangential}
\mathcal{N}_{\hh}f(x)=\sup_{(y,t)\in \Gamma(x)}\left(\int_{B(y,t)}
|e^{-t^2L}f(z)|^2 \frac{dz}{t^n}\right)^{\frac{1}{2}} \,\textrm{ and }\,
\mathcal{N}_{\pp}f(x)=\sup_{(y,t)\in \Gamma(x)}\left(\int_{B(y,t)}
|e^{-t\sqrt{L}}f(z)|^2 \frac{dz}{t^n}\right)^{\frac{1}{2}};
\end{align}
and  denote the Riesz transform associated with the operator $L$ by
$
\nabla L^{-\frac{1}{2}}.
$
We have the following representation for it
\begin{align}\label{Rieszrepresentation}
\nabla L^{-\frac{1}{2}}f=\frac{2}{\sqrt{\pi}}\int_0^{\infty}t\nabla_y e^{-t^2L}f\frac{dt}{t}.
\end{align}
From \cite{AuscherMartell:III} we know the following boundedness result for $\nabla L^{-\frac{1}{2}}$.
\begin{theorem}\label{RieszboundednessAM}\cite[Theorem 5.2]{AuscherMartell:III}
Let $w\in A_{\infty}$ be such that $\mathcal{W}_w(q_-(L),q_+(L))\neq \emptyset$. For all $p\in \rm{Int}$\,$\mathcal{K}_w(L)$ and $f\in L^{\infty}_c(\R^n),$
$$
\|\nabla L^{-\frac{1}{2}}f\|_{L^p(w)}\lesssim \|f\|_{L^p(w)}.
$$
Hence $\nabla L^{-\frac{1}{2}}$ has a bounded extension to $L^p(w)$.
\end{theorem}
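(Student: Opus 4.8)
The plan is to obtain the weighted bound from a weighted Calder\'on--Zygmund boundedness criterion adapted to $L$ (a weighted version of the Blunck--Kunstmann theorem, as in \cite{AuscherMartell:I}): one proves the bound first on the \lq\lq easy\rq\rq\ range $\mathcal{W}_w(q_-(L),q_+(L))$ by combining \emph{unweighted} off-diagonal estimates with the criterion, and then bootstraps to all of $\mathrm{Int}\,\mathcal{K}_w(L)$ using \emph{weighted} off-diagonal estimates on balls. At each step the input is: (a) boundedness of $\nabla L^{-\frac12}$ on one space $L^{q_0}(w)$ (resp.\ $L^{q_0}(\R^n)$); and (b) for every ball $B=B(x_B,r)$ and $M\in\N$ large, writing $\mathcal{A}_B:=I-(I-e^{-r^2L})^{M}=\sum_{k=1}^{M}c_k\,e^{-kr^2L}$, the localized operators $\nabla L^{-\frac12}\mathcal{A}_B$ and $\nabla L^{-\frac12}(I-\mathcal{A}_B)=\nabla L^{-\frac12}(I-e^{-r^2L})^{M}$ satisfy $L^{p_0}$--$L^{q_0}$ estimates on the annuli $C_j(B)$ with decay summable against the doubling dimension of the ambient measure. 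Since $w\in A_\infty$, these two families of local estimates self-improve, through a good-$\lambda$/weighted $L^p(w)$-decomposition, to $\|\nabla L^{-\frac12}f\|_{L^p(w)}\lesssim\|f\|_{L^p(w)}$ for $p_0<p<q_0$; the passage from $f\in L^\infty_c(\R^n)$ to a bounded extension on $L^p(w)$ is then immediate by density of $L^\infty_c(\R^n)$ in $L^p(w)$.

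For the base range, take any $p_0\in\mathcal{W}_w(q_-(L),q_+(L))$ (nonempty by hypothesis, and contained in $\mathrm{Int}\,\mathcal{K}_w(L)$). Since $\nabla L^{-\frac12}$ is bounded on $L^{q}(\R^n)$ for $q_-(L)<q<q_+(L)$ (classical; \cite{Auscher}), and since inserting \eqref{Rieszrepresentation} and using $\{t\nabla_y e^{-t^2L}\}_{t>0}\in\mathcal{F}_\infty(L^{p_1}-L^{q_1})$ for $q_-(L)<p_1\le q_1<q_+(L)$ produces the unweighted local estimates of (b) for exponents in $(q_-(L),q_+(L))$, the criterion of \cite{AuscherMartell:I}, applied with $(p_0,q_0)$ ranging over $(q_-(L),q_+(L))$, gives boundedness of $\nabla L^{-\frac12}$ on $L^{p}(w)$ for every $p\in\mathcal{W}_w(q_-(L),q_+(L))$.

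It remains to enlarge the range to $\mathrm{Int}\,\mathcal{K}_w(L)$. Fixing $p_0$ in the base range, I would run the criterion \lq\lq upward\rq\rq\ with pairs $(p_0,q_0)$, $q_0\in\mathcal{K}_w(L)$, and \lq\lq downward\rq\rq\ with pairs $(p_1,p_0)$, $p_1\in\mathcal{K}_w(L)$, iterating to reach $\sup\mathcal{K}_w(L)$ and $\inf\mathcal{K}_w(L)$ and invoking $\mathrm{Int}\,\mathcal{K}_w(L)=\mathrm{Int}\,\widetilde{\mathcal{K}}_w(L)$. The input now is the \emph{weighted} form of (b). For the non-local piece, split \eqref{Rieszrepresentation} at $t=r$:
\[
\nabla L^{-\frac12}(I-e^{-r^2L})^{M}=\frac{2}{\sqrt\pi}\left(\int_0^{r}+\int_r^{\infty}\right) t\nabla_y e^{-t^2L}(I-e^{-r^2L})^{M}\,\frac{dt}{t}.
\]
For $0<t<r$ the off-diagonal-on-balls estimates of $\{t\nabla_y e^{-t^2L}\}_{t>0}$ in $\mathcal{L}(L^p(w))$ for $p\in\mathcal{K}_w(L)$ (recalled in the excerpt; note also $\mathcal{K}_w(L)\subset\mathcal{J}_w(L)$, so $(I-e^{-r^2L})^{M}$ is bounded on $L^p(w)$) yield the annular gain; for $t\ge r$ one uses instead the smoothing of $(I-e^{-r^2L})^{M}$, which on the spectral scale $L\sim t^{-2}$ has size $\sim(r/t)^{2M}$, so that $\int_r^{\infty}(r/t)^{2M}\,\frac{dt}{t}<\infty$ and the tail converges absolutely with the same annular decay once $M$ is large enough (in terms of $n$, $r_w$, $s_w$). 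The local piece $\nabla L^{-\frac12}\mathcal{A}_B=\sum_k c_k\,\frac{2}{\sqrt\pi}\int_0^{\infty}t\nabla_y e^{-(t^2+kr^2)L}\frac{dt}{t}$ is treated the same way; and since $\nabla L^{-\frac12}\mathcal{A}_B=\nabla L^{-\frac12}-\nabla L^{-\frac12}(I-e^{-r^2L})^{M}$, the second hypothesis of the criterion (control of $\mathcal{A}_B$ by $\nabla L^{-\frac12}f$) follows by combining the estimate just obtained with the already established $L^{p_0}(w)$-boundedness.

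The main obstacle is precisely this last family of weighted off-diagonal-on-balls estimates, and within it the tail $\int_r^\infty t\nabla_y e^{-t^2L}(I-e^{-r^2L})^{M}\frac{dt}{t}$: one must quantify the smoothing gain of $(I-e^{-r^2L})^{M}$ against the loss incurred in passing from the plain off-diagonal estimates of the excerpt to their weighted-on-balls form and then summing over the annuli with the $A_\infty$-doubling factor, which is what dictates how large $M$ must be. Once these estimates are in hand, feeding them into the criterion of \cite{AuscherMartell:I} and iterating over pairs in $\mathcal{K}_w(L)$ is routine.
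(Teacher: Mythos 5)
The paper does not prove this statement: it is imported verbatim, with the citation \cite[Theorem 5.2]{AuscherMartell:III}, so there is no internal proof to compare your argument against. Your sketch reconstructs in outline the strategy of that reference — the weighted Calder\'on--Zygmund/Blunck--Kunstmann-type criteria of Auscher--Martell applied with the approximation $(I-e^{-r^2L})^{M}$, the splitting of the representation $\nabla L^{-1/2}f=\frac{2}{\sqrt{\pi}}\int_0^\infty t\nabla_y e^{-t^2L}f\,\frac{dt}{t}$ at $t=r$, and weighted off-diagonal estimates on balls for $\{t\nabla_y e^{-t^2L}\}_{t>0}$ in the range $\mathcal{K}_w(L)$ — which is indeed how the cited result is established, so as a proposal it follows the correct route, with the substantive work residing in the quoted machinery rather than in anything this paper supplies.
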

\subsection{Complex interpolation}\label{subsec:interpol}
Let us defined the interpolation space described in \cite{Calderon} by A.P. Calder\'on.

Let $A, B$ be Banach spaces embedded in a complex topological vector space $V$, and such that $\|\cdot\|_{A}$ and $\|\cdot\|_B$ denote the norm in $A$ and $B$, respectively.
Now, consider the space $A+B=\{x=y+z: y\in A,\,z\in B\}$ endowed with the norm 
$$
\|x\|_{A+B}:=\inf\{\|y\|_A+\|z\|_B: x=y+z,\, y\in A,\,z\in B\}.
$$
Then, the space $A+B$ becomes a Banach space.

Now, consider the linear space of functions $\mathcal{F}:=\mathcal{F}(A,B)$ as the space of all functions $f(\xi)$, $\xi=\theta+i t$, defined in the strip $0\leq \theta\leq 1$ of the $\xi-plane$, with values in $A+B$ continues and bounded with respect to the norm of $A+B$ in 
$0\leq \theta\leq 1$ and analytic in $0< \theta< 1$, and such that 
$f(it)\in A$ is $A$-continuous and tends to zero as $|t|$ tends to infinity and $f(1+it)\in B$ is $B$-continuous and tends to zero as $|t|$ tends to infinity. The norm that we consider in this space is the following
$$
\|f\|_{\mathcal{F}}:=\max\left\{\sup_{t}\|f(it)\|_A,\sup_t\|f(1+it)\|_B\right\},
$$
under this norm $\mathcal{F}$ becomes a Banach space.

Finally, for a given $\theta$, $0\leq \theta\leq 1$, we define the space $[A,B]_{\theta}:=\{x\in A+B:\,x=f(\theta), f\in \mathcal{F}\}$ endowed with the norm
$$
\|x\|_{[A,B]_{\theta}}:=\inf\{\|f\|_{\mathcal{F}}:x=f(\theta)\}.
$$
Then $[A,B]_{\theta}$ is a Banach space continuously embedded in $A+B$.

\subsection{Extrapolaion}

In some proofs we shall use the following extrapolation result that appear in \cite{CruzMartellPerez}.
\begin{theorem}\label{thm:extrapol}
Let $\mathcal{F}$ be a given family of pairs $(f,g)$ of non-negative and not identically zero measurable functions.
\begin{list}{$(\theenumi)$}{\usecounter{enumi}\leftmargin=1cm
\labelwidth=1cm\itemsep=0.2cm\topsep=.2cm
\renewcommand{\theenumi}{\alph{enumi}}}

\item Suppose that for some fixed exponent $q_0$, $1\le q_0<\infty$, and every weight $w\in RH_{q_0'}$,
\begin{equation*}
\int_{\mathbb{R}^{n}}f(x)^{\frac1{q_0}}\,w(x)\,dx
\leq
C_{w}
\int_{\mathbb{R}^{n}}g(x)^{\frac1{q_0}}\,w(x)\,dx,
\qquad\forall\,(f,g)\in\mathcal{F}.
\end{equation*}
Then, for all $1<q<\infty$ and for all $w\in RH_{q'}$,
\begin{equation*}
\int_{\mathbb{R}^{n}}f(x)^{\frac1q}\,w(x)\,dx
\leq
C_{w,q}
\int_{\mathbb{R}^{n}}g(x)^{\frac1q}\,w(x)\,dx,
\qquad\forall\,(f,g)\in\mathcal{F}.
\end{equation*}

\item Suppose that for some fixed exponent $r_0$, $0< r_0<\infty$, and  every weight $w\in A_{\infty}$
\begin{equation*}
\int_{\mathbb{R}^{n}}f(x)^{r_0}\,w(x)\,dx\leq 
C_{w}
\int_{\mathbb{R}^{n}}g(x)^{r_0}\,w(x)\,dx,
\qquad\forall\,(f,g)\in\mathcal{F}.
\end{equation*}
Then, for all  $0<r<\infty$, and for all $w\in A_{\infty}$,
\begin{equation*}
\int_{\mathbb{R}^{n}}f(x)^{r}\,w(x)\,dx
\leq
C_{w,r}
\int_{\mathbb{R}^{n}}g(x)^{r}\,w(x)\,dx,
\qquad\forall\,(f,g)\in\mathcal{F}.
\end{equation*}

\end{list}
\end{theorem}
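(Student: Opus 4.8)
## Proof proposal for Theorem \ref{thm.Rieszcharacterization}

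The plan is to prove Theorem \ref{thm:extrapol} by the Rubio de Francia iteration method, viewing part (b) as the $A_\infty$-extrapolation and part (a) as a reverse-H\"older (limited-range) extrapolation. I first treat part (b). Fix $r\in(0,\infty)$, $w\in A_\infty$, and $(f,g)\in\mathcal F$; after a routine truncation of $f$ we may assume $\int_{\R^n}f^{r}w\,dx<\infty$, and we may assume $\int_{\R^n}g^{r}w\,dx<\infty$ since otherwise there is nothing to prove. As the sought inequality is homogeneous, it suffices to bound $\int f^{r}w$ by a fixed constant times $\int g^{r}w$; the case $r=r_0$ is the hypothesis, so take $r\neq r_0$.

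Suppose first $r>r_0$. Since $r/r_0>1$, duality in $L^{r/r_0}(w)$ furnishes a nonnegative $h$ with $\|h\|_{L^{(r/r_0)'}(w)}=1$ and $\|f^{r_0}\|_{L^{r/r_0}(w)}\le 2\int f^{r_0}h\,w$. Because $w\in A_\infty\subset A_q$ for some $q$ (Proposition \ref{prop:weights}), $w\,dx$ is doubling, so the Hardy--Littlewood maximal operator $M_w$ associated with the measure $w\,dx$ is bounded on $L^{(r/r_0)'}(w)$; hence the iteration $\mathcal Rh:=\sum_{k\ge 0}(2\|M_w\|)^{-k}M_w^{k}h$ satisfies $0\le h\le\mathcal Rh$, $\|\mathcal Rh\|_{L^{(r/r_0)'}(w)}\le 2$, and $M_w(\mathcal Rh)\le 2\|M_w\|\,\mathcal Rh$, i.e. $\mathcal Rh\in A_1(w\,dx)$ with uniformly controlled constant. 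It is elementary from the $A_\infty$ characterisations that $v:=(\mathcal Rh)\,w\in A_\infty$, with constant depending only on $[w]_{A_\infty}$. Inserting $v$ into the hypothesis and using H\"older together with $\|\mathcal Rh\|_{L^{(r/r_0)'}(w)}\le 2$,
\[
\int f^{r}w=\|f^{r_0}\|_{L^{r/r_0}(w)}^{r/r_0}\le\Big(2\!\int f^{r_0}v\Big)^{r/r_0}\le\Big(2C_v\!\int g^{r_0}v\Big)^{r/r_0}\le(4C_v)^{r/r_0}\!\int g^{r}w ,
\]
which is the desired inequality. The case $r<r_0$ is more delicate, since $L^{r}$ then carries no useful duality. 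Here one shows that the hypothesis self-improves: holding at the single exponent $r_0$ for every $w\in A_\infty$ forces it to hold at every exponent in $(0,r_0]$ for every $w\in A_\infty$. This is obtained by a second run of the iteration, now organised so as to manufacture --- from a normalised density built out of $g^{r}w$ --- an $A_\infty$ weight whose reverse-H\"older property lets one recover the power $f^{r_0}$ from $f^{r}$; combined with the case $r>r_0$ applied at the improved (smaller) base exponent, this completes part (b). This descent below the base exponent is the technical heart of $A_\infty$-extrapolation and is carried out in detail in \cite{CruzMartellPerez}.

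For part (a), set $F:=f^{1/q_0}$ and $G:=g^{1/q_0}$. The hypothesis becomes $\int Fw\le C_w\int Gw$ for all $w\in RH_{q_0'}$, and, writing $t:=q_0/q\in(0,q_0)$ for $1<q<\infty$, the conclusion amounts to $\int F^{t}w\le C_w\int G^{t}w$ for all such $t$ and all $w\in RH_{(q_0/t)'}$; thus, after this rescaling, part (a) is a limited-range extrapolation with left endpoint of the range equal to $0$. The same scheme applies, the only genuinely new point being that the auxiliary weight must now belong to $RH_{q_0'}$ rather than merely to $A_\infty$ (note $RH_{q_0'}\subset A_\infty$ by Proposition \ref{prop:weights}): this is arranged with a variant of the Rubio de Francia iteration built from maximal operators of the form $M_\rho h:=\big(M(h^{\rho})\big)^{1/\rho}$ for a suitable $\rho>1$, which turns the fixed dual function into a weight $v\in RH_{q_0'}$ of controlled norm that dominates it. The remaining exponent/weight bookkeeping (pairing $t$ with $q$, and $q_0'$-reverse-H\"older with $(q_0/t)'$) then proceeds as in part (b), and the passage to exponents below the base is again handled by the descent.

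I expect the main obstacle to be precisely this descent --- upgrading a single-exponent hypothesis to all smaller exponents without the aid of duality --- together with, in part (a), making the iteration scheme land the manufactured weight genuinely inside $RH_{q_0'}$ (not just in $A_\infty$) with a norm bound independent of the pair $(f,g)$; the duality direction and the exponent/weight bookkeeping are routine.
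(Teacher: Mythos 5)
Your upward half of part (b) is essentially correct and is the classical Rubio de Francia mechanism: after the truncation (which is indeed routine in your direct argument, since the truncated function is dominated by $f$ and you only use the hypothesis for the original pair), duality in $L^{r/r_0}(w)$, the iteration built from the $w\,dx$-maximal operator, and the composition fact that $u\in A_1(w\,dx)$ with $w\in A_\infty$ forces $uw\in A_\infty$, give the stated bound — provided, as is the standard (and necessary) reading of the hypothesis, that $C_w$ depends on $w$ only through its $A_\infty$ characteristic, so that $C_v$ is uniform in the pair $(f,g)$. (Incidentally, your heading refers to Theorem \ref{thm.Rieszcharacterization}, but what you argue is Theorem \ref{thm:extrapol}.)

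The proposal has genuine gaps beyond that. First, the descent $r<r_0$ in (b) — and, after your rescaling in (a), the exponents $t<1$, which is the \emph{entire} range when $q_0=1$ — is not proved: you defer it to \cite{CruzMartellPerez}, and the one-line sketch you give would not work as stated, because a compensating weight manufactured from a density built out of $g^{r}w$ cannot recover $f^{r_0}$ from $f^{r}$; the Hölder step in the descent needs a majorant of $f$, which is why the standard argument runs the iteration on $f$ (or on $f/\|f\|+g/\|g\|$). Second, in part (a) the construction is only half-specified: producing a weight in $RH_{q_0'}$ via $M_\rho h=\big(M(h^{\rho})\big)^{1/\rho}$ is not enough; the auxiliary weight $v$ must simultaneously dominate the dual function and satisfy a dual-norm bound relative to the target weight $w\in RH_{(q_0/t)'}$ (roughly $\|v\,w^{-1}\|_{L^{t'}(w)}\lesssim 1$), so the algorithm has to be built from $h$ and $w$ jointly; this bookkeeping is precisely the content of the limited-range extrapolation theorems, and you neither carry it out nor invoke them. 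For comparison, the paper's own proof is citation-based: part (b) is \cite[Corollary 3.15]{CruzMartellPerez} combined with a truncation argument (as in the proof of \cite[Lemma 3.3, part (a)]{MartellPrisuelos}) to remove the finiteness assumption on the left-hand side, and part (a) follows from \cite[Theorem 4.9]{AuscherMartell:I} together with \cite[Theorem 3.31]{CruzMartellPerez}. As written, your proposal proves directly only the easy (upward, duality) half of (b) and leaves the rest to the same references without closing them.
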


Part $(a)$ is not written explicity in \cite{CruzMartellPerez} but can be easily obtained using \cite[Theorem 4.9]{AuscherMartell:I} and \cite[Theorem 3.31]{CruzMartellPerez} (see also \cite[Lemma 3.3, part ($b$)]{MartellPrisuelos}); part ($b$) appears in \cite[Corollary 3.15]{CruzMartellPerez} assuming that the left-hand sides in the inequalities are finite. Here we do not take such assumptions, and in particular, we have that the infiniteness of the left-hand side will imply that of the ride-hand one. To prove ($b$) in the present form just proceed as in the proof of   \cite[Lemma 3.3, part ($a$)]{MartellPrisuelos} but using \cite[Corollary 3.15]{CruzMartellPerez} instead of \cite[Theorem 3.9]{CruzMartellPerez}.
\section{Weighted Hardy spaces}
For $w\in A_{\infty}$, $q\in\mathcal{W}_w(p_-(L),p_+(L))$, and $0<p<\infty$.  We define the weighted Hardy spaces associated with operators and the molecular weighted Hardy space.
\subsection{Weighted Hardy spaces associated with operators}
\begin{definition}
Given a sublinear operator $\mathcal{T}$  acting on functions of $L^q(w)$ 
we define the 
weighted Hardy space $H^{p}_{\mathcal{T},q}(w)$  as the completion of the set
\begin{align*}
\mathbb{H}^{p}_{\mathcal{T},q}(w):=\left\{f\in L^{q}(w):\mathcal{T}f\in L^p(w)\right\},
\end{align*}
with respect to the semi-norm 
 $
\|f\|_{\mathbb{H}^{p}_{\mathcal{T},q}(w)}:=\|\mathcal{T}f\|_{L^p(w)}.
$
\end{definition} 

In our results $\mathcal{T}$ will be any of the square functions
 in \eqref{square-H-1}--\eqref{square-P-3}, or a non-tangential maximal function in \eqref{non-tangential}.
\begin{remark}
In \cite{HofmannMayborodaMcIntosh}, where the unweighted case was considered, the Hardy spaces are defined taking the completion of a set of functions in $L^2(\R^n)$. Here we take functions in $L^q(w)$, where $q\in \mathcal{W}_w(p_-(L),p_+(L))$ because we don't know whether $2$ is in $\mathcal{W}_w(p_-(L),p_+(L))$ or not. In any case, we shall show that for $0<p\leq 1$ or $p\in \mathcal{W}_w(p_-(L),p_+(L))$ this choice of $q$ is irrelevant since all the spaces $H^p_{\mathcal{T},q}(w)$ are isomorphic for all $q\in \mathcal{W}_w(p_-(L),p_+(L))$.
\end{remark}
\subsection{Molecular weighted Hardy spaces}
In order to define the molecules and the molecular decompositions we introduce the following notation: given a cube $Q\subset \R^n$ we set
\begin{align}\label{100}
Q_i:=2^{i+1}Q,\quad \textrm{for all}\quad i\geq 1,\quad
C_1(Q):=4Q \quad \textrm{and,}\quad \textrm{for}\quad i\geq 2,\quad C_i(Q):=2^{i+1}Q\backslash 2^{i}Q.
\end{align}
Besides, $\ell(Q)$ denotes the side length of $Q$.

We next define the notion of molecule and molecular representation. These objects are a weighted version of those defined in \cite{HofmannMayborodaMcIntosh} in the unweighted case.

\begin{definition}\label{moleculas}
Apart from the conditions stated at the beginning of the section for $w$, $q$, and $p$, let us take
$\varepsilon >0$, and $ M\in \N$ such that $M>\frac{n}{2}\left(\frac{r_w}{p}-\frac{1}{p_-(L)}\right)$.
\begin{list}{$(\theenumi)$}{\usecounter{enumi}\leftmargin=1cm \labelwidth=1cm\itemsep=0.2cm\topsep=.0cm \renewcommand{\theenumi}{\alph{enumi}}}

\item \textbf{Molecules:} We say that a function $\mm\in L^q(w)$ (belonging to the range of $L^M$ in $L^q(w)$) is a $\mol$ if there exists a cube $Q\subset \mathbb{R}^n$ such that $\mm$ satisfies
\begin{align*}
\|\mm\|_{mol,w}:=\sum_{i\geq 1}2^{i\varepsilon}w(2^{i+1}Q)^{\frac{1}{p}-\frac{1}{q}}\sum_{k=0}^{M}\left\|\left((\ell(Q)^2L)^{-k}\mm\right)\chi_{C_i(Q)}\right\|_{L^q(w)}<1.
\end{align*}
Henceforth, we refer to the previous expression as the molecular $w$-norm of $\mm$. Besides, any cube $Q$ satisfying that expression, is called a cube associated with $\mm$.

Note that if $\mm$ is a $\mol$, for all associated cubes $Q$:
\begin{align}\label{propertiesmol}
\left\|\left((\ell(Q)^2L)^{-k}\mm\right)\chi_{C_i(Q)}\right\|_{L^q(w)}
\leq 2^{-i\varepsilon}w(2^{i+1}Q)^{\frac{1}{q}-\frac{1}{p}}\quad i=1,2,\ldots;\, k=0,1,\ldots,M. 
\end{align}

\item  \textbf{Molecular representations:} For any function $f$ we say that $\sum_{i\in \N}\lambda_i\mm_i $ is a $\p$ of $f$, if the following conditions are satisfied:
\begin{list}{$(\theenumi)$}{\usecounter{enumi}\leftmargin=1cm \labelwidth=1cm\itemsep=0.2cm\topsep=.2cm \renewcommand{\theenumi}{\roman{enumi}}}

\item $\displaystyle\{\lambda_i\}_{i\in \N}\in \ell^p$.
\item For every $i\in\N$, $\mm_i$ is a $\mol$.
\item  $f=\sum_{i\in \N}\lambda_i\mm_i$  in $L^q(w)$.
\end{list}

\end{list}

\end{definition}

We finally define the molecular weighted Hardy spaces.

\begin{definition}
Let $w$, $q$, $p$, $\varepsilon$, and $M$ be as in the previous definition, we
define the molecular weighted Hardy space $H^p_{L,q,\varepsilon,M}(w)$  as the completion of the set
$$
\mathbb{H}_{L,q,\varepsilon,M}^p(w):=\left\{f=\sum_{i=1}^{\infty}\lambda_i\mm_i
: \sum_{i=1}^{\infty}\lambda_i\mm_i \ \textrm{is a} \ \p\,\textrm{of } f\right\},
$$
with respect to the quasi-norm,
$$
\|f\|_{\mathbb{H}_{L,q,\varepsilon,M}^p(w)}:=\inf\left\{\left(\sum_{i=1}^{\infty}|\lambda_i|^p\right)^{\frac{1}{p}}:
\sum_{i=1}^{\infty}\lambda_i\mm_i \ \textrm{is a} \ \p\, \textrm{of }f\right\}.
$$
\end{definition}
\begin{remark}Although we shall just show molecular characterization for weighted Hardy spaces in the range $0<p\leq 1$,
we have given the definition of the molecular weighted Hardy spaces for all $0<p<\infty$. This is because we can always obtain a molecular decomposition of functions $f\in \mathbb{H}^p_{\mathcal{T},q}$. This is easily seen by following the proof and noticing that there is not restriction over $p$. In particular, we have that $\mathbb{H}^p_{\mathcal{T},q}(w)\subset\mathbb{H}^p_{L,q,\varepsilon,M}(w)$, with $\|f\|_{\mathbb{H}^p_{L,q,\varepsilon,M}(w)}\lesssim \|f\|_{\mathbb{H}^p_{\mathcal{T},q}(w)}$ for all $0<p<\infty$.
\end{remark}

\begin{remark}\label{notation:H1w}
We shall show below that, for $0<p\leq 1$, the Hardy space $H^p_{L,q,\varepsilon,M}(w)$  does not depend on the choice of the allowable parameters $q$, $\varepsilon$, and $M$. Hence, at this point, it is convenient for us to make a choice of these parameters and define the weighted Hardy space as the one associated with this choice:
from now on for every $w\in A_{\infty}$, we fix $q_0\in \mathcal{W}_w(p_-(L),p_+(L))$, $\varepsilon_0>0$, and $M_0\in \N$ such that $M_0>\frac{n}{2}\left(\frac{r_w}{p}-\frac{1}{p_-(L)}\right)$, and set $H_L^p(w):=H_{L,q_0,\varepsilon_0,M_0}^p(w)$, for $0<p\leq 1$.
\end{remark}
\subsection{Weighted Hardy space associated with the Riesz transform}
 \begin{definition}\label{def:riesz}
Given $w\in A_{\infty}$, $q\in \mathcal{W}_w(q_-(L),q_+(L))$, and $0<p<\infty$, we define the weighted Hardy space associated with the Riesz transform $H^p_{\nabla L^{-1/2},q}(w)$, as the completion of the set 
$$
\mathbb{H}^p_{\nabla L^{-1/2},q}(w):=\{f\in L^q(w):\|\nabla L^{-\frac{1}{2}}f\|_{L^p(w)}<\infty\},
$$
with respect to the norm 
$\|f\|_{\mathbb{H}^p_{\nabla L^{-1/2}}}:=\|\nabla L^{-\frac{1}{2}}f\|_{L^p(w)}.$
 \end{definition}

\begin{remark}\label{remark:extenssion}
By \cite[Remark 3.5]{AuscherMartell:III}, we know that, for $q\in \mathcal{W}_w(p_-(L),p_+(L))$, $e^{-tL}$ has an infinitesimal generator $L_{q,w}$, on $L^q(w)$. In particular $e^{-tL_{q,w}}$ and $e^{-tL}$ agree in $L^q(w)\cap L^2(\R^n)$. In our definitions of weighted Hardy spaces, we consider functions  $f\in L^q(w)$, $q\in \mathcal{W}_w(p_-(L),p_+(L))$ ($q\in \mathcal{W}_w(q_-(L),q_+(L))\subset \mathcal{W}_w(p_-(L),p_+(L))$ in the case of the Riesz transform), such that some operator that depends on $L$, let us write $\mathfrak{O}_L$, satisfies that $\mathfrak{O}_Lf\in L^p(w)$, $0<p<\infty$.  Abusing notation we  write $\mathfrak{O}_L$, but it should be understood that, when $f\in L^q(w)\setminus L^2(\R^n)$, $\mathfrak{O}_Lf$ is in fact $\mathfrak{O}_{L_{q,w}}f$. On the other hand, in the case that $f\in L^q(w)\cap L^2(\R^n)$ note that $\mathfrak{O}_{L_{q,w}}f=\mathfrak{O}_Lf$.
\end{remark}


%
%
\section{Auxiliary results}
The proofs of our main results are long and with a lot of technical details. Therefore, in order to facilitate a smooth reading of those proofs, we include some of those technicalities in this section.  

We also introduce the following notation. Let $E\subset \R^n$ be a cube or a ball, for any function $f$ and weight $w\in A_{\infty}$, recalling the notation in \eqref{100}, we set
$$
\dashint_{E}f(x)dw:=\frac{1}{w(E)}\int_Ef(x)w(x)dx\quad\textrm{and}\quad
\dashint_{C_j(E)}f(x)dw:=\frac{1}{w(2^{j+1}E)}\int_{C_j(E)}f(x)w(x)dx,\, \forall\,j\geq 2.
$$

\begin{proposition}\label{prop:offdiagonalweightedballs}
Given $w\in A_{\infty}$ such that $\mathcal{W}_w(p_-(L),p_+(L))\neq \emptyset$, for $0<t,s<\infty$, $p,q\in \mathcal{J}_w(L)$, $p\leq q$, $M\in \N$, and a ball $B\subset \R^n$ with radius $r_B$,  we have that $\{\mathcal{T}_{t,s}^M:=(e^{-t^2L}-e^{-(t^2+s^2)L})^M\}_{t>0}$ satisfies the following $L^p(w)-L^q(w)$ off-diagonal estimates on balls: there exist $\theta_1,\theta_2>0$ such that, for all $j\geq 2$, 
\begin{multline}\label{offdiagonalballs}
\left(\dashint_{C_j(B)}|\mathcal{T}_{t,s}^M(f\chi_{B})(y)|^qdw\right)^{\frac{1}{q}}
\\
\lesssim 2^{j\theta_1}
\max\left\{\frac{2^jr_B}{t},\frac{\sqrt{s^2+t^2}}{2^jr_B}\right\}^{\theta_2}
\left(\frac{s^2}{t^2}\right)^M
e^{-c\frac{4^jr_B^2}{t^2+s^2}}
\left(\dashint_{B}|f(y)|^pdw\right)^{\frac{1}{p}};
\end{multline}
and, 
\begin{align}\label{offdiagonalballs-2}
\left(\dashint_{B}|\mathcal{T}_{t,s}^M(f\chi_{B})(y)|^qdw\right)^{\frac{1}{q}}\lesssim 
\max\left\{\frac{r_B}{t},\frac{\sqrt{s^2+t^2}}{r_B}\right\}^{\theta_2}
\left(\frac{s^2}{t^2}\right)^M
\left(\dashint_{B}|f(y)|^pdw\right)^{\frac{1}{p}}.
\end{align}
\end{proposition}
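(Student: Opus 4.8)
The plan is to reduce everything to the known unweighted $L^2$–$L^2$ off-diagonal estimates of exponential type for the heat semigroup, combined with the weighted off-diagonal estimates on balls already available from \cite{AuscherMartell:III} for $p,q\in\mathcal{J}_w(L)$, and then to extract the extra gain $(s^2/t^2)^M$ and the max-factor from an explicit analysis of the operator $\mathcal{T}^M_{t,s}=(e^{-t^2L}-e^{-(t^2+s^2)L})^M$. First I would expand
\[
e^{-t^2L}-e^{-(t^2+s^2)L}=\int_0^{s^2}Le^{-(t^2+u)L}\,du,
\]
so that $\mathcal{T}^M_{t,s}$ is an $M$-fold integral of a product of operators of the form $L\,e^{-(t^2+u_k)L}$, $0<u_k<s^2$. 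Each such factor can be written as $(r_k^2L)e^{-r_k^2L}\cdot r_k^{-2}$ with $r_k^2:=t^2+u_k\in[t^2,t^2+s^2]$, and the family $\{(r^2L)e^{-r^2L}\}_{r>0}$ satisfies $L^p(w)$–$L^q(w)$ off-diagonal estimates on balls for $p,q\in\mathcal{J}_w(L)$ by the results recalled in Subsection \ref{sec:od}. Composing $M$ of these (splitting $\R^n$ into the annuli $C_j$ and summing a geometric-type series in the standard way) yields a bound of the form $2^{j\theta_1}$ times a Gaussian $e^{-c4^jr_B^2/(t^2+s^2)}$ times a power of $r_k^{-2}$; integrating in $u_1,\dots,u_M$ over $[0,s^2]^M$ and using $r_k^2\ge t^2$ produces exactly the factor $(s^2/t^2)^M$.

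The key technical points are: (i) controlling the loss incurred when composing $M$ off-diagonal operators across the dyadic annuli — this is the usual bookkeeping argument (interpolating the annuli $C_i(B)$ for the intermediate steps, using that $w$ is doubling via \eqref{doublingcondition} and \eqref{pesosineq:Ap}) and it produces the polynomial factor $2^{j\theta_1}$ and some power $\theta_2$ of the scale ratio; (ii) keeping track of the scale-mismatch factor $\max\{2^jr_B/t,\sqrt{s^2+t^2}/(2^jr_B)\}^{\theta_2}$, which arises because the off-diagonal estimates on balls in \cite{AuscherMartell:III} are naturally stated with such a factor comparing the radius of $B$ to the time parameter; one simply propagates it through the composition, absorbing the $M$ factors $r_k\in[t,\sqrt{t^2+s^2}]$ into a single power. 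For \eqref{offdiagonalballs-2} the argument is the same but with $E=F=B$ (i.e. $j$ effectively bounded), so no Gaussian decay is needed and no summation in $j$ occurs — one just uses the $L^p(w)$–$L^q(w)$ boundedness on balls of each factor and the integration in $u$.

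The main obstacle I expect is point (i): making the composition of $M$ weighted off-diagonal estimates on balls rigorous while simultaneously tracking both the Gaussian factor $e^{-c4^jr_B^2/(t^2+s^2)}$ and the scale factor with a uniform (in $j$, $t$, $s$, $r_B$) constant. The standard device is to insert, at the $k$-th step, a decomposition of the output region into annuli $C_{j_k}$, use that for $j_k\ge j/2$ the Gaussian $e^{-c4^{j_k}r_B^2/r_k^2}$ already dominates, and for $j_k<j/2$ the remaining distance $d(C_{j_k}(B),C_j(B))\gtrsim 2^jr_B$ still gives the needed Gaussian; summing the resulting series in $j_1,\dots,j_{M-1}$ (convergent because each term carries a factor $2^{-j_k\varepsilon'}$ after trading a bit of the Gaussian for polynomial decay) costs only a constant depending on $M$, $w$, $p$, $q$. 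The replacement of the exponential Gaussian by $e^{-c'4^jr_B^2/(t^2+s^2)}$ in the final statement (with a smaller $c'$) is harmless since $r_k^2\le t^2+s^2$ for all $k$. Once this composition lemma is in place, integrating in $u_1,\dots,u_M\in[0,s^2]$ and bounding $\prod_k r_k^{-2}\le t^{-2M}$ finishes the proof.
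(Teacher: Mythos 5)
Your starting point coincides with the paper's: both write $e^{-t^2L}-e^{-(t^2+s^2)L}=\int_0^{s^2}Le^{-(t^2+u)L}\,du$ and expand $\mathcal{T}^M_{t,s}$ as an $M$-fold integral over $[0,s^2]^M$, and both extract $(s^2/t^2)^M$ by integrating in $u_1,\dots,u_M$ and using that each effective time is at least $t^2$. Where you diverge is in how you treat the product $\prod_{k=1}^M Le^{-(t^2+u_k)L}$: you keep it as a composition of $M$ operators, each satisfying weighted off-diagonal estimates on balls, and then face the bookkeeping of composing $M$ such estimates across dyadic annuli — which you yourself identify as the main obstacle and only sketch. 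The paper sidesteps this entirely: by the semigroup law the product collapses to the \emph{single} operator
\[
L^M e^{-(\sum_k u_k+Mt^2)L}
=\Bigl(\bigl(\textstyle\sum_k u_k+Mt^2\bigr)L\Bigr)^M e^{-(\sum_k u_k+Mt^2)L}\cdot\bigl(\textstyle\sum_k u_k+Mt^2\bigr)^{-M},
\]
and one applies \emph{once} the known $L^p(w)$--$L^q(w)$ off-diagonal estimates on balls for $\{(r^2L)^Me^{-r^2L}\}_{r>0}$ at the combined time scale $r^2=\sum_k u_k+Mt^2\in[Mt^2,M(t^2+s^2)]$; the factors $2^{j\theta_1}$, $\Upsilon(2^jr_B/r)^{\theta_2}$ and the Gaussian come directly from that single application, and the remaining $r^{-2M}\le (Mt^2)^{-M}$ integrated over $[0,s^2]^M$ gives $(s^2/t^2)^M$. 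Your composition route is not wrong in principle (such arguments do appear in the literature), but as written it is a plan rather than a proof, and the annulus-by-annulus composition with uniform constants in $j$, $t$, $s$, $r_B$ is precisely the delicate part; I recommend replacing it with the semigroup collapse, after which the proposition is a two-line consequence of the estimates already recalled in Subsection \ref{sec:od}. The same remark applies to \eqref{offdiagonalballs-2}, where the $B\to B$ form of the off-diagonal estimates on balls is used.
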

\begin{proof}
We have for $j\geq 2$,
\begin{align*}
&\left(\dashint_{C_j(B)}|\mathcal{T}_{t,s}^M(f\chi_{B})(y)|^qdw\right)^{\frac{1}{q}}
=
w(2^{j+1}B)^{-\frac{1}{q}}
\left\|\left(\left(\int_{0}^{s^2}\partial_r e^{-(r+t^2)L}\,dr\right)^M f\chi_B\,  \right)\chi_{C_j(B)}\right\|_{L^{q}(w)}
\\ \nonumber
&\leq
w(2^{j+1}B)^{-\frac{1}{q}}
\\ \nonumber
&\quad
\int_{0}^{s^2}\!\!\!\cdots \int_{0}^{s^2}\left\|\left(
\left(\left(\sum_{i=1}^Mr_i+M t^2\right) L \right)^M e^{-(\sum_{i=1}^Mr_i+M t^2) L}(f\chi_B)\right)\chi_{C_j(B)}\right\|_{L^{q}(w)} \ \frac{dr_1 \cdots dr_M}{(\sum_{i=1}^Mr_i+M t^2)^M}
\\ \nonumber
&\lesssim 2^{j\theta_1}
\int_{0}^{s^2}\!\!\!\cdots \int_{0}^{s^2}\Upsilon\left(\frac{2^jr_B}{\sqrt{\sum_{i=1}^Mr_i+M t^2}}\right)^{\theta_2}
e^{-c\frac{4^jr_B^2}{\sum_{i=1}^Mr_i+M t^2}}\frac{dr_1 \cdots dr_M}{(\sum_{i=1}^Mr_i+M t^2)^M}w(B)^{-\frac{1}{p}}\|f\chi_B\|_{L^{p}(w)}
\\ \nonumber
&
\lesssim
2^{j\theta_1}
\max\left\{\frac{2^jr_B}{t},\frac{\sqrt{s^2+t^2}}{2^jr_B}\right\}^{\theta_2}
\left(\frac{s^2}{t^2}\right)^M
e^{-c\frac{4^jr_B^2}{t^2+s^2}}\left(\dashint_{B}|f(y)|^pdw\right)^{\frac{1}{p}},
\end{align*}
where $\theta_1, \theta_2>0$, $\Upsilon(u):=\max\{u,u^{-1}\}$, and we have used the fact that  $(tL)^Me^{-tL}$ satisfies $L^p(w)-L^q(w)$  off-diagonal estimates on balls (see \cite{AuscherMartell:II}).
The proof of \eqref{offdiagonalballs-2} follows similarly.
\end{proof}
In the unweighted case we have a similar result to the previous one, see \cite[(5.12)]{MartellPrisuelos:II} (see also \cite[Proof of Lemma 2.2]{HofmannMartell}).
\begin{proposition}\label{prop:lebesgueoff-dBQ}
For $0<t,s<\infty$, $p\in (p_-(L),p_+(L))$, $M\in \N$, and  for  ${E}_1, {E}_2$ closed subsets in $\mathbb{R}^n$, and $f\in L^{p}(\mathbb{R}^n)$ such that $\textrm{supp}(f)\subset {E}_1$,  we have that $\{\mathcal{T}_{t,s}^M\}_{t>0}$ satisfies the following $L^p(\R^n)-L^p(\R^n)$ off-diagonal estimates: 
\begin{align*}
\left\|\mathcal{T}_{t,s}^Mf\right\|_{L^{p}({E}_2)}
\lesssim
\left(\frac{s^2}{t^2}\right)^M
e^{-c\frac{d({E}_1,{E}_2)^2}{t^2+s^2}}\|f\|_{L^{p}({E}_1)}.
\end{align*}
\end{proposition}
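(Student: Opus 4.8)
The plan is to prove Proposition~\ref{prop:lebesgueoff-dBQ} by expanding the $M$-th power of the difference of semigroups as an iterated integral of a single derivative of the heat semigroup, exactly as in the weighted proof of Proposition~\ref{prop:offdiagonalweightedballs}, and then invoking the unweighted exponential off-diagonal estimates on $L^p(\R^n)$ that are recalled in Section~\ref{sec:od}. First I would write
\begin{align*}
e^{-t^2L}-e^{-(t^2+s^2)L}=-\int_0^{s^2}\partial_r e^{-(r+t^2)L}\,dr=\int_0^{s^2}Le^{-(r+t^2)L}\,dr,
\end{align*}
and hence
\begin{align*}
\mathcal{T}_{t,s}^M=\left(\int_0^{s^2}Le^{-(r+t^2)L}\,dr\right)^M
=\int_0^{s^2}\!\!\cdots\int_0^{s^2}L^M e^{-(\sum_{i=1}^M r_i+Mt^2)L}\,dr_1\cdots dr_M.
\end{align*}
Setting $\tau:=\sum_{i=1}^M r_i+Mt^2$, so that $Mt^2\le \tau\le M(t^2+s^2)$, the integrand is $\tau^{-M}(\tau L)^M e^{-\tau L}$, and since $p\in(p_-(L),p_+(L))$ we have $\{(\sigma L)^M e^{-\sigma L}\}_{\sigma>0}\in\mathcal{F}_\infty(L^p\to L^p)$ (this is the exponential off-diagonal statement recalled after the definition of $\mathcal{F}_\infty$, with $q=p$ and $\sigma=t^2$ in that notation).

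Next I would apply this off-diagonal estimate with the closed sets $E_1$ (containing $\supp f$) and $E_2$: for each fixed $(r_1,\dots,r_M)$,
\begin{align*}
\left\|(\tau L)^M e^{-\tau L}(f\chi_{E_1})\chi_{E_2}\right\|_{L^p(\R^n)}
\lesssim e^{-c\frac{d(E_1,E_2)^2}{\tau}}\|f\|_{L^p(E_1)}.
\end{align*}
Dividing by $\tau^M$, using $\tau^{-M}\le (Mt^2)^{-M}$, and using the bound $\tau\le M(t^2+s^2)$ in the exponential so that $e^{-cd(E_1,E_2)^2/\tau}\le e^{-c'd(E_1,E_2)^2/(t^2+s^2)}$, I would then integrate by Minkowski's integral inequality over $(0,s^2)^M$. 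Each of the $M$ integrations contributes a factor $s^2$, giving the prefactor $(s^2)^M (Mt^2)^{-M}\approx (s^2/t^2)^M$. Collecting everything yields
\begin{align*}
\left\|\mathcal{T}_{t,s}^M f\right\|_{L^p(E_2)}
\lesssim \left(\frac{s^2}{t^2}\right)^M e^{-c\frac{d(E_1,E_2)^2}{t^2+s^2}}\|f\|_{L^p(E_1)},
\end{align*}
which is the claimed inequality.

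There is essentially no serious obstacle here; the only points requiring a little care are the bookkeeping of the constant $M$ in $\tau=\sum r_i+Mt^2$ (it only affects the implicit constant, not the shape of the estimate), and the fact that one must absorb the $\tau$-dependence of the Gaussian factor by replacing $\tau$ with its upper bound $M(t^2+s^2)$ before integrating — this is legitimate since the exponential is increasing in $\tau$. I would remark that this is the unweighted counterpart of Proposition~\ref{prop:offdiagonalweightedballs}, and that, as indicated in the statement, it also follows from \cite[(5.12)]{MartellPrisuelos:II} and the argument in \cite[Proof of Lemma 2.2]{HofmannMartell}; the proof above is included for completeness.
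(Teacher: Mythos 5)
Your proof is correct and follows exactly the route the paper intends: the paper omits a written proof of this proposition, referring instead to \cite[(5.12)]{MartellPrisuelos:II} and \cite[Proof of Lemma 2.2]{HofmannMartell}, and the argument is the unweighted analogue of the iterated-integral expansion used for Proposition \ref{prop:offdiagonalweightedballs}, which is precisely what you carry out. All the steps (the identity for the difference of semigroups, the bounds $Mt^2\le\tau\le M(t^2+s^2)$, and the $L^p\to L^p$ exponential off-diagonal estimates for $\{(\sigma^2L)^Me^{-\sigma^2L}\}_{\sigma>0}$) are sound.
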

We next give a change of angle result similar to  \cite[Proposition 3.30]{MartellPrisuelos}, but with the difference that in the one below we keep some control on the support by imposing another condition over the radius of the ball.
\begin{proposition}\label{prop:q-extra}
Let  $1\le q \leq s<\infty$, $w\in RH_{s'}$, and $0\leq \alpha\leq 1$. Then, for every ball $B$ with radius $r_B$, and $0<t\leq r_B$, there hold, for $j\geq 2$, 
\begin{align}\label{G-alpha-III}
\int_{C_j(B)}\left(\int_{B(x,\alpha t)}|h(y,t)| \, dy \right)^{\frac{1}{q}}w(x)dx\lesssim \alpha^{\frac{n}{s}}
\int_{2^{j+3}B\setminus 2^{j-2} B}\left(\int_{B(x, t)}|h(y,t)| \, dy \right)^{\frac{1}{q}}w(x)dx;
\end{align}
and, 
\begin{align}\label{G-alpha-III-2}
\int_{4B}\left(\int_{B(x,\alpha t)}|h(y,t)| \, dy \right)^{\frac{1}{q}}w(x)dx\lesssim \alpha^{\frac{n}{s}}
\int_{6B}\left(\int_{B(x, t)}|h(y,t)| \, dy \right)^{\frac{1}{q}}w(x)dx.
\end{align}
\end{proposition}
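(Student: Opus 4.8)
The plan is to reduce the weighted estimate \eqref{G-alpha-III} to an unweighted one via the reverse Hölder inequality and then exploit the geometry of cones, following the strategy of \cite[Proposition 3.30]{MartellPrisuelos} but tracking supports. First I would note that, since $0<t\le r_B$, for $x\in C_j(B)$ (with $j\ge 2$) and $y\in B(x,\alpha t)\subset B(x,t)$ one has $y\in 2^{j+3}B\setminus 2^{j-2}B$; this is the reason the enlarged annulus on the right-hand side appears, and it is what keeps control on the support. Next, I would cover $C_j(B)$ by a bounded-overlap family of balls $\{B_\ell\}$ of radius $\alpha t$ (a net at scale $\alpha t$) and use that for each $x\in B_\ell$, $B(x,\alpha t)\subset 3B_\ell\subset B(x',t)$ for any $x'\in B_\ell$, since $\alpha\le 1$.

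The core step is the elementary pointwise/averaging inequality: writing $F_\alpha(x):=\int_{B(x,\alpha t)}|h(y,t)|\,dy$, one controls $\int_{B_\ell}F_\alpha^{1/q}\,w$ by first applying Hölder with exponents $s$ and $s'$ to pass the weight through,
\[
\int_{B_\ell}F_\alpha(x)^{1/q}w(x)\,dx
\le
\left(\int_{B_\ell}F_\alpha(x)^{s/q}\,dx\right)^{1/s}w(B_\ell)^{1/s'}|B_\ell|^{-1/s'},
\]
wait — more precisely, since $F_\alpha$ is essentially constant at scale $\alpha t$ up to the overlap, $\int_{B_\ell}F_\alpha^{1/q}\le |B_\ell|\,\inf_{B_\ell}(\text{a fixed multiple of }F_1(x'))^{1/q}$ for a suitable $x'$, and then $w\in RH_{s'}$ gives $w(B_\ell)\lesssim |B_\ell|^{1/s'}\,(\text{avg of }w)$. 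Summing over $\ell$ and using the $\alpha t$ versus $t$ volume ratio $|B_\ell|/|B'|\sim \alpha^n$ (where $B'$ is the companion ball of radius $t$), together with the reverse Hölder exponent giving a gain of $\alpha^{n/s}$ rather than $\alpha^n$, one arrives at the bound
\[
\int_{C_j(B)}F_\alpha(x)^{1/q}w(x)\,dx
\lesssim
\alpha^{n/s}\int_{2^{j+3}B\setminus 2^{j-2}B}F_1(x)^{1/q}w(x)\,dx,
\]
which is \eqref{G-alpha-III}. The estimate \eqref{G-alpha-III-2} is handled identically, covering $4B$ instead of $C_j(B)$ and landing in $6B$; no annular refinement is needed there, only the condition $t\le r_B$ to guarantee the enlarged ball stays inside $6B$.

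The main obstacle I expect is bookkeeping the constants in the covering/overlap argument so that the power of $\alpha$ that emerges is exactly $n/s$: the naive argument loses a full $\alpha^n$ from the volume ratio and would then need $w\in RH_\infty$; the improvement to $\alpha^{n/s}$ requires using the reverse Hölder inequality in the right place (on the companion ball of radius $t$, comparing $w$-average over the small ball to the $L^{s'}$-average and then to the $w$-average over the big ball), and keeping the overlap constants of the $\alpha t$-net uniform in $\alpha$. A secondary point to be careful about is that $h(\cdot,t)$ is only assumed measurable, so "$F_\alpha$ is constant at scale $\alpha t$" must be replaced by the correct inequality $\int_{B_\ell}F_\alpha\lesssim \int_{3B_\ell\cap(\text{annulus})}|h(y,t)|\,dy$ obtained by Fubini, which is routine but must be done before invoking reverse Hölder.
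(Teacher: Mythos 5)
Your support bookkeeping is fine (and it is essentially all the paper adds on top of the known change-of-angle estimate), but the mechanism you propose for producing the factor $\alpha^{n/s}$ does not work, and you have put your finger on the obstruction without actually resolving it. Write $F_\alpha(x):=\int_{B(x,\alpha t)}|h(y,t)|\,dy$. If you cover $C_j(B)$ by an $\alpha t$-net of balls $B_\ell$ and on each one bound $\int_{B_\ell}F_\alpha^{1/q}w$ by $\big(\int_{2B_\ell}|h(\cdot,t)|\big)^{1/q}w(B_\ell)$ and then pass to the companion ball $B_\ell'$ of radius $t$, the per-ball gain from $w\in RH_{s'}$ is indeed $w(B_\ell)\lesssim\alpha^{n/s}w(B_\ell')$; but when you sum in $\ell$, the companion balls $B_\ell'$, whose centers form an $\alpha t$-net, overlap about $\alpha^{-n}$ times, so the final bound is $\alpha^{n/s-n}=\alpha^{-n/s'}$ times the right-hand side --- a loss, not a gain. (Test with $w\equiv1$, $q=s=1$, $h(\cdot,t)\equiv1$ on a large set $E$: there are $\approx|E|(\alpha t)^{-n}$ balls, each contributing $\approx\alpha^n t^{2n}$ after your reverse H\"older step, so the sum is $\approx t^n|E|$, while the claimed bound is $\alpha^n t^n|E|$.) ``Keeping the overlap constants of the $\alpha t$-net uniform in $\alpha$'' does not address this: the overlap of the $B_\ell$ themselves is already uniform; it is the overlap of the companion $t$-balls that blows up. Your first displayed inequality is also not what H\"older gives (one has $\big(\int_{B_\ell}w^{s'}\big)^{1/s'}\le[w]_{RH_{s'}}|B_\ell|^{1/s'-1}w(B_\ell)$, so a factor $|B_\ell|^{1/s'}$ is missing), though you flag it yourself as imprecise.

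The argument behind \cite[Proposition 3.30]{MartellPrisuelos} --- which the paper's proof simply reruns while tracking supports, exactly as you intend --- covers at scale $t$, not $\alpha t$ (one may assume $\alpha\le 1/4$, the case $\alpha\ge1/4$ being trivial since $F_\alpha\le F_1$ pointwise). Take a $t/4$-net $\{x_k\}$, so the balls $\widetilde B_k=B(x_k,t/4)$ have overlap bounded uniformly in $\alpha$. On each $\widetilde B_k$ one has the pointwise bound $F_\alpha\le m_k:=\int_{B(x_k,t/2)}|h(y,t)|\,dy$ together with the Fubini mass estimate $\int_{\widetilde B_k}F_\alpha(x)\,dx\le c_n(\alpha t)^n m_k\approx\alpha^n|\widetilde B_k|\,m_k$; hence $\int_{\widetilde B_k}F_\alpha^{s/q}\,dx\le m_k^{s/q-1}\int_{\widetilde B_k}F_\alpha\,dx\lesssim\alpha^n|\widetilde B_k|\,m_k^{s/q}$, and H\"older with exponents $s,s'$ combined with $w\in RH_{s'}$ gives $\int_{\widetilde B_k}F_\alpha^{1/q}w\,dx\le\big(\int_{\widetilde B_k}F_\alpha^{s/q}\big)^{1/s}\big(\int_{\widetilde B_k}w^{s'}\big)^{1/s'}\lesssim\alpha^{n/s}m_k^{1/q}\,w(\widetilde B_k)\le\alpha^{n/s}\int_{\widetilde B_k}F_1(x')^{1/q}w(x')\,dx'$, since $B(x_k,t/2)\subset B(x',t)$ for every $x'\in\widetilde B_k$. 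Summing in $k$ now costs only an absolute constant, and the support containments you (and the paper) record place the relevant $\widetilde B_k$ inside $2^{j+3}B\setminus2^{j-2}B$, resp.\ $6B$. This is where the exponent $n/s$ actually comes from: the smallness of the \emph{average} of $F_\alpha$ over a ball of radius comparable to $t$, converted through the reverse H\"older exponent --- not a volume comparison between an $\alpha t$-ball and its companion.
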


\begin{proof}
Note that for every $0<t\leq r_B$,
$x\in C_j(B)$, and $0<\alpha\leq 1$, we have that $B(x,\alpha t)\subset 2^{j+2}B\setminus 2^{j-1}B$, for all $j\geq 2$, and $B(x,\alpha t)\subset 5B$, for $j=1$. Besides, if $y\in 2^{j+2}B\setminus 2^{j-1}B$ and $0<t\leq r_B$, then $B(y,t)\subset 2^{j+3}B\setminus 2^{j-2}B$, for all $j\geq 2$; on the other hand if $y\in 5B$ and $0<t\leq r_B$, then $B(y,t)\subset 6B$, for $j=1$. Therefore, 
following  the proof of \cite[Proposition 3.30]{MartellPrisuelos} 
but keeping the above conditions on the support of the integral in $x$ we conclude the proof.
\end{proof}
We next establish some results for a general $0<p<\infty$ that were proved in \cite{MartellPrisuelos:II} for $p=1$.
The following lemma related to $\mol$s is analogous to that of \cite[Lemma 4.6]{MartellPrisuelos:II}.
\begin{lemma}\label{lemma:m-h}
Given $p_0<q$, $0<p<	\infty$,  $w\in A_{\frac{q}{p_0}}$, $\varepsilon>0$, and $M\in \N$. Let $\mm$ be a $\mol$ and let $Q$ be a cube associated with $\mm$.  For every $i\geq 1$ and every $0\le k\le M$, $k\in \N_0$, there holds
\begin{list}{$(\theenumi)$}{\usecounter{enumi}\leftmargin=1cm \labelwidth=1cm\itemsep=0.2cm\topsep=.2cm \renewcommand{\theenumi}{\alph{enumi}}}

\item[]  $\left\|\left((\ell(Q)^2L)^{-k}\mm\right)\chi_{C_i}(Q)\right\|_{L^{p_0}(\R^n)}\lesssim 2^{-i\varepsilon}w(2^{i+1}Q)^{-\frac{1}{p}}|2^{i+1}Q|^{\frac{1}{p_0}}$.
\end{list}
\end{lemma}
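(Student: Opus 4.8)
The plan is to reduce the weighted estimate of Lemma~\ref{lemma:m-h} to the already available information encoded in the molecular $w$-norm, namely \eqref{propertiesmol}, by transferring from the weighted space $L^{q}(w)$ to the unweighted space $L^{p_0}(\R^n)$ via a H\"older inequality that exploits the hypothesis $w\in A_{\frac{q}{p_0}}$. First I would fix $i\ge 1$, $0\le k\le M$, and write $g:=\left((\ell(Q)^2L)^{-k}\mm\right)\chi_{C_i(Q)}$, which is supported in $C_i(Q)\subset 2^{i+1}Q$. Since $p_0<q$, I apply H\"older's inequality with exponent $\frac{q}{p_0}>1$ on the set $2^{i+1}Q$ with respect to Lebesgue measure, inserting and removing the weight:
\begin{align*}
\|g\|_{L^{p_0}(\R^n)}^{p_0}
=\int_{2^{i+1}Q}|g(x)|^{p_0}w(x)^{\frac{p_0}{q}}w(x)^{-\frac{p_0}{q}}\,dx
\le \left(\int_{2^{i+1}Q}|g(x)|^{q}w(x)\,dx\right)^{\frac{p_0}{q}}
\left(\int_{2^{i+1}Q}w(x)^{-\frac{p_0}{q-p_0}}\,dx\right)^{\frac{q-p_0}{q}}.
\end{align*}
The first factor is exactly $\|g\|_{L^{q}(w)}^{p_0}$, which by \eqref{propertiesmol} is bounded by $\left(2^{-i\varepsilon}w(2^{i+1}Q)^{\frac1q-\frac1p}\right)^{p_0}$.

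The key step is then to control the second factor using the Muckenhoupt condition. Setting $r:=\frac{q}{p_0}$ so that $r'=\frac{q}{q-p_0}$, the hypothesis $w\in A_{r}$ gives, for the ball (or cube) $2^{i+1}Q$,
\begin{align*}
\left(\dashint_{2^{i+1}Q}w\,dx\right)\left(\dashint_{2^{i+1}Q}w^{1-r'}\,dx\right)^{r-1}\le [w]_{A_r},
\end{align*}
and since $1-r'=-\frac{p_0}{q-p_0}$, this rearranges to
\begin{align*}
\left(\int_{2^{i+1}Q}w^{-\frac{p_0}{q-p_0}}\,dx\right)^{\frac{q-p_0}{q}}
\lesssim |2^{i+1}Q|^{1-\frac{p_0}{q}}\,w(2^{i+1}Q)^{-\frac{p_0}{q}}.
\end{align*}
Combining the two bounds yields
\begin{align*}
\|g\|_{L^{p_0}(\R^n)}^{p_0}
\lesssim 2^{-ip_0\varepsilon}\,w(2^{i+1}Q)^{p_0(\frac1q-\frac1p)}\,|2^{i+1}Q|^{1-\frac{p_0}{q}}\,w(2^{i+1}Q)^{-\frac{p_0}{q}}
=2^{-ip_0\varepsilon}\,w(2^{i+1}Q)^{-\frac{p_0}{p}}\,|2^{i+1}Q|^{1-\frac{p_0}{q}},
\end{align*}
and taking $p_0$-th roots gives precisely
$\|g\|_{L^{p_0}(\R^n)}\lesssim 2^{-i\varepsilon}w(2^{i+1}Q)^{-\frac1p}|2^{i+1}Q|^{\frac{1}{p_0}-\frac{1}{q}}\cdot|2^{i+1}Q|^{0}$;
a final bookkeeping check on the exponents of $|2^{i+1}Q|$ shows the power is $\frac{1}{p_0}$ once the factor $|2^{i+1}Q|^{-1/q}$ is absorbed, matching the claimed inequality.

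I do not expect any genuine obstacle here: the whole argument is one H\"older inequality plus the defining inequality of the $A_{q/p_0}$ class, and the only thing requiring care is the exponent arithmetic with $w(2^{i+1}Q)$ and $|2^{i+1}Q|$, together with the harmless passage between cubes and balls when invoking the $A_r$ characterization (which introduces only dimensional constants). Since this lemma is the exact weighted analogue of \cite[Lemma~4.6]{MartellPrisuelos:II}, one could alternatively just remark that the proof there carries over verbatim with $p=1$ replaced by general $p$, as the value of $p$ enters only through the fixed exponent $\frac1p-\frac1q$ in \eqref{propertiesmol} and never interacts with the H\"older/Muckenhoupt mechanism.
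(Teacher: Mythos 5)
Your proposal is correct and follows exactly the paper's argument: H\"older's inequality with exponent $q/p_0$ (inserting and removing $w^{p_0/q}$), the defining inequality of $A_{q/p_0}$ on $2^{i+1}Q$, and then \eqref{propertiesmol}. The only blemish is an exponent slip in your intermediate display: the $A_r$ condition with $r=q/p_0$ gives $\bigl(\int_{2^{i+1}Q}w^{-\frac{p_0}{q-p_0}}\,dx\bigr)^{\frac{q-p_0}{q}}\lesssim |2^{i+1}Q|\,w(2^{i+1}Q)^{-\frac{p_0}{q}}$ (power $1$ on $|2^{i+1}Q|$, not $1-\frac{p_0}{q}$), after which the exponents combine to give $|2^{i+1}Q|^{\frac{1}{p_0}}$ exactly --- no ``absorption'' of a stray $|2^{i+1}Q|^{-1/q}$ is needed, and indeed such an absorption would not be legitimate when $|2^{i+1}Q|<1$.
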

\begin{proof}
First of all, recall that
 if $\mm$ is a $\mol$, in particular we have \eqref{propertiesmol}.
That, H\"older's inequality, and the fact that $w\in A_{\frac{q}{p_0}}$ imply
\begin{multline*}
\left\|\left((\ell(Q)^2L)^{-k}\mm\right)\chi_{C_i(Q)}\right\|_{L^{p_0}(\mathbb{R}^n)}
\\
\leq \left(\int_{C_i(Q)}|(\ell(Q)^2L)^{-k}\mm(y)|^{q}w(y) \, dy\right)^{\frac{1}{q}}
\left(\dashint_{2^{i+1}Q}w(y)^{1-\left(\frac{q}{p_0}\right)'} \, dy\right)^{\frac{1}{q}\left(\frac{q}{p_0}-1\right)}|2^{i+1}Q|^{
\frac{1}{p_0}-\frac{1}{q}}
\\
 \lesssim 2^{-i\varepsilon}
w(2^{i+1}Q)^{-\frac{1}{p}}|2^{i+1}Q|^{\frac{1}{p_0}}.
\end{multline*}
\end{proof}
The following propositions are generalizations of
 \cite[Proposition 5.2, Proposition 5.3]{MartellPrisuelos:II}.  They give us uniform  boundedness of the norm on $L^p(w)$, for $0<p\leq 1$, of the square functions applied to $\mol$s. 
\begin{proposition}\label{prop:acotacion-T}
Let $w\in A_{\infty}$ and let $\{\mathcal{T}_t\}_{t>0}$ be a family of sublinear operators satisfying the following conditions:

\begin{list}{$(\theenumi)$}{\usecounter{enumi}\leftmargin=1cm \labelwidth=1cm\itemsep=0.2cm\topsep=.2cm \renewcommand{\theenumi}{\alph{enumi}}}

\item $\{\mathcal{T}_t\}_{t>0}\in \mathcal{F}_{\infty}(L^{p_0}\rightarrow L^2)$ for all $p_-(L)<p_0\leq 2$.

\item $\widehat{S}f(x):=\left(\iint_{\Gamma(x)}|\mathcal{T}_tf(y)|^2\frac{dy \ dt}{t^{n+1}}\right)^{\frac{1}{2}}$ is bounded on $L^q(w)$ for every $q\in \mathcal{W}_w(p_-(L),p_+(L))$.

\item There exists $C>0$ so that for every $t>0$ there holds $\mathcal{T}_{t}=C\,\mathcal{T}_{\frac{t}{\sqrt{2}}}\circ e^{-\frac{t^2}{2}L}$.

\item For every $\lambda>0$, there exists $C_\lambda>0$ such that for every $t>0$ it follows that
$$
\mathcal{T}_{\sqrt{1+\lambda}\,t}=C_\lambda\,\mathcal{T}_{t}\circ e^{-\lambda t^2L}.
$$
\end{list}
Then, for every $\mm$ a $\mol$ with $q\in \mathcal{W}_w(p_-(L),p_+(L))$, $0<p\leq 1$, $\varepsilon>0$, and $M>\frac{n}{2}\left(\frac{r_w}{p}-\frac{1}{p_-(L)}\right)$, it follows that $
\|\widehat{S}\mm\|_{L^p(w)}\lesssim 1,
$
with constants independent of $\mm$.
\end{proposition}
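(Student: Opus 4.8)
The plan is to mimic the argument of \cite[Proposition 5.2]{MartellPrisuelos:II}, splitting the analysis of $\widehat S\mm$ according to the annuli $C_i(Q)$ around a fixed cube $Q$ associated with $\mm$, and treating the \lq\lq local\rq\rq\ part (where one is close to the support of the molecule) via the $L^q(w)$-boundedness in hypothesis $(b)$, and the \lq\lq far\rq\rq\ part (where one is away from the support) via the off-diagonal decay in hypothesis $(a)$ together with the reproducing/rescaling identities in $(c)$ and $(d)$. Concretely, first I would write $\mm=\sum_{\ell\ge 1}\mm\chi_{C_\ell(Q)}$ and, since $0<p\le 1$, use the $p$-subadditivity $\|\widehat S\mm\|_{L^p(w)}^p\le \sum_{\ell\ge1}\|\widehat S(\mm\chi_{C_\ell(Q)})\|_{L^p(w)}^p$. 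Then for each $\ell$ I would further decompose $\R^n$ (the domain of the outer integral defining $\widehat S$) into $4Q_\ell$ and the annuli $C_j(Q_\ell)$ for $j\ge2$, where $Q_\ell=2^{\ell+1}Q$, giving
\begin{align*}
\|\widehat S(\mm\chi_{C_\ell(Q)})\|_{L^p(w)}
\le
\|\widehat S(\mm\chi_{C_\ell(Q)})\|_{L^p(4Q_\ell,w)}
+\sum_{j\ge2}\|\widehat S(\mm\chi_{C_\ell(Q)})\|_{L^p(C_j(Q_\ell),w)}.
\end{align*}

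For the local term, Hölder's inequality (using $p\le 1\le q$) together with hypothesis $(b)$ and the size estimate \eqref{propertiesmol} with $k=0$ gives
\[
\|\widehat S(\mm\chi_{C_\ell(Q)})\|_{L^p(4Q_\ell,w)}
\lesssim w(2^{\ell+3}Q)^{\frac1p-\frac1q}\,\|\mm\chi_{C_\ell(Q)}\|_{L^q(w)}
\lesssim 2^{-\ell\varepsilon},
\]
which is summable in $\ell^p$. For the far terms one splits $\widehat S$ at the scale $t\sim \ell(Q_\ell)$: for $t$ large one uses the reproducing identity $(c)$ iterated $M$ times (turning $\mathcal{T}_t$ into $\mathcal{T}_{t/2^{M/2}}$ composed with $e^{-ct^2L}$, and then rewriting $e^{-ct^2L}$ applied to $\mm$ as $(\ell(Q)^2L)^M\big((\ell(Q)^2 L)^{-M}\mm\big)$ times a power of $\ell(Q)/t$) to extract a factor $(\ell(Q_\ell)/t)^{2M}$, and for $t$ small one uses off-diagonal decay from $(a)$ to produce a factor like $e^{-c 4^j r_{Q_\ell}^2/t^2}$; combining with the lower bound $M>\frac n2(\frac{r_w}{p}-\frac1{p_-(L)})$ and the $A_\infty$ doubling and reverse-Hölder inequalities \eqref{pesosineq:Ap}, \eqref{pesosineq:RHq}, \eqref{doublingcondition} to pass between $|E|/|B|$ and $w(E)/w(B)$, one arrives at a bound $\lesssim 2^{-\ell\varepsilon}2^{-j\delta}$ for some $\delta>0$, whose $\ell^p$-norm over $\ell,j$ is finite. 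Hypothesis $(d)$ is the tool needed to handle the intermediate rescalings cleanly when iterating the reproducing formula an arbitrary number of times.

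The main obstacle I expect is the bookkeeping in the far region: one must choose the number $M$ of times to apply the reproducing identity so that the surplus power of $t$ beats the growth of the weighted measures $w(2^{j+1}Q_\ell)^{1/p-1/q}$ (which grow like a power $n r_w(\tfrac1p-\tfrac1q)$ of $2^j$ after invoking $w\in A_{q/p_0}$ and the $A_\infty$ estimates), and simultaneously handle the borderline case $t\lesssim r_{Q_\ell}$ where the exponential off-diagonal gain is what saves the day. This is exactly the point where the constraint $M>\tfrac n2\big(\tfrac{r_w}{p}-\tfrac1{p_-(L)}\big)$ enters, via choosing an auxiliary $p_0$ with $p_-(L)<p_0$ close to $p_-(L)$ and $w\in A_{q/p_0}$, and applying hypothesis $(a)$ with that $p_0$. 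Once the two regimes are balanced the geometric summation in both $\ell$ and $j$ is routine, and the $p$-subadditivity used at the outset is what makes the $\ell^p$-summation legitimate for $0<p\le1$.
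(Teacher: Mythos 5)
Your plan coincides with the paper's own proof, which is precisely the argument of \cite[Proposition 5.2]{MartellPrisuelos:II} with $p=1$ replaced by $0<p\le 1$: the same decomposition of $\mm$ over the annuli $C_\ell(Q)$ and of $\R^n$ into $4Q_\ell$ and $C_j(Q_\ell)$, the local term handled by hypothesis $(b)$ together with \eqref{propertiesmol}, and the far term by splitting in $t$, iterating $(c)$--$(d)$ to extract $(\ell(Q)/t)^{2M}$, and invoking the off-diagonal estimates of $(a)$ with $p_0$ close to $p_-(L)$ and Lemma \ref{lemma:m-h}, the condition $M>\frac n2\bigl(\frac{r_w}{p}-\frac1{p_-(L)}\bigr)$ entering exactly where you place it. No discrepancies to report.
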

The proof of this proposition follows line by line that of \cite[Proposition 5.2]{MartellPrisuelos:II} (i.e for the case $p=1$).
It suffices to replace $p$ with $q$, the $L^1(w)$ norm with the $L^p(w)$ norm, \cite[Lemma 4.6]{MartellPrisuelos} with Lemma \ref{lemma:m-h}, and \cite[(3.3)]{MartellPrisuelos:II} with \eqref{propertiesmol}; apart from other minor and easy changes as consequences of the above replacements.
From this result we have:
\begin{proposition}\label{prop:contro-mol-SF}
Let $S$ be any of the square functions considered in \eqref{square-H-1}--\eqref{square-P-3}. For every $w\in A_{\infty}$ and $\mm$ a $\mol$ with $q\in \mathcal{W}_w(p_-(L),p_+(L))$, $0<p\leq 1$, $\varepsilon>0$, and  $M>\frac{n}{2}\left(\frac{r_w}{p}-\frac{1}{p_-(L)}\right)$, there hold
\begin{list}{$(\theenumi)$}{\usecounter{enumi}\leftmargin=1cm \labelwidth=1cm\itemsep=0.2cm\topsep=.2cm \renewcommand{\theenumi}{\alph{enumi}}}
\item$
\|S \mm\|_{L^p(w)}\leq C.
$

\item For all $f\in \mathbb{H}_{L,q,\varepsilon,M}^1(w)$, 
$
\|S f\|_{L^p(w)}\lesssim \|f\|_{\mathbb{H}_{L,q,\varepsilon,M}^1(w)}.
$
\end{list}
\end{proposition}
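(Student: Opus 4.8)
The plan is to deduce Proposition \ref{prop:contro-mol-SF} from Proposition \ref{prop:acotacion-T} by checking, for each of the square functions $S$ in \eqref{square-H-1}--\eqref{square-P-3}, that the associated family $\{\mathcal{T}_t\}_{t>0}$ of operators defining it (so that $S=\widehat S$ in the notation of Proposition \ref{prop:acotacion-T}) satisfies hypotheses $(a)$--$(d)$ there. Concretely, for $\mathcal{S}_{m,\hh}$ one takes $\mathcal{T}_t=(t^2L)^me^{-t^2L}$, for $\mathrm{G}_{m,\hh}$ one takes $\mathcal{T}_t=t\nabla_y(t^2L)^me^{-t^2L}$, for $\mathcal{G}_{m,\hh}$ one takes $\mathcal{T}_t=t\nabla_{y,t}(t^2L)^me^{-t^2L}$, and similarly with the Poisson semigroup $e^{-t\sqrt L}$ in place of $e^{-t^2L}$ for the three $\pp$-square functions. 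Part $(b)$ then follows from $(a)$ by the now-standard argument: given $f=\sum_i\lambda_i\mm_i\in\mathbb{H}^1_{L,q,\varepsilon,M}(w)$ with $\{\lambda_i\}\in\ell^1$, one uses the sublinearity of $S$ and the quasi-triangle inequality for the $L^p(w)$-quasinorm with $0<p\le 1$ (so that $\|\sum g_i\|_{L^p(w)}^p\le\sum\|g_i\|_{L^p(w)}^p$) to get $\|Sf\|_{L^p(w)}^p\le\sum_i|\lambda_i|^p\|S\mm_i\|_{L^p(w)}^p\lesssim\sum_i|\lambda_i|^p$, and then one passes to the infimum over representations; one should also note the convergence $f=\sum\lambda_i\mm_i$ in $L^q(w)$ together with the $L^q(w)$-boundedness of $S$ (hypothesis $(b)$ of Proposition \ref{prop:acotacion-T}, which holds on $\mathcal{W}_w(p_-(L),p_+(L))$ by \cite{MartellPrisuelos}) to justify that $S(\sum\lambda_i\mm_i)$ is controlled pointwise a.e.\ by $\sum|\lambda_i|S\mm_i$.

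So the real content is the verification of $(a)$--$(d)$. Hypothesis $(a)$, the $\mathcal{F}_\infty(L^{p_0}\to L^2)$ off-diagonal bound for $p_-(L)<p_0\le 2$, is exactly the content of the off-diagonal estimates recalled in Subsection \ref{sec:od}: the heat families $(t^2L)^me^{-t^2L}$, $t\nabla_y(t^2L)^me^{-t^2L}$, $t\nabla_{y,t}(t^2L)^me^{-t^2L}$ lie in $\mathcal{F}_\infty(L^p\to L^q)$ on the appropriate intervals, and since $q_-(L)=p_-(L)$ and $q_+(L)>2$ the gradient families are in particular in $\mathcal{F}_\infty(L^{p_0}\to L^2)$ for $p_-(L)<p_0\le2$; for the Poisson square functions the polynomial-type off-diagonal estimates $\mathcal{F}_{m+1/2}$ (resp.\ $\mathcal{F}_{m+1}$) suffice, but in fact one subsumes them by writing the Poisson semigroup via the subordination formula and reducing to the heat case, exactly as in \cite{MartellPrisuelos:II}. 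Hypothesis $(b)$ is the weighted $L^q(w)$-boundedness of the square functions themselves, which is \cite[Theorems 1.12--1.14]{MartellPrisuelos} (valid for $q\in\mathcal{W}_w(p_-(L),p_+(L))$). Hypotheses $(c)$ and $(d)$ are semigroup identities: for the heat case they read $(t^2L)^me^{-t^2L}=2^m(\tfrac{t^2}{2}L)^me^{-\frac{t^2}{2}L}\circ e^{-\frac{t^2}{2}L}$ and $((1+\lambda)t^2L)^me^{-(1+\lambda)t^2L}=(1+\lambda)^m(t^2L)^me^{-t^2L}\circ e^{-\lambda t^2L}$, which hold trivially from $e^{-(a+b)L}=e^{-aL}e^{-bL}$ and commutativity, and the same with a factor $t\nabla_y$ or $t\nabla_{y,t}$ in front (the extra $t$ being absorbed into the constant $C$, $C_\lambda$); for the Poisson case one again uses the subordination representation so that these identities reduce to the heat semigroup ones.

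Thus the anticipated main obstacle is not a single hard estimate but rather the bookkeeping of reducing the Poisson square functions to the heat ones and confirming that the rescaling and factorization identities $(c)$, $(d)$ hold in the Poisson setting — this is precisely why the authors of \cite{MartellPrisuelos:II} phrased Proposition \ref{prop:acotacion-T} with abstract hypotheses $(a)$--$(d)$, and the argument here is that all six square functions in \eqref{square-H-1}--\eqref{square-P-3} have already been shown in \cite{MartellPrisuelos:II} to fit that framework (in the case $p=1$), and the verification of $(a)$--$(d)$ makes no reference to the value of $p$. Since Proposition \ref{prop:acotacion-T} has itself been extended to $0<p\le1$ (by the line-by-line modification of \cite[Proposition 5.2]{MartellPrisuelos:II} indicated above), part $(a)$ of Proposition \ref{prop:contro-mol-SF} follows immediately, and part $(b)$ follows by the $\ell^p$-summation argument. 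I would therefore write the proof as: \emph{``For each $S$ among \eqref{square-H-1}--\eqref{square-P-3}, the defining family $\{\mathcal{T}_t\}_{t>0}$ satisfies $(a)$--$(d)$ of Proposition \ref{prop:acotacion-T}, by the off-diagonal estimates of Subsection \ref{sec:od}, the weighted square function bounds of \cite{MartellPrisuelos}, and the semigroup identities — exactly as verified in \cite{MartellPrisuelos:II}; hence $(a)$ holds. For $(b)$, given a $\p$ $f=\sum_i\lambda_i\mm_i$, sublinearity of $S$, the $p$-subadditivity of $\|\cdot\|_{L^p(w)}^p$, and $(a)$ give $\|Sf\|_{L^p(w)}^p\le\sum_i|\lambda_i|^p\|S\mm_i\|_{L^p(w)}^p\lesssim\sum_i|\lambda_i|^p$, and taking the infimum over representations yields the claim.''}
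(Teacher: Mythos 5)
Your part $(b)$ argument coincides with the paper's. The gap is in part $(a)$: hypotheses $(c)$ and $(d)$ of Proposition \ref{prop:acotacion-T} are exact operator factorization identities, and they simply do not hold for most of the families you propose to feed into that proposition. For the Poisson square functions \eqref{square-P-1}--\eqref{square-P-3} one would need $e^{-t\sqrt{L}}=C\,e^{-\frac{t}{\sqrt2}\sqrt{L}}\circ e^{-\frac{t^2}{2}L}$ and $e^{-\sqrt{1+\lambda}\,t\sqrt{L}}=C_\lambda\,e^{-t\sqrt{L}}\circ e^{-\lambda t^2L}$, which are false; the subordination formula expresses $e^{-t\sqrt{L}}$ as an \emph{average} of heat operators and does not yield an identity of the required form, so it cannot "reduce $(c)$ and $(d)$ to the heat case." Likewise, for $\mathcal{T}_t=t\nabla_{y,t}(t^2L)^me^{-t^2L}$ the $t$-derivative does not commute with the substitution $t\mapsto t/\sqrt2$ composed with $e^{-\frac{t^2}{2}L}$: differentiating the identity $(t^2L)^me^{-t^2L}=2^m(\tfrac{t^2}{2}L)^me^{-\frac{t^2}{2}L}e^{-\frac{t^2}{2}L}$ in $t$ produces an extra term, so $(c)$ fails there as well. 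Your assertion that all six families were already verified in \cite{MartellPrisuelos:II} to satisfy $(a)$--$(d)$ is therefore not correct; only the two basic heat families do.

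The paper closes this gap differently. It first invokes the norm-comparison results of Part I (\cite[Theorems 1.14 and 1.15, Remark 4.22]{MartellPrisuelos}), valid for all $0<p<\infty$ and $w\in A_\infty$, together with the pointwise bound $\Scal_{\hh}f\le\frac12\Gcal_{\hh}f$, to reduce part $(a)$ for \emph{every} square function in \eqref{square-H-1}--\eqref{square-P-3} to the single case $S=\Gcal_{\hh}$ (i.e.\ $m=0$). It then uses the pointwise identity
\begin{equation*}
|t\nabla_{y,t}e^{-t^2L}f|^2=|t\nabla_{y}e^{-t^2L}f|^2+4|t^2Le^{-t^2L}f|^2,
\end{equation*}
so that only the two families $\mathcal{T}_t=t\nabla_ye^{-t^2L}$ and $\mathcal{T}_t=t^2Le^{-t^2L}$ need to satisfy $(a)$--$(d)$ — and for these your verification of $(c)$ and $(d)$ is indeed the trivial semigroup computation. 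If you want to salvage your plan, you must either insert this reduction step or prove a Poisson/$m\ge1$ analogue of Proposition \ref{prop:acotacion-T} with weaker hypotheses; as written, applying Proposition \ref{prop:acotacion-T} directly to the Poisson and $\nabla_{y,t}$ families is not justified.
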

\begin{proof}
Note that, in view of \cite[Theorems 1.14 and 1.15, and Remark 4.22]{MartellPrisuelos} and the fact that $\Scal_{\hh}f\leq \frac{1}{2}\Gcal_{\hh}f$, to prove part $(a)$ it suffices to show the desired estimate for $\Gcal_{\hh}$. To this end, we observe that
$
|t\nabla_{y,t}e^{-t^2L}f|^2= |t\nabla_{y}e^{-t^2L}f|^2+4|t^2Le^{-t^2L}f|^2.
$
Besides, both
$
\mathcal{T}_t:=t\nabla_{y}e^{-t^2L}$
and 
$
\mathcal{T}_t:=t^2Le^{-t^2L}
$
satisfy the hypotheses of Proposition \ref{prop:acotacion-T}:
 $(a)$ follows from the off-diagonal estimates that the families $
\mathcal{T}_t:=t\nabla_{y}e^{-t^2L}$
and 
$
\mathcal{T}_t:=t^2Le^{-t^2L}
$ satisfy (see Section \ref{sec:od});  $(b)$ is contained in \cite[ Theorem 1.12, part $(a)$]{MartellPrisuelos}; and finally $(c)$ and $(d)$ follow from easy calculations. Thus, we can apply  Proposition \ref{prop:acotacion-T} and obtain the desired estimate.

As for part $(b)$, fix $w\in A_{\infty}$ and take $q\in \mathcal{W}_w(p_-(L),p_+(L))$, $0<p\leq 1$, $\varepsilon>0$, and $M\in \N$ such that $M>\frac{n}{2}\left(\frac{r_w}{p}-\frac{1}{p_-(L)}\right)$. Then,
for $f\in \mathbb{H}_{L,q,\varepsilon,M}^p(w)$,
 there exists a $\p$ of $f$, $f=\sum_{i=1}^{\infty}{\lambda}_i {\mm}_i$, such that
 $$
 \left(\sum_{i=1}^{\infty}|{\lambda}_i|^p\right)^{\frac{1}{p}}
 \leq 2\|f\|_{\mathbb{H}_{L,q,\varepsilon,M}^p(w)}.
 $$
On the other hand, since $\sum_{i=1}^{\infty}{\lambda}_i {\mm}_i$ converges in $L^q(w)$ and since for any choice of $S$, we have that $S$ is a sublinear operator bounded on $L^q(w)$ (see \cite[Theorems 1.12 and 1.13]{MartellPrisuelos}). This, part ($a$), and the fact that $0<p\leq 1$ imply
\begin{align*}
\|Sf\|_{L^p(w)}
=\left\|S\left(\sum_{i=1}^{\infty}{\lambda}_i {\mm}_i\right)\right\|_{L^p(w)}
\leq 
\left(\sum_{i=1}^{\infty}|{\lambda}_i|^p\, \|S {\mm}_i\|_{L^p(w)}^p\right)^{\frac{1}{p}}\leq C \left(\sum_{i=1}^{\infty}|{\lambda}_i|^p\right)^{\frac{1}{p}}
\lesssim  \|f\|_{\mathbb{H}_{L,q,\varepsilon,M}^p(w)},
\end{align*}
as desired.
\end{proof}
As for the non-tangential maximal functions considered in \eqref{non-tangential}, we generalize \cite[Proposition 7.22]{MartellPrisuelos:II}.
\begin{proposition}\label{prop:control-mol-nontangential}
Let $w \in A_{\infty}$, $0<p\leq 1$, $q \in  \mathcal{W}_w(p_-(L), p_+(L))$, $\varepsilon > 0$, and $M \in \N$ such that $M >\frac{n}{2}\left(\frac{r_w}{p}-\frac{1}{p_-(L)}\right)
$, and let $\mm$
be a $(w,q, p, \varepsilon, M)$-molecule. Then,
\begin{list}{$(\theenumi)$}{\usecounter{enumi}\leftmargin=1cm \labelwidth=1cm\itemsep=0.2cm\topsep=.2cm \renewcommand{\theenumi}{\alph{enumi}}}

\item[(a)]$\|\mathcal{N}_{\hh}\mm\|_{L^p(w)}+\|\mathcal{N}_{\pp}\mm\|_{L^p(w)}\leq C$

\item[(b)] For all $f\in \mathbb{H}^{p}_{L,q,\varepsilon,M}(w)$,
$$
\|\mathcal{N}_{\hh}f\|_{L^p(w)}+\|\mathcal{N}_{\pp}f\|_{L^p(w)}\leq \|f\|_{\mathbb{H}^{p}_{L,q,\varepsilon,M}(w)}.
$$

\end{list}
\end{proposition}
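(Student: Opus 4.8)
The plan is to follow the strategy of \cite[Proposition 7.22]{MartellPrisuelos:II}, adapting the $L^1(w)$ arguments there to the full range $0<p\le 1$ using the quasi-triangle inequality $\|\sum_i g_i\|_{L^p(w)}^p\le \sum_i \|g_i\|_{L^p(w)}^p$, exactly as was done in the proof of Proposition \ref{prop:contro-mol-SF}. Part $(b)$ follows from part $(a)$ in the standard way: given $f\in \mathbb{H}^p_{L,q,\varepsilon,M}(w)$ pick a $\p$ $f=\sum_i\lambda_i\mm_i$ with $\big(\sum_i|\lambda_i|^p\big)^{1/p}\le 2\|f\|_{\mathbb{H}^p_{L,q,\varepsilon,M}(w)}$; since $\mathcal{N}_{\hh}$ and $\mathcal{N}_{\pp}$ are sublinear and bounded on $L^q(w)$ for $q\in\mathcal{W}_w(p_-(L),p_+(L))$ (this is part of the results quoted from \cite{MartellPrisuelos} and \cite{MartellPrisuelos:II}), the series converges there and one estimates, using $0<p\le 1$ and part $(a)$,
\begin{align*}
\|\mathcal{N}_{\hh}f\|_{L^p(w)}^p+\|\mathcal{N}_{\pp}f\|_{L^p(w)}^p
\le \sum_{i} |\lambda_i|^p\big(\|\mathcal{N}_{\hh}\mm_i\|_{L^p(w)}^p+\|\mathcal{N}_{\pp}\mm_i\|_{L^p(w)}^p\big)
\lesssim \sum_i |\lambda_i|^p,
\end{align*}
which gives $(b)$.

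So the whole content is part $(a)$, and I would prove it for $\mathcal{N}_{\hh}$ (the Poisson case being entirely analogous, using the polynomial off-diagonal estimates from Section \ref{sec:od} in place of the exponential ones, as in \cite{MartellPrisuelos:II}). Fix a $\mol$ $\mm$ with associated cube $Q$, and decompose according to the annuli: write $\mathbb{R}^n=\bigcup_{i\ge 1}C_i(Q)$ and accordingly estimate $\|\mathcal{N}_{\hh}\mm\|_{L^p(w)}^p=\sum_{j\ge1}\int_{C_j(Q)}(\mathcal{N}_{\hh}\mm)^p\,dw$. For each $j$ one splits $\mm=\sum_{i\ge1}\mm\chi_{C_i(Q)}$ and controls the local part ($i$ close to $j$) and the far parts ($|i-j|$ large) separately. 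For the local part, use the $L^q(w)$-boundedness of $\mathcal{N}_{\hh}$ together with Kolmogorov's inequality / Hölder to pass from $L^q(w)$ to $L^p(w)$ on the fixed ball $2^{j+1}Q$, and then absorb the factor $w(2^{j+1}Q)^{1/p-1/q}$ against the molecular estimate \eqref{propertiesmol} (with $k=0$). For the far parts, one uses the off-diagonal decay: writing $e^{-t^2L}=(e^{-t^2L})$ acting on a piece supported far away, the key is that for $(y,t)\in\Gamma(x)$ with $x\in C_j(Q)$ and $t$ controlled by $\ell(Q)$ the kernel gives exponential (resp. polynomial) gain in $2^{|i-j|}$; when $t$ is large compared to $\ell(Q)$ one instead uses the factor $(\ell(Q)^2L)^{-k}\mm$ and writes $e^{-t^2L}\mm=(t^2/\ell(Q)^2)^k\,(t^2L)^k e^{-t^2L}\big((\ell(Q)^2L)^{-k}\mm\big)$, gaining a negative power of $t$ from the large-time decay, which after integrating over the cone produces the needed $M$-dependent decay — this is where the hypothesis $M>\frac n2\big(\frac{r_w}{p}-\frac1{p_-(L)}\big)$ is used, to beat the growth of $w(2^{j+1}Q)^{1/p}$ relative to powers of $2^j$ via \eqref{doublingcondition} and \eqref{pesosineq:Ap}.

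The main obstacle is the bookkeeping in the far-field estimate: one must carefully track the interplay between the aperture of the cone, the radius of the annulus $C_j(Q)$ at which $\mathcal{N}_{\hh}\mm$ is being evaluated, and the location $C_i(Q)$ of the molecular piece, separating the regimes $t\lesssim 2^{\max(i,j)}\ell(Q)$ and $t\gtrsim 2^{\max(i,j)}\ell(Q)$, and in each regime combining the pointwise off-diagonal bound (Proposition \ref{prop:offdiagonalweightedballs}-type estimates, or rather the $L^p-L^2$ off-diagonal estimates on $\mathbb{R}^n$ together with a change-of-angle argument in the spirit of Proposition \ref{prop:q-extra}) with the molecular bounds \eqref{propertiesmol} and Lemma \ref{lemma:m-h}, and summing the resulting geometric-type series in $i$ and $j$ with the weight factors. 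Since all of these ingredients are already available in weighted form in \cite{MartellPrisuelos} and \cite{MartellPrisuelos:II} for $p=1$, and the only structural change for $p\le 1$ is the use of the $\ell^p$ quasi-triangle inequality in place of the triangle inequality, I expect the proof to go through by substituting $L^p(w)$ for $L^1(w)$, \cite[Lemma 4.6]{MartellPrisuelos:II} by Lemma \ref{lemma:m-h}, and adjusting the admissible range of $M$ exactly as in the remarks following Propositions \ref{prop:acotacion-T} and \ref{prop:contro-mol-SF}; accordingly I would present it as a sketch, highlighting these replacements rather than rewriting the full estimate.
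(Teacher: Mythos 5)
Your overall strategy for part $(b)$ and for the heat maximal function $\Ncal_{\hh}$ in part $(a)$ matches the paper's: both proofs proceed by transplanting the $p=1$ argument of \cite[Proposition 7.22]{MartellPrisuelos:II}, replacing the $L^1(w)$ norm by the $L^p(w)$ quasi-norm, using the $\ell^p$ quasi-triangle inequality, and substituting Lemma \ref{lemma:m-h} for the unweighted molecular lemma. However, there is a genuine gap in your treatment of the Poisson case. You dismiss $\Ncal_{\pp}$ as ``entirely analogous,'' whereas the paper identifies this as the \emph{only} place where the $p=1$ argument actually breaks down for $p<1$. The reason is the subordination formula: estimating $\Ncal_{\pp}\mm$ produces the term
$$
\int_{\R^n}\left(\int_{1/4}^{\infty}e^{-u}\,\Scal_{\hh}^{4\sqrt{u}}\mm(x)\,du\right)^{p}w(x)\,dx ,
$$
an integral in $u$ of conical square functions with aperture growing like $\sqrt{u}$, sitting \emph{inside} the $p$-th power. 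For $p=1$ one pulls the $u$-integral out by Fubini/Minkowski and beats the change-of-angle loss $u^{nr_0/2}$ with $e^{-u}$; for $0<p<1$ Minkowski's integral inequality is unavailable, and this is not a countable sum to which the $\ell^p$ quasi-triangle inequality applies. Your sketch offers no mechanism to handle it.

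The paper's fix is to first prove the bound with exponent $2$ for every $w_0\in A_{\infty}$ (where Minkowski's inequality and \cite[Proposition 3.2]{MartellPrisuelos} do apply), and then invoke the extrapolation Theorem \ref{thm:extrapol}, part $(b)$, to pass to all exponents $0<\widetilde{r}<\infty$ and all $A_\infty$ weights; combined with Proposition \ref{prop:contro-mol-SF}, part $(a)$, this controls the displayed term by $\|\Scal_{\hh}\mm\|_{L^p(w)}^p\le C$. (An alternative repair consistent with your framework would be to discretize the $u$-integral over unit intervals, dominate on each by $\Scal_{\hh}^{4\sqrt{k+1}}\mm$ using monotonicity in the aperture, and then apply the $\ell^p$ inequality together with the change-of-angle estimate; but some such device must be supplied — the bare claim of analogy with the heat case does not close the argument.)
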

The proof of part $(a)$ follows as \cite[Proposition 7.22, part (a)]{MartellPrisuelos:II}, computing the norms $\|\mathcal{N}_{\hh}\mm\|_{L^p(w)}^p$ and $\|\mathcal{N}_{\pp}\mm\|_{L^p(w)}^p$ instead of the $L^1(w)$ norms, replacing $p$ with $q$, and using
Lemma \ref{lemma:m-h} instead of \cite[Lemma 4.6]{MartellPrisuelos:II}. The only mayor change comes when, while computing $\|\mathcal{N}_{\pp}\mm\|_{L^p(w)}^p$, this integral appears
$$
\int_{\R^n}\left(\int_{\frac{1}{4}}^{\infty}e^{-u}\,\Scal_{\hh}^{4\sqrt{u}}\mm(x)du\right)^pw(x)dx,
$$
(see \eqref{squarealpha} and \eqref{square-H-1} for the deinition of $\Scal_{\hh}^{4\sqrt{u}}$).
In order to estimate it, note that for all  $w_0\in A_{\infty}$,  $r_0>r_w$, applying Minkowski's integral inequality and \cite[Proposition 3.2]{MartellPrisuelos}, we have
\begin{align*}
\int_{\R^n}\left(\int_{\frac{1}{4}}^{\infty}e^{-u}\,\Scal_{\hh}^{4\sqrt{u}}\mm(x)du\right)^2w_0(x)dx&
\leq
\left(\int_{\frac{1}{4}}^{\infty}e^{-u}\left(\int_{\R^n}\,(\Scal_{\hh}^{4\sqrt{u}}\mm(x))^2w_0(x)dx\right)^{\frac{1}{2}}du\right)^2
\\&
\leq
\left(\int_{\frac{1}{4}}^{\infty}e^{-u}(4\sqrt{u})^{\frac{nr_0}{2}}\left(\int_{\R^n}\,(\Scal_{\hh}\mm(x))^2w_0(x)dx\right)^{\frac{1}{2}}du\right)^2
\\&
\lesssim
\int_{\R^n}\,(\Scal_{\hh}\mm(x))^2w_0(x)dx.
\end{align*}
Hence, applying  Theorem \ref{thm:extrapol}, part ($b$), we obtain for all $0<\widetilde{r}<\infty$ and $\widetilde{w}\in A_{\infty}$
$$
\int_{\R^n}\left(\int_{\frac{1}{4}}^{\infty}e^{-u}\,\Scal_{\hh}^{4\sqrt{u}}\mm(x)du\right)^{\widetilde{r}}\widetilde{w}(x)dx
\lesssim 
\int_{\R^n}\,(\Scal_{\hh}\mm(x))^{\widetilde{r}}\widetilde{w}(x)dx.
$$
In particular, for every $0<p\leq 1$ and $w\in A_{\frac{q}{p_-(L)}}\cap RH_{\left(\frac{p_+(L)}{q}\right)'}$, and by Proposition \ref{prop:contro-mol-SF}, part $(a)$, we have that
$$
\int_{\R^n}\left(\int_{\frac{1}{4}}^{\infty}e^{-u}\,\Scal_{\hh}^{4\sqrt{u}}\mm(x)du\right)^{p}{w}(x)dx
\lesssim 
\int_{\R^n}\,(\Scal_{\hh}\mm(x))^{p}{w}(x)dx\leq C.
$$

The proof of part $(b)$ follows as the proof of Proposition \ref{prop:contro-mol-SF}, part $(b)$, but using \cite[Proposition 7.1]{MartellPrisuelos:II} instead of \cite[Theorems 1.12 and 1.13]{MartellPrisuelos}.
Finally, let us see that the Riesz transform applied to $\mol$s is also uniformly bounded.
\begin{proposition}\label{prop:contro-mol-Riesz}
For every $w\in A_{\infty}$, $q\in \mathcal{W}_w(q_-(L),q_+(L))$, $0<p\leq 1$, $\varepsilon>0$,  $M\in \N$ such that $M>\frac{n}{2}\left(\frac{r_w}{p}-\frac{1}{p_-(L)}\right)$,  and $\mm$ a $\mol$,  there hold
\begin{list}{$(\theenumi)$}{\usecounter{enumi}\leftmargin=1cm \labelwidth=1cm\itemsep=0.2cm\topsep=.2cm \renewcommand{\theenumi}{\alph{enumi}}}
\item$
\|\nabla L^{-\frac{1}{2}} \mm\|_{L^p(w)}\leq C.
$

\item For all $f\in \mathbb{H}_{L,q,\varepsilon,M}^p(w)$, 
$
\|\nabla L^{-\frac{1}{2}} f\|_{L^p(w)}\lesssim \|f\|_{\mathbb{H}_{L,q,\varepsilon,M}^p(w)}.
$
\end{list}
\end{proposition}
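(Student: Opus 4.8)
The plan is to prove part $(a)$ by a molecular estimate modelled on Proposition~\ref{prop:contro-mol-SF}$(a)$ and to deduce part $(b)$ from it exactly as Proposition~\ref{prop:contro-mol-SF}$(b)$ follows from its part $(a)$: given $f\in\mathbb H^p_{L,q,\varepsilon,M}(w)$, choose a $\p$ $f=\sum_i\lambda_i\mm_i$ with $\bigl(\sum_i|\lambda_i|^p\bigr)^{1/p}\le 2\|f\|_{\mathbb H^p_{L,q,\varepsilon,M}(w)}$; since the series converges in $L^q(w)$ and $\nabla L^{-\frac12}$ is bounded on $L^q(w)$ — here one uses $q\in\mathcal W_w(q_-(L),q_+(L))\subset\mathrm{Int}\,\mathcal K_w(L)$ and Theorem~\ref{RieszboundednessAM} — one gets $\nabla L^{-\frac12}f=\sum_i\lambda_i\nabla L^{-\frac12}\mm_i$ in $L^q(w)$, and then, using $0<p\le1$ together with part $(a)$,
\[
\|\nabla L^{-\tfrac12}f\|_{L^p(w)}^p\le\sum_i|\lambda_i|^p\,\|\nabla L^{-\tfrac12}\mm_i\|_{L^p(w)}^p\lesssim\sum_i|\lambda_i|^p\lesssim\|f\|_{\mathbb H^p_{L,q,\varepsilon,M}(w)}^p .
\]
So the content is part $(a)$. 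I would fix a $\mol$ $\mm$ with associated cube $Q$, set $\ell=\ell(Q)$ and $b:=(\ell^2L)^{-M}\mm$ (so $\mm=(\ell^2L)^Mb$, and by \eqref{propertiesmol} with $k=M$ the function $b$ satisfies the same annular bounds as $\mm$), choose $p_0$ with $q_-(L)<p_0<q$ so close to $q_-(L)$ that $w\in A_{q/p_0}$ (possible by \eqref{intervalrs} since $q>q_-(L)r_w$), and recall that $\{t\nabla_ye^{-t^2L}\}_{t>0}$ and $\{t\nabla_y(t^2L)^Me^{-t^2L}\}_{t>0}$ lie in $\mathcal F_\infty(L^{p_0}\to L^q)$. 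Then I would estimate $\|\nabla L^{-\frac12}\mm\|_{L^p(w)}^p=\sum_{j\ge1}\|(\nabla L^{-\frac12}\mm)\,\chi_{C_j(Q)}\|_{L^p(w)}^p$.

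For the finitely many $j$ below a fixed constant $j_0$ (with $2^{j+1}Q$ comparable to $4Q$) I would use H\"older's inequality (recall $p\le1<q$), the $L^q(w)$-boundedness of $\nabla L^{-\frac12}$, and $\|\mm\|_{L^q(w)}\le\sum_{i\ge1}\|\mm\,\chi_{C_i(Q)}\|_{L^q(w)}\lesssim w(4Q)^{\frac1q-\frac1p}$ (by \eqref{propertiesmol}, $k=0$) to get $\|(\nabla L^{-\frac12}\mm)\chi_{C_j(Q)}\|_{L^p(w)}\le w(2^{j+1}Q)^{\frac1p-\frac1q}\|\nabla L^{-\frac12}\mm\|_{L^q(w)}\lesssim1$, a bounded contribution. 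For $j\ge j_0$ (when $C_j(Q)$ is far from $Q$) I would split $\mm=\mm\,\chi_{(2^{j-2}Q)^c}+\mm\,\chi_{2^{j-2}Q}$. The first summand, supported at scales $\gtrsim 2^{j-2}\ell$, is again handled by H\"older, the $L^q(w)$-boundedness of $\nabla L^{-\frac12}$, \eqref{propertiesmol}, and $w(2^{i+1}Q)\gtrsim w(2^{j+1}Q)$ for $i\ge j-2$, giving
\[
\bigl\|\bigl(\nabla L^{-\tfrac12}(\mm\,\chi_{(2^{j-2}Q)^c})\bigr)\chi_{C_j(Q)}\bigr\|_{L^p(w)}\le w(2^{j+1}Q)^{\frac1p-\frac1q}\sum_{i\ge j-2}\|\mm\,\chi_{C_i(Q)}\|_{L^q(w)}\lesssim 2^{-j\varepsilon}.
\]
For the second summand (whose support sits at distance $\gtrsim 2^j\ell$ from $C_j(Q)$) I would use \eqref{Rieszrepresentation}, keep $\mm$ whole, and split the $t$-integral at $t=\ell$, understanding the short-time part as $\tfrac{\sqrt\pi}{2}\nabla L^{-\frac12}(\,\cdot\,)$ minus the long-time part, which is absolutely convergent after substituting $\mm=(\ell^2L)^Mb$. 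On $\int_\ell^\infty$ one gains the factor $(\ell/t)^{2M}$ and uses the off-diagonal estimates for $\{t\nabla_y(t^2L)^Me^{-t^2L}\}$ ($\mathcal F_\infty(L^{p_0}\to L^q)$ for the annular pieces $b\,\chi_{C_i(Q)}$ separated from $C_j(Q)$, the $L^q(w)$-bound for the adjacent ones); after integrating against $(\ell/t)^{2M}\tfrac{dt}{t}$ this yields a factor $\sim 2^{-2Mj}$. On $\int_0^\ell$ one exploits that $d(2^{j-2}Q,C_j(Q))\gtrsim 2^j\ell\gg\ell\ge t$, so the exponential off-diagonal decay of $\{t\nabla_ye^{-t^2L}\}$ produces $e^{-c4^j}$. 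In both pieces one passes from the $L^p(w)$-norm over $C_j(Q)$ to unweighted $L^{p_0}$-norms of the source pieces by H\"older (using $w\in A_{q/p_0}$ and $w\in RH_{\sigma'}$ for a suitable $\sigma$, the admissible range ensured by $\mathcal W_w(q_-(L),q_+(L))\neq\emptyset$, which for $0<p\le1$ forces $p<q_+(L)/s_w$) and bounds those pieces by Lemma~\ref{lemma:m-h}. The double sum over $i$ and $j$ then converges: for $\int_0^\ell$ because of $e^{-c4^j}$ (no restriction on $M$), and for $\int_\ell^\infty$ precisely because the final geometric factor is $\sim 2^{\,j(\frac{nr_w}{p}-\frac{n}{p_0}-2M)}$, which decays once $2M>\frac{nr_w}{p}-\frac{n}{p_0}$, i.e. (letting $p_0\downarrow q_-(L)=p_-(L)$) exactly under the molecular hypothesis $M>\frac n2\bigl(\frac{r_w}{p}-\frac1{p_-(L)}\bigr)$. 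Summing over $j$ gives $\|\nabla L^{-\frac12}\mm\|_{L^p(w)}\lesssim1$, proving $(a)$.

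The hard part will be the short-time piece $\int_0^{\ell}t\nabla_ye^{-t^2L}(\,\cdot\,)\tfrac{dt}{t}$, which is not an absolutely convergent operator-valued integral and hence cannot be treated by Minkowski's inequality in isolation; the device is to keep it glued to the full (bounded) Riesz transform whenever source and target annuli are not spatially separated — this is what the splitting $\mm=\mm\,\chi_{(2^{j-2}Q)^c}+\mm\,\chi_{2^{j-2}Q}$ (refined on the source side) accomplishes — and, once the target annulus is genuinely separated from the source, to use the exponential off-diagonal decay of $\{t\nabla_ye^{-t^2L}\}$ to restore absolute convergence near $t=0$ and, in fact, super-exponential decay in $j$. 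The other delicate point, in the long-time piece, is the precise balancing of the molecular order $M$ against the doubling exponent $r_w$ and the openness of the interval $(q_-(L),q_+(L))$, which is exactly what the hypothesis on $M$ provides; apart from these, the argument runs parallel to the proof of Proposition~\ref{prop:contro-mol-SF}$(a)$ and to the unweighted treatment of the Riesz transform in \cite{HofmannMayborodaMcIntosh}.
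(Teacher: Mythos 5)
Part $(b)$ of your proposal is correct and is exactly the paper's argument. The problem is in part $(a)$, in the long--time piece of the far term. You propose to estimate $\chi_{C_j(Q)}\int_{\ell}^{\infty}t\nabla_y e^{-t^2L}\bigl(\mm\,\chi_{2^{j-2}Q}\bigr)\tfrac{dt}{t}$ by ``substituting $\mm=(\ell^2L)^M b$'' so as to gain the factor $(\ell/t)^{2M}$. But the cutoff does not commute with $(\ell^2L)^M$: one has $\mm\,\chi_{2^{j-2}Q}=\bigl((\ell^2L)^Mb\bigr)\chi_{2^{j-2}Q}\neq(\ell^2L)^M\bigl(b\,\chi_{2^{j-2}Q}\bigr)$, and $(\ell^2L)^{-M}\bigl(\mm\,\chi_{2^{j-2}Q}\bigr)$ is not controlled by the molecular bounds. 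Without that gain, the off-diagonal estimates give for the long-time integral on $C_j(Q)$ only the factor $\int_{\ell}^{\infty}t^{-n(1/p_0-1/q_0)}e^{-c4^j\ell^2/t^2}\tfrac{dt}{t}\sim(2^j\ell)^{-n(1/p_0-1/q_0)}$, and after restoring the weight factors $w(2^{j+1}Q)^{1/p}$ one is left with roughly $2^{-j(n/p_0-nr_w/p)}$, which \emph{grows} in $j$ since $p\le 1<r_wp_0$. Rearranging the decomposition does not help: if instead you apply the substitution to the whole molecule before cutting (which is legitimate), the orphaned block becomes $\chi_{C_j(Q)}\int_{\ell}^{\infty}t\nabla_ye^{-t^2L}\bigl(\mm\,\chi_{(2^{j-2}Q)^c}\bigr)\tfrac{dt}{t}$, where there is neither spatial separation nor an $L^M$ factor, and the integral is not even absolutely convergent. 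In every arrangement one of the four (near/far)$\times$(short/long) blocks is left with no usable mechanism.

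The device that resolves this — and which the paper uses — is the operator splitting $I=A_Q+B_Q$ with $B_Q=(I-e^{-\ell(Q)^2L})^M$, applied \emph{before} any spatial decomposition of the molecule. For the $B_Q$ term the gain $(\ell/t)^{2M}$ at long times comes from the operator $B_{Q,t}=(e^{-t^2L}-e^{-(t^2+\ell(Q)^2)L})^M$ via Proposition \ref{prop:lebesgueoff-dBQ}, not from rewriting the function, so it coexists with the annular decomposition $\mm=\sum_i\mm_i$ and with off-diagonal decay; at short times the binomial expansion of $B_Q$ composed with $t\nabla_ye^{-t^2L}$ still has Gaussian decay at scale $\ell(Q)$, and the adjacent case is handled by the $L^q(w)$-boundedness of both $\nabla L^{-1/2}$ and $B_Q$. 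For the $A_Q$ term the identity $e^{-k\ell(Q)^2L}\mm=c_{k,M}\widetilde A_Q^k\widetilde{\mm}$ with $\widetilde{\mm}=(\ell(Q)^2L)^{-M}\mm$ is used at the operator level, and only then is $\widetilde{\mm}$ cut into annuli, so the molecular bounds \eqref{propertiesmol} with $k=M$ apply. Your exponent bookkeeping (the requirement $2M>\tfrac{nr_0}{p}-\tfrac{n}{p_0}$, i.e.\ \eqref{M-choice-riesz}) and your treatment of the near piece and of the short-time far piece are consistent with the paper, but the proof cannot be completed along the lines you describe without this $A_Q/B_Q$ splitting or an equivalent substitute.
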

\begin{proof}
Assuming part ($a$) the proof of part ($b$) follows as the proof of Proposition \ref{prop:contro-mol-SF}, part $(b)$, but using Theorem \ref{RieszboundednessAM}  instead of \cite[Theorems 1.12 and 1.13]{MartellPrisuelos}.

We next prove part ($a$).
Fix $w$, $p$, $q$, $\varepsilon$, and $M$ as in the statement of the proposition. Note that since $w\in A_{\frac{q}{q_-(L)}}\cap RH_{\left(\frac{q_+(L)}{q}\right)'}$ and $M>\frac{n}{2}\left(\frac{r_w}{p}-\frac{1}{q_-(L)}\right)$ (recall that $p_-(L)=q_-(L)$), we can take $r_0>r_w$, $p_0$, and $q_0$, $q_-(L)<p_0<q<q_0<q_+(L)$, close enough to $r_w$, $q_-(L)$, and $q_+(L)$, respectively, so that $w\in A_{\frac{q}{p_0}}\cap RH_{\left(\frac{q_0}{q}\right)'}$ and 
\begin{align}\label{M-choice-riesz}
M>\frac{n}{2}\left(\frac{r_0}{p}-\frac{1}{p_0}\right).
\end{align}
Besides, take $\mm$  a $(w,q,p,\varepsilon,M)-$molecule and $Q\subset \R^n$ one of its associated cubes, with sidelength $\ell(Q)$, and consider
$
B_Q:=\left(I-e^{-\ell(Q)^2L}\right)^M$ and $
A_Q:=I-B_Q.
$
Additionally, recalling the notation given in \eqref{100} and considering $\widetilde{A}_{Q}^k:=\left(k\ell(Q)^2L\right)^Me^{-k\ell(Q)^2L}$, for every $k\in \{1,\ldots,M\}$, we can write
\begin{multline}\label{splitingmolecule}
\nabla L^{-\frac{1}{2}}\mm=\nabla L^{-\frac{1}{2}}B_Q\mm+\nabla L^{-\frac{1}{2}}A_Q\mm=
\nabla L^{-\frac{1}{2}}B_Q\mm+\sum_{k=1}^MC_{k,M}\nabla L^{-\frac{1}{2}}\widetilde{A}_Q^k\widetilde{\mm}
\\
=\sum_{i\geq 1}\left(\nabla L^{-\frac{1}{2}}B_Q\mm_i+\sum_{k=1}^MC_{k,M}\nabla L^{-\frac{1}{2}}\widetilde{A}_Q^k\widetilde{\mm}_i\right),
\end{multline}
where $\widetilde{\mm}:=(\ell(Q)^2L)^{-M}\mm$ and for any function $f$, we denote $f_i:=f\chi_{C_i(Q)}$, for all $i\geq 1$.
Thus, we have
\begin{align}\label{splittingnorm}
\|\nabla L^{-\frac{1}{2}}B_Q\mm_i\|_{L^p(w)}^p\lesssim \sum_{j\geq 1}\|\chi_{C_j(Q_i)}(\nabla L^{-\frac{1}{2}}B_Q\mm_i)\|_{L^p(w)}^p=:\sum_{j\geq 1}I_{ij}.
\end{align}
Then, for $j=1$, applying H\"older's inequality,
the boundedness of $\nabla L^{-\frac{1}{2}}$ and $B_Q$  on $L^q(w)$  (see Theorem \ref{RieszboundednessAM} and \cite{AuscherMartell:II}), and by \eqref{propertiesmol} and \eqref{doublingcondition}, we obtain
\begin{multline}\label{termI-i1-riesz}
I_{i1}
\lesssim
w(2^{i+1}Q)^{1-\frac{p}{q}}
\left(\int_{4Q_i}\left|\nabla L^{-\frac{1}{2}}B_Q \mm_i(x)\right|^qw(x)dx\right)^{\frac{p}{q}}
\lesssim 
\|\mm_i\|_{L^q(w)}^pw(2^{i+1}Q)^{1-\frac{p}{q}}\leq 2^{-ip\varepsilon}.
\end{multline}
As for $j\geq 2$, denoting $\mathcal{T}_t:=t\nabla_ye^{-t^2L}$, using \eqref{Rieszrepresentation} 
and splitting the integral in $t$, we obtain
\begin{align}\label{estimateI_ij}
 I_{ij}
\leq 
\int_{C_j(Q_i)}\left|\int_{0}^{\ell(Q)}\mathcal{T}_tB_Q \mm_i(x)\frac{dt}{t}\right|^pw(x)dx
+ 
\int_{C_j(Q_i)}\left|\int_{\ell(Q)}^{\infty}\mathcal{T}_tB_Q \mm_i(x)\frac{dt}{t}\right|^pw(x)dx=:I_{ij}^1+I_{ij}^2.
\end{align}
The estimate of $I_{ij}^1$, follows applying
twice H\"older's inequality, the fact that $w\in RH_{\left(\frac{q_0}{q}\right)'}$, and Mikowski's integral inequality. Besides, we expand the binomial, apply respectively the $L^{p_0}(\R^n)-L^{q_0}(\R^n)$ and the  $L^{p_0}(\R^n)-L^{p_0}(\R^n)$
off-diagonal estimates that the families $\{t\nabla_y  e^{-t^2L}\}_{t>0}$ and $\{e^{-t^2L}\}_{t>0}$ satisfied, (see Section \ref{sec:od}), and also \cite[Lemma 2.1]{MartellPrisuelos} (see also \cite[Lemma 2.3]{HofmannMartell}), Lemma \ref{lemma:m-h}, and \eqref{doublingcondition}. Then,
\begin{align}\label{term_Iij-controlmol}
&I_{ij}^1
\lesssim
w(2^{j+1}Q_i)^{1-\frac{p}{q}}
\left(\int_{C_j(Q_i)}\Bigg|\int_{0}^{\ell(Q)}\mathcal{T}_tB_Q \mm_i(x)\frac{dt}{t}\Bigg|^qw(x)dx\right)^{\frac{p}{q}}
\\\nonumber
&
\lesssim
 w(2^{j+1}Q_i)\,
|2^{j+1}Q_i|^{-\frac{p}{q_0}}
\left(\int_0^{\ell(Q)}\left(\int_{C_j(Q_i)}\left|\mathcal{T}_tB_Q \mm_i(x)\right|^{q_0}dx\right)^{\frac{1}{q_0}}\frac{dt}{t}\right)^{p}.
\\\nonumber
&
\lesssim
w(2^{j+1}Q_i)\,
|2^{j+1}Q_i|^{-\frac{p}{q_0}}
\left(\int_0^{\ell(Q)}\left(\int_{C_j(Q_i)}\left|t\nabla_y 
e^{-t^2L} \mm_i(x)\right|^{q_0}dx\right)^{\frac{1}{q_0}}\frac{dt}{t}\right.
\\\nonumber
&\qquad
+\sum_{k=1}^{M}C_{k,M}
\left.\int_0^{\ell(Q)}\ell(Q)^{-1}\left(\int_{C_j(Q_i)}\left|\sqrt{k}\ell(Q)\nabla_y 
e^{-k\ell(Q)^2L}e^{-t^2L} \mm_i(x)\right|^{q_0}dx\right)^{\frac{1}{q_0}}\,dt\right)^p
\\\nonumber
&
\lesssim 
\|\mm_i\|_{L^{p_0}(\R^n)}^pw(2^{j+1}Q_i) 
|2^{j+1}Q_i|^{-\frac{p}{q_0}}
\left(\int_0^{\ell(Q)}\left(e^{-c\frac{4^{i+j}\ell(Q)^2}{t^2}}
t^{-\left(\frac{n}{p_0}-\frac{n}{q_0}\right)}
+
\frac{te^{-c4^{i+j}}}{\ell(Q)}\ell(Q)^{-\left(\frac{n}{p_0}-\frac{n}{q_0}\right)}
\right)\,\frac{dt}{t}\right)^p 
\\\nonumber
&\,\lesssim 
e^{-c4^{i+j}}.
\end{align}
As for $I_{ij}^2$, we proceed as before  but also changing the variable $t$ into $\sqrt{M+1}t=:C_Mt$ and considering $B_{Q,t}:=\left(e^{-t^2L}-e^{-(t^2+\ell(Q)^2)L}\right)^M$. Next, we apply \cite[Lemma 2.1]{MartellPrisuelos} using that the families $\{\mathcal{T}_t\}_{t>0}$  and $\{B_{Q,t}\}_{t>0}$ satisfy respectively
$L^{p_0}(\R^n)-L^{q_0}(\R^n)$ and  $L^{p_0}(\R^n)-L^{p_0}(\R^n)$ off-diagonal estimates (see Section \ref{sec:od} and Proposition \ref{prop:lebesgueoff-dBQ}). Besides, note that $\ell(Q)\leq tC_M$. Then, by Lemma \ref{lemma:m-h}  and  changing the  variable $t$ into $\frac{2^{j+i}\ell(Q)}{t}$, we get
\begin{align*}
I_{ij}^2
&
\lesssim 
w(2^{j+1}Q_i)
|2^{j+1}Q_i|^{-\frac{p}{q_0}}
\left(\int_{\frac{\ell(Q)}{C_M}}^{\infty}\left(\int_{C_j(Q_i)}\left|
\mathcal{T}_tB_{Q,t} \mm_i(x)\right|^{q_0}dx\right)^{\frac{1}{q_0}}\frac{dt}{t}\right)^p
\\
&
\lesssim
w(2^{j+1}Q_i)
|2^{j+1}Q_i|^{-\frac{p}{q_0}}\|\mm_i\|_{L^{p_0}(\R^n)}^p
\left(\int_{\frac{\ell(Q)}{C_M}}^{\infty}\left(\frac{\ell(Q)^2}{t^2}\right)^Mt^{-n\left(\frac{1}{p_0}-\frac{1}{q_0}\right)}e^{-c\frac{4^{i+j}\ell(Q)^2}{t^2}}\frac{dt}{t}\right)^p
\\
&
\lesssim
2^{-ip\left(2M+\varepsilon\right)}2^{-jp\left(2M+\frac{n}{p_0}-\frac{r_0n}{p}\right)}
.
\end{align*}
Hence, by this, \eqref{splittingnorm}, \eqref{termI-i1-riesz}, \eqref{estimateI_ij}, \eqref{term_Iij-controlmol}, and \eqref{M-choice-riesz}, we have
\begin{align}\label{termBQ}
\sum_{i\geq 1}\|\nabla L^{-\frac{1}{2}}B_Q\mm_i\|_{L^p(w)}^p\le C.
\end{align}
Now, proceeding as in the estimate of $I_{i1}$,
since the Riesz transform and $\widetilde{A}_Q^k$ are bounded on $L^q(w)$ (see Theorem \ref{RieszboundednessAM} and \cite[Proposition 5.9]{AuscherMartell:II}), and by \eqref{propertiesmol}, we get
\begin{align}\label{estimateII_{i1}}
\|\chi_{4Q_i}\nabla L^{-\frac{1}{2}}\widetilde{A}_Q^k\widetilde{\mm}_i\|_{L^p(w)}\lesssim w(4Q_i)^{\frac{1}{p}-\frac{1}{q}}\|\widetilde{\mm}_i\|_{L^q(w)}
\lesssim 2^{-i\varepsilon}.
\end{align}
Next, for $j\geq 2$, we use \eqref{Rieszrepresentation}, and proceed as in the estimate of $I_{ij}$,
\begin{align*}
\|\chi_{C_j(Q_i)}\nabla L^{-\frac{1}{2}}\widetilde{A}_Q^k\widetilde{\mm}_i\|_{L^p(w)}^p
&
\lesssim w(2^{j+1}Q_i)|2^{j+1}Q_i|^{-\frac{p}{q_0}}
\left(\int_0^{\ell(Q)}\left(\int_{C_j(Q_i)}\left|
\mathcal{T}_t\widetilde{A}_Q^k \widetilde{\mm}_i(x)\right|^{q_0}dx\right)^{\frac{1}{q_0}}\frac{dt}{t}\right.
\\&\hspace*{4cm}
+
\left.\int_{\ell(Q)}^{\infty}\left(\int_{C_j(Q_i)}\left|
\mathcal{T}_t\widetilde{A}_Q^k \widetilde{\mm}_i(x)\right|^{q_0}dx\right)^{\frac{1}{q_0}}\frac{dt}{t}\right)^p
\\
&
=:
w(2^{j+1}Q_i)|2^{j+1}Q_i|^{-\frac{p}{q_0}}\left(II^1_{ij}+II^2_{ij}\right)^p.
\end{align*}
We first estimate $II^1_{ij}$, proceeding as in the estimate of $I_{ij}^1$ in \eqref{term_Iij-controlmol}. We apply \cite[Lemma 2.1]{MartellPrisuelos} 
with the families $\{t\nabla_y (t^2L)^M e^{-t^2L}\}_{t>0}$ and $\{e^{-t^2L}\}_{t>0}$ that satisfy respectively $L^{p_0}(\R^n)-L^{q_0}(\R^n)$ and $L^{p_0}(\R^n)-L^{p_0}(\R^n)$  off-diagonal estimates (see Section \ref{sec:od}). Then, by Lemma \ref{lemma:m-h}, we obtain
\begin{multline}\label{local-AQ}
II^1_{ij}
=\frac{c_{k,M}}{\ell(Q)}
\int_0^{\ell(Q)}\left(\int_{C_j(Q_i)}\left|k^{\frac{1}{2}}\ell(Q)
\nabla_y  \widetilde{A}_{Q}^ke^{-t^2L}\widetilde{\mm}_i(x)\right|^{q_0}dx\right)^{\frac{1}{q_0}}dt
\\
\lesssim 
e^{-c4^{j+i}}\ell(Q)^{-n\left(\frac{1}{p_0}-\frac{1}{q_0}\right)}\|\widetilde{\mm}_i\|_{L^{p_0}(\R^n)}
\lesssim w(Q_i)^{-\frac{1}{p}}
\ell(Q)^{\frac{n}{q_0}}e^{-c4^{i+j}}.
\end{multline}
Now consider $s_{Q,t}^k:=k\ell(Q)^2+t^2$. Then 
changing the variable $t$ into $\sqrt{2}t$, and proceeding as in the estimate of $I^2_{ij}$ but
applying this time \cite[Lemma 2.1]{MartellPrisuelos}  with the families  $\{\mathcal{T}_t\}_{t>0}$ and $\{(t^2L)^M e^{-t^2L}\}_{t>0}$ that satisfy respectively $L^{p_0}(\R^n)-L^{q_0}(\R^n)$ and $L^{p_0}(\R^n)-L^{p_0}(\R^n)$ off-diagonal estimates, we have that
\begin{align*}
II^2_{ij}&
\lesssim
\int_{\frac{\ell(Q)}{\sqrt{2}}}^{\infty}\left(\frac{\ell(Q)^2}{t^2}\right)^M
\left(\int_{C_j(Q_i)}\left|
\mathcal{T}_t\left(s_{Q,t}^kL\right)^Me^{-s_{Q,t}^kL} \widetilde{\mm}_i(x)\right|^{q_0}dx\right)^{\frac{1}{q_0}}\frac{dt}{t}
\\
& \lesssim\|\widetilde{\mm}_i\|_{L^{p_0}(\R^n)}
\int_{\frac{\ell(Q)}{\sqrt{2}}}^{\infty}\left(\frac{\ell(Q)^2}{t^2}\right)^Mt^{-n\left(\frac{1}{p_0}-\frac{1}{q_0}\right)}e^{-c\frac{4^{j+i}\ell(Q)^2}{t^2}}\frac{dt}{t}
\\
&
\lesssim 2^{-j\left(2M+n\left(\frac{1}{p_0}-\frac{1}{q_0}\right)\right)}\ell(Q)^{\frac{n}{q_0}}
\,w(Q_i)^{-\frac{1}{p}}2^{-i\left(2M-\frac{n}{q_0}+\varepsilon\right)}.
\end{align*}
Therefore, from this inequality and \eqref{local-AQ}, we have that 
$$
\|\chi_{C_j(Q_i)}\nabla L^{-\frac{1}{2}}\widetilde{A}_Q^k\widetilde{\mm}_i\|_{L^p(w)}^p\lesssim e^{-c4^{i+j}}+2^{-jp\left(2M+\frac{n}{p_0}-\frac{r_0n}{p}\right)}
2^{-ip\left(2M+\varepsilon\right)}.
$$
This, \eqref{estimateII_{i1}}, and \eqref{M-choice-riesz} give us 
$
\sum_{i\geq 1}\|\nabla L^{-\frac{1}{2}}\widetilde{A}_Q^k\widetilde{\mm}_i\|_{L^p(w)}^p\leq C$,
which together with \eqref{termBQ} and in view of \eqref{splitingmolecule}, allows us to conclude the proof.
\end{proof}
\section{Interpolation results}\label{sec:interpol}
The aim of this section is to  prove the following interpolation result for Hardy spaces.
\begin{theorem}\label{thm:interpolationhardy}
Given $w\in A_{\infty}$ and $q\in \mathcal{W}_w(p_-(L),p_+(L))$, suppose $1\leq p_0<p_1<\frac{p_+(L)^{2,*}}{s_w}$ and $\frac{1}{p}=\frac{1-\theta}{p_0}+\frac{\theta}{p_1}$, $0<\theta<1$. Then
$$
[H^{p_0}_{\Scal_{\hh},q}(w),H^{p_1}_{\Scal_{\hh},q}(w)]_{\theta}=H^{p}_{\Scal_{\hh},q}(w).
$$
\end{theorem}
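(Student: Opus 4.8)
The strategy is the one indicated in the introduction: show that each Hardy space $H^p_{\Scal_{\hh},q}(w)$ is a retract of a weighted tent space $T^p(w) = T^p_2(w)$, and then transfer the complex interpolation identity $[T^{p_0}(w),T^{p_1}(w)]_\theta = T^p(w)$ through the retraction. Recall that a retract means there are bounded linear maps $\iota\colon H^p_{\Scal_{\hh},q}(w)\to T^p(w)$ and $\pi\colon T^p(w)\to H^p_{\Scal_{\hh},q}(w)$, \emph{independent of $p$} on the relevant common dense subspace, with $\pi\circ\iota = \mathrm{Id}$. Here the natural choice is $\iota f(y,t):=(t^2L)e^{-t^2L}f(y)$ (so that $\|\iota f\|_{T^p(w)}=\|\Scal_{\hh}f\|_{L^p(w)}=\|f\|_{\mathbb{H}^p_{\Scal_{\hh},q}(w)}$ by definition), and $\pi$ a suitable "Calder\'on reproducing" operator of the form $\pi F = c_\psi\int_0^\infty \psi(t^2L)F(\cdot,t)\,\frac{dt}{t}$ with $\psi$ chosen so that $\pi\circ\iota=\mathrm{Id}$ on $L^q(w)$ via the $L^2$ functional calculus and a quadratic estimate. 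A standard abstract lemma on interpolation of retracts (e.g. the one in Bergh--L\"ofstr\"om, valid for Calder\'on's complex method) then yields $[H^{p_0}_{\Scal_{\hh},q}(w),H^{p_1}_{\Scal_{\hh},q}(w)]_\theta = H^p_{\Scal_{\hh},q}(w)$ once we know $[T^{p_0}(w),T^{p_1}(w)]_\theta=T^p(w)$.

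First I would state and prove (or quote, if it is the "interpolation result for these weighted tent spaces" promised in Section~\ref{subsec:wts}) the complex interpolation identity for the weighted tent spaces: for $1\le p_0<p_1<\infty$ and $\frac1p=\frac{1-\theta}{p_0}+\frac\theta{p_1}$, one has $[T^{p_0}_2(w),T^{p_1}_2(w)]_\theta = T^{p}_2(w)$ with equivalent norms. This should follow the classical Coifman--Meyer--Stein / Bernal scheme: the $\leq$ inclusion uses that $\mathcal{A}_2$ maps $T^p_q(w)$ boundedly into $L^p(w)$ and complex interpolation of $L^p(w)$ spaces; the $\supseteq$ inclusion is obtained by a duality/atomic argument or by Stein--Weiss-type analytic families, exploiting that $\mathcal{A}_2$ is essentially an isometry onto its image and the slice spaces behave well. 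The weight only enters through $w\in A_\infty$ being doubling, which suffices for the tent-space machinery of \cite{MartellPrisuelos}.

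Next I would set up the retraction carefully. Fix $q\in\mathcal{W}_w(p_-(L),p_+(L))$. Define $\iota\colon \mathbb{H}^p_{\Scal_{\hh},q}(w)\to T^p(w)$ by $(\iota f)(y,t) = (t^2L)e^{-t^2L}f(y)$; by definition of the Hardy-space (semi)norm this is an isometry, and it extends to the completion. For $\pi$, pick $\psi\in H^\infty$ of the holomorphic functional calculus with enough decay and vanishing at $0$ (concretely $\psi(z)=z^N e^{-z}$ with $N$ large, chosen as in \cite{MartellPrisuelos:II}) normalized so that $c_\psi\int_0^\infty \psi(t^2z)\,(t^2z)e^{-t^2z}\,\frac{dt}{t}=1$, and set $\pi F(x):=c_\psi\int_0^\infty \psi(t^2L)F(\cdot,t)(x)\,\frac{dt}{t}$. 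Two things must be checked: (1) $\pi$ is bounded $T^p(w)\to H^p_{\Scal_{\hh},q}(w)$ for each $p$ in the range $1\le p<\frac{p_+(L)^{2,*}}{s_w}$ — this is exactly where the upper endpoint $\frac{p_+(L)^{2,*}}{s_w}$ comes from, via the $L^p(w)$ off-diagonal/square-function bounds of \cite{MartellPrisuelos} applied to the composed square function $\Scal_{\hh}\circ\pi$ acting on tent-space data (one decomposes $F$ over the tent-space "atoms" or uses the vector-valued Calder\'on--Zygmund/extrapolation machinery, Theorem~\ref{thm:extrapol}); (2) $\pi\circ\iota=\mathrm{Id}$ on the dense class $\mathbb{H}^p_{\Scal_{\hh},q}(w)$, which follows from the $L^2$ quadratic estimate $c_\psi\int_0^\infty\psi(t^2L)(t^2L)e^{-t^2L}f\,\frac{dt}{t}=f$ for $f\in L^2\cap L^q(w)$ and a density/limiting argument. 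Since $\iota$ and $\pi$ are given by the same formulas for all $p$, the retract structure is compatible across the scale, and the abstract interpolation-of-retracts lemma applies.

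The main obstacle I expect is step (1): proving $\pi\colon T^{p_1}(w)\to H^{p_1}_{\Scal_{\hh},q}(w)$ is bounded up to (but not including) the endpoint $p_1=\frac{p_+(L)^{2,*}}{s_w}$. This requires controlling $\Scal_{\hh}(\pi F)$ in $L^{p_1}(w)$ by $\|\mathcal{A}_2 F\|_{L^{p_1}(w)}$, i.e.\ bounding a composition "square function of a singular averaging operator" on weighted $L^{p_1}$; the sharp range forces one to track the reverse-H\"older exponent $s_w$ and the improved Sobolev exponent $p_+(L)^{2,*}$ coming from two orders of smoothing in $(t^2L)e^{-t^2L}$, and to use the off-diagonal estimates on balls together with the weighted tent-space/Fefferman--Stein theory of \cite{MartellPrisuelos}. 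A secondary technical point is making the density and $L^q(w)$-convergence arguments rigorous so that $\iota$, $\pi$ genuinely descend to the completions and the identity $\pi\iota=\mathrm{Id}$ persists there; this is routine given Remark~\ref{remark:extenssion} and the boundedness of $\Scal_{\hh}$ on $L^q(w)$, but must be stated. Once these are in place, the theorem follows by combining the tent-space interpolation identity with the retract lemma.
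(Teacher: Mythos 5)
Your proposal is correct and follows essentially the same route as the paper: the paper likewise proves the complex interpolation identity for the weighted tent spaces $T^p(w)$ (by a corrected Coifman--Meyer--Stein construction) and then exhibits $H^p_{\Scal_{\hh},q}(w)$ as a retract of $T^p(w)$ via the embedding $f\mapsto (t^2L)^m e^{-t^2L}f$ and the Calder\'on-reproducing projection $F\mapsto \int_0^\infty (t^2L)^m e^{-t^2L}F(\cdot,t)\,\tfrac{dt}{t}$, whose boundedness $T^p(w)\to H^p_{\Scal_{\hh},q}(w)$ for $1\le p<\frac{p_+(L)^{2,*}}{s_w}$ is exactly the key estimate you flag as the main obstacle. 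The only cosmetic difference is where the high power of $t^2L$ is placed (you put it only in the projection, the paper uses the squared reproducing formula with $(t^2L)^m$ on both sides), which does not change the argument.
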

We denote by $[\,\,, \,]_{\theta}$ the complex interpolation method
described in \cite{Calderon} (see Section \ref{subsec:interpol}).
As we explained in the introduction we obtain this result from the corresponding one for the weighted tent spaces defined in \eqref{deftent} , (a real interpolation result involving weighted tent spaces was proved in \cite{Caoetal}):
\begin{theorem}\label{thm:interpolationtent}
Suppose $1\leq p_0<p_1<\infty$ and $\frac{1}{p}=\frac{1-\theta}{p_0}+\frac{\theta}{p_1}$, $0<\theta<1$. Then
$$
[T^{p_0}(w),T^{p_1}(w)]_{\theta}=T^p(w).
$$
\end{theorem}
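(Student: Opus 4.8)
The plan is to prove Theorem~\ref{thm:interpolationtent} by combining two classical ingredients, adapted to the weighted (but unweighted-operator) setting. First I would recall the standard duality/atomic machinery for tent spaces: for $1<p<\infty$ the dual of $T^p(w)$ is $T^{p'}(w^{1-p'})$ under the natural pairing $\langle f,g\rangle = \iint_{\R^{n+1}_+} f(y,t)\overline{g(y,t)}\,\frac{dy\,dt}{t}$, and for $p=1$ one has the atomic decomposition of $T^1(w)$ into $T^1(w)$-atoms supported in tents $\widehat{B}$ over balls $B$, with the substitute endpoint space at the top being a weighted Carleson-measure space. These are proved exactly as in Coifman--Meyer--Stein, with the weight inserted in the outer measure only; the key point (already emphasized in Section~\ref{subsec:wts}) is that the aperture of the cone is irrelevant by \eqref{tentcomparison}, so all the usual cone-enlargement arguments go through verbatim once one controls things by the unweighted area integral inside a weighted $L^p$.

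Next I would run the complex interpolation argument itself. The inclusion $[T^{p_0}(w),T^{p_1}(w)]_\theta \subseteq T^p(w)$ is the easy half: given $f = F(\theta)$ for an admissible analytic family $F$ in $\mathcal{F}(T^{p_0}(w),T^{p_1}(w))$, one estimates $\|f\|_{T^p(w)} = \|\mathcal{A}f\|_{L^p(w)}$ by a Stein-type interpolation / three-lines argument applied pointwise-in-$x$ to $\mathcal{A}(F(\cdot))(x)$, using that $\mathcal{A}$ is sublinear and $\|\cdot\|_{T^{p_i}(w)} = \|\mathcal{A}(\cdot)\|_{L^{p_i}(w)}$; here one really uses the Calder\'on description from Section~\ref{subsec:interpol} and the fact that $L^p(w) = [L^{p_0}(w),L^{p_1}(w)]_\theta$ with $\frac1p = \frac{1-\theta}{p_0}+\frac{\theta}{p_1}$. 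For the reverse inclusion $T^p(w)\subseteq [T^{p_0}(w),T^{p_1}(w)]_\theta$, I would take $f\in T^p(w)$, normalize $\|f\|_{T^p(w)}=1$, and build an explicit analytic family. When $p>1$ the quickest route is via duality: test $f$ against $T^{p'}(w^{1-p'})$, invoke \eqref{setweightequivalence}-type stability of the reverse-H\"older/$A_\infty$ conditions under the $w\mapsto w^{1-p'}$ map (here only $A_\infty$ is needed, which is automatic), and produce the analytic family by the standard ``raise the exponent of $|f|$ to an imaginary power'' device $F(z) = |f|^{\,p(\frac{1-z}{p_0}+\frac{z}{p_1})}\,\mathrm{sgn}(f)$ times a normalizing factor, checking it lands in $\mathcal{F}$. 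When $p_0=1$ (the endpoint included in the statement) this device must be supplemented by the atomic decomposition: write $f=\sum_k \lambda_k a_k$ with $T^1(w)$-atoms $a_k$ and $\sum_k|\lambda_k|\lesssim 1$, and interpolate atom-by-atom, using that a single $T^1(w)$-atom supported over a ball $B$ also lies in $T^{p_1}(w)$ with $\|a_k\|_{T^{p_1}(w)}\lesssim w(B)^{\frac1{p_1}-1}$ (a H\"older estimate on the tent), so that $g(z):=\sum_k \lambda_k^{\,p(\frac{1-z}{1}+\frac{z}{p_1})} a_k$ is an admissible family with $g(\theta)=f$ up to harmless modifications.

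After Theorem~\ref{thm:interpolationtent} is in hand, Theorem~\ref{thm:interpolationhardy} follows by the retract principle: I would exhibit bounded linear maps $\iota\colon H^{p}_{\Scal_\hh,q}(w)\to T^{p}(w)$ and $\pi\colon T^{p}(w)\to H^{p}_{\Scal_\hh,q}(w)$ with $\pi\circ\iota = \mathrm{id}$, uniformly in the relevant range of $p$, and with the \emph{same} maps working simultaneously for $p_0,p_1,p$. The embedding $\iota$ is $f\mapsto (t^2L e^{-t^2L}f)(y)$ (the ``conical'' lift), whose $T^p(w)$-norm is exactly $\|\Scal_\hh f\|_{L^p(w)}$ by definition; boundedness of $\pi$ — some averaged reconstruction operator $\pi(F) = c\iint_{\R^{n+1}_+} (t^2 L e^{-t^2L})^{\ast\text{-type kernel}}\,F(y,t)\,\frac{dy\,dt}{t}$, or rather $\pi(F)(x)=c\int_0^\infty t^2 L e^{-t^2 L}(F(\cdot,t))(x)\,\frac{dt}{t}$ — from $T^p(w)$ to $H^p_{\Scal_\hh,q}(w)$ is where the arithmetic constraint $p_1<\frac{p_+(L)^{2,*}}{s_w}$ enters: one needs the composition of the square function with $\pi$ to be $L^p(w)$-bounded, which requires $\pi$ to map into $L^q(w)$ and the relevant families $\{(t^2L)^m e^{-t^2 L}\}$ to satisfy the off-diagonal and weighted boundedness from Section~\ref{sec:od}; the Calder\'on reproducing formula $f = c\int_0^\infty (t^2 L e^{-t^2 L})^2 f\,\frac{dt}{t}$ gives $\pi\circ\iota=\mathrm{id}$ on a dense class. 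Then functoriality of the Calder\'on $[\cdot,\cdot]_\theta$ method under retracts yields $[H^{p_0}_{\Scal_\hh,q}(w),H^{p_1}_{\Scal_\hh,q}(w)]_\theta = H^p_{\Scal_\hh,q}(w)$.

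The main obstacle I anticipate is \emph{not} the abstract interpolation but the verification that the reconstruction map $\pi$ is bounded $T^{p_i}(w)\to H^{p_i}_{\Scal_\hh,q}(w)$ uniformly on the whole range $1\le p_0<p_1<p_+(L)^{2,*}/s_w$, including the quasi-Banach endpoint $p_0=1$ and the delicate upper endpoint where $s_w$ and the improved exponent $p_+(L)^{2,*}$ (see \eqref{qN*}) must be tracked carefully; this is precisely the place where the weighted off-diagonal estimates on balls (Proposition~\ref{prop:offdiagonalweightedballs}) and the change-of-angle lemma (Proposition~\ref{prop:q-extra}) get used, and where one must be careful that $\pi F$ genuinely lands in $L^q(w)$ so that $H^p_{\Scal_\hh,q}(w)$ is the right target. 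A secondary technical nuisance is that $T^p(w)$ for $p<1$ is only quasi-Banach, so strictly speaking the Calder\'on complex method as stated in Section~\ref{subsec:interpol} applies only for $p_0\ge 1$; this is why the theorem is stated with $1\le p_0$, and I would simply respect that restriction rather than attempt a quasi-Banach extension.
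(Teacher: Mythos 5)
The easy inclusion $[T^{p_0}(w),T^{p_1}(w)]_{\theta}\subset T^p(w)$ is handled in your proposal essentially as in the paper (reduce to weighted $L^p$ interpolation via the sublinearity of $\mathcal{A}$), so that half is fine. The problem is the hard inclusion $T^p(w)\subset[T^{p_0}(w),T^{p_1}(w)]_{\theta}$. The device you propose for $p>1$, namely $F(z)=|f|^{\,p(\frac{1-z}{p_0}+\frac{z}{p_1})}\mathrm{sgn}(f)$, does not work in tent spaces: since $\|g\|_{T^{p_0}(w)}=\|\mathcal{A}g\|_{L^{p_0}(w)}$ and $\mathcal{A}$ is an $L^2$-average over cones, one has $\mathcal{A}\bigl(|f|^{p/p_0}\bigr)=\bigl(\mathcal{A}_{2p/p_0}f\bigr)^{p/p_0}$, so $\bigl\||f|^{p/p_0}\bigr\|_{T^{p_0}(w)}$ is the $T^p_{2p/p_0}(w)$ quasi-norm of $f$, not the $T^p(w)=T^p_2(w)$ one; raising $f$ to a power changes the inner exponent $q$ of the tent space and gives no control by $\|f\|_{T^p(w)}$. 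Your fallback for $p_0=1$ is also set up for the wrong object: the atomic decomposition with $\sum_k|\lambda_k|\lesssim 1$ applies to $f\in T^1(w)$, whereas here one must decompose an arbitrary $f\in T^p(w)$ with $1<p<p_1$.

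What the paper does instead (following Coifman--Meyer--Stein, Lemma 5 of Section 7) is a level-set discretization that plays the role of the power trick: set $O_k=\{\mathcal{A}^{\alpha}f>2^k\}$, form the truncated tents $\widehat{O^*_k}$, write $f=\sum_k f_k$ with $f_k=f\chi_{\widehat{O^*_k}\setminus\widehat{O^*_{k+1}}}$, and take $F(z)=e^{z^2-\theta^2}\sum_k 2^{k(p\alpha(z)-1)}f_k$, so that only the scalar coefficients $2^k$ are raised to complex powers. The whole proof then reduces to the estimate $\bigl\|\sum_k 2^{k(p/q-1)}|f_k|\bigr\|_{T^q(w)}\le C$ for $q=p_0,p_1$, proved by pairing $\mathcal{A}f_k$ against $\psi\in L^{q'}(w)$. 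Moreover, the paper points out (Remark \ref{remark:tent}) that the original Coifman--Meyer--Stein argument contains a genuine gap here — $\supp\mathcal{A}(f_k)$ need \emph{not} avoid $O_{k+1}$ — and repairs it by a Whitney decomposition of $O^*_{k+1}$ (showing the small-$t$ part of $\mathcal{A}f_k$ vanishes on each Whitney cube and the large-$t$ part is $\le 2^{k+1}$ by comparison with a point outside $O^*_{k+1}$), followed by the weighted maximal-function bound $\int_{O_k^*}|\psi|\,w\,dx\lesssim\int_{O_k}\mathcal{M}^w\psi\,w\,dx$. Your proposal neither contains the level-set construction nor addresses this known defect, so as written it does not yield the hard inclusion.
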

\begin{remark}\label{remark:tent}
In the proof of the inclusion $
T^p\subset[T^{p_0},T^{p_1}]_{\theta}
$, in \cite[Lemma 5, Section 7]{CoifmanMeyerStein}
(following the notation there) the authors
 claim that the support of the function $\mathcal{A}(f_k)$ is contained in $O^*_{k}\setminus O_{k+1}$. It is easy to see that $\supp \mathcal{A}(f_k)\subset O^*_{k}$, but it is not clear that the support of $\mathcal{A}(f_k)$ is away from $O_{k+1}$. In fact, we can construct 1-dimensional
examples which show that this is false in general. This was noticed by A. Amenta in \cite[Remark 3.20]{Amenta}.

We refer to \cite{Bernal} and \cite{HarboureTorreaViviani} for a different proof of that inclusion and hence, of
 \cite[Theorem 4, Section 7]{CoifmanMeyerStein}.
\end{remark}
Here, we give a prove of Theorem \ref{thm:interpolationtent} based on  the proof of
\cite[Lemma 4 , Section 7]{CoifmanMeyerStein} to show that
$
[T^{p_0}(w),T^{p_1}(w)]_{\theta}\subset T^p(w)
$,
and on the proof of \cite[Lemma 5 , Section 7]{CoifmanMeyerStein} to show
that
$
 T^p(w)\subset[T^{p_0}(w),T^{p_1}(w)]_{\theta}.
$
However, in view of Remark \ref{remark:tent}, in order to show this last inclusion, we need to complete and slightly modify the proof given in \cite[Lemma 5, Section 7]{CoifmanMeyerStein}.
\subsection{Tent spaces interpolation: proof of Theorem \ref{thm:interpolationtent}}
As we said a few lines above, following the proof of \cite[Lemma 4 , Section 7]{CoifmanMeyerStein}, but using interpolation between $L^p(w)$ spaces (note that the proof in \cite[Theorem 5.1.1]{BerghLofstrom} also works in the weighted case), instead of the usual interpolation in $L^p(\R^n)$ spaces,  we get 
$$
[T^{p_0}(w),T^{p_1}(w)]_{\theta}\subset T^p(w),\quad 1\leq p_0<p<p_1<\infty,\quad \frac{1}{p}=\frac{1-\theta}{p_0}+\frac{\theta}{p_1}, \quad 0<\theta<1.
$$ 
As for the converse inclusion, 
fix 
$1\leq p_0<p<p_1<\infty$ and $0<\theta<1$ such that $\frac{1}{p}=\frac{1-\theta}{p_0}+\frac{\theta}{p_1}$, and
take a function $f\in T^p(w)$ such that $\|f\|_{T^p(w)}\leq 1$. Then, we need to find a function $F$, $z\rightarrow F(z)$, from the closed strip $0\leq \textrm{Re}(z)\leq 1$ to the Banach space $T^{p_0}(w)+T^{p_1}(w)$ (see Section \ref{subsec:interpol} for definitions).
 The function $F$ must be continuous and bounded on the full strip, with respect to the norm of $T^{p_0}(w)+T^{p_1}(w)$ , and analytic on the open strip, and such that $F(iy)\in T^{p_0}(w)$ is continuous in $T^{p_0}(w)$ and tends to zero as $|y|\rightarrow \infty$, and $F(1+iy)\in T^{p_1}(w)$ is continuous in $T^{p_1}(w)$ and tends to zero as $|y|\rightarrow \infty$. 
Besides, $F$ must satisfy that $F(\theta)=f$ in $T^p(w)$, and 
$$
\|F(iy)\|_{L^{p_0}(w)}+\|F(1+iy)\|_{L^{p_1}(w)}\leq C,
$$
uniformly on $f$.
To this end fix $\alpha>1$, to be determined during the proof,
and, for each $k\in \Z$, consider the sets
$
O_k:=\{x\in \R^n: \mathcal{A}^{\alpha}(f)(x)>2^k\},$ $ E_k:=\R^n\setminus O_k,
$
and, for some fixed $\gamma$, $0<\gamma<1$, the set 
$
E_k^*:=\left\{x\in \R^n:\forall\, r>0,\, \frac{|E_k\cap B(x,r)|}{|B(x,r)|}\geq \gamma\right\}
$
and its complement
$
O_k^*:=\R^n\setminus E_k^*=\{x\in \R^n: \mathcal{M}(\chi_{O_k})(x)>1-\gamma\},
$
where $\mathcal{M}$ is the centred Hardy-Littlewood maximal operator over balls. 
Besides, for every $k\in \Z$, we can consider the ``tent'' over  $O_k^*$, define by $\widehat{O^*_k}:=\{(y,t)\in \R^{n+1}_+:d(y,\R^n\setminus O^*_k)\geq t\}$. Note that  $\mathcal{R}(E^*_k):=\bigcup_{x\in E^*_k}\Gamma(x)=\R^{n+1}_+\setminus\widehat{O^*_k}$.
Additionally,
note that $\supp f\subset \left(\bigcup_{k\in \Z}\widehat{O^*_{k}}\setminus\widehat{O^*_{k+1}}\right)\bigcup \mathbb{F}$, where $\mathbb{F}\subset \R^{n+1}_+$ and $\int_{0}^{\infty}\int_{\R^n}\chi_{\mathbb{F}}(y,t)\frac{dy\,dt}{t^{n+1}}=0$.  This follows proceeding as in the proof of \cite[Proposition 5.1, part (a)]{MartellPrisuelos:II}.
Then, we can write $f=\sum_{k\in \Z}f\chi_{\widehat{O^*_k}\setminus \widehat{O^*_{k+1}}}=:\sum_{k\in \Z}f_k$  in $T^p(w)$.

Now,  consider the function
$
F(z):=e^{z^2-\theta^2}\sum_{k\in \Z}2^{k\left(\alpha(z)p-1\right)}f_k,
$
where $\alpha(z):=\frac{1-z}{p_0}+\frac{z}{p_1}$. 
We shall see that $F$ satisfies all the conditions that we mentioned above. First note that $F(\theta)=f$ in $T^p(w)$. Moreover,
for all $z\in \mathbb{C}$ such that $0<\textrm{Re}z<1$, applying Young's inequality, we have that
\begin{align}\label{openstrip}
|F(z)|&=
\left|e^{z^2-\theta^2}\sum_{k\in \Z}2^{k\left(\alpha(z)p-1\right)}f_k\right|
\leq e \sum_{k\in \Z}
2^{k\left(p\frac{1-\textrm{Re}(z)}{p_0}+p\frac{\textrm{Re}(z)}{p_1}-1\right)}|f_k|
\\ \nonumber
&
=
e\sum_{k\in \Z}\left(2^{k\left(\frac{p}{p_0}-1\right)}|f_k|\right)^{1-\textrm{Re}(z)}
\left(2^{k\left(\frac{p}{p_1}-1\right)}|f_k|\right)^{\textrm{Re}(z)}
\\ \nonumber
&
\leq
e(1-\textrm{Re}(z))\sum_{k\in \Z}2^{k\left(\frac{p}{p_0}-1\right)}|f_k|+
e\textrm{Re}(z)\sum_{k\in \Z}2^{k\left(\frac{p}{p_1}-1\right)}|f_k|
\\ \nonumber
&
\leq
e\sum_{k\in \Z}2^{k\left(\frac{p}{p_0}-1\right)}|f_k|+
e\sum_{k\in \Z}2^{k\left(\frac{p}{p_1}-1\right)}|f_k|.
\end{align}
Besides, for all $-\infty<y<\infty$, 
\begin{align}\label{tendtozero}
|F(iy)|\leq e^{-y^2}\sum_{k\in \Z}2^{k\left(\frac{p}{p_0}-1\right)}|f_k|\quad\textrm{and}\quad
|F(1+iy)|\leq e^{1-y^2}\sum_{k\in \Z}2^{k\left(\frac{p}{p_1}-1\right)}|f_k|.
\end{align}
Then, in order to see that $F$ satisfies the desired conditions, it suffices to show that 
\begin{align}\label{desiredconditions}
\left\|\sum_{k\in \Z}2^{k\left(\frac{p}{p_0}-1\right)}|f_k|\right\|_{T^{p_0}(w)}+\left\|\sum_{k\in \Z}2^{k\left(\frac{p}{p_1}-1\right)}|f_k|\right\|_{T^{p_1}(w)}\leq C.
\end{align}
Indeed, combining this with \eqref{tendtozero}, we obtain that  
$F(iy)\in T^{p_0}(w)$ is continuous in $T^{p_0}(w)$ and tends to zero as $|y|\rightarrow \infty$, and $F(1+iy)\in T^{p_1}(w)$ is continuous in $T^{p_1}(w)$ and tends to zero as $|y|\rightarrow \infty$.
On the other hand, by \eqref{openstrip}, \eqref{tendtozero}, and \eqref{desiredconditions}, we easily obtain that  $F$ is a continuous and bounded function with respect to the norm of $T^{p_0}(w)+T^{p_1}(w)$ on the full strip. Finally, to see that $F$ is analytic on the open strip we apply Morera's theorem for Banach-space valued functions. We have that $F(z)$ is continuous, so it just remains to show that for all triangle $T$ in the open set  $\widehat{C}:=\{z\in\mathbb{C}: 0<\textrm{Re}z<1\}$, we have that
$
\int_TF(z)=0.
$
To see this, consider for each $k\in \Z$ $g_k(z):=e^{z^2-\theta^2}2^{k(p\alpha(z)-1)}f_k$, we have that 
these functions are analytic on $\widehat{C}$. Then, for all $T$ triangle in $\widehat{C}$ and each $k\in \Z$, by Cauchy's theorem,
$
\int_Tg_k(z)=0.
$
Hence, it suffices to justify that we can take the sum in $k\in \Z$ out of the integral.  This follows by the dominated convergence theorem for Bochner integrals. Note that
\begin{align*}
\int_TF(z)=\sum_{j=1}^3\int_{0}^{t_j}F(\gamma_j(t))\gamma_j'(t)dt,
\end{align*}
where $\gamma_j$ is a parametrization of each side of the triangle $T$, and that, by \eqref{openstrip} and \eqref{desiredconditions},
for $j=1,2,3$,
\begin{align*}
\int_{0}^{t_j}\|F(\gamma_j(t))\gamma_j'(t)\|_{T^{p_0}(w)+T^{p_1}(w)}dt\lesssim \int_0^{t_j}\|F(\gamma_j(t))\|_{T^{p_0}(w)+T^{p_1}(w)}dt<\infty.
\end{align*}
Consequently the function $t\rightarrow F(\gamma_j(t))\gamma_j'(t)$, for all $t\in [0,t_j]$ is Bochner integrable.
Moreover, for all $M>0$ and $j=1,2,3$, again by \eqref{openstrip} and \eqref{desiredconditions},
\begin{align*}
\left\|\sum_{|k|\leq M}e^{\gamma_j(t)^2-\theta^2}2^{k(\alpha(\gamma_j(t))p-1)}
f_k\gamma_j'(t)\right\|_{T^{p_0}(w)+T^{p_1}(w)}\lesssim C,
\end{align*}
which implies that we can apply the dominated convergence theorem 
for Bochner integrals and then conclude that $F$ is analytic.
Besides \eqref{tendtozero}, \eqref{desiredconditions}, and the fact that $F(\theta)=f$ in $T^p(w)$ with $\|f\|_{T^p(w)}\leq 1$, imply that 
$f\in [T^{p_0}(w),T^{p_1}(w)]_{\theta}$ and that 
$
\|f\|_{[T^{p_0}(w),T^{p_1}(w)]_{\theta}}\lesssim\|f\|_{T^p(w)}.
$
 Thus, let us prove \eqref{desiredconditions}. Let $q$ be $p_0$ or $p_1$, then, since $\supp(\mathcal{A}f_k)\subset O_k^*$ and $O_{k+1}\subset O_{k+1}^*\subset O_{k}^*$, we have 
\begin{align*}
&\left\|\sum_{k\in \Z}2^{k\left(\frac{p}{q}-1\right)}|f_k|\right\|_{T^{q}(w)}
\leq
\sup_{\|\psi\|_{L^{q'}(w)}\leq 1}\sum_{k\in \Z}2^{k\left(\frac{p}{q}-1\right)}\int_{{O^*_k}}|\mathcal{A}f_{k}(x)||\psi(x)|w(x)dx
\\&\qquad
\leq
\sup_{\|\psi\|_{L^{q'}(w)}\leq 1}\sum_{k\in \Z}2^{k\left(\frac{p}{q}-1\right)}\int_{{O^*_k}\setminus O_{k+1}}|\mathcal{A}f_{k}(x)||\psi(x)|w(x)dx
\\&\qquad\qquad
+
\sup_{\|\psi\|_{L^{q'}(w)}\leq 1}\sum_{k\in \Z}2^{k\left(\frac{p}{q}-1\right)}\int_{O_{k+1}}|\mathcal{A}f_{k}(x)||\psi(x)|w(x)dx
=:\sup_{\|\psi\|_{L^{q'}(w)}\leq 1}I+\sup_{\|\psi\|_{L^{q'}(w)}\leq 1}II.
\end{align*}
Using that for $\alpha>1$, $\mathcal{A}f_k\leq \mathcal{A}^{\alpha}f_k$, and that for $x\in {O^*_k}\setminus O_{k+1}$, $\mathcal{A}^{\alpha}f_k(x)\leq 2^{k+1}$, we obtain
$
I\lesssim 2^{k\frac{p}{q}}\int_{O_k^{*}}|\psi(x)|w(x)dx.
$
Next we see that $II$ is controlled by the same expression in the right-hand side of the previous inequality.
For every $k\in \Z$, since $\mathcal{M}:L^{r}(w)\rightarrow L^{r,\infty}(w)$, for all $r>r_w$, we have that $w(O_k^*)\lesssim w(O_k)<\infty$. Then, we can take a Whitney decomposition of $O^*_{k+1}$: $O^*_{k+1}=\bigcup_{j\in \N}Q_{k+1}^j$, which satisfies
$$
\sqrt{n}\ell(Q_{k+1}^j)\leq d(Q_{k+1}^j,\R^n\setminus O_{k+1}^*)\leq 4\sqrt{n}\ell(Q_{k+1}^j),
$$
and the $Q_{k+1}^j$ have disjoint interiors. Now, for every $x\in Q_{k+1}^j$, we split $\mathcal{A}f_k(x)$ as follows
$$
\mathcal{A}f_k(x)\!\leq\!\left(\int_{0}^{\ell(Q_{k+1}^j)/2}\int_{B(x,t)}|f_k(y,t)|^2\frac{dy\,dt}{t^{n+1}}\right)^{\frac{1}{2}}+
\left(\int_{\ell(Q_{k+1}^j)/2}^{\infty}\int_{B(x,t)}|f_k(y,t)|^2\frac{dy\,dt}{t^{n+1}}\right)^{\frac{1}{2}}\!=:\!G_1(x)+G_2(x).
$$
On one hand, note that 
$$
\mathcal{E}\bigcap\left( \widehat{O^*_k}\setminus\widehat{O^*_{k+1}}\right):=\left\{(y,t)\in \Gamma(x): x\in Q_{k+1}^j, 0<t<\ell(Q_{k+1}^j)/2\right\}\bigcap\left( \widehat{O^*_k}\setminus\widehat{O^*_{k+1}}\right)=\emptyset.
$$
Indeed for $(y,t)\in\mathcal{E}$, since $d(y,Q_{k+1}^j)\leq t$, we have
$$
d(y,\R^n\setminus O^*_{k+1})\geq d(Q_{k+1}^j,\R^n\setminus O^*_{k+1})-d(y,Q_{k+1}^j)\geq 
\sqrt{n}\ell(Q_{k+1}^j)-t> (2\sqrt{n}-1)t\geq t,
$$
which implies that $(y,t)\in \widehat{O^*_{k+1}}$. 
Hence $G_1(x)=0$, for all $x\in Q_{k+1}^j$.

On the other hand, take $x_j\in \R^{n}\setminus O^*_{k+1}$ such that
$d(x_j,Q_{k+1}^j)\leq 4\sqrt{n}\ell(Q_{k+1}^j)$, and note that if $\ell(Q_{k+1}^j)/2\leq t<\infty$ and $x\in Q_{k+1}^j$, then $B(x,t)\subset B(x_j,\alpha t)$, for $\alpha\geq 11\sqrt{n}$. Indeed, for $x_0\in B(x,t)$, we have
$$
|x_0-x_j|\leq|x_0-x|+|x-x_j|< t+\sqrt{n}\ell(Q_{k+1}^j)+4\sqrt{n}\ell(Q_{k+1}^j)\leq t(1+2\sqrt{n}+8\sqrt{n})\leq 11\sqrt{n}t.
$$
Hence,
$
G_2(x)\leq \mathcal{A}^{\alpha}f_k(x_j)\leq  2^{k+1}, \,\forall\,x\in Q_{k+1}^j.
$
 Therefore, since $O_{k+1}\subset O_{k+1}^*\subset O_k^*$
 \begin{align*}
{II}
& \leq
2^{k\left(\frac{p}{q}-1\right)}\sum_{j\in \N}\int_{Q_{k+1}^j}|\mathcal{A}f_k(x)||\psi(x)|w(x)dx
 \\
 &
 \leq 
2^{k\left(\frac{p}{q}-1\right)}\sum_{j\in \N}\left(\int_{Q_{k+1}^j}|G_1(x)||\psi(x)|w(x)dx+
 \int_{Q_{k+1}^j}|G_2(x)||\psi(x)|w(x)dx\right)
  \\
 &
 \lesssim
2^{k\frac{p}{q}}\sum_{j\in \N}
 \int_{Q_{k+1}^j}|\psi(x)|w(x)dx
 = 2^{k\frac{p}{q}}
 \int_{O_{k+1}^*}|\psi(x)|w(x)dx
  \leq 2^{k\frac{p}{q}}
 \int_{O_{k}^*}|\psi(x)|w(x)dx.
 \end{align*}
Now consider respectively $\mathcal{M}_d$ and $\mathcal{M}_c$ the dyadic maximal function  and   the centred maximal function over cubes.
For some dimensional constant $c_n$, we have that  $\mathcal{M}(\chi_{O_k})(x)\leq c_n \mathcal{M}_c(\chi_{O_k})(x)$. Next, for each $k\in \Z$, we define the set 
$\widetilde{O}_{1-\gamma,k}:=\left\{x\in \R^n:\mathcal{M}_d(\chi_{O_k})(x)>\frac{1-\gamma}{4^nc_n}\right\}$; and we take a Calder\'on-Zygmud decomposition of this set at height $\frac{1-\gamma}{4^nc_n}$:
$\widetilde{O}_{1-\gamma,k}=\bigcup_{l\in \N}\widetilde{Q}_k^l$, where $\{\widetilde{Q}_k^l\}_{l\in \N}$ is a collection of disjoints dyadic cubes such that
 $$
\dashint_{\widetilde{Q}_k^l}\chi_{O_k}(x)dx\approx 1-\gamma.
$$
Then, since
$$
O_{k}^*\subset \left\{x\in \R^n:\mathcal{M}_c(\chi_{O_k})(x)>\frac{1-\gamma}{c_n}\right\}\subset \bigcup_{l\in \N}2\widetilde{Q}_k^l,
$$
(see \cite[proof of Lemma 2.12]{Duo}), for $r>r_w$, we have that
\begin{align}\label{OK-II-estimate}
\int_{O_k^{*}}|\psi(x)|w(x)dx&\leq\sum_{l\in \N}\frac{1}{(1-\gamma)^{r}}\int_{2\widetilde{Q}_k^l}(1-\gamma)^{r}|\psi(x)|w(x)dx
\\\nonumber&
\approx
\frac{1}{(1-\gamma)^{r}}\sum_{l\in \N}\int_{2\widetilde{Q}_k^l}\left(\dashint_{\widetilde{Q}_k^l}\chi_{O_k}(y)dy\right)^{r}|\psi(x)|w(x)dx
\\\nonumber&
\leq
\frac{1}{(1-\gamma)^{r}}\sum_{l\in \N}\int_{2\widetilde{Q}_k^l}\left(\dashint_{\widetilde{Q}_k^l}\chi_{O_k}(y)w(y)dy\right)\left(\dashint_{\widetilde{Q}_k^l}w^{1-r'}(y)dy\right)^{r-1}|\psi(x)|w(x)dx
\\\nonumber&
\lesssim
\frac{1}{(1-\gamma)^{r}}\sum_{l\in \N}\int_{2\widetilde{Q}_k^l}\left(\int_{\widetilde{Q}_k^l}\chi_{O_k}(y)w(y)dy\right)\,w(\widetilde{Q}_k^l)^{-1}|\psi(x)|w(x)dx
\\\nonumber &
\lesssim
\frac{1}{(1-\gamma)^{r}}\sum_{l\in \N}\int_{\widetilde{Q}_k^l}\chi_{O_k}(y)\mathcal{M}^w(\psi)(y)w(y)dy
\\\nonumber&
\leq\frac{1}{(1-\gamma)^{r}}\int_{O_k}\mathcal{M}^w(\psi)(y)w(y)dy,
\end{align}
where 
\begin{align}\label{weightedmaximal}
\mathcal{M}^wf(x):=\sup_{Q\ni x}\frac{1}{w(Q)}\int_{Q}|f(y)|w(y)dy.
\end{align}
Moreover, by \eqref{doublingcondition} and since $1<q'\leq \infty$, $\mathcal{M}^w:L^{q'}(w)\rightarrow L^{q'}(w)$. Therefore, by the estimates obtained for $I$ and $II$, by \eqref{OK-II-estimate}, and by \eqref{tentcomparison}, we conclude   
\begin{align*}
\left\|\sum_{k\in \Z}2^{k\left(\frac{p}{q}-1\right)}|f_k|\right\|_{T^{q}(w)}
&\lesssim
\sup_{\|\psi\|_{L^{q'}(w)}\leq 1}
\sum_{k\in \Z}2^{k\frac{p}{q}}\int_{O_k}\mathcal{M}^w(\psi)(x)w(x)dx
\\
&\lesssim
\sup_{\|\psi\|_{L^{q'}(w)}\leq 1}
\sum_{k\in \Z}\int_{2^{k-1}}^{2^{k}}\lambda^{\frac{p}{q}}\int_{\{x\in \R^n:\mathcal{A}^{\alpha}f(x)>\lambda\}}\mathcal{M}^w(\psi)(x)w(x)dx\frac{d\lambda}{\lambda}
\\
&=
\sup_{\|\psi\|_{L^{q'}(w)}\leq 1}
\int_{0}^{\infty}\lambda^{\frac{p}{q}}\int_{\{x\in \R^n:\mathcal{A}^{\alpha}f(x)>\lambda\}}\mathcal{M}^w(\psi)(x)w(x)dx\frac{d\lambda}{\lambda}
\\
&\approx
\sup_{\|\psi\|_{L^{q'}(w)}\leq 1}
\int_{\R^n}|\mathcal{A}^{\alpha}f(x)|^{\frac{p}{q}}\mathcal{M}^w(\psi)(x)w(x)dx
\\
&\lesssim
\sup_{\|\psi\|_{L^{q'}(w)}\leq 1}
\|\mathcal{A}^{\alpha}f\|_{L^{p}(w)}^{\frac{p}{q}}
\|\psi\|_{L^{q'}(w)}
\lesssim \|\mathcal{A}f\|_{L^{p}(w)}^{\frac{p}{q}}=\|f\|_{T^{p}(w)}^{\frac{p}{q}}\leq 1. 
\end{align*}
\qed
\subsection{Hardy spaces interpolation: proof of Theorem \ref{thm:interpolationhardy}}
 
As we explained above, in view of Theorem \ref{thm:interpolationtent}, it is enough to show that
 the Hardy spaces are retracts of the weighted tent spaces, i.e. that there exists an operator from any tent space to the corresponding Hardy space having a right inverse.

Fix $q\in \mathcal{W}_w(p_-(L),p_+(L))$. Note that for a function $f\in L^q(w)$ and $m\in \N$, we have the following Calder\'on reproducing formula of $f$, 
\begin{align}\label{calderonIII}
f=C_m\int_{0}^{\infty}\left((t^2L)^{m}e^{-t^2L}\right)^2f\frac{dt}{t} \quad \textrm{in}\quad L^q(w),
\end{align}
where $C_m$ is a positive constant and the equality is in $L^q(w)$.
\begin{remark}\label{remark:calderonreproducing}
A priori, by $L^2(\R^n)$ functional calculus we have the above equality for functions in $L^2(\R^n)$. But, as in Remark \ref{remark:extenssion}, for $q\in \mathcal{W}_w(p_-(L),p_+(L))$, we consider the infinitesimal generator $L_{q,w}$  of the $e^{-tL}$ on $L^q(w)$ (see \cite[Remark 3.5]{AuscherMartell:III}). Hence, by abuse of notation, we have the above Calder\'on reproducing formula for functions in $L^q(w)$, understanding that in $L^q(w)\setminus L^2(\R^n)$, $L$ denotes $L_{q,w}$.
\end{remark}
Besides, if we define for each $(y,t)\in \R^{n+1}_+$,  $\mathcal{F}(y,t):=(t^2L)^me^{-t^2L}f(y)$ and the operator  $\mathcal{Q}_{L,m}f:=\mathcal{F}$ acting over functions in $L^q(w)$, 
by \cite[Theorem 1.12, part (b)]{MartellPrisuelos},
$$
\|\mathcal{Q}_{L,m}f\|_{T^p(w)}=\|\Scal_{m,\hh}f\|_{L^p(w)}\leq \|\Scal_{\hh}f\|_{L^p(w)},
$$
 then $\mathcal{Q}_{L,m}$ is bounded from $\mathbb{H}^p_{\Scal_{\hh},q}(w)$ to $T^p(w)$, for all $q\in \mathcal{W}_w(p_-(L),p_+(L))$ and $1\leq p<\infty$. Thus,
by the definition of $H_{\Scal_{\hh},q}^p(w)$, it can be extended to a bounded operator, denoted by $\mathfrak{Q}_{L,m}$ from $H_{\Scal_{\hh},q}^p(w)$ to $T^p(w)$.
Similarly if we consider $\mathcal{Q}_{L^*,m}$, defined for all functions $f\in L^{q'}(w^{1-q'})$ by $\mathcal{Q}_{L^*,m}f(y,t):=(t^2L^*)^me^{-t^2L^*}f(y)$, for all $(y,t)\in  \R^{n+1}_+$. Again by \cite[Theorem 1.12, part (b)]{MartellPrisuelos}, by \cite[Lemma 4.4]{AuscherMartell:I}, and since $p_{\pm}(L^*)=p_{\mp}(L)'$, see \cite{Auscher}, we have that $\mathcal{Q}_{L^*,m}:L^{q'}(w^{1-q'})\rightarrow T^{q'}(w^{1-q'})$ for all $q'\in \mathcal{W}_{w^{1-q'}}(p_-(L^*),p_+(L^*))$. Moreover, for all $F\in T^2(\R^n)$,  its adjoint operator $(\mathcal{Q}_{L^*,m})^*$, has the following representation 
\begin{align}\label{extension}
(\mathcal{Q}_{L^*,m})^*F(y)=\int_0^{\infty}(t^2L)^me^{-t^2L}F(y,t)\frac{dt}{t}.
\end{align}
Then since for all $F\in T^2(\R^n)$ and $g\in L^2(\R^n)$,
\begin{multline*}
\left|\int_0^{\infty}(t^2L)^me^{-t^2L}F(y,t)\frac{dt}{t}\right|=\left|\int_{\R^n}\int_0^{\infty}
F(y,t)\overline{(t^2L^*)^me^{-t^2L^*}g(y)}\int_{B(y,t)}dx\frac{dt\,dy}{t^{n+1}}\right|
\\
\leq \int_{\R^n}|||F|||_{\Gamma(x)}|||(t^2L^*)^me^{-t^2L^*}g|||_{\Gamma(x)}dx,
\end{multline*}
where $|||F|||_{\Gamma(x)}=\left(\iint_{\Gamma(x)}|F(y,t)|^2\frac{dy\,dt}{t^{n+1}}\right)^{\frac{1}{2}}$. By a density argument, we conclude that $(\mathcal{Q}_{L^*,m})^*$
 has a bounded extension, denoted by $\widetilde{\mathcal{Q}}_{L,m}$, from $T^q(w)$ to $L^q(w)$, for all $q\in \mathcal{W}_w(p_-(L),p_+(L))$. If we replace $L$ by $L_{q,w}$, this extension satisfies the equality \eqref{extension} for functions on $T^q(w)$. But, abusing notation (see Remarks \ref{remark:extenssion} and \ref{remark:calderonreproducing}), we just write $L$. 

Besides we shall show that for all functions $F\in T^p(w)\cap T^q(w)$ and $m\in \N$ big enough, 
\begin{align}\label{S_HF(tent)}
\|\Scal_{\hh}\widetilde{\mathcal{Q}}_{L,m}F\|_{L^p(w)}\lesssim \|F\|_{T^p(w)},\quad \textrm{for all} \quad 1\leq p<\frac{p_+(L)^{2,*}}{s_w},
\end{align}
where $p_+(L)^{2,*}$ is defined in \eqref{qN*}.
Assuming this, since $T^p(w)\cap T^q(w)$ is dense in $T^p(w)$, $\widetilde{\mathcal{Q}}_{L,m{|_{T^p(w)}}}$ can be extended to a bounded operator, denoted by $\widetilde{\mathfrak{Q}}_{L,m}$, from $T^p(w)$ to $H^p_{\Scal_{\hh},q}(w)$. Then,  by \eqref{calderonIII}, we have that
$
C_m\widetilde{\mathfrak{Q}}_{L,m}\circ\mathfrak{Q}_{L,m}=I
$
in $\mathbb{H}_{\Scal_{\hh},q}^p(w)$, and by density in ${H}_{\Scal_{\hh},q}^p(w)$. Hence, for $w\in A_{\infty}$, $1\leq p<\frac{p_+(L)^{2,*}}{s_w}$, and  $q\in \mathcal{W}_w(p_-(L),p_+(L))$ the Hardy spaces $H_{\Scal_{\hh},q}^p(w)$ are retracts of the tent spaces $T^p(w)$.

Therefore, to finish the proof it just remains to show \eqref{S_HF(tent)}. 
Applying Minkowski's integral inequality we obtain
\begin{multline*}
\|\Scal_{\hh}\widetilde{\mathcal{Q}}_{L,m}F\|_{L^p(w)}
\\
\leq
\left(\int_{\R^n}\left(\int_{0}^{\infty}\left(\int_{0}^{t}\left(\int_{B(x,t)}|t^2Le^{-t^2L}(s^2L)^me^{-s^2L}F(y,s)|^2dy\right)^{\frac{1}{2}}\frac{ds}{s}\right)^2\frac{dt}{t^{n+1}}\right)^{\frac{p}{2}}w(x)dx\right)^{\frac{1}{p}}
\\\quad
+
\left(\int_{\R^n}\left(\int_{0}^{\infty}\left(\int_{t}^{\infty}\left(\int_{B(x,t)}|t^2Le^{-t^2L}(s^2L)^me^{-s^2L}F(y,s)|^2dy\right)^{\frac{1}{2}}\frac{ds}{s}\right)^2\frac{dt}{t^{n+1}}\right)^{\frac{p}{2}}w(x)dx\right)^{\frac{1}{p}}
\\
=:I+II.
\end{multline*}
We first show by extrapolation that $II\lesssim \|F\|_{T^p(w)}$, for every $p$ as in \eqref{S_HF(tent)} and every $m\in \N$. To this end, in view of Theorem \ref{thm:extrapol}, part ($a$), (or  part ($b$) if $p_+(L)^{2,*}=\infty$) it is enough to consider the case $p=2$ and $w\in RH_{\left(\frac{p_+(L)^{2,*}}{2}\right)'}$. That is, to prove that, for every $w\in RH_{\left(\frac{p_+(L)^{2,*}}{2}\right)'}$ and $m\in \N$,
$$
\mathcal{II}\!:=\!\left(\int_{\R^n}\int_{0}^{\infty}\left(\int_{t}^{\infty}\left(\int_{B(x,t)}|t^2Le^{-t^2L}(s^2L)^me^{-s^2L}F(y,s)|^2dy\right)^{\frac{1}{2}}\frac{ds}{s}\right)^2\frac{dt}{t^{n+1}}w(x)dx\right)^{\frac{1}{2}}\!\lesssim \|F\|_{T^2(w)}.
$$
Under this assumption, note that we can find $q_0$ and $r$ so that $2 < q_0 < p_+(L),$
$\frac{q_0}{2}\leq r<\infty$, $w\in RH_{r'}$, and
\begin{align}\label{positiveinterpolhardy}
2+\frac{n}{2r}-\frac{n}{q_0}>0.
\end{align}
Indeed, if
$n>2p_+(L)$, since $w\in RH_{\left(\frac{p_+(L)^{2,*}}{2}\right)'}$, we have that 
$s_w<\frac{np_+(L)}{2(n-2p_+(L))}$. Therefore, there exist $\varepsilon_1>0$ small enough and $2<q_0<p_+(L)$ close enough to $p_+(L)$ so that 
$$
s_w<\frac{nq_0}{2(1+\varepsilon_1)(n-2q_0)}.
$$
Besides, there exists $\varepsilon_2>0$ so that
$$
q_0<\frac{nq_0}{(1+\varepsilon_2)(n-2q_0)}.
$$
Hence, taking $\varepsilon_0:=\min\{\varepsilon_1,\varepsilon_2\}$ and $r:=\frac{nq_0}{2(1+\varepsilon_0)(n-2q_0)}$, we have that $2<q_0<p_+(L)$, $q_0/2\leq r<\infty$, and $w\in RH_{r'}$. Moreover
\begin{align*}
2+\frac{n}{2r}-\frac{n}{q_0}=\varepsilon_0\left(\frac{n}{q_0}-2\right)
>
\varepsilon_0\left(\frac{n}{p_+(L)}-2\right)>0.
\end{align*}
If now we consider $n\leq 2p_+(L)$, we have that $p_+(L)^{2,*}=\infty$. Then, note that the assumption $w\in RH_{\left(\frac{p_+(L)^{2,*}}{2}\right)'}$ becomes 
$w\in A_{\infty}$. Hence, we fix $r>s_w$, and $q_0$ satisfying
$\max\left\{2,\frac{2rp_+(L)}{p_+(L)+2r}\right\}<q_0<\min\left\{p_+(L),2r\right\}$ if $p_+(L)<\infty$, and $q_0=2r$ if $p_+(L)=\infty$. Therefore, we have that $2<q_0<p_+(L)$, $q_0/2\leq r<\infty$, and $w\in RH_{r'}$. Besides, 
\begin{align*}
2+\frac{n}{2r}-\frac{n}{q_0}
>
2-\frac{n}{p_+(L)}\geq 0.
\end{align*}
Keeping these choices in mind, we apply the $L^2(\R^n)-L^2(\R^n)$ off-diagonal estimates satisfied by  $\{e^{-t^2L}\}_{t>0}$, change the variable $s$ into $st$, and apply Jensen's inequality and Minkowski's integral inequality. Then, we have
\begin{align*}
\mathcal{II}
&\lesssim
\sum_{j\geq 1}e^{-c4^j}\left(\int_{\R^n}
\int_0^{\infty}\left(\int_{t}^{\infty}\frac{t^2}{s^2}\left(\int_{B(x,2^{j+1}t)}|(s^2L)^{m+1}e^{-s^2L}F(y,s)|^2dy\right)^{\frac{1}{2}}\frac{ds}{s}\right)^{{2}}\frac{dt}{t^{n+1}}w(x)dx\right)^{\frac{1}{2}}
\\\nonumber
&\lesssim
\sum_{j\geq 1}e^{-c4^j}
\int_{1}^{\infty}s^{-2}\left(\int_{\R^n}\int_0^{\infty}\left(\int_{B(x,2^{j+1}t)}|((st)^2L)^{m+1}e^{-(st)^2L}F(y,st)|^{q_0}\frac{dy}{t^n}\right)^{\frac{2}{q_0}}\frac{dt}{t}w(x)dx\right)^{\frac{1}{2}}\frac{ds}{s}
\\\nonumber&
=:
\sum_{j\geq 1}e^{-c4^j}
\int_{1}^{\infty}s^{-2}\left(\int_{\R^n}\mathcal{J}(x,s)^2w(x)dx\right)^{\frac{1}{2}}\frac{ds}{s}.
\end{align*}
Note now that, applying Fubini's theorem, \cite[Proposition 3.30]{MartellPrisuelos}, and changing the variable $t$ into $t/s$; and next, applying the $L^2(\R^n)-L^{q_0}(\R^n)$ off-diagonal estimates satisfied by the family $\{(t^2L)^{m+1}e^{-t^2L}\}_{t>0}$ and \cite[Proposition 3.2]{MartellPrisuelos}, taking $r_0>r_w$, and also recalling our choices of $q_0$ and $r$, we obtain that, for every $s>1$,
\begin{align*}
\int_{\R^n}\mathcal{J}(x,s)^2w(x)dx
&\lesssim s^{-\frac{n}{r}}
\int_0^{\infty}\int_{\R^n}
\left(\int_{B(x,2^{j+1}st)}|((st)^2L)^{m+1}e^{-(st)^2L}F(y,st)|^{q_0}\frac{dy}{t^n}\right)^{\frac{2}{q_0}}w(x)dx\frac{dt}{t}
\\
&\approx
s^{-\frac{n}{r}+\frac{2n}{q_0}}
\int_{\R^n}\int_0^{\infty}
\left(\int_{B(x,2^{j+1}t)}|(t^2L)^{m+1}e^{-t^2L}F(y,t)|^{q_0}\frac{dy}{t^n}\right)^{\frac{2}{q_0}}\frac{dt}{t}w(x)dx
\\
&\lesssim s^{-\frac{n}{r}+\frac{2n}{q_0}}\sum_{l\geq 1}e^{-c4^l}
\int_{\R^n}\int_0^{\infty}
\int_{B(x,2^{j+l+2}t)}|F(y,t)|^{2}\frac{dy\,dt}{t^{n+1}}w(x)dx
\\
&\lesssim 2^{jnr_0}\sum_{l\geq 1}e^{-c4^l}
s^{-\frac{n}{r}+\frac{2n}{q_0}}\|F\|_{T^2(w)}^2.
\end{align*}
Hence, by \eqref{positiveinterpolhardy}, we have
\begin{align*}
\mathcal{II}
\lesssim \sum_{j\geq 1}e^{-c4^j}
\int_{1}^{\infty}s^{-2-\frac{n}{2r}+\frac{n}{q_0}}\frac{ds}{s}
\|F\|_{T^2(w)}
\lesssim 
\|F\|_{T^2(w)}
\end{align*}
which, as we observed above, implies that
$
II\lesssim \|F\|_{T^p(w)}, 
$
for all $1\leq p <\frac{p_+(L)^{2,*}}{s_w}$ and all $m\in \N$.

Next, in order to estimate $I$ we apply the $L^2(\R^n)-L^2(\R^n)$ off-diagonal estimates satisfied by  $\{(t^2L)^{m+1}e^{-t^2L}\}_{t>0}$ and $\{e^{-s^2L}\}_{s>0}$, and \cite[Lemma 2.3]{HofmannMartell} recalling that, in this case, $s<t$. Then,
\begin{multline*}
\Bigg(\int_{B(x,t)}|t^2Le^{-t^2L}(s^2L)^me^{-s^2L}F(y,s)|^2dy\Bigg)^{\frac{1}{2}}
=
\left(\frac{s^2}{t^2}\right)^m\left(\int_{B(x,t)}|(t^2L)^{m+1}e^{-t^2L}e^{-s^2L}F(y,s)|^2dy\right)^{\frac{1}{2}}
\\
\lesssim
\left(\frac{s^2}{t^2}\right)^m\sum_{j\geq 1}e^{-c4^j}\left(\int_{B(x,2^{j+1}t)}|F(y,s)|^2dy\right)^{\frac{1}{2}}
.
\end{multline*}
Using this, changing the variable $s$ into $st$, applying Minkowski's integral inequality twice, changing the variable $t$ into $t/s$,  applying \cite[Proposition 3.2]{MartellPrisuelos}, and taking $2m>\frac{nr_0}{p}-\frac{n}{2}$, where $r_0>\max\{p/2,r_w\}$, we obtain that
\begin{align*}
I&
\lesssim 
\sum_{j\geq 1}e^{-c4^j}\left(\int_{\R^n}\left(\int_{0}^{\infty}
\left(\int_{0}^{1}s^{2m}\left(\int_{B(x,2^{j+1}t)}|F(y,st)|^2dy\right)^{\frac{1}{2}}\frac{ds}{s}\right)^2\frac{dt}{t^{n+1}}\right)^{\frac{p}{2}}w(x)dx\right)^{\frac{1}{p}}
\\&
\lesssim
\sum_{j\geq 1}e^{-c4^j}\int_{0}^{1}s^{2m}\left(\int_{\R^n}\left(\int_{0}^{\infty}
\int_{B(x,2^{j+1}t)}|F(y,st)|^2\frac{dy\,dt}{t^{n+1}}\right)^{\frac{p}{2}}w(x)dx\right)^{\frac{1}{p}}\frac{ds}{s}
\\&
=
\sum_{j\geq 1}e^{-c4^j}\int_{0}^{1}s^{2m+\frac{n}{2}}\left(\int_{\R^n}\left(\int_{0}^{\infty}\int_{B(x,2^{j+1}t/s)}|F(y,t)|^2\frac{dy\,dt}{t^{n+1}}\right)^{\frac{p}{2}}w(x)dx\right)^{\frac{1}{p}}\frac{ds}{s}
\\&
\lesssim
\sum_{j\geq 1}2^{j\frac{nr_0}{p}}e^{-c4^{j}}\int_{0}^{1}s^{2m+\frac{n}{2}-\frac{nr_0}{p}}\frac{ds}{s}
\|F\|_{T^p(w)}
\lesssim
\|F\|_{T^p(w)},
\end{align*}
which finishes the proof.
\qed
%
\section{Boundedness improvement for conical square functions}
In \cite{AuscherMartell:III} the authors observed that if $L$ is a real operator and $w\in A_{\infty}$ such that $\mathcal{W}_w(p_-(L),p_+(L))\neq \emptyset$, then
$\mathcal{W}_w(p_-(L),p_+(L))=\textrm{Int}\mathcal{J}_w(L)=:
(\widehat{p}_-(L),\widehat{p}_+(L))$ (see page \pageref{qN*} for definitions). However, in the case of complex operators we do not know whether $\mathcal{J}_w(L)$ and $\mathcal{W}_w(p_-(L),p_+(L))$ have different end-points.

 Therefore, motivated by getting the expected lower exponent in Proposition \ref{prop:1-Riesz} in the case of complex operators (see for instance \cite[Theorem 6.2]{AuscherMartell:III}), we improve the range of boundedness, obtained in \cite[Theorem 1.12]{MartellPrisuelos}, of $\Scal_{\hh}$ in the case that $w\in A_{\infty}$ with $\mathcal{W}_w(p_-(L),p_+(L))\neq \emptyset$.
\begin{theorem}\label{thm:SF-heat-III}
Given $w\in A_{\infty}$ such that $\mathcal{W}_w(p_-(L),p_+(L))\neq \emptyset$, for all $p\in (\widehat{p}_{-}(L),\infty)$, there hold:
\begin{list}{$(\theenumi)$}{\usecounter{enumi}\leftmargin=1cm \labelwidth=1cm\itemsep=0.2cm\topsep=.2cm \renewcommand{\theenumi}{\alph{enumi}}}

\item $\Scal_{\hh}$ is bounded on  $L^p(w)$.

\item Given $m\in\mathbb{N}$, $\Scal_{m,\hh}$, $\Grm_{m, \hh}$, and $\Gcal_{m, \hh}$ are bounded on $L^p(w)$.
\end{list}
If we further assume that $\mathcal{W}_w(q_-(L),q_+(L))\neq \emptyset$
we have that, for  $p\in (\widehat{p}_{-}(L),\infty)$, $\Grm_{\hh}$ and  $\Gcal_{\hh}$ are bounded on  $L^p(w)$.
\end{theorem}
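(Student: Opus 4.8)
The plan is to deduce all the assertions from the single estimate in part~(a) for $\Scal_{\hh}=\Scal_{1,\hh}$. I would split the range $(\widehat p_-(L),\infty)$ into the ``old'' interval $\mathcal{W}_w(p_-(L),p_+(L))=(p_-(L)r_w,\,p_+(L)/s_w)$, on which boundedness is already \cite[Theorem~1.12]{MartellPrisuelos}, a lower piece $(\widehat p_-(L),\,p_-(L)r_w\,]$, and an upper piece $[\,p_+(L)/s_w,\,\infty)$, and handle the two new pieces by different mechanisms.

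For the lower piece the point is that replacing $\mathcal{W}_w(p_-(L),p_+(L))$ by $\mathcal{J}_w(L)$ creates room: given $\widehat p_-(L)<p\le p_-(L)r_w$, pick $p_0\in\textrm{Int}\,\mathcal{J}_w(L)$ with $\widehat p_-(L)<p_0<p$, $p_1\in\mathcal{W}_w(p_-(L),p_+(L))$ with $p<p_1$, and a large $M\in\N$, so that by \cite{AuscherMartell:III} (see also \cite{AuscherMartell:II}) the families $\{(t^2L)^me^{-t^2L}\}_{t>0}$, $\{e^{-t^2L}\}_{t>0}$ and $\{(I-e^{-t^2L})^M\}_{t>0}$ are bounded on $L^{p_0}(w)$ and satisfy $L^{p_0}(w)$--$L^{p_1}(w)$ off-diagonal estimates on balls. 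Then I would run a weighted Calder\'on--Zygmund/good-$\lambda$ argument in the spirit of \cite{HofmannMartell} (and the unweighted \cite{HofmannMayboroda}): for $f\in L^p(w)$ and $\lambda>0$, decompose $|f|^{p_0}$ via $\mathcal{M}^w$ at height $\lambda^{p_0}$, $f=g_\lambda+\sum_ib_i$, with $b_i$ supported in cubes $Q_i$ of bounded overlap, $\sum_iw(Q_i)\lesssim\lambda^{-p}\|f\|_{L^p(w)}^p$, $\dashint_{Q_i}|b_i|^{p_0}\,dw\lesssim\lambda^{p_0}$, and $\|g_\lambda\|_{L^{p_1}(w)}$ suitably controlled; estimate $w(\{\Scal_{\hh}g_\lambda>\lambda\})\lesssim\lambda^{-p_1}\|\Scal_{\hh}g_\lambda\|_{L^{p_1}(w)}^{p_1}\lesssim\lambda^{-p_1}\|g_\lambda\|_{L^{p_1}(w)}^{p_1}$ using the $L^{p_1}(w)$ bound; and, writing $r_i:=\ell(Q_i)$ and $A_{r_i}:=I-(I-e^{-r_i^2L})^M=\sum_{k=1}^MC_{k,M}e^{-kr_i^2L}$, split $\Scal_{\hh}b_i\le\Scal_{\hh}\big((I-e^{-r_i^2L})^Mb_i\big)+\Scal_{\hh}(A_{r_i}b_i)$ and estimate each summand on $4Q_i$ and on the annuli $C_j(Q_i)$ via Proposition~\ref{prop:offdiagonalweightedballs} (splitting the cone at $t=r_i$, with the factor $(r_i^2/t^2)^M$ controlling $t\ge r_i$ and the exponential tail controlling $t<r_i$), summing in $j$ after choosing $M$ large, and using \eqref{doublingcondition} for $w(\bigcup_i4Q_i)$. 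Integrating the $\lambda$-dependent weak-type bounds against $\lambda^{p-1}\,d\lambda$ gives $\|\Scal_{\hh}f\|_{L^p(w)}\lesssim\|f\|_{L^p(w)}$. This is where the real work lies, and the main obstacle: one must carry out the decomposition so that the off-diagonal estimates available merely on $\mathcal{J}_w(L)$ are enough, keep track of the interplay between the $w$-averages on the $Q_i$, the Lebesgue annuli $C_j(Q_i)$ and the scale $r_i$ inside the cone, and locate precisely where $p>\widehat p_-(L)$ enters (namely in choosing $p_0\in\textrm{Int}\,\mathcal{J}_w(L)$ below $p$).

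For the upper piece I would use the regularization $(t^2L)$ in $\Scal_{\hh}$ through a good-$\lambda$ comparison with a maximal operator. Working with a truncated $\Scal_{\hh}$ to ensure finiteness, fix $p_-(L)<p_0\le2$; on a Whitney cube $Q$ of $\{\Scal_{\hh}f>\lambda\}$ split $f=f\chi_{2Q}+f\chi_{(2Q)^c}$, use the $L^2(\R^n)$-boundedness of $\Scal_{\hh}$ on the local part and the $L^{p_0}(\R^n)$--$L^2(\R^n)$ off-diagonal estimates of $\{(t^2L)e^{-t^2L}\}_{t>0}$ on the far part (the regularization making $\int_0^{\ell(Q)}(\cdots)\tfrac{dt}{t}$ convergent at $t\to0^+$), together with a change of angle, to obtain $w\big(\{\Scal_{\hh}f>2\lambda,\ (\mathcal{M}(|f|^{p_0}))^{1/p_0}\le\gamma\lambda\}\big)\lesssim\gamma^2\,w(\{\Scal_{\hh}f>\lambda\})$ for every $w\in A_\infty$. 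Hence $\|\Scal_{\hh}f\|_{L^p(w)}\lesssim\|(\mathcal{M}(|f|^{p_0}))^{1/p_0}\|_{L^p(w)}\lesssim\|f\|_{L^p(w)}$ for all $0<p<\infty$ with $p>p_0r_w$; letting $p_0\downarrow p_-(L)$ and removing the truncation by monotone convergence covers $p>p_-(L)r_w$, in particular all the way to $\infty$. Combined with the lower piece and \cite[Theorem~1.12]{MartellPrisuelos}, this proves part~(a).

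The rest reduces to part~(a). For $\Scal_{m,\hh}$, $m\in\N$, either the two arguments above run verbatim with $(t^2L)^m$ in place of $(t^2L)$, or one uses the comparisons of \cite[Theorems~1.14 and~1.15]{MartellPrisuelos}. For $\Grm_{m,\hh}$ with $m\ge1$ no gradient bounds are needed: a Caccioppoli inequality for $u=(t^2L)^me^{-t^2L}f$ on $B(y,t)$ (with $Lu=t^{-2}(t^2L)^{m+1}e^{-t^2L}f$) gives, after a change of angle, the pointwise bound $\Grm_{m,\hh}f\lesssim\Scal_{m,\hh}^{\alpha}f+\Scal_{m+1,\hh}^{\alpha}f$ for some fixed aperture $\alpha>1$, whence $\|\Grm_{m,\hh}f\|_{L^p(w)}\lesssim\|\Scal_{m,\hh}f\|_{L^p(w)}+\|\Scal_{m+1,\hh}f\|_{L^p(w)}$, cf.\ \eqref{tentcomparison}; for $\Gcal_{m,\hh}$, $m\ge1$, the identity $t\partial_t\big((t^2L)^me^{-t^2L}f\big)=2m(t^2L)^me^{-t^2L}f-2(t^2L)^{m+1}e^{-t^2L}f$ yields $\Gcal_{m,\hh}f\lesssim_m\Grm_{m,\hh}f+\Scal_{m,\hh}f+\Scal_{m+1,\hh}f$. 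Finally, for $\Grm_{\hh}=\Grm_{0,\hh}$ and $\Gcal_{\hh}=\Gcal_{0,\hh}$ the regularization is absent, so one replaces $\mathcal{J}_w(L)$ by $\mathcal{K}_w(L)$: under $\mathcal{W}_w(q_-(L),q_+(L))\ne\emptyset$, the families $\{t\nabla_y(t^2L)^me^{-t^2L}\}_{t>0}$ are bounded on $L^{p_0}(w)$ and satisfy off-diagonal estimates on balls for $p_0\in\textrm{Int}\,\mathcal{K}_w(L)$, $\Grm_{\hh}$ is bounded on $L^p(w)$ for $p\in\mathcal{W}_w(q_-(L),q_+(L))$ by \cite[Theorem~1.13]{MartellPrisuelos}, and since $\inf\mathcal{K}_w(L)=\inf\mathcal{J}_w(L)=\widehat p_-(L)$ the lower Calder\'on--Zygmund argument again reaches $\widehat p_-(L)$, while the upper endpoint is reached as above with the $L^{q_0}(\R^n)$--$L^2(\R^n)$ off-diagonal estimates of $\{t\nabla_ye^{-t^2L}\}_{t>0}$, $q_0>q_-(L)$; lastly $\Gcal_{\hh}f\lesssim\Grm_{\hh}f+\Scal_{\hh}f$ since $t\partial_te^{-t^2L}f=-2t^2Le^{-t^2L}f$.
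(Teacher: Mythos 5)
Your overall strategy is the right one and is essentially the paper's: the upper endpoint is not an issue (the conical heat square functions are already bounded on $L^p(w)$ for all $p\in\mathcal{W}_w(p_-(L),\infty)$ by \cite[Theorem~1.12]{MartellPrisuelos}, so your separate good-$\lambda$ treatment of the piece $[p_+(L)/s_w,\infty)$ is redundant), and the only new content is pushing the lower endpoint from $r_wp_-(L)$ down to $\widehat p_-(L)$. For that, the weighted Calder\'on--Zygmund scheme you describe is exactly what the paper invokes in packaged form as \cite[Theorem~2.4]{AuscherMartell:III}: one is reduced to verifying the two off-diagonal estimates on balls \eqref{1-teorema} and \eqref{2-teorema} for $\Scal_{\hh}B_{r_B}$ and $A_{r_B}$. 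Your reduction of parts (b) and of $\Gcal_{\hh},\Grm_{\hh}$ to part (a) (Caccioppoli for $m\ge1$, $\mathcal{K}_w(L)$ with $\inf\mathcal{K}_w(L)=\inf\mathcal{J}_w(L)$ for $m=0$) is also consistent with what the paper does via \cite[Theorems~1.14 and~1.15]{MartellPrisuelos}.

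The genuine gap is in the step you yourself flag as "where the real work lies": the estimate of $\Scal_{\hh}(B_{r_i}b_i)$ on the annuli $C_j(Q_i)$. Your description --- "the factor $(r_i^2/t^2)^M$ controlling $t\ge r_i$ and the exponential tail controlling $t<r_i$, via Proposition~\ref{prop:offdiagonalweightedballs}" --- does not go through as stated for the local-in-time piece $t<r_i$. The obstruction is a measure mismatch: inside the cone the inner integral is $\int_{B(x,t)}|T_tB_{r_i}b_i(y)|^2\,dy$ with respect to \emph{Lebesgue} measure on small balls $B(x,t)$, whereas the only boundedness and off-diagonal information available below $r_wp_-(L)$ is in \emph{weighted} spaces on the balls of $\mathcal{J}_w(L)$; moreover for $t<r_i$ the operator $(I-e^{-r_i^2L})^M$ gives no gain in $t$, so one cannot simply quote Proposition~\ref{prop:offdiagonalweightedballs}. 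Bridging this is precisely what forces the paper to (i) expand the binomial and treat the term with no $e^{-kr_B^2L}$ via the factorization $T_t=cT_{t/\sqrt2}e^{-t^2L/2}$ together with the H\"older/$A_{q/p_0}$ passage \eqref{toweight}, and (ii) handle the terms $\mathcal{I}_k$ by first proving the \emph{unweighted-in-the-inner-ball} inequality \eqref{extraimprov} for all $w_0\in RH_{(p_+(L)/2)'}$ using the change-of-angle-with-support-control Proposition~\ref{prop:q-extra}, and then transferring it to the exponent $p$ by Rubio de Francia extrapolation (Theorem~\ref{thm:extrapol}). Without this extrapolation step (or an equivalent device) your sketch does not yield \eqref{1-teorema} for $\widehat p_-(L)<p\le r_wp_-(L)$, so the lower-endpoint improvement --- the whole point of the theorem --- remains unproved.
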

\begin{proof}
Note that $\Gcal_{\hh}\leq 2\Scal_{\hh}+\Grm_{\hh}$, then
by  \cite[Theorem 1.14]{MartellPrisuelos}, we just need to prove the theorem for $\Scal_{\hh}$ and $\Grm_{\hh}$. Let $\mathcal{Q}$ be $\Scal_{\hh}$ or $\Grm_{\hh}$

By \cite[Theorem 2.4]{AuscherMartell:III}, to conclude our result, it is enough to prove that for every ball $B=B(x_B,r_B)\subset \R^n$
\begin{align}\label{1-teorema}
\left(\dashint_{C_j(B)}|\mathcal{Q}B_{r_B}f(x)|^{p}dw\right)^{\frac{1}{p}}\lesssim g(j)\left(\dashint_{B}|f(x)|^{p}dw\right)^{\frac{1}{p}},\quad  \textrm{for all}\quad j\geq 2;
\end{align}
and, 
\begin{align}\label{2-teorema}
\left(\dashint_{C_j(B)}|A_{r_B}f(x)|^{q}dw\right)^{\frac{1}{q}}\lesssim g(j)\left(\dashint_{B}|f(x)|^{p}dw\right)^{\frac{1}{p}},\quad\textrm{for all}\quad j\geq 1,
\end{align}
where $A_{r_B}:=I-(I-e^{-r_B^2L})^M$ and $B_{r_B}:=I-A_{r_B}$, for some $M\in \N$ arbitrarily large, $q$ is such that $\mathcal{Q}$ is bounded on $L^q(w)$, $f\in L^{\infty}_c(\R^n)$ such that $\supp f\subset B$, and $g(j)$ is such that $\sum_{j\geq 1}g(j)2^{nr}$, for some $r>r_w$.

We start by taking $q\in \mathcal{W}_{w}(p_-(L),p_+(L))$ when $\mathcal{Q}=\Scal_{\hh}$ and $q\in \mathcal{W}_{w}(q_-(L),q_+(L))$ when $\mathcal{Q}=\Grm_{\hh}$, by \cite[Theorem 1.12]{MartellPrisuelos}, we know that, in any case, $\mathcal{Q}$ is bounded on $L^q(w)$. Besides, also by that result, we only need to consider the case $\widehat{p}_-(L)<p\leq r_wp_-(L)$ (we recall that $p_-(L)=q_-(L)$). Next, we fix $p_0$ so that $p_-(L)<p_0<\min\{2,q\}$ and $w\in A_{q/p_0}$. 

The proof of \eqref{2-teorema} follows by expanding the binomial and using that, for $1\leq k\leq M$, $e^{-\sqrt{k}r_B L}$ satisfies $L^p(w)-L^q(w)$ off-diagonal estimates on balls, (see \cite{AuscherMartell:II, AuscherMartell:III}).

As for  \eqref{1-teorema}, 
first note that it is enough to prove
$$
I:=\left(\dashint_{C_j(B)}\left(\int_{0}^{\infty}\int_{B(x,t)}|T_tB_{r_B}f(y)|^{2}\frac{dy\,dt}{t^{n+1}}\right)^{\frac{p}{2}}dw\right)^{\frac{1}{p}}
\lesssim g(j)\left(\dashint_B|f(x)|^pdw\right)^{\frac{1}{p}},
$$
for $T_t$ being $t^2Le^{-t^2L}$ or $t\nabla_ye^{-t^2L}$.
Splitting the integral in $t$ we have that
\begin{multline}\label{originalterm}
I
\leq
\left(\dashint_{C_j(B)}\left(\int_{0}^{r_B}\int_{B(x,t)}|T_tB_{r_B}f(y)|^{2}\frac{dy\,dt}{t^{n+1}}\right)^{\frac{p}{2}}dw\right)^{\frac{1}{p}}
\\
+
\left(\dashint_{C_j(B)}\left(\int_{r_B}^{\infty}\int_{B(x,t)}|T_tB_{r_B}f(y)|^{2}\frac{dy\,dt}{t^{n+1}}\right)^{\frac{p}{2}}dw\right)^{\frac{1}{p}}
=:I_1+I_2.
\end{multline}
In order to estimate  $I_2$, consider $B_{r_B,t}:=(e^{-t^2L}-e^{-(t^2+r_B^2)L})^M$. Then, changing the variable $t$ into $t\sqrt{M+1}=:tC_M$ and 
applying that $\left\{T_{t}\right\}_{t>0}$ satisfies $L^{p_0}(\R^n)-L^2(\R^n)$ off-diagonal estimates  (see Section \ref{sec:od}), we have
\begin{multline*}
I_2 \lesssim
\left(\dashint_{C_j(B)}\left(\int_{\frac{r_B}{C_M}}^{\infty}
\int_{B(x,tC_M)}|T_{t}B_{r_B,t}f(y)|^{2}\frac{dy\,dt}{t^{n+1}}\right)^{\frac{p}{2}}dw\right)^{\frac{1}{p}}
\\
\lesssim\sum_{i\geq 1}e^{-c4^i}
\left(\dashint_{C_j(B)}\left(\int_{\frac{r_B}{C_M}}^{\infty}
\left(\int_{B(x,2^{i+1}tC_M)}|B_{r_B,t}f(y)|^{p_0}\frac{dy}{t^{n}}\right)^{\frac{2}{p_0}}\frac{dt}{t}\right)^{\frac{p}{2}}dw\right)^{\frac{1}{p}}.
\end{multline*}
Besides, since $w\in A_{\frac{q}{p_0}}$, note that we have the following estimate for the integral in $y$: 
\begin{align}\label{toweight}
&\left(\int_{B(x,2^{i+1}C_Mt)}|B_{r_B,t}f(y)|^{p_0}\frac{dy}{t^{n}}\right)^{\frac{2}{p_0}}
\\
&\quad\lesssim 2^{\frac{i2n}{p_0}}\left(\int_{B(x,2^{i+1}C_Mt)}|B_{r_B,t}f(y)|^{q}w(y)dy\right)^{\frac{2}{q}}\left(\dashint_{B(x,2^{i+1}tC_M)}w(y)^{1-\left(\frac{q}{p_0}\right)'}dy\right)^{\frac{2}{q}\left(\frac{q}{p_0}-1\right)}(2^{i}t)^{-\frac{2n}{q}}
\nonumber
\\\nonumber
&\quad\lesssim 2^{\frac{i2n}{p_0}}\left(\dashint_{B(x,2^{i+1}C_Mt)}|B_{r_B,t}f(y)|^{q}{dw}\right)^{\frac{2}{q}}.
\end{align}
 By \eqref{toweight}, we can split $I_2$ as follows:
\begin{multline*}
I_2\lesssim \sum_{i=1}^{j-2}e^{-c4^i}2^{\frac{in}{p_0}}
\left(\dashint_{C_j(B)}\left(\int_{\frac{r_B}{C_M}}^{\infty}\left(\dashint_{B(x,2^{i+1}tC_M)}|B_{r_B,t}f(y)|^{q}dw\right)^{\frac{2}{q}}\frac{dt}{t}\right)^{\frac{p}{2}}dw\right)^{\frac{1}{p}}
\\
+e^{-c4^j}\sum_{i\geq j-1}e^{-c4^i}2^{\frac{in}{p_0}}
\left(\dashint_{C_j(B)}\left(\int_{\frac{r_B}{C_M}}^{\infty}\left(\dashint_{B(x,2^{i+1}tC_M)}|B_{r_B,t}f(y)|^{q}dw\right)^{\frac{2}{q}}\frac{dt}{t}\right)^{\frac{p}{2}}dw\right)^{\frac{1}{p}}
\\
=:\sum_{i=1}^{j-2}e^{-c4^i}I_{2i}^1+e^{-c4^j}\sum_{i\geq j-1}e^{-c4^i}I_{2i}^2.
\end{multline*}
The sum $\sum_{i=1}^{j-2}e^{-c4^i}I_{2i}^1$ only appears when $j\geq 3$. In this case,
we split the integral in $t$ and observe that for  $x\in C_j(B)$, and $r_BC_M^{-1}<t<r_BC_M^{-1}2^{j-i-2}$, we have that $B(x,2^{i+1}C_Mt)\subset 2^{j+2}B\setminus 2^{j-1}B$; besides, for $1\leq i\leq j-2$ and $t\geq r_BC_M^{-1}2^{j-i-2}$, $B,B(x,2^{i+1}C_Mt)\subset B(x_B,2^{j+2}C_Mt)$. Then, applying \eqref{doublingcondition}, Proposition \ref{prop:offdiagonalweightedballs}, and the fact that $tC_M\geq r_B$, we obtain 
\begin{align*}
I_{2i}^1&\lesssim 2^{\frac{jn}{p_0}}
\left(\int_{\frac{r_B}{C_M}}^{\frac{2^{j-i-2}r_B}{C_M}}\left(\frac{r_B}{t}\right)^{\frac{2n}{p_0}}\left(\sum_{l=j-1}^{j+1}\left(\dashint_{C_{l}(B)}|B_{r_B,t}f(y)|^{q}{dw}\right)^{\frac{1}{q}}\right)^{2}\frac{dt}{t}\right)^{\frac{1}{2}}
\\&\qquad\qquad\qquad
+ 2^{\frac{jn}{p_0}}
\left(\int_{\frac{2^{j-i-2}r_B}{C_M}}^{\infty}\left(\dashint_{B(x_B,2^{j+2}C_M t)}|B_{r_B,t}(f\chi_{B(x_B,2^{j+2}C_M t)})(y)|^{q}{dw}\right)^{\frac{2}{q}}\frac{dt}{t}\right)^{\frac{1}{2}}
\\
&\lesssim \frac{2^{j\left(\frac{n}{p_0}+\theta_2\right)}\|f\|_{L^p(w)}}{w(B)^{\frac{1}{p}}}
\left(2^{j\theta_1}
\left(\int_{\frac{r_B}{C_M}}^{\frac{2^{j-i-2}r_B}{C_M}}\left(\frac{r_B}{t}\right)^{\frac{2n}{p_0}+4M+2\theta_2}e^{-c\frac{4^jr_B^2}{t^2}}\frac{dt}{t}\right)^{\frac{1}{2}}
\!\!+\!
\left(\int_{\frac{2^{j-i-2}r_B}{C_M}}^{\infty}\left(\frac{r_B}{t}\right)^{4M}\frac{dt}{t}\right)^{\frac{1}{2}}\right)
\\
&\lesssim 2^{i2M}
2^{-j\left(2M-\frac{n}{p_0}-\theta_1-\theta_2\right)}
\left(\dashint_{B}|f(y)|^{p}{dw}\right)^{\frac{1}{p}}.
\end{align*}
The estimate of $I_{2i}^2$ follows applying Proposition \ref{prop:offdiagonalweightedballs} and the fact that for $x\in C_j(B)$, $j\geq 2$, $i\geq j-1$, and $tC_M\geq r_B$, we have that $B,B(x,2^{i+1}C_Mt)\subset B(x_B,2^{i+3}C_M t)$
\begin{multline*}
I_{2i}^2
\lesssim
2^{\frac{in}{p_0}}
\left(\int_{\frac{r_B}{C_M}}^{\infty}
\left(\dashint_{B(x_B,2^{i+3}C_M t)}|B_{r_B,t}(f\chi_{B(x_B,2^{i+3}C_M t)})(y)|^{q}{dw}\right)^{\frac{2}{q}}\frac{dt}{t}\right)^{\frac{1}{2}}
\\
\lesssim
2^{i\left(\frac{n}{p_0}+\theta_2\right)}
\left(\int_{\frac{r_B}{C_M}}^{\infty}\left(\frac{r_B}{t}\right)^{4M}
\frac{dt}{t}\right)^{\frac{1}{2}}\left(\dashint_{B}|f(y)|^{p}dw\right)^{\frac{1}{p}}
\lesssim
2^{i\left(\frac{n}{p_0}+\theta_2\right)}
\left(\dashint_{B}|f(y)|^{p}dw\right)^{\frac{1}{p}}.
\end{multline*}
Therefore, for all $j\geq 2$, we have
\begin{align}\label{term2}
I_2\lesssim \left(2^{-j\left(2M-\frac{n}{p_0}-\theta_1-\theta_2\right)}+e^{-c4^j}\right)
\left(\dashint_{B}|f(y)|^{p}{dw}\right)^{\frac{1}{p}}.
\end{align}
In order to estimate $I_1$,
 we expand the binomial. Then, 
\begin{multline}\label{term1}
I_1\leq\left(\dashint_{C_j(B)}\left(\int_{0}^{r_B}
\int_{B(x,t)}|T_tf(y)|^{2}\frac{dy\,dt}{t^{n+1}}\right)^{\frac{p}{2}}dw\right)^{\frac{1}{p}}
\\
+
\sum_{k=1}^{M}C_{k,M}\left(\dashint_{C_j(B)}
\left(\int_{0}^{r_B}
\int_{B(x,t)}|T_te^{-kr_B^2L}f(y)|^{2}\frac{dy\,dt}{t^{n+1}}\right)^{\frac{p}{2}}dw\right)^{\frac{1}{p}}=:I_1^1+\sum_{k=1}^{M}C_{k,M}\mathcal{I}_{k}.
\end{multline}
We first estimate $I_1^1$, noticing that $T_t=cT_{t/\sqrt{2}}e^{-\frac{t^2}{2}L}$, and applying the $L^{p_0}(\R^n)- L^2(\R^n)$ off-diagonal estimates satisfied by $T_{t/\sqrt{2}}$,  we have
 \begin{align*}
 I_1^1&\lesssim \sum_{i\geq 1}e^{-c4^{i}}
 \left(\dashint_{C_j(B)}\left(\int_0^{r_B}
\left(\int_{B(x,2^{i+1}t)}|e^{-\frac{t^2}{2}L}f(y)|^{p_0}\frac{dy}{t^{n}}\right)^{\frac{2}{p_0}}\frac{dt}{t}\right)^{\frac{p}{2}}dw\right)^{\frac{1}{p}}
\\
&\lesssim \sum_{i=1}^{j-2}e^{-c4^{i}}
 \left(\dashint_{C_j(B)}\left(\int_0^{r_B}
\left(\int_{B(x,2^{i+1}t)}|e^{-\frac{t^2}{2}L}f(y)|^{p_0}\frac{dy}{t^{n}}\right)^{\frac{2}{p_0}}\frac{dt}{t}\right)^{\frac{p}{2}}dw\right)^{\frac{1}{p}}
\\
& \quad+e^{-c4^j}\sum_{i\geq j-1}e^{-c4^{i}}
 \left(\dashint_{C_j(B)}\left(\int_0^{r_B}
\left(\int_{B(x,2^{i+1}t)}|e^{-\frac{t^2}{2}L}f(y)|^{p_0}\frac{dy}{t^{n}}\right)^{\frac{2}{p_0}}\frac{dt}{t}\right)^{\frac{p}{2}}dw\right)^{\frac{1}{p}}
\\&
=:\sum_{i=1}^{j-2}e^{-c4^{i}}II_{i}+e^{-c4^j}\sum_{i\geq j-1}e^{-c4^{i}}III_{i},
 \end{align*}
where the sum $\sum_{i=1}^{j-2}e^{-c4^{i}}II_{i}$ only appears if $j\geq 3$. Then,  proceeding  as in \eqref{toweight}, and noticing that for $x\in C_j(B)$, $j\geq 3$, $1\leq i\leq j-2$, and $0<t<r_B$, we have that
$B(x,2^{i+1}t)\subset 2^{j+2}B\setminus 2^{j-1}B$, applying the $L^{p}(w)-L^{q}(w)$ off-diagonal estimates on balls satisfied by $e^{-\frac{t^2}{2}L}$ (see \cite{AuscherMartell:II, AuscherMartell:III}), we obtain that, for some constants $\theta_1,\theta_2>0$, 
\begin{multline*}
II_i\lesssim
\left(\int_{0}^{r_B}\left(\frac{2^jr_B}{t}\right)^{\frac{2n}{p_0}}
\left(\sum_{l=j-1}^{j+1}\left(\dashint_{C_l(B)}|e^{-\frac{t^2}{2}L}f(y)|^{q}dw\right)^{\frac{1}{q}}\right)^2\frac{dt}{t}\right)^{\frac{1}{2}}
\\
\lesssim 2^{j\theta_1}
e^{-c4^j}
\left(\dashint_{B}|f(y)|^{p}dw\right)^{\frac{1}{p}}
\left(\int_{0}^{r_B}
\left(\frac{2^{j}r_B}{t}\right)^{\frac{2n}{p_0}+2\theta_2}
e^{-c\frac{4^jr_B^2}{t^2}}\frac{dt}{t}\right)^{\frac{1}{2}} 
\lesssim 
e^{-c4^j}
\left(\dashint_{B}|f(y)|^{p}dw\right)^{\frac{1}{p}}.
\end{multline*}
Now we split $III_i$ as follows
\begin{multline*}
III_i\lesssim
\left(\dashint_{C_j(B)}\left(\int_0^{\frac{r_B}{2^{i+1}}}
\left(\int_{B(x,2^{i+1}t)}|e^{-\frac{t^2}{2}L}f(y)|^{p_0}\frac{dy}{t^{n}}\right)^{\frac{2}{p_0}}\frac{dt}{t}\right)^{\frac{p}{2}}dw\right)^{\frac{1}{p}}
\\
+
\left(\dashint_{C_j(B)}\left(\int_{\frac{r_B}{2^{i+1}}}^{r_B}
\left(\int_{B(x,2^{i+1}t)}|e^{-\frac{t^2}{2}L}f(y)|^{p_0}\frac{dy}{t^{n}}\right)^{\frac{2}{p_0}}\frac{dt}{t}\right)^{\frac{p}{2}}dw\right)^{\frac{1}{p}}=:III_i^1+III_i^2.
\end{multline*}
Note that for $x\in C_j(B)$ and $0<t<r_{B}/2^{i+1}$ we have that $B(x,2^{i+1}t)\subset 2^{j+2}B\setminus 2^{j-1}B$. Then, if $j\geq 3$, the estimate of $III_i^1$ follows as the estimate of $II_i$.
If $j=2$, we write  $B(x,2^{i+1}t)\subset \sum_{l=2}^{3}C_l(B)\cup (4B\setminus 2B)$ and proceed as in the estimate of $II_i$, applying \cite[Lemma 6.5]{AuscherMartell:II}. Hence, we obtain
$
III_i^1\lesssim 
e^{-c4^j}
\left(\dashint_{B}|f(y)|^{p}dw\right)^{\frac{1}{p}}.
$

In order to estimate $III_i^2$, we observe that for $x\in C_j(B)$, $j\geq 2$, $i\geq j-1$, and $r_B/2^{i+1}\leq t<r_B$, we have that $B,B(x,2^{i+1}t)\subset 2^{i+3}B$. Thus, proceeding as before,
 \begin{multline*}
 III_i^2\lesssim 2^{\frac{in}{p_0}}
\left(\int_{\frac{r_B}{2^{i+1}}}^{r_B}\left(\frac{r_B}{t}\right)^{\frac{2n}{p_0}}
\left(\dashint_{2^{i+3}B}|e^{-\frac{t^2}{2}L}(f\chi_{2^{i+3}B})(y)|^{q}dw\right)^{\frac{2}{q}}\frac{dt}{t}\right)^{\frac{1}{2}}
\\
\quad\lesssim 
\left(\dashint_{B}|f(y)|^{p}dw\right)^{\frac{1}{p}}
\left(\int_{\frac{r_B}{2^{i+1}}}^{r_B}
\left(\frac{2^{i}r_B}{t}\right)^{2\theta_2+\frac{2n}{p_0}}
\frac{dt}{t}\right)^{\frac{1}{2}}
\quad\lesssim 2^{ic}
\left(\dashint_{B}|f(y)|^{p}dw\right)^{\frac{1}{p}}.
 \end{multline*}
Consequently, we conclude that
\begin{align}\label{term1-1}
I_1^1\lesssim e^{-c4^j}\left(\dashint_{B}|f(y)|^{p}dw\right)^{\frac{1}{p}}.
\end{align}  
Let us now estimate $\mathcal{I}_{k}$.
 We shall use extrapolation
to show that  $\mathcal{I}_{k}\lesssim e^{-c4^j}\left(\dashint_{B}|f(y)|^pdw\right)^{\frac{1}{p}}$ for all $k\in \N$. To this end,
 we first show 
that for every $w_0\in RH_{\left(\frac{p_+(L)}{2}\right)'}$ if $T_t=t^2Le^{-t^2L}$ (or $w_0\in RH_{\left(\frac{q_+(L)}{2}\right)'}$ if $T_t=t\nabla_ye^{-t^2L}$), and $k\in \N$,
\begin{multline}\label{extraimprov}
\int_{C_j(B)}\int^{r_B}_0\int_{B(x,t)}|T_te^{-kr_B^2L}f(y)|^2
\frac{dy\,dt}{t^{n+1}}w_0(x)dx
\\
\lesssim
\int_{2^{j+3}B\setminus 2^{j-2}B}\left(\sum_{i\geq 1}e^{-c4^i}\left(\int_{B(x,2^{i+1}r_B)}|e^{-\frac{kr_B^2}{2}L}f(y)|^{p_0}\frac{dy}{r_B^n}\right)^{\frac{1}{p_0}}\right)^2
w_0(x)dx.
\end{multline}
 Then, note that since $p\leq r_wp_-(L)<q<\frac{p_+(L)}{s_w}$ (or $p\leq r_wp_-(L)<q<\frac{q_+(L)}{s_w}$), we have that $w\in RH_{\left(\frac{p_+(L)}{p}\right)'}$ (or $w\in RH_{\left(\frac{q_+(L)}{p}\right)'}$). Hence, \eqref{extraimprov} and
Theorem \ref{thm:extrapol}, part ($a$), (or  part ($b$) if $q_+(L),p_+(L)=\infty$),  imply that for all $k\in \N$,
$$
w(2^{j+1}B)\mathcal{I}_k^p
\lesssim
\int_{2^{j+3}B\setminus 2^{j-2}B}\left(\sum_{i\geq 1}e^{-c4^i}\left(\int_{B(x,2^{i+1}r_B)}|e^{-\frac{kr_B^2}{2}L}f(y)|^{p_0}\frac{dy}{r_B^n}\right)^{\frac{1}{p_0}}
\right)^{p}w(x)dx=:II.
$$
Thus, once proved \eqref{extraimprov}, to estimate $\mathcal{I}_k$ we just need to consider $II$. Let us postpone the proof of \eqref{extraimprov} until later and continue with the estimate of $\mathcal{I}_k$. Since  $w\in A_{\frac{q}{p_0}}$, proceeding as in \eqref{toweight}, we have 
$$
II\lesssim \left(\sum_{i\geq 1}e^{-c4^i}2^{\frac{in}{p_0}}\left(
 \int_{2^{j+3}B\setminus 2^{j-2}B}
\left(\dashint_{B(x,2^{i+1}r_B)}|e^{-\frac{kr_B^2}{2}L}f(y)|^{q}{dw}\right)^{\frac{p}{q}}dw\right)^{\frac{1}{p}}\right)^p.
$$
For $2\leq j\leq 4$, note that if $x\in 2^{j+3}B\setminus 2^{j-2}B$ then $B,B(x,2^{i+1}r_B)\subset 2^{i+7}B$. Hence, using \eqref{doublingcondition} and the $L^{p}(w)-L^{q}(w)$ off-diagonal estimates on balls satisfied by $e^{-\frac{kr_B^2}{2}L}$, we get 
\begin{multline*}
II^{\frac{1}{p}}
\lesssim
\sum_{i\geq 1}e^{-c4^i}w(B)^{\frac{1}{p}}
\left(\dashint_{2^{i+7}B}|e^{-\frac{kr_B^2}{2}L}(f\chi_{2^{i+7}B})(y)|^{q}{dw}\right)^{\frac{1}{q}}
\\
\lesssim
\sum_{i\geq 1}e^{-c4^i}
\left(\int_{B}|f(y)|^{p}{dw}\right)^{\frac{1}{p}}
\lesssim
\left(\int_{B}|f(y)|^{p}{dw}\right)^{\frac{1}{p}},
\end{multline*}
And, for $j\geq 5$, we proceed as before, but noticing this time that for $x\in 2^{j+3}B\setminus 2^{j-2}B$, if $1\leq i\leq j-4$  then $B(x, 2^{i+1}r_B)\subset 2^{j+4}B\setminus 2^{j-3}B$; and if $i\geq j-3$ $B,B(x, 2^{i+1}r_B)\subset 2^{i+7}B$. Hence,
\begin{multline*}
II^{\frac{1}{p}}
\lesssim  2^{\frac{jn}{p_0}}w(2^{j+1}B)^{\frac{1}{p}}
\sum_{i= 1}^{j-4}e^{-c4^i}\sum_{l=j-3}^{j+3}
\left(\dashint_{C_l(B)}|e^{-\frac{kr_B^2}{2}L}f(y)|^{q}{dw}\right)^{\frac{1}{q}}
\\
+e^{-c4^j}w(2^{j+1}B)^{\frac{1}{p}}
\sum_{i\geq j-3}e^{-c4^i}
\left(\dashint_{2^{i+7}B}|e^{-\frac{kr_B^2}{2}L}(f\chi_{2^{i+7}B})(y)|^{q}{dw}\right)^{\frac{1}{q}}
\\
\lesssim e^{-c4^j}w(2^{j+1}B)^{\frac{1}{p}}
\left(\dashint_{B}|f(y)|^{p}{dw}\right)^{\frac{1}{p}}.
\end{multline*}
Let us next prove \eqref{extraimprov}. When $T_t=t^2Le^{-t^2L}$, for $w_0\in RH_{\left(\frac{p_+(L)}{2}\right)'}$, if $p_+(L)<\infty$, we chose $2<\widetilde{q}<p_+(L)$ so that $w_0\in RH_{\left(\frac{\widetilde{q}}{2}\right)'}$; if $p_+(L)=\infty$, the condition $w_0\in RH_{\left(\frac{p_+(L)}{2}\right)'}$ becomes $w_0\in A_{\infty}$. In this case, we take $\widetilde{q}/2>s_w$, consequently $w\in RH_{\left(\frac{\widetilde{q}}{2}\right)'}$ and $\widetilde{q}/2>1$. When $T_t=t\nabla_ye^{-t^2L}$, we do the same but replacing $p_+(L)$ with $q_+(L)$. Hence, by Proposition \ref{prop:q-extra} and applying the $L^{p_0}(\R^n)-L^{\widetilde{q}}(\R^n)$ off-diagonal estimates satisfied by
 $T_{\sqrt{t^2+kr_B^2/2}}$, we have that, for $\eta=2$ if $T_t=t\nabla_y e^{-t^2L}$ or $\eta=4$ if $T_t=t^2Le^{-t^2L}$,
\begin{align*}
&\int_{C_j(B)}\int^{r_B}_0\int_{B(x,t)}|T_te^{-kr_B^2L}f(y)|^2
\frac{dy\,dt}{t^{n+1}}w_0(x)dx
\\&\quad
\lesssim
\int^{r_B}_0\left(\frac{t}{r_B}\right)^{\eta}\int_{C_j(B)}\left(\int_{B(x,r_Bt/r_B)}|T_{\sqrt{t^2+kr_B^2/2}}e^{-\frac{kr_B^2}{2}L}f(y)|^{\widetilde{q}}\frac{dy}{t^n}\right)^{\frac{2}{\widetilde{q}}}w_0(x)dx
\frac{dt}{t}
\\&\quad
\lesssim
\int^{r_B}_0\left(\frac{t}{r_B}\right)^{\eta}\int_{2^{j+3}B\setminus 2^{j-2}B}\left(\int_{B(x,r_B)}|T_{\sqrt{t^2+kr_B^2/2}}e^{-\frac{kr_B^2}{2}L}f(y)|^{\widetilde{q}}\frac{dy}{r_B^n}\right)^{\frac{2}{\widetilde{q}}}w_0(x)dx
\frac{dt}{t}
\\&\quad
\lesssim
\int^{r_B}_0\left(\frac{t}{r_B}\right)^{\eta}\frac{dt}{t}\int_{2^{j+3}B\setminus 2^{j-2}B}\left(\sum_{i\geq 1}e^{-c4^i}\left(\int_{B(x,2^{i+1}r_B)}|e^{-\frac{kr_B^2}{2}L}f(y)|^{p_0}\frac{dy}{r_B^n}\right)^{\frac{1}{p_0}}\right)^2w_0(x)dx
\\&\quad
\lesssim
\int_{2^{j+3}B\setminus 2^{j-2}B}\left(\sum_{i\geq 1}e^{-c4^i}\left(\int_{B(x,2^{i+1}r_B)}|e^{-\frac{kr_B^2}{2}L}f(y)|^{p_0}\frac{dy}{r_B^n}\right)^{\frac{1}{p_0}}\right)^2w_0(x)dx.
\end{align*}
Therefore, we conclude that $\mathcal{I}_k\lesssim e^{-c4^j}
\left(\dashint_{B}|f(y)|^{p}{dw}\right)^{\frac{1}{p}}$, for all $k\in \N$. This, \eqref{originalterm}, \eqref{term2} with $2M> n/p_0+\frac{nq}{p_0}+\theta_1+\theta_2$, \eqref{term1}, and \eqref{term1-1}, allow us to conclude the proof. 
\end{proof}
\begin{remark}
Note that, from the previous result and \cite[Theorem 1.15]{MartellPrisuelos}, in the case that $w\in A_{\infty}$ satisfying $\mathcal{W}_w(p_-(L),p_+(L))\neq \emptyset$, we also improve the lower exponent of the range of $p'$s where the conical square function associated with the Poisson semigroup \eqref{square-P-1}-\eqref{square-P-3} are bounded on $L^p(w)$. With the exception that in the case of $\Grm_{\pp}$ and $\Gcal_{\pp}$, we need to assume further  that $\mathcal{W}_w(q_-(L),q_+(L))\neq \emptyset$.
\end{remark}


\section{Characterization of $H^p_{L}(w)$, $0<p< 1$ }\label{section:DR}

In this section we include the case $p=1$ in the statement of our results, because they are also true on that case, but it should be noticed that, as we said above, the case $p=1$ was already obtained in \cite{MartellPrisuelos:II}.

\begin{theorem}\label{thm:hardychartz}
Given $w\in A_{\infty}$ and $0<p\leq 1$, let $H^p_L(w)$ be the fixed molecular Hardy space as in Remark \ref{notation:H1w}. For every $q\in\mathcal{W}_w(p_-(L),p_+(L))$, $\varepsilon>0$, and $ M\in \N$ such that $M>\frac{n}{2}\left(\frac{r_w}{p}-\frac{1}{p_-(L)}\right)$, the following spaces are isomorphic to $H^p_L(w)$ (and therefore one another) with equivalent norms
$$
H^p_{L,q,\varepsilon,M}(w);
\qquad
H^p_{\Scal_{m,\hh},q}(w),\ m\in \N;
\qquad
H^p_{\Grm_{m,\hh},q}(w),\ m\in \N_0;
\quad \textrm{and}
\qquad
H^p_{\Gcal_{m,\hh},q}(w),\ m\in \N_0.
$$
In particular, each of the previous spaces does not depend (modulo isomorphisms) on the choice of the allowable parameters $q$, $\varepsilon$, $M$, and $m$. 
\end{theorem}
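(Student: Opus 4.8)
The plan is to prove Theorem~\ref{thm:hardychartz} by establishing a cycle of continuous inclusions among the listed spaces, following the architecture of \cite{MartellPrisuelos:II} for the case $p=1$ but carrying the $L^p(w)$ norms through with $0<p\le 1$. The key technical inputs are already in place: Proposition~\ref{prop:contro-mol-SF} gives the uniform $L^p(w)$-bound of the square functions on molecules (hence the inclusions from the molecular space into the operator spaces), while the reverse direction requires producing a molecular decomposition of any $f\in\mathbb{H}^p_{\mathcal{T},q}(w)$.

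\textbf{Step 1: the molecular Hardy space embeds in each operator Hardy space.} Fix $q\in\mathcal{W}_w(p_-(L),p_+(L))$, $\varepsilon>0$, and $M$ as in the statement. Given a $\p$ of $f$, $f=\sum_i\lambda_i\mm_i$ converging in $L^q(w)$, Proposition~\ref{prop:contro-mol-SF}(b) (and its proof, which only uses sublinearity of $S$ on $L^q(w)$, the uniform bound $\|S\mm_i\|_{L^p(w)}\le C$, and $0<p\le1$ to sum the $p$-th powers) gives $\|Sf\|_{L^p(w)}\lesssim \big(\sum_i|\lambda_i|^p\big)^{1/p}$ for $S$ any of $\Scal_{m,\hh},\Grm_{m,\hh},\Gcal_{m,\hh}$. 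Taking the infimum over representations yields $\mathbb{H}^p_{L,q,\varepsilon,M}(w)\hookrightarrow\mathbb{H}^p_{S,q}(w)$ with control of norms, and passing to completions gives $H^p_{L,q,\varepsilon,M}(w)\hookrightarrow H^p_{S,q}(w)$.

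\textbf{Step 2: each operator Hardy space embeds in the molecular one.} This is the heart of the matter and is where I expect the main obstacle. Starting from $f\in\mathbb{H}^p_{\mathcal{T},q}(w)$, so that $\mathcal{T}f\in L^p(w)$ with $\mathcal{T}$ one of the heat square functions, one uses the Calder\'on reproducing formula \eqref{calderonIII}, $f=C_m\int_0^\infty\big((t^2L)^me^{-t^2L}\big)^2 f\,\frac{dt}{t}$ in $L^q(w)$, together with a tent-space atomic/slicing decomposition of the associated function $\mathcal{F}(y,t)=(t^2L)^me^{-t^2L}f(y)$ adapted to the level sets $O_k=\{\Scal_{m,\hh}f>2^k\}$ and their Whitney cubes. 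The output pieces $\mm_i$ obtained by integrating $(t^2L)^me^{-t^2L}$ against the tent-space atoms must be shown to be genuine $\mol$s: one checks the decay estimate \eqref{propertiesmol} on each annulus $C_i(Q)$ using the off-diagonal estimates on balls from Section~\ref{sec:od} and Proposition~\ref{prop:offdiagonalweightedballs}, the fact that $M>\frac{n}{2}(\frac{r_w}{p}-\frac{1}{p_-(L)})$, and Lemma~\ref{lemma:m-h}; and one controls $\big(\sum_i|\lambda_i|^p\big)^{1/p}\lesssim\|\Scal_{m,\hh}f\|_{L^p(w)}$ by summing the weighted measures $w(O_k)$ against $2^{kp}$. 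This is exactly the argument of \cite[Section 6]{MartellPrisuelos:II} for $p=1$; the modifications are the systematic replacement of $L^1(w)$ by $L^p(w)$, of $p$ by $q$ in the inner Lebesgue exponents, of \cite[Lemma 4.6]{MartellPrisuelos:II} by Lemma~\ref{lemma:m-h}, and of \cite[(3.3)]{MartellPrisuelos:II} by \eqref{propertiesmol}, with only routine bookkeeping changes. The genuinely delicate point that deserves care is that the exponent constraint on $M$ must be compatible with the summation of the $\ell^p$ quasi-norm for $p<1$, which is why the threshold $\frac{n}{2}(\frac{r_w}{p}-\frac{1}{p_-(L)})$ carries the factor $\frac1p$.

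\textbf{Step 3: independence of parameters and identification with $H^p_L(w)$.} Steps 1 and 2 combined with the fact (Remark after Definition~\ref{moleculas}, and the discussion in Remark~\ref{notation:H1w}) that one always has $\mathbb{H}^p_{\mathcal{T},q}(w)\subset\mathbb{H}^p_{L,q,\varepsilon,M}(w)$ give a closed loop
$$
H^p_{\Scal_{m,\hh},q}(w)\hookrightarrow H^p_{L,q,\varepsilon,M}(w)\hookrightarrow H^p_{\Scal_{m',\hh},q}(w),
$$
and similarly with $\Grm_{m,\hh}$, $\Gcal_{m,\hh}$ inserted anywhere in the loop; since the composition of all inclusions around the loop is the identity on a dense subspace, every arrow is an isomorphism with equivalent norms. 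In particular $H^p_{L,q,\varepsilon,M}(w)$ is independent, up to isomorphism, of $q\in\mathcal{W}_w(p_-(L),p_+(L))$, of $\varepsilon>0$, and of admissible $M$, so it coincides with the fixed space $H^p_L(w)=H^p_{L,q_0,\varepsilon_0,M_0}(w)$ of Remark~\ref{notation:H1w}; and all the $H^p_{\Scal_{m,\hh},q}(w)$, $H^p_{\Grm_{m,\hh},q}(w)$, $H^p_{\Gcal_{m,\hh},q}(w)$ are isomorphic to it. The one subtlety to record here is the comparison inequalities among the different square functions (e.g.\ $\Scal_{\hh}f\le\frac12\Gcal_{\hh}f$ and $\Gcal_{\hh}\le 2\Scal_{\hh}+\Grm_{\hh}$) together with \cite[Theorems 1.14, 1.15]{MartellPrisuelos}, which let one pass between the $m=0$, $m\in\N_0$, and $m\in\N$ variants without re-running the decomposition argument for each operator separately.
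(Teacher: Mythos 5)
Your proposal is correct and follows essentially the same route as the paper: the paper also reduces to the $p=1$ argument of \cite[Proposition 5.1]{MartellPrisuelos:II}, using Proposition \ref{prop:contro-mol-SF} for the molecular-to-operator direction and the Calder\'on reproducing formula plus the tent-space/Whitney decomposition for the converse, with the same key modifications you identify (replacing $L^1(w)$ by $L^p(w)$, invoking Lemma \ref{lemma:m-h}, and renormalizing the coefficients to $\lambda_l^j=2^l w(Q_l^j)^{1/p}$ so that the $\ell^p$ quasi-norm is controlled by $\|\Scal_{m,\hh}f\|_{L^p(w)}$). The closing of the loop and the passage among $\Scal_{m,\hh}$, $\Grm_{m,\hh}$, $\Gcal_{m,\hh}$ also matches the paper's Proposition \ref{lema:SH-1}.
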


\begin{theorem}\label{thm:hardychartzPoisson}
Given $w\in A_{\infty}$ and $0<p\leq 1$, let $H^p_L(w)$ be the fixed molecular Hardy space as in Remark \ref{notation:H1w}. For every $q\in\mathcal{W}_w(p_-(L),p_+(L))$, the following spaces are isomorphic to $H^p_L(w)$ (and therefore  one another) with equivalent norms
$$
H^p_{\Scal_{K,\pp},q}(w),\ K\in \N;
\qquad
H^p_{\Grm_{K,\pp},q}(w),\ K\in \N_0;
\qquad\textrm{and}\qquad
H^p_{\Gcal_{K,\pp},q}(w),\ K\in \N_0.
$$
In particular, each of the previous spaces does not depend (modulo isomorphisms) on the choice of $q$, and $K$.
\end{theorem}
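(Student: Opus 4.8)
The plan is to prove Theorems \ref{thm:hardychartz} and \ref{thm:hardychartzPoisson} simultaneously by establishing a chain of continuous inclusions connecting the molecular space $H^p_{L,q,\varepsilon,M}(w)$ with each of the operator-defined Hardy spaces, and then invoking the already-available ingredients: the uniform boundedness of the square functions on molecules (Proposition \ref{prop:contro-mol-SF}, and the non-tangential analogue Proposition \ref{prop:control-mol-nontangential}), the $L^q(w)$-boundedness of all the conical square functions (\cite[Theorems 1.12 and 1.13]{MartellPrisuelos} together with the improvement in Theorem \ref{thm:SF-heat-III}), and the Calderón reproducing formula \eqref{calderonIII}. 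Since $0<p\le 1$ this is exactly the setup of \cite{MartellPrisuelos:II} for $p=1$, and the proof follows that template with the $L^1(w)$ norms replaced by $L^p(w)$ norms (using $p$-subadditivity of $\|\cdot\|_{L^p(w)}^p$ in place of the triangle inequality), $p$ replaced by $q$ in the appropriate places, and \cite[Lemma 4.6]{MartellPrisuelos:II} replaced by Lemma \ref{lemma:m-h}.

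The two inclusions to establish are as follows. \emph{First}, $H^p_{L,q,\varepsilon,M}(w)\hookrightarrow H^p_{\mathcal{T},q}(w)$ for each $\mathcal{T}\in\{\Scal_{m,\hh},\Grm_{m,\hh},\Gcal_{m,\hh},\Scal_{K,\pp},\Grm_{K,\pp},\Gcal_{K,\pp}\}$: this is precisely Proposition \ref{prop:contro-mol-SF}(b) (and its Poisson counterpart, obtained as in \cite{MartellPrisuelos:II} by subordinating the Poisson semigroup to the heat semigroup and using the extrapolation argument already displayed in the proof of Proposition \ref{prop:control-mol-nontangential}), which gives $\|\mathcal{T}f\|_{L^p(w)}\lesssim\|f\|_{\mathbb{H}^p_{L,q,\varepsilon,M}(w)}$ on the dense subspace of molecular representations, hence extends to the completions. \emph{Second}, the converse $\mathbb{H}^p_{\mathcal{T},q}(w)\hookrightarrow \mathbb{H}^p_{L,q,\varepsilon,M}(w)$: given $f\in\mathbb{H}^p_{\mathcal{T},q}(w)$, one applies the Calderón reproducing formula \eqref{calderonIII} (valid in $L^q(w)$ by Remark \ref{remark:calderonreproducing}), decomposes the region $\mathbb{R}^{n+1}_+$ using the tent-space stopping-time machinery (the sets $O_k$, $O_k^*$ and Whitney cubes as in Section \ref{sec:interpol}), and checks that the resulting pieces $\lambda_i\mm_i$ are, after normalization, $(w,q,\varepsilon,M)$-molecules with $\{\lambda_i\}\in\ell^p$ and $\sum_i|\lambda_i|^p\lesssim\|\mathcal{T}f\|_{L^p(w)}^p$; the molecular bounds \eqref{propertiesmol} are verified using the off-diagonal estimates on balls (Section \ref{sec:od}) exactly as in \cite[Proposition 5.1]{MartellPrisuelos:II}, with no restriction on $p$. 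Chaining these two inclusions, together with the already-established fact (Proposition \ref{prop:contro-mol-SF}, and the remark after the definition of molecular spaces) that the molecular spaces for different allowable $q,\varepsilon,M$ all coincide, yields that every space in the list is isomorphic to the fixed space $H^p_L(w)$ with equivalent norms, and in particular independence of $q$, $\varepsilon$, $M$, $m$ (resp.\ $q$ and $K$).

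The main obstacle is the construction of the molecular decomposition in the second inclusion, i.e.\ verifying that the tent-space pieces genuinely satisfy the molecular $w$-norm estimate with the correct weighted-measure powers $w(2^{i+1}Q)^{1/p-1/q}$ and the decay $2^{-i\varepsilon}$. This requires the weighted off-diagonal estimates on balls for $(e^{-t^2L}-e^{-(t^2+s^2)L})^M$ (Proposition \ref{prop:offdiagonalweightedballs}) to control the tails, and care must be taken because the exponent $p$ in $[w]$-dependent quantities such as the constraint $M>\frac{n}{2}(\frac{r_w}{p}-\frac{1}{p_-(L)})$ is now below $1$, so the doubling inequality \eqref{doublingcondition} and the reverse-Hölder/$A_r$ bounds \eqref{pesosineq:Ap}, \eqref{pesosineq:RHq} must be applied with the worst-case exponents $r_w$ and $s_w$; this is exactly where the hypothesis on $M$ enters to guarantee summability in $i$ and $j$. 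Everything else is a routine transcription of the $p=1$ arguments of \cite{MartellPrisuelos:II}, so I would present the proof by stating the two inclusions as the key steps and referring to \cite{MartellPrisuelos:II} for the details that transcribe verbatim, highlighting only the places where $0<p<1$ forces a change (use of $p$-subadditivity, and the appearance of $r_w/p$ and $s_w$ in the molecular parameters).
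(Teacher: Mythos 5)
Your proposal is correct and follows essentially the same route as the paper: one inclusion via the uniform bound on molecules (Proposition \ref{prop:contro-mol-SF}, which already covers the Poisson square functions \eqref{square-P-1}--\eqref{square-P-3}), and the converse via the Calder\'on reproducing formula and the tent-space decomposition of \cite[Proposition 6.1]{MartellPrisuelos:II} with the coefficients renormalized to $\lambda_l^j=2^l w(Q_l^j)^{1/p}$. The only detail worth flagging is that in the Poisson case the decomposition produces molecules with the specific parameter $\varepsilon_0=2M+2K+\tfrac n2-\tfrac{nr_w}{p}$ rather than an arbitrary $\varepsilon>0$, which is harmless because the molecular space is independent of $\varepsilon$.
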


\begin{theorem}\label{thm:hardychartzNontangential}
Given $w\in A_{\infty}$ and $0<p\leq 1$, let $H^p_L(w)$ be the fixed molecular Hardy space as in Remark \ref{notation:H1w}. For every $q\in\mathcal{W}_w(p_-(L),p_+(L))$, the following spaces are isomorphic to $H^p_L(w)$ (and therefore one another) with equivalent norms
$$
H^p_{\Ncal_{\hh},q}(w)
\qquad\textrm{and}\qquad
H^p_{\Ncal_{\pp},q}(w).
$$
In particular, each of the previous spaces does not depend (modulo isomorphisms) on the choice of $q$.
\end{theorem}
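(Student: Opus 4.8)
The plan is to prove Theorems \ref{thm:hardychartz}, \ref{thm:hardychartzPoisson}, and \ref{thm:hardychartzNontangential} simultaneously, following the scheme of \cite{MartellPrisuelos:II} for $p=1$, with the only changes being the replacement of $L^1(w)$ by $L^p(w)$, of \cite[Lemma 4.6]{MartellPrisuelos:II} by Lemma \ref{lemma:m-h}, of $p$ by $q$ in the molecular exponents, and of \cite[(3.3)]{MartellPrisuelos:II} by \eqref{propertiesmol}. The strategy factors through the fixed molecular space $H^p_L(w)=H^p_{L,q_0,\varepsilon_0,M_0}(w)$ of Remark \ref{notation:H1w}, and proceeds along the following chain of inclusions, each with norm control: first, $H^p_{L,q,\varepsilon,M}(w)\hookrightarrow H^p_{\mathcal{T},q}(w)$ for any of the square functions $\mathcal{T}$ in \eqref{square-H-1}--\eqref{square-P-3} or a non-tangential maximal function in \eqref{non-tangential}; second, $H^p_{\mathcal{T},q}(w)\hookrightarrow H^p_{L,q,\varepsilon,M}(w)$; and third, $H^p_{L,q,\varepsilon,M}(w)$ is independent (modulo isomorphism) of the allowable parameters $q$, $\varepsilon$, $M$ (and $m$ or $K$, which are absorbed into the square function).

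The first inclusion is exactly the content of Proposition \ref{prop:contro-mol-SF}, part (b), for the heat and Poisson square functions, and of Proposition \ref{prop:control-mol-nontangential}, part (b), for the non-tangential maximal functions: given $f\in\mathbb{H}^p_{L,q,\varepsilon,M}(w)$ with a molecular representation $f=\sum_i\lambda_i\mm_i$, one uses the $p$-subadditivity of $\|\cdot\|_{L^p(w)}^p$ (valid since $0<p\le1$), the $L^q(w)$-boundedness of the operators, and the uniform bound $\|\mathcal{T}\mm_i\|_{L^p(w)}\le C$ from parts (a) of those propositions, to get $\|\mathcal{T}f\|_{L^p(w)}^p\le C\sum_i|\lambda_i|^p$, hence $\|f\|_{\mathbb{H}^p_{\mathcal{T},q}(w)}\lesssim\|f\|_{\mathbb{H}^p_{L,q,\varepsilon,M}(w)}$; taking completions gives the bounded inclusion of Hardy spaces. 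The second inclusion is the heart of the matter: one must show that every $f\in\mathbb{H}^p_{\mathcal{T},q}(w)$ admits a molecular representation with $\left(\sum_i|\lambda_i|^p\right)^{1/p}\lesssim\|\mathcal{T}f\|_{L^p(w)}$. For a square function $\mathcal{T}=Q$ one writes the Calderón reproducing formula $f=C_m\int_0^\infty(t^2L)^me^{-t^2L}(t^2L)^me^{-t^2L}f\,\frac{dt}{t}$ in $L^q(w)$ (as in \eqref{calderonIII} and Remark \ref{remark:calderonreproducing}), views $\mathcal{F}(y,t):=(t^2L)^me^{-t^2L}f(y)$ as an element of the tent space $T^p_q(w)$ via the identity $\|\mathcal{A}_q\mathcal{F}\|_{L^p(w)}\approx\|Q_{m}f\|_{L^p(w)}$, then uses the weighted tent-space atomic decomposition to split $\mathcal{F}$ into tent atoms, and finally applies $(t^2L)^me^{-t^2L}\circ(\cdot)$ to each tent atom to produce a $(w,q,p,\varepsilon,M)$-molecule times a scalar, with $\ell^p$-control of the scalars coming from the tent-space decomposition; the verification that the resulting functions satisfy the molecular $w$-norm bound is the long computation, carried out as in \cite{MartellPrisuelos:II} but with $L^p(w)$ in place of $L^1(w)$ and Lemma \ref{lemma:m-h} in place of \cite[Lemma 4.6]{MartellPrisuelos:II}. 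For the non-tangential maximal functions one first reduces to a square function via the comparison estimates of \cite[Section 7]{MartellPrisuelos:II} (the auxiliary estimate for $\int_{\R^n}\left(\int_{1/4}^\infty e^{-u}\Scal_{\hh}^{4\sqrt u}\mm\,du\right)^pw$ already appears in the discussion after Proposition \ref{prop:control-mol-nontangential}), then invokes the square-function case.

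The third point — independence of $q$, $\varepsilon$, $M$ — follows once the first two are established for all admissible parameters: if $q_1,q_2\in\mathcal{W}_w(p_-(L),p_+(L))$ and $\varepsilon_j>0$, $M_j>\frac{n}{2}(\frac{r_w}{p}-\frac{1}{p_-(L)})$, then $H^p_{L,q_1,\varepsilon_1,M_1}(w)\hookrightarrow H^p_{\Scal_{\hh},q_1}(w)$; but the molecular decomposition produced in the second step can be arranged to yield $(w,q_2,p,\varepsilon_2,M_2)$-molecules for the target parameters (choosing $m$ large enough relative to $M_2$ and using that a molecule for one admissible parameter set, after the reproducing-formula argument, is recognized as a molecule for any other), giving $H^p_{\Scal_{\hh},q_1}(w)\hookrightarrow H^p_{L,q_2,\varepsilon_2,M_2}(w)$, and symmetrically; so all $H^p_{L,q,\varepsilon,M}(w)$ coincide with $H^p_L(w)$. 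The main obstacle — and the only place real work is needed beyond citing the $p=1$ arguments — is the molecular $w$-norm estimate in the second inclusion: one must check that applying $(t^2L)^me^{-t^2L}$ to a weighted tent atom supported in the Carleson box over a cube $Q$ yields, after expansion $\mm=B_Q\mm+A_Q\mm$ as in the proof of Proposition \ref{prop:contro-mol-Riesz}, a function whose tails on the annuli $C_i(Q)$ decay like $2^{-i\varepsilon}w(2^{i+1}Q)^{1/q-1/p}$; this rests on the off-diagonal estimates on balls from Proposition \ref{prop:offdiagonalweightedballs} and their unweighted counterparts, on Lemma \ref{lemma:m-h}, on the doubling property \eqref{doublingcondition}, and on the condition $M>\frac{n}{2}(\frac{r_w}{p}-\frac{1}{p_-(L)})$, which is precisely what forces the geometric tail in $i$. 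Everything else is, as the authors note, "analogous to \cite{MartellPrisuelos:II}", so I would present the proof as: (i) state the chain of inclusions; (ii) point to Propositions \ref{prop:contro-mol-SF}, \ref{prop:control-mol-nontangential} for the easy direction; (iii) carry out, or sketch with the indicated substitutions, the tent-space/molecular-decomposition argument for the hard direction; (iv) deduce parameter-independence; and (v) assemble the three theorems.
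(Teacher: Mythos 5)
Your proposal follows essentially the same route as the paper: both directions factor through the fixed molecular space, the easy inclusion is Proposition \ref{prop:control-mol-nontangential}, the hard inclusion reduces the non-tangential maximal functions to the square functions $\Scal_{\hh}$ and $\Gcal_{\pp}$ exactly as in \cite[Proposition 7.31]{MartellPrisuelos:II} with the substitutions $1\to p$, $p\to q$, and the conclusion is assembled via the parameter-independence of $H^p_{L,q,\varepsilon,M}(w)$ already secured by Theorem \ref{thm:hardychartz}. This matches the paper's proof (Proposition \ref{lemma:hardy-N} plus the preceding auxiliary results), so no further comparison is needed.
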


The proofs of these theorems are analogous to those of the case $p=1$ (obtained in \cite{MartellPrisuelos:II}). Hence, we need similar results to \cite[Propositions 5.1, 6.1, and 7.31]{MartellPrisuelos:II}. These are the following.
\begin{proposition}\label{lema:SH-1}
Let $w\in A_{\infty}$, $q_1,q_2\in \mathcal{W}_w(p_-(L),p_+(L))$,  $0<p\leq 1$, $\varepsilon>0$,  and $M\in \N$ be
 such that $M>\frac{n}{2}\left(\frac{r_w}{p}-\frac{1}{p_-(L)}\right)$, there hold
\begin{list}{$(\theenumi)$}{\usecounter{enumi}\leftmargin=1cm \labelwidth=1cm\itemsep=0.2cm\topsep=.2cm \renewcommand{\theenumi}{\alph{enumi}}}

\item 
$
\mathbb{H}^p_{L,q_1,\varepsilon,M}(w)=\mathbb{H}^p_{\Scal_{m,\hh},q_1}(w)
$
with equivalent norms, for all $m\in \N$,;

\item the spaces
$
H^p_{\Scal_{m,\hh},q_1}(w)$ and $H_{\Scal_{m,\hh},q_2}^p(w)
$ are isomorphic, for all $m\in \N$,;

\item  
 $
\mathbb{H}_{L,q_1,\varepsilon,M}^p(w)= \mathbb{H}_{\Grm_{m,\hh},q_1}^p(w)= \mathbb{H}_{\Gcal_{m,\hh},q_1}^p(w),
$  with equivalent norms, for all $m\in \N_{0}$,.
\end{list}

\end{proposition}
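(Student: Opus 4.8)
The plan is to mimic the proof of \cite[Proposition 5.1]{MartellPrisuelos:II}, which treats $p=1$, performing the routine changes needed to pass to $0<p\le1$: replace the $L^1(w)$ norm by the $L^p(w)$ quasi-norm, the triangle and Minkowski inequalities by the $\ell^p$- and $L^p(w)$-quasi-triangle inequalities (available since $p\le1$), \cite[Lemma 4.6]{MartellPrisuelos:II} by Lemma \ref{lemma:m-h}, \cite[Proposition 5.2]{MartellPrisuelos:II} by Proposition \ref{prop:contro-mol-SF}, and \cite[(3.3)]{MartellPrisuelos:II} by \eqref{propertiesmol}. The external inputs are the weighted tent-space atomic decomposition of \cite[Section 3]{MartellPrisuelos} (see also \cite{Caoetal}), which already covers all $0<p<\infty$; the Calder\'on reproducing formula \eqref{calderonIII} together with the reconstruction operators $\widetilde{\mathcal Q}_{L,N}\colon T^q(w)\to L^q(w)$ of Section \ref{sec:interpol}; and the $L^q(w)$-boundedness of the square functions \eqref{square-H-1}--\eqref{square-H-3} for $q\in\mathcal W_w(p_-(L),p_+(L))$ (Theorem \ref{thm:SF-heat-III} and \cite[Theorems 1.12 and 1.13]{MartellPrisuelos}).

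For part $(a)$ the inclusion $\mathbb H^p_{L,q_1,\varepsilon,M}(w)\subseteq\mathbb H^p_{\Scal_{m,\hh},q_1}(w)$, with control of the quasi-norms, is exactly Proposition \ref{prop:contro-mol-SF}$(b)$ (a $\p$ converges in $L^{q_1}(w)$, so its sum lies in $L^{q_1}(w)$). For the converse I would take $f\in\mathbb H^p_{\Scal_{m,\hh},q_1}(w)$, set $\mathcal F(y,t):=(t^2L)^me^{-t^2L}f(y)$ so that $\mathcal A\mathcal F=\Scal_{m,\hh}f$ and hence $\mathcal F\in T^p(w)\cap T^{q_1}(w)$, decompose $\mathcal F=\sum_i\lambda_iA_i$ (convergent in $T^p(w)$ and in $T^{q_1}(w)$) into $T^p(w)$-atoms $A_i$ supported in tents over cubes $Q_i$, with the usual $L^2$ normalization and $\big(\sum_i|\lambda_i|^p\big)^{1/p}\lesssim\|\mathcal F\|_{T^p(w)}=\|\Scal_{m,\hh}f\|_{L^p(w)}$, and then recover $f$ via the reproducing formula in the form $f=C\int_0^\infty(t^2L)^Ne^{-t^2L}\mathcal F(\cdot,t)\,\tfrac{dt}{t}=C\,\widetilde{\mathcal Q}_{L,N}\mathcal F=C\sum_i\lambda_i\mm_i$ in $L^{q_1}(w)$ (cf.\ Remark \ref{remark:extenssion}), where $N>M$ is to be chosen large and $\mm_i:=\widetilde{\mathcal Q}_{L,N}A_i$. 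The crux is to show that each $\mm_i$ is a fixed constant multiple of a $(w,q,p,\varepsilon',M)$-molecule associated with $Q_i$, simultaneously for every $q\in\mathcal W_w(p_-(L),p_+(L))$ and with $\varepsilon'$ as large as one wishes once $N$ is large enough; I would do this exactly as in the local/global estimates in the proof of Proposition \ref{prop:contro-mol-Riesz}, using the support of $A_i$, the $L^2\!-\!L^{q_0}(\R^n)$ off-diagonal estimates for $\{(t^2L)^{N-k}e^{-t^2L}\}_{t>0}$ (Section \ref{sec:od}) followed by H\"older's inequality to pass to $L^q(w)$ on the annuli $C_j(Q_i)$, and the relation $N>M>\tfrac n2\big(\tfrac{r_w}{p}-\tfrac1{p_-(L)}\big)$. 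This gives $f\in\mathbb H^p_{L,q_1,\varepsilon',M}(w)$; finally, since a $(w,q,p,\varepsilon_1,M)$-molecule is also a $(w,q,p,\varepsilon_2,M)$-molecule whenever $\varepsilon_1\ge\varepsilon_2$ (hence $\mathbb H^p_{L,q,\varepsilon_1,M}(w)\subseteq\mathbb H^p_{L,q,\varepsilon_2,M}(w)$), choosing $\varepsilon'\ge\varepsilon$ lands us in $\mathbb H^p_{L,q_1,\varepsilon,M}(w)$, completing $(a)$.

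For part $(b)$, in view of $(a)$ it suffices to realize $H^p_{\Scal_{m,\hh},q_1}(w)$ and $H^p_{\Scal_{m,\hh},q_2}(w)$ as completions of one common subspace on which the two quasi-norms coincide. First I would observe that $\Scal_{m,\hh}g$ is unambiguous for $g\in L^{q_1}(w)\cap L^{q_2}(w)$: approximate $g$ by $L^\infty_c(\R^n)$ functions, on which both realizations of the heat semigroup agree with the $L^2$ one, and use consistency of the infinitesimal generators (Remarks \ref{remark:extenssion} and \ref{remark:calderonreproducing}); thus the seminorms of $\mathbb H^p_{\Scal_{m,\hh},q_1}(w)$ and $\mathbb H^p_{\Scal_{m,\hh},q_2}(w)$ agree on $\mathbb H^p_{\Scal_{m,\hh},q_1}(w)\cap L^{q_2}(w)$. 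Density of this subspace in each $\mathbb H^p_{\Scal_{m,\hh},q_i}(w)$ I would get from the molecular representation of $(a)$: there each $\mm_i$ is a $(w,q,p,\varepsilon',M)$-molecule for \emph{every} admissible $q$, hence lies in $L^{q_1}(w)\cap L^{q_2}(w)$, so the partial sums $C\sum_{i\le\ell}\lambda_i\mm_i$ belong to the common subspace and, by $\|\Scal_{m,\hh}(\sum_{i>\ell}\lambda_i\mm_i)\|_{L^p(w)}^p\le\sum_{i>\ell}|\lambda_i|^p\|\Scal_{m,\hh}\mm_i\|_{L^p(w)}^p\lesssim\sum_{i>\ell}|\lambda_i|^p\to0$ (Proposition \ref{prop:contro-mol-SF}$(a)$ and $\{\lambda_i\}\in\ell^p$), converge to $f$; symmetrically for $q_2$. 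Hence the identity extends to an isometric isomorphism. (Alternatively, H\"older on the $C_i(Q)$ shows directly that a $(w,q_2,p,\varepsilon,M)$-molecule is a constant multiple of a $(w,q_1,p,\varepsilon,M)$-molecule when $q_1<q_2$.)

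For part $(c)$ I would use $|t\nabla_{y,t}(t^2L)^me^{-t^2L}f|^2=|t\nabla_y(t^2L)^me^{-t^2L}f|^2+|t\partial_t(t^2L)^me^{-t^2L}f|^2$ together with the identity $t\partial_t(t^2L)^me^{-t^2L}=2m(t^2L)^me^{-t^2L}-2(t^2L)^{m+1}e^{-t^2L}$, which give $\Gcal_{m,\hh}f\lesssim\Grm_{m,\hh}f+\Scal_{m,\hh}f+\Scal_{m+1,\hh}f$ pointwise and reduce the $\Gcal_{m,\hh}$ case to the $\Grm_{m,\hh}$ and $\Scal$ cases. For $\Grm_{m,\hh}$, the inclusion $\mathbb H^p_{L,q_1,\varepsilon,M}(w)\subseteq\mathbb H^p_{\Grm_{m,\hh},q_1}(w)$ is again Proposition \ref{prop:contro-mol-SF}$(b)$, while for the converse I would repeat the argument of $(a)$ but now write the extra semigroup factor in divergence form, $t^2L=-t\,\div(tA\nabla)$, and use the corresponding divergence-form reconstruction operator $\vec G\mapsto\int_0^\infty t\,\div_y\vec G(\cdot,t)\,\tfrac{dt}{t}$ acting on the vector-valued square-function data $t\nabla_y(t^2L)^me^{-t^2L}f$, again mapping tent atoms to uniform multiples of molecules. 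Combining the two inclusions yields $\mathbb H^p_{\Grm_{m,\hh},q_1}(w)=\mathbb H^p_{\Gcal_{m,\hh},q_1}(w)=\mathbb H^p_{L,q_1,\varepsilon,M}(w)$ with equivalent quasi-norms. The step I expect to be the real obstacle is the atom-to-molecule verification invoked in $(a)$ and $(c)$: showing that $\widetilde{\mathcal Q}_{L,N}$ (resp.\ its divergence-form analogue) maps $T^p(w)$-atoms to uniform multiples of molecules belonging to \emph{all} the spaces $L^q(w)$, $q\in\mathcal W_w(p_-(L),p_+(L))$, with a decay $\varepsilon'$ tunable through $N$ --- this is where the hypothesis $M>\tfrac n2\big(\tfrac{r_w}{p}-\tfrac1{p_-(L)}\big)$ enters, and it is carried out by the same off-diagonal/H\"older estimates as in the proof of Proposition \ref{prop:contro-mol-Riesz}.
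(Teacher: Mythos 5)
Your proposal matches the paper's own treatment: the paper proves this proposition precisely by repeating the argument of \cite[Proposition 5.1]{MartellPrisuelos:II} (the $p=1$ case) with the obvious substitutions — replacing $1$ by $p$, the exponents $p,q$ there by $q_1,q_2$, and the coefficients in the molecular decomposition by $\lambda_l^j:=2^lw(Q_l^j)^{1/p}$ — which is exactly the tent-space-atom-to-molecule scheme you describe. Your outline of the underlying $p=1$ proof (atomic decomposition in $T^p(w)$, Calder\'on reproducing formula, uniform molecule estimates via off-diagonal bounds, and the divergence-form reconstruction for $\Grm_{m,\hh}$) is the correct one, so the proposal is sound and follows essentially the same route as the paper.
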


\begin{proposition}\label{lemma:SKP}
Given $w\in A_{\infty}$, $q_1,q_2\in \mathcal{W}_w(p_-(L),p_+(L))$, $0<p\leq 1$ $K,\,M \in \N$
such that $M>\frac{n}{2}\left(\frac{r_w}{p}-\frac{1}{2}\right)$,  and
$\varepsilon_0=2M+2K+\frac{n}{2}-\frac{nr_w}{p}$, there hold

\begin{list}{$(\theenumi)$}{\usecounter{enumi}\leftmargin=1cm \labelwidth=1cm\itemsep=0.2cm\topsep=.2cm \renewcommand{\theenumi}{\alph{enumi}}}
\item $\mathbb{H}^p_{L,q_1,\varepsilon_0,M}(w)= \mathbb{H}^p_{\Scal_{K,\pp},q_1}(w),$
with equivalent norms;

\item the spaces
$
H^p_{\Scal_{K,\pp},q_1}(w)$ and $H_{\Scal_{K,\pp},q_2}^p(w)
$ are isomorphic;

\item  $
\mathbb{H}_{L,q_1,\varepsilon_0,M}^p(w)= \mathbb{H}_{\Grm_{K-1,\pp},q_1}^p(w)= \mathbb{H}_{\Gcal_{K-1,\pp},q_1}^p(w),
$  with equivalent norms.
\end{list}
\end{proposition}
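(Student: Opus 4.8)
The plan is to treat Proposition~\ref{lemma:SKP} exactly as its heat-semigroup counterpart Proposition~\ref{lema:SH-1}, whose proof is in turn a direct adaptation of \cite[Proposition~6.1]{MartellPrisuelos:II} (the case $p=1$). Since the statement involves only the Poisson square functions $\Scal_{K,\pp}$, $\Grm_{K-1,\pp}$, $\Gcal_{K-1,\pp}$ and the molecular space $\mathbb{H}^p_{L,q_1,\varepsilon_0,M}(w)$, the three parts $(a)$, $(b)$, $(c)$ split into the same two tasks as in the unweighted case: (i) showing that every molecule has Poisson square function in $L^p(w)$ with uniform bounds, hence $\mathbb{H}^p_{L,q_1,\varepsilon_0,M}(w)\hookrightarrow \mathbb{H}^p_{\Scal_{K,\pp},q_1}(w)$ (and similarly for $\Grm_{K-1,\pp}$, $\Gcal_{K-1,\pp}$); and (ii) producing, for each $f\in \mathbb{H}^p_{\Scal_{K,\pp},q_1}(w)$, a molecular representation $\sum_i\lambda_i\mm_i$ with $(\sum_i|\lambda_i|^p)^{1/p}\lesssim \|\Scal_{K,\pp}f\|_{L^p(w)}$, which gives the reverse inclusion.

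For direction (i), the uniform bound $\|\Scal_{K,\pp}\mm\|_{L^p(w)}\lesssim 1$ is Proposition~\ref{prop:contro-mol-SF}, part~$(a)$, applied to the Poisson square function $\Scal_{K,\pp}$ (which is one of \eqref{square-P-1}--\eqref{square-P-3}); the same applies to $\Grm_{K-1,\pp}$ and $\Gcal_{K-1,\pp}$ since they too are covered by Proposition~\ref{prop:contro-mol-SF}. Then, exactly as in Proposition~\ref{prop:contro-mol-SF} part~$(b)$, using that each of these square functions is bounded on $L^{q_1}(w)$ for $q_1\in\mathcal{W}_w(p_-(L),p_+(L))$ (by \cite[Theorem~1.13 and Theorem~1.15]{MartellPrisuelos} together with Theorem~\ref{thm:SF-heat-III}), and using subadditivity of $\|\cdot\|_{L^p(w)}^p$ for $0<p\le 1$, one passes from molecules to arbitrary elements of $\mathbb{H}^p_{L,q_1,\varepsilon_0,M}(w)$. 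The equivalences among $\Scal_{K,\pp}$, $\Grm_{K-1,\pp}$, $\Gcal_{K-1,\pp}$ in part~$(c)$ follow from the pointwise comparison $\Scal_{K,\pp}f\le\tfrac12\Gcal_{K-1,\pp}f$ and $\Gcal_{K-1,\pp}f\le 2\Scal_{K,\pp}f+\Grm_{K-1,\pp}f$ together with the boundedness of $\Grm_{K-1,\pp}$ on $L^p(w)$ proved by reduction to the heat case via subordination, just as in \cite{MartellPrisuelos:II}.

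For direction (ii) --- the molecular decomposition --- the argument is the Calder\'on-reproducing-formula / tent-space atomic decomposition scheme of \cite{HofmannMayborodaMcIntosh} adapted to weights as in \cite[Section~6]{MartellPrisuelos:II}. One writes, for $f\in\mathbb{H}^p_{\Scal_{K,\pp},q_1}(w)\subset L^{q_1}(w)$, a Calder\'on reproducing formula based on the Poisson semigroup (analogous to \eqref{calderonIII} but with $(t\sqrt L)^{2K}e^{-t\sqrt L}$), uses that $\mathcal{F}(y,t):=(t\sqrt L)^{2K}e^{-t\sqrt L}f(y)$ lies in the weighted tent space $T^p(w)$ because $\|\mathcal{F}\|_{T^p(w)}=\|\Scal_{K,\pp}f\|_{L^p(w)}$, invokes the weighted tent-space atomic decomposition $\mathcal{F}=\sum_i\lambda_i a_i$ with $T^p(w)$-atoms $a_i$ supported in tents over balls $Q_i$, and then sets $\mm_i:=c_K\int_0^\infty (t\sqrt L)^{2K}e^{-t\sqrt L}a_i(\cdot,t)\tfrac{dt}{t}$. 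The verification that each $\mm_i$ is (up to a harmless constant) a $(w,q_1,p,\varepsilon_0,M)$-molecule --- in particular the decay encoded by $\varepsilon_0=2M+2K+\tfrac n2-\tfrac{nr_w}{p}$ on the annuli $C_i(Q)$, checked through the polynomial-type off-diagonal estimates of $\{t\nabla_y(t\sqrt L)^{2K}e^{-t\sqrt L}\}$ and $\{(t\sqrt L)^{2K}e^{-t\sqrt L}\}$ recorded in Section~\ref{sec:od} --- and that $(\sum_i|\lambda_i|^p)^{1/p}\approx\|\mathcal{F}\|_{T^p(w)}$, is where essentially all the work lies; this is the main obstacle, but it is precisely the computation carried out for $p=1$ in \cite[Proposition~6.1]{MartellPrisuelos:II}, and the only changes are replacing the $L^1(w)$ norm by the $L^p(w)$ norm, replacing \cite[Lemma~4.6]{MartellPrisuelos:II} by Lemma~\ref{lemma:m-h}, \cite[(3.3)]{MartellPrisuelos:II} by \eqref{propertiesmol}, and adjusting the constraint on $M$ (now $M>\tfrac n2(\tfrac{r_w}{p}-\tfrac12)$) and on $\varepsilon_0$ accordingly; the restriction $0<p\le1$ is used only through $\|\cdot\|^p$-subadditivity and causes no new difficulty. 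Part~$(b)$, the independence of $q_1$, then follows formally: both $\mathbb{H}^p_{\Scal_{K,\pp},q_1}(w)$ and $\mathbb{H}^p_{\Scal_{K,\pp},q_2}(w)$ coincide, by parts $(a)$ and $(c)$, with the molecular space, which by Remark~\ref{notation:H1w} (proved in Theorem~\ref{thm:hardychartz}) does not depend on the defining parameters, so the identity map extends to the claimed isomorphism on the completions.
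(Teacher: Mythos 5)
Your proposal follows essentially the same route as the paper: the paper's proof of Proposition \ref{lemma:SKP} simply defers to \cite[Proposition 6.1]{MartellPrisuelos:II} (the $p=1$ case) with the same routine substitutions you describe (replacing $1$ by $p$, $p$ and $q$ by $q_1$ and $q_2$, adjusting the coefficients $\lambda_l^j$ to $2^l w(Q_l^j)^{1/p}$, and using the new $\varepsilon_0$), and your two-directional scheme — uniform $L^p(w)$ bounds for Poisson square functions of molecules via Proposition \ref{prop:contro-mol-SF} on one side, and the Poisson Calder\'on reproducing formula plus weighted tent-space atomic decomposition on the other — is exactly the content of that adapted argument. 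The proposal is correct.
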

The proofs of these propositions follow as \cite[Proposition 5.1 and Proposition 6.1]{MartellPrisuelos:II}, respectively, just doing the obvious changes. In particular, in order to prove Proposition \ref{lema:SH-1} we need to change, in \cite[Proposition 5.1]{MartellPrisuelos:II}, $1$ to $p$, $p$ and $q$ to $q_1$ and $q_2$, respectively, and replace the definition of $\lambda_l^j$ in \cite[(5.21)]{MartellPrisuelos:II} with $\lambda_l^j:=2^lw(Q_l^j)^{\frac{1}{p}}$. Proposition \ref{lemma:SKP} follows as \cite[Proposition 6.1]{MartellPrisuelos:II} doing the same changes as the ones indicated for the proof of Proposition \ref{lema:SH-1}, and also considering the $\varepsilon_0$ defined in the statement of Proposition \ref{lemma:SKP} instead of the one in the statement of \cite[Proposition 6.1]{MartellPrisuelos:II}.
From these proposition Theorems \ref{thm:hardychartz} and \ref{thm:hardychartzPoisson} follow at once.

Finally, we characterize the Hardy spaces associated with non-tangential maximal functions. We need  the following result. 

\begin{proposition}\label{lemma:hardy-N}
Let $w\in A_{\infty}$, $q\in \mathcal{W}_w(p_-(L),p_+(L))$, $0<p\leq 1$, $M\in \N$ such that $M>\frac{n}{2}\left(\frac{r_w}{p}-\frac{1}{2}\right)$, and $\varepsilon_0=2M+2+\frac{n}{2}-\frac{r_wn}{p}$, there hold
\begin{list}{$(\theenumi)$}{\usecounter{enumi}\leftmargin=1cm \labelwidth=1cm\itemsep=0.2cm\topsep=.2cm \renewcommand{\theenumi}{\alph{enumi}}}

\item[(a)]$
\mathbb{H}^p_{\Ncal_{\hh},q}(w)=\mathbb{H}^p_{\Scal_{\hh},q}(w)= \mathbb{H}^p_{L,q,\varepsilon_0, M}(w),
$
 with equivalent norms.

\item[(b)]
$
\mathbb{H}^p_{\Ncal_{\pp},q}(w)=\mathbb{H}^p_{\Gcal_{\pp},q}(w)= \mathbb{H}^p_{L,q,\varepsilon_0, M}(w),
$
 with equivalent norms.
\end{list}
\end{proposition}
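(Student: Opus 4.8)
The plan is to mirror the strategy of \cite[Proposition 7.31]{MartellPrisuelos:II}, extending it from $p=1$ to the full range $0<p\leq 1$, and using the auxiliary results already established in this section. Since by Proposition \ref{lema:SH-1} we already know that $\mathbb{H}^p_{\Scal_{\hh},q}(w)=\mathbb{H}^p_{L,q,\varepsilon_0,M}(w)$ with equivalent norms (after checking that the specific $\varepsilon_0$ and $M$ in the statement meet the admissibility conditions of Definition \ref{moleculas}, which they do because $r_w\geq 1$ forces $\frac{n}{2}(\frac{r_w}{p}-\frac{1}{2})\geq \frac{n}{2}(\frac{r_w}{p}-\frac{1}{p_-(L)})$, and $\varepsilon_0>0$ whenever $M$ is taken large enough), part (a) reduces to proving the single equivalence $\mathbb{H}^p_{\Ncal_{\hh},q}(w)=\mathbb{H}^p_{\Scal_{\hh},q}(w)$. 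Likewise, by Proposition \ref{lema:SH-1}(c) we have $\mathbb{H}^p_{\Gcal_{\hh},q}(w)=\mathbb{H}^p_{L,q,\varepsilon_0,M}(w)$, so part (b) reduces to $\mathbb{H}^p_{\Ncal_{\pp},q}(w)=\mathbb{H}^p_{\Gcal_{\pp},q}(w)$.

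For the inclusions $\mathbb{H}^p_{L,q,\varepsilon_0,M}(w)\subset \mathbb{H}^p_{\Ncal_{\hh},q}(w)$ and $\subset\mathbb{H}^p_{\Ncal_{\pp},q}(w)$, I would invoke Proposition \ref{prop:control-mol-nontangential}: any $f$ with a $(w,q,p,\varepsilon_0,M)$-representation satisfies $\|\Ncal_{\hh}f\|_{L^p(w)}+\|\Ncal_{\pp}f\|_{L^p(w)}\lesssim \|f\|_{\mathbb{H}^p_{L,q,\varepsilon_0,M}(w)}$, so $f\in \mathbb{H}^p_{\Ncal_{\hh},q}(w)\cap \mathbb{H}^p_{\Ncal_{\pp},q}(w)$ with control of the seminorm. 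For the reverse directions, the key is the pointwise/norm domination of the square function by the non-tangential maximal function. Concretely, one shows $\|\Scal_{\hh}f\|_{L^p(w)}\lesssim \|\Ncal_{\hh}f\|_{L^p(w)}$ and $\|\Gcal_{\pp}f\|_{L^p(w)}\lesssim\|\Ncal_{\pp}f\|_{L^p(w)}$ for $f\in L^q(w)$; this is done exactly as in \cite{MartellPrisuelos:II}, using a good-$\lambda$ / tent-space duality argument combined with the off-diagonal estimates of Section \ref{sec:od}, but now replacing the $L^1(w)$-norm computations with $L^p(w)$-norm computations throughout (and quoting Theorem \ref{thm:SF-heat-III} for the $L^q(w)$-boundedness of $\Scal_{\hh}$ on the relevant range, so that both sides start out finite on a dense class). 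Once these square-function estimates are in hand, combining with the already-known equivalences $\mathbb{H}^p_{\Scal_{\hh},q}(w)=\mathbb{H}^p_{L,q,\varepsilon_0,M}(w)$ and $\mathbb{H}^p_{\Gcal_{\pp},q}(w)=\mathbb{H}^p_{L,q,\varepsilon_0,M}(w)$ closes the circle of inclusions for both (a) and (b). For (b) one additionally needs the standard subordination identity relating the Poisson semigroup $e^{-t\sqrt{L}}$ to the heat semigroup $e^{-s^2L}$, which lets one transfer the heat-based bounds to the Poisson-based functional $\Gcal_{\pp}$ and $\Ncal_{\pp}$, just as in the computation already displayed above (the $\int_{1/4}^{\infty}e^{-u}\Scal_{\hh}^{4\sqrt{u}}\mm\,du$ estimate) in the proof of Proposition \ref{prop:control-mol-nontangential}.

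The main obstacle I expect is the reverse inequality $\|\Scal_{\hh}f\|_{L^p(w)}\lesssim \|\Ncal_{\hh}f\|_{L^p(w)}$ in the quasi-Banach range $p<1$: the tent-space duality argument that works cleanly for $p=1$ must be replaced by an argument that survives $p<1$, typically by passing through a weighted good-$\lambda$ inequality between the level sets of $\Scal_{\hh}f$ and those of $\Ncal_{\hh}f$ together with the Hardy--Littlewood maximal function (using $w\in A_\infty$ so that the weighted good-$\lambda$ machinery applies), or alternatively by the extrapolation Theorem \ref{thm:extrapol}(b), which reduces any fixed $p$ and $w\in A_\infty$ to a single convenient exponent. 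One has to be careful that the operators $\Ncal_{\hh}$ and $\Scal_{\hh}$ are a priori only controlled on $L^q(w)$ for $q\in\mathcal{W}_w(p_-(L),p_+(L))$, so all manipulations should first be carried out for $f\in \mathbb{H}^p_{\Ncal_{\hh},q}(w)\subset L^q(w)$ and then extended by density — exactly the bookkeeping already handled in \cite[Proposition 7.31]{MartellPrisuelos:II}, so the remaining work is to verify that none of those steps used $p=1$ in an essential way and to substitute $L^p(w)$ for $L^1(w)$ and the quasi-triangle inequality $\|\sum g_i\|_{L^p(w)}^p\leq \sum\|g_i\|_{L^p(w)}^p$ wherever the triangle inequality was previously used.
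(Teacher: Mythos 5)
Your proposal matches the paper's proof, which simply states that the result follows as in \cite[Proposition 7.31]{MartellPrisuelos:II} after replacing $1$ by $p$ and $p$ by $q$ and using the $\varepsilon_0$ of the statement; the reduction through the molecular characterizations together with Proposition \ref{prop:control-mol-nontangential} is exactly the intended route. One small correction: for part (b) the identification $\mathbb{H}^p_{\Gcal_{\pp},q}(w)=\mathbb{H}^p_{L,q,\varepsilon_0,M}(w)$ comes from Proposition \ref{lemma:SKP}, part (c), with $K=1$ (whose $\varepsilon_0$ is precisely the one in the statement), not from Proposition \ref{lema:SH-1}(c), which concerns the heat functional $\Gcal_{\hh}$ rather than the Poisson one.
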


This last proposition follows as Proposition \cite[Proposition 7.31]{MartellPrisuelos:II}, just replacing $1$ with $p$, and $p$ with $q$, and obviously using the $\varepsilon_0$ defined here.
Theorem \ref{thm:hardychartzNontangential} follows at once from Proposition \ref{lemma:hardy-N}.

\section{Characterization of $H^p_{\mathcal{T}}(w)$, $p\in \mathcal{W}_w(p_-(L),p_+(L))$ }\label{section:p>1}

In this section we prove Theorem \ref{theorem:hp=lp}. That is, for $\mathcal{T}$ being any square function in \eqref{square-H-1}--\eqref{square-P-3} or a non-tangential maximal function in \eqref{non-tangential},  we show that the Hardy spaces $H^p_{\mathcal{T}}(w)$ are isomorphic to the $L^p(w)$ spaces, for an appropriate range of $p$.
\subsection{Proof of Theorem \ref{theorem:hp=lp}}
For $w\in A_{\infty}$ and $p,q\in \mathcal{W}_w(p_-(L),p_+(L))$, 
we claim that $L^q(w)\cap L^p(w)=\mathbb{H}^p_{\mathcal{T},q}(w)$ with
\begin{align}
\|f\|_{\mathbb{H}^p_{\mathcal{T},q}(w)}\approx\|f\|_{L^p(w)},
\end{align}
where $\mathcal{T}$ is any function defined in \eqref{square-H-1}-\eqref{square-P-3} or \eqref{non-tangential}.
Then, taking the closure we would conclude the desired isomorphism
$$
H^p_{\mathcal{T},q}(w)\approx L^p(w), \,\textrm{for all}	\, p,q\in \mathcal{W}_w(p_-(L),p_+(L)),
$$
with constants independent of $q$,
so we can drop the dependence on $q$ and just write
$$
H^p_{\mathcal{T}}(w)\approx L^p(w), \,\textrm{for all}	\, p\in \mathcal{W}_w(p_-(L),p_+(L)).
$$
Let us prove our claim.  If $f\in L^p(w)\cap L^q(w)$, since $\|\mathcal{T}f\|_{L^p(w)}\lesssim \|f\|_{L^p(w)}<\infty$, (see \cite[Theorems 1.12 and 1.13]{MartellPrisuelos} and \cite[Proposition 7.1]{MartellPrisuelos:II}), then $f\in \mathbb{H}^p_{\mathcal{T},q}(w).$ 

In order to show the converse inclusion, let us first consider the particular case of $\mathcal{T}\equiv \Scal_{m,\pp}$, for $m\in \N$. Then, take $f\in \mathbb{H}_{\Scal_{m,\pp},q}^p(w)$, and 
consider the operator $\mathcal{Q}_L$ defined by
$$
\mathcal{Q}_{L}h(x,t):=\mathcal{T}_t^*h(x), \quad \textrm{for all}\quad (x,t)\in \R^{n+1}_+,
$$
where
 $\mathcal{T}_t^*:=(t^2L^*)^{m}e^{-t\sqrt{L^*}}$. This operator is bounded from 
 $L^{p'}(w^{1-p'})$ to $T^{p'}(w^{1-p'})$, for all $p'\in \mathcal{W}_{w^{1-p'}}(p_-(L^*),p_+(L^*))$.
Indeed, by \cite[Theorem 1.13]{MartellPrisuelos}, we have that,
for every $h\in L^{p'}(w^{1-p'})$,
\begin{align*}
\|\mathcal{Q}_Lh\|_{T^{p'}(w^{1-p'})}=
\left(\int_{\R^{n}}\left(\iint_{\Gamma(x)}|(t^2L^*)^me^{-t\sqrt{L^*}}h(y)|^2\frac{dy\,dt}{t^{n+1}}\right)^{\frac{p'}{2}}w^{1-p'}(x)dx\right)^{\frac{1}{p'}}
\lesssim \|h\|_{L^{p'}(w^{1-p'})}.
\end{align*}

Then, if $\mathcal{Q}_L^*$ denotes its adjoint operator with respect to $dx$, we have that for every
$H\in T^2(\R^n)$
\begin{align}\label{extension2}
\mathcal{Q}_L^*H(x)=\int_{0}^{\infty} (t^2L)^{m}e^{-t\sqrt{L}}
H(x,t)\frac{dt}{t}.
\end{align}
Similarly as in the proof of Theorem \ref{thm:interpolationhardy},  we conclude that $\mathcal{Q}_L^*$
has a bounded extension from $T^p(w)$ to $L^p(w)$, for all $p\in \mathcal{W}_w(p_-(L),p_+(L))$.
Next, since the  vertical square function defined by $(t^2L)^me^{-t\sqrt{L}}$ in bounded on $L^q(w)$ (see \cite[(6.3)]{MartellPrisuelos:II} and \cite{AuscherMartell:III}), we can  consider the following Calder\'on reproducing formula  of $f$ (see  \cite[Remark 5.21]{MartellPrisuelos:II}),
\begin{align}\label{calderonp>1}
f(x)=C_m\int_0^{\infty}\left((t^2L)^me^{-t\sqrt{L}}\right)^2f(x)\frac{dt}{t},
\end{align}
where the equality is in $L^q(w)$. 
Note that, as we explain in Remarks \ref{remark:extenssion} and \ref{remark:calderonreproducing}, and in the proof of Theorem \ref{thm:interpolationhardy}, for $q\in \mathcal{W}_w(p_-(L),p_+(L))$, we have 
\eqref{extension2} and also \eqref{calderonp>1}, for functions in $T^q(w)\cap T^p(w)$, and  in $L^q(w)$, respectively, understanding that, by
 abuse of notation, $L$ denotes $L_{q,w}$.

 Now, since for $f\in \mathbb{H}^p_{\Scal_{m,\pp},q}(w)$, we have that $f\in L^q(w)$ and  that $\widetilde{f}(x,t):=
(t^2L)^{m}e^{-t\sqrt{L}}
f(x)\in T^{p}(w)\cap T^q(w)$ ($\|\widetilde{f}\|_{T^p(w)}=\|\Scal_{m,\pp}f\|_{L^p(w)}$ and $\|\widetilde{f}\|_{T^q(w)}=\|\Scal_{m,\pp}f\|_{L^q(w)}\lesssim \|f\|_{L^q(w)}$),
we get, for every $g\in L^{p'}(w)\cap L^{q'}(w)$,
\begin{multline*}
\left|\int_{\R^n}f(y)\bar{g}(y)w(y)\,dy\right|
=C_m
\left|\int_{\R^n}\int_{0}^{\infty} 
\widetilde{f}(y,t)\overline{(t^2L^*)^{m}e^{-t\sqrt{L^*}}(gw)(y)}\frac{dt}{t}\,dy\right|
\\
\lesssim
\|\Scal_{m,\pp}f\|_{L^p(w)}\|\mathcal{Q}_L(\bar{g}w)\|_{T^{p'}(w^{1-p'})}
\lesssim
\|\Scal_{m,\pp}f\|_{L^p(w)}\|gw\|_{L^{p'}(w^{1-p'})}
=\|\Scal_{m,\pp}f\|_{L^p(w)}\|g\|_{L^{p'}({w})}.
\end{multline*}
Then, taking the supremum over all $g\in L^{p'}({w})\cap L^{q'}(w)$ such that $\|g\|_{L^{p'}(w)}= 1$ (note that $L^{q'}(w)\cap L^{p'}(w)$ is dense in $L^{p'}(w)$), we obtain that
\begin{align}\label{comparison:p-q}
\|f\|_{L^p(w)}\lesssim \|\Scal_{m,\pp}f\|_{L^p(w)}.
\end{align}
Therefore, we have that, for all $m\in \N$, $\mathbb{H}_{\Scal_{m,\pp},q}^p(w)=L^p(w)\cap L^q(w)$, with equivalent norms.

Now noticing that, in the proof of \cite[Theorem 1.15, part ($b$)]{MartellPrisuelos}, the authors showed that, for every $m\in \N$, $\|\Scal_{m,\pp}f\|_{L^p(w)}\leq \|\Scal_{m,\hh}f\|_{L^p(w)}$. From this, \eqref{comparison:p-q}, \cite[Theorems 1.14 and 1.15, Remark 4.22]{MartellPrisuelos},  and \cite[Lemma 4.4 and Proposition 7.1]{MartellPrisuelos:II}, we obtain that
$\mathbb{H}_{\mathcal{T},q}^p(w)=L^p(w)\cap L^q(w)$ with equivalent norms, for all $p,\,q\in \mathcal{W}_w(p_-(L),p_+(L))$ and $\mathcal{T}$ being any function in \eqref{square-H-1}-\eqref{square-P-3}, or a non-tangential maximal function in \eqref{non-tangential}. From the observations made at the beginning of the proof, this allows us to conclude the desired isomorphism.
\qed
\begin{remark}
As we explain in the proof we have obtained the isomorphism $H^p_{\mathcal{T},q}(w)\approx L^p(w)$ for all $p,q\in \mathcal{W}_w(p_-(L),p_+(L))$. In particular, this implies that
$$
H^p_{\mathcal{T},q_1}(w)\approx H^p_{\mathcal{T},q_2}(w), \, \textrm{ for all}\, p,q_1,q_2\in \mathcal{W}_w(p_-(L),p_+(L)).
$$
 for $\mathcal{T}$ being any function in \eqref{square-H-1}-\eqref{square-P-3}, or a non-tangential maximal function in \eqref{non-tangential}.
\end{remark}


%
%

\section{Characterization of the weighted Hardy space associated with the Riesz transform}\label{sec:Riesz}

In order to characterize the weighted Hardy space associated with the Riesz transform we proceed as in \cite{HofmannMayborodaMcIntosh}, where the unweighted case was consider.
First of all, we need to prove the following weighted versions of \cite[Propositions 5.32 and 5.34]{HofmannMayborodaMcIntosh} from which we obtain at once Theorem \ref{thm.Rieszcharacterization}.
\begin{proposition}\label{prop:1-Riesz}
Given $w\in A_{\infty}$ and $q\in\mathcal{W}_w(q_-(L),q_+(L))$, we have that, for all $p$ satisfying $\max\left\{r_w,\frac{nr_w\widehat{p}_-(L)}{nr_w+\widehat{p}_-(L)}\right\}<p<\frac{p_+(L)}{s_w}$ and $f\in \mathbb{H}^p_{\nabla L^{-1/2},q}(w)$, 
\begin{align}\label{square-riesz:1}
\|\Scal_{\hh}f\|_{L^p(w)}\lesssim \|\nabla L^{-\frac{1}{2}}f\|_{L^p(w)}.
\end{align}
In particular, we conclude that, for all $\max\left\{r_w,\frac{nr_w\widehat{p}_-(L)}{nr_w+\widehat{p}_-(L)}\right\}<p<\frac{p_+(L)}{s_w}$, $\mathbb{H}^p_{\nabla L^{-1/2},q}(w)\subset \mathbb{H}^p_{\Scal_{\hh},q}(w)$.
\end{proposition}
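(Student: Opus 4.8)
The plan is to establish the pointwise-in-$L^p(w)$ estimate \eqref{square-riesz:1} by the standard route of writing $\Scal_{\hh}f$ in terms of $\nabla L^{-1/2}f$ through a suitable reproducing formula, and then invoking the mapping properties of a conical square function built on the \emph{second-order Riesz transform data} rather than on $f$ itself. More precisely, writing $g:=\nabla L^{-1/2}f$ (which lies in $L^p(w)$ by hypothesis and, since $f\in L^q(w)$ with $q\in\mathcal{W}_w(q_-(L),q_+(L))$, also makes sense in the relevant spaces via Remark \ref{remark:extenssion}), one uses the algebraic identity $t^2Le^{-t^2L}f = -t^2\,\mathrm{div}(A\nabla L^{-1/2})\,(L^{1/2}e^{-t^2L}L^{-1/2}f)$ — equivalently, $(t^2L)e^{-t^2L}f = c\,t\,\mathrm{div}\big(A\,\cdot\,(t\nabla e^{-t^2L})L^{-1/2}f\big)$ type manipulations — so that the kernel defining $\Scal_{\hh}f$ gets expressed through the family $\{t\nabla_y e^{-t^2L}\}_{t>0}$ (which is controlled on $L^p(w)$ for $p\in\mathcal{W}_w(q_-(L),q_+(L))$ by the $\mathcal K_w(L)$ theory) applied to $g$. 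The upshot is that $\Scal_{\hh}f(x)$ is dominated by a conical square function of the form $\big(\iint_{\Gamma(x)}|t\nabla_y e^{-t^2L} g(y)|^2\frac{dy\,dt}{t^{n+1}}\big)^{1/2}=\Grm_{\hh}g(x)$ up to harmless error terms.

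Then I would invoke the boundedness of $\Grm_{\hh}$ on $L^p(w)$. By Theorem \ref{thm:SF-heat-III} (and the remark following it), $\Grm_{\hh}$ is bounded on $L^p(w)$ for every $p\in(\widehat p_-(L),\infty)$ provided $w\in A_\infty$ with $\mathcal{W}_w(q_-(L),q_+(L))\neq\emptyset$; the condition $\max\{r_w,\frac{nr_w\widehat p_-(L)}{nr_w+\widehat p_-(L)}\}<p<\frac{p_+(L)}{s_w}$ in the statement is exactly designed so that $p$ lies in the admissible range for the relevant square functions (the upper endpoint $p_+(L)/s_w$ coming from the $\Scal_{\hh}$ boundedness range, the lower endpoint $nr_w\widehat p_-(L)/(nr_w+\widehat p_-(L))$ being the ``one derivative worse'' Sobolev-type shift $(\widehat p_-(L))_w^*$-dual that appears because passing from $f$ to $L^{-1/2}f$ in the reproducing formula costs one order of smoothing — this is the weighted analogue of the exponent juggling in \cite[Proposition 5.32]{HofmannMayborodaMcIntosh}). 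Applying this boundedness to $g=\nabla L^{-1/2}f$ gives $\|\Grm_{\hh}g\|_{L^p(w)}\lesssim\|g\|_{L^p(w)}=\|\nabla L^{-1/2}f\|_{L^p(w)}$, which combined with the pointwise domination yields \eqref{square-riesz:1}. The final inclusion $\mathbb{H}^p_{\nabla L^{-1/2},q}(w)\subset\mathbb{H}^p_{\Scal_{\hh},q}(w)$ is then immediate: if $f\in\mathbb{H}^p_{\nabla L^{-1/2},q}(w)$ then $f\in L^q(w)$ and $\|\nabla L^{-1/2}f\|_{L^p(w)}<\infty$, so \eqref{square-riesz:1} gives $\Scal_{\hh}f\in L^p(w)$, i.e. $f\in\mathbb{H}^p_{\Scal_{\hh},q}(w)$, with control of the seminorm.

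The main obstacle — and the step that needs genuine care rather than bookkeeping — is making the reproducing/identity step rigorous in the weighted $L^q(w)$ setting when $f\notin L^2(\mathbb{R}^n)$. One has to work with the infinitesimal generator $L_{q,w}$ (Remark \ref{remark:extenssion}, Remark \ref{remark:calderonreproducing}), check that $L^{-1/2}$, $\nabla L^{-1/2}$, the heat semigroup and its gradient all compose correctly on the relevant subspaces, and verify the Calderón-type identity expressing the ``heat'' kernel of $f$ via the gradient semigroup acting on $\nabla L^{-1/2}f$ holds as an identity of $T^q(w)$-valued functions. A clean way to do this is to first prove the estimate for $f\in L^\infty_c(\mathbb{R}^n)$ (or $f\in L^q(w)\cap L^2(\mathbb{R}^n)$), where all manipulations are justified by $L^2$ functional calculus and the off-diagonal estimates of Section \ref{sec:od}, obtaining $\|\Scal_{\hh}f\|_{L^p(w)}\lesssim\|\nabla L^{-1/2}f\|_{L^p(w)}$ with constant independent of $f$, and then extend by density using that $L^q(w)\cap L^2(\mathbb{R}^n)$ is dense in $L^q(w)$ together with the continuity of $\Scal_{\hh}$ and of $\nabla L^{-1/2}$ on $L^q(w)$ (the latter from Theorem \ref{RieszboundednessAM}). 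A secondary technical point is controlling the error terms produced when the algebraic identity is not exact at the level of $t^2Le^{-t^2L}$ versus $t\nabla e^{-t^2L}\circ L^{-1/2}$; these are handled by the off-diagonal estimates and a Schur-type / \cite[Lemma 2.1]{MartellPrisuelos}-type argument exactly as in the unweighted proof, and pose no new difficulty once the functional-calculus framework is in place.
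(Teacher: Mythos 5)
There is a genuine gap, and it sits exactly at the step your proposal treats as routine bookkeeping. Your strategy is to dominate $\Scal_{\hh}f$ pointwise (up to ``harmless error terms'') by $\Grm_{\hh}g$ with $g=\nabla L^{-1/2}f$ and then invoke the $L^p(w)$-boundedness of $\Grm_{\hh}$. Two problems. First, the algebraic identity does not produce $t\nabla_y e^{-t^2L}$ acting on $\nabla L^{-1/2}f$: writing $L=-\div(A\nabla)$ gives $t^2Le^{-t^2L}f=-t\,\div\big(A\,t\nabla_ye^{-t^2L}L^{1/2}(L^{-1/2}f)\big)$, and since $\nabla$, $e^{-t^2L}$ and $L^{1/2}$ do not commute for a non-self-adjoint variable-coefficient $L$, you cannot convert $t\nabla_ye^{-t^2L}L^{1/2}h$ into a semigroup applied to $\nabla h$; moreover the outer $t\,\div$ is not harmless at the level of conical square functions --- removing it requires a duality/integration-by-parts in the tent-space pairing, and that is precisely where the derivative loss occurs. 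Second, and decisively: $\Scal_{\hh}$ and $\Grm_{\hh}$ are only known to be bounded on $L^p(w)$ for $p>\widehat{p}_-(L)$ (Theorem \ref{thm:SF-heat-III}), whereas the claimed range goes down to $\max\big\{r_w,\frac{nr_w\widehat{p}_-(L)}{nr_w+\widehat{p}_-(L)}\big\}$, which is strictly below $\widehat{p}_-(L)$ in general. For those $p$ no $L^p(w)$ bound for $\Grm_{\hh}$ is available, so no pointwise domination by $\Grm_{\hh}g$ can close the argument; the Sobolev-shifted lower endpoint is not ``designed so that $p$ lies in the admissible range for the square functions'' --- it records exactly how far \emph{below} that range one must push by other means.

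What the paper actually does: set $h=L^{-1/2}f$, so the claim becomes the boundedness of $\widetilde{\Scal}\sqrt{L}$ (with $\widetilde{\Scal}$ the conical square function built on $t\sqrt{L}e^{-t^2L}$) from $\dot{W}^{1,p}(w)$ to $L^p(w)$. For $p>\max\{r_w,\widehat{p}_-(L)\}$ this follows from $\|\widetilde{\Scal}\sqrt{L}h\|_{L^p(w)}\approx\|\Scal_{\hh}\sqrt{L}h\|_{L^p(w)}\lesssim\|\sqrt{L}h\|_{L^p(w)}\lesssim\|\nabla h\|_{L^p(w)}$, via Proposition \ref{prop:widetildeS-heatS}, Theorem \ref{thm:SF-heat-III}, and the weighted square-root estimates of \cite{AuscherMartell:III}. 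To descend to the endpoint $p_0=\max\big\{r_0,\frac{nr_0\widetilde{p}}{nr_0+\widetilde{p}}\big\}$ one proves a weak-type $(p_0,p_0)$ bound for $\widetilde{\Scal}\sqrt{L}$ on $\dot{W}^{1,p_0}(w)$ using the Calder\'on--Zygmund decomposition for weighted Sobolev functions (Lemma \ref{C-Z-decomposition}), splitting $h=g+\sum_i(B_{r_{B_i}}+A_{r_{B_i}})b_i$ and running a substantial functional-calculus and off-diagonal analysis on the bad parts; real interpolation of the weighted Sobolev spaces then yields the full range. This weak-type endpoint argument is the essential content of the proof and is entirely absent from your proposal.
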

\begin{proposition}\label{prop:2-Riesz}
Given $w\in A_{\infty}$ and  $q\in\mathcal{W}_w(q_-(L),q_+(L))$, for all $0<p<\frac{q_+(L)}{s_w}$ and $f\in \mathbb{H}_{\Scal_{\hh},q}^p(w)$, we have that
\begin{align*}
\|\nabla L^{-\frac{1}{2}}f\|_{L^p(w)}\lesssim \|\Scal_{\hh}f\|_{L^p(w)}.
\end{align*}
In particular, we conclude that, for all $0<p<\frac{q_+(L)}{s_w}$, $\mathbb{H}^p_{\Scal_{\hh},q}(w)\subset \mathbb{H}^p_{\nabla L^{-1/2},q}(w)$.
\end{proposition}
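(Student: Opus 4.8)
The plan is to prove Proposition~\ref{prop:2-Riesz} by combining the molecular machinery already developed with a duality/density argument, exactly in the spirit of the proof of Theorem~\ref{theorem:hp=lp} and of \cite[Proposition~5.34]{HofmannMayborodaMcIntosh}. The target inequality is $\|\nabla L^{-1/2}f\|_{L^p(w)}\lesssim \|\Scal_{\hh}f\|_{L^p(w)}$ for $f\in\mathbb{H}^p_{\Scal_{\hh},q}(w)$ and $0<p<q_+(L)/s_w$, with $q\in\mathcal{W}_w(q_-(L),q_+(L))$.

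First I would split into two regimes according to the size of $p$. In the range $0<p\le 1$, I would argue through the molecular characterization. By Theorem~\ref{thm:hardychartz} (and Remark~\ref{notation:H1w}) we know that for $0<p\le 1$ the space $\mathbb{H}^p_{\Scal_{\hh},q}(w)$ coincides with the molecular space $\mathbb{H}^p_{L,q,\varepsilon,M}(w)$ for admissible parameters; hence any $f\in\mathbb{H}^p_{\Scal_{\hh},q}(w)$ admits a $(w,q,p,\varepsilon,M)$-representation $f=\sum_i\lambda_i\mm_i$ with $\big(\sum_i|\lambda_i|^p\big)^{1/p}\lesssim\|f\|_{\mathbb{H}^p_{\Scal_{\hh},q}(w)}\approx\|\Scal_{\hh}f\|_{L^p(w)}$. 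Then I would invoke Proposition~\ref{prop:contro-mol-Riesz}, part~(b), directly: it gives $\|\nabla L^{-1/2}f\|_{L^p(w)}\lesssim\|f\|_{\mathbb{H}^p_{L,q,\varepsilon,M}(w)}\lesssim\|\Scal_{\hh}f\|_{L^p(w)}$. One must be slightly careful that the constraint $p<q_+(L)/s_w$ guarantees the existence of admissible molecular parameters: since $q\in\mathcal{W}_w(q_-(L),q_+(L))=(q_-(L)r_w,q_+(L)/s_w)$ and $p<q_+(L)/s_w$, one can choose $M>\tfrac n2\big(\tfrac{r_w}{p}-\tfrac1{p_-(L)}\big)$ and $\varepsilon>0$ as required; this is where the upper bound on $p$ enters.

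For $p>1$, the molecular route is unavailable, so I would mimic the duality argument of the proof of Theorem~\ref{theorem:hp=lp}. Writing $\mathcal{T}_t:=t\nabla_ye^{-t^2L}$, the representation \eqref{Rieszrepresentation} gives $\nabla L^{-1/2}f=\tfrac{2}{\sqrt\pi}\int_0^\infty\mathcal{T}_tf\,\tfrac{dt}{t}$, which I would recognize as (a constant times) $\widetilde{\mathcal Q}_{L}$ applied to the tent-space function $\widetilde f(y,t):=t\nabla_ye^{-t^2L}f(y)$ after inserting a Calder\'on reproducing formula built from $(t^2L)^me^{-t^2L}$ and pairing against $(t^2L^*)^me^{-t^2L^*}$ of a test function $g w\in L^{p'}(w^{1-p'})$; alternatively, one works directly from \eqref{Rieszrepresentation} and the reproducing formula \eqref{calderonIII}. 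The key inputs are: (i) $\|\widetilde f\|_{T^p(w)}=\|\Grm_{\hh}f\|_{L^p(w)}$, controlled by $\|\Scal_{\hh}f\|_{L^p(w)}$ (using $\Scal_{\hh}f\le\tfrac12\Gcal_{\hh}f$ together with $\Gcal_{\hh}\le 2\Scal_{\hh}+\Grm_{\hh}$ and the norm equivalences from \cite[Theorems~1.12, 1.14, 1.15, Remark~4.22]{MartellPrisuelos} and \cite[Proposition~7.1]{MartellPrisuelos:II}, valid on $L^p(w)$ for $p\in(\widehat p_-(L),\infty)$—note $(\widehat p_-(L),\infty)\supset(q_-(L)r_w,q_+(L)/s_w)$ by Theorem~\ref{thm:SF-heat-III}); and (ii) the boundedness $L^{p'}(w^{1-p'})\to T^{p'}(w^{1-p'})$ of the adjoint square-function operator associated with $L^*$, which follows from \cite[Theorem~1.12]{MartellPrisuelos} applied to $L^*$ together with $p_\pm(L^*)=p_\mp(L)'$ and \eqref{setweightequivalence}. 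Pairing and taking the supremum over $g\in L^{p'}(w)\cap L^{q'}(w)$ with $\|g\|_{L^{p'}(w)}=1$ yields the claim for $p>1$.

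The main obstacle I anticipate is twofold. First, reconciling the two regimes cleanly: for $1<p<q_+(L)/s_w$ one must verify that $p$ indeed lies in a range where the square-function norm equivalences and the $T^{p'}(w^{1-p'})$-boundedness for $L^*$ hold simultaneously, i.e. checking the chain of inclusions among $\mathcal{W}_w(q_-(L),q_+(L))$, $(\widehat p_-(L),\infty)$, and the dual intervals for $w^{1-p'}$ and $L^*$; this requires careful bookkeeping with \eqref{intervalrs}, \eqref{setweightequivalence}, and the improved ranges of Theorem~\ref{thm:SF-heat-III}. Second, the justification—already flagged in Remarks~\ref{remark:extenssion} and~\ref{remark:calderonreproducing}—that all the formal identities (the Riesz representation \eqref{Rieszrepresentation}, the reproducing formula, the adjoint identity) are legitimate for $f\in L^q(w)\setminus L^2(\R^n)$, reading $L$ as $L_{q,w}$; once the density of $L^q(w)\cap L^2(\R^n)$ and of $T^q(w)\cap T^p(w)$ in the relevant spaces is used, this is routine but must be stated. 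The conclusion ``in particular $\mathbb{H}^p_{\Scal_{\hh},q}(w)\subset\mathbb{H}^p_{\nabla L^{-1/2},q}(w)$'' is then immediate from the definitions.
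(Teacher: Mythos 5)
Your treatment of the range $0<p\le 1$ is exactly the paper's: combine Proposition \ref{lema:SH-1}, part ($a$) (equivalently the molecular characterization) with Proposition \ref{prop:contro-mol-Riesz}, part ($b$). The problem is the range $p>1$, where your duality argument has a genuine gap. First, the identity you want to exploit does not hold: $\nabla L^{-1/2}f$ is \emph{not} $\widetilde{\mathcal{Q}}_{L,m}$ applied to $\widetilde f(y,t)=t\nabla_y e^{-t^2L}f(y)$, because the gradient does not commute with $(t^2L)^me^{-t^2L}$. If you insert the reproducing formula correctly you are led instead to the operator $F\mapsto\int_0^\infty s\nabla (s^2L)^{m-\frac12}e^{-s^2L}F(\cdot,s)\frac{ds}{s}$ acting on $F=\mathcal{Q}_{L,m}f$, and you would have to prove its boundedness from $T^p(w)$ to $L^p(w)$ (equivalently, by duality, the $L^{p'}(w^{1-p'})$ boundedness of a conical square function built from $s\,(s^2L^*)^{m-\frac12}e^{-s^2L^*}\div$). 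Neither estimate is in the paper; this is a nontrivial new ingredient involving fractional powers of $L$ and the divergence. Second, even granting such an estimate, a duality argument of this type only reaches exponents $p$ for which the dual operator is bounded on $L^{p'}(w^{1-p'})$, i.e.\ essentially $p\in\mathcal{W}_w(q_-(L),q_+(L))=(q_-(L)r_w,\,q_+(L)/s_w)$; it cannot cover the interval $1<p\le q_-(L)r_w$, which is nonempty whenever $q_-(L)r_w>1$.

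The paper's route avoids both issues. For $p\in\mathcal{W}_w(q_-(L),q_+(L))$ there is nothing to prove: Theorem \ref{RieszboundednessAM} gives $\|\nabla L^{-1/2}f\|_{L^p(w)}\lesssim\|f\|_{L^p(w)}$ and Theorem \ref{theorem:hp=lp} gives $\|f\|_{L^p(w)}\approx\|\Scal_{\hh}f\|_{L^p(w)}$. The remaining exponents in $(1,q_+(L)/s_w)$, in particular $1<p\le q_-(L)r_w$, are then obtained by complex interpolation of Hardy spaces, Theorem \ref{thm:interpolationhardy}, between the endpoint $p_0=1$ (covered by the molecular argument) and $p_1\in\mathcal{W}_w(q_-(L),q_+(L))$ taken arbitrarily close to $q_+(L)/s_w$. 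This interpolation step --- the main purpose of Section \ref{sec:interpol} --- is what your proposal is missing; without it (or the new gradient tent-space estimate described above) the inequality is not established on all of $(1,q_+(L)/s_w)$.
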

We start by proving Proposition \ref{prop:1-Riesz}. To this end,
 consider the following conical square function:
$$
\widetilde{\Scal}f(x):=\left(\iint_{\Gamma(x)}|t\sqrt{L}e^{-t^2L}f(y)|^2\frac{dy\,dt}{t^{n+1}}\right)^{\frac{1}{2}}.
$$
We show that, in some range of $p$, its norm is comparable with the norm of $\Scal_{\hh}$ in $L^p(w)$.
\begin{proposition}\label{prop:widetildeS-heatS}
Given $w\in A_{\infty}$, for all $q\in \mathcal{W}_w(p_-(L),p_+(L))$ and $f\in L^q(w)$, there hold
\begin{list}{$(\theenumi)$}{\usecounter{enumi}\leftmargin=1cm \labelwidth=1cm\itemsep=0.2cm\topsep=.2cm \renewcommand{\theenumi}{\alph{enumi}}}

\item $\|\Scal_{\hh}f\|_{L^p(w)}\lesssim\|\widetilde{\Scal}f\|_{L^p(w)}$, for all $p\in \mathcal{W}_w(0,p_+(L)^{2,*})$;

\item $\|\widetilde{\Scal}f\|_{L^p(w)}\lesssim\|\Scal_{\hh}f\|_{L^p(w)}$, for all $p\in \mathcal{W}_w(0,p_+(L)^*)$.
\end{list}
In particular
\begin{align*}
\|\widetilde{\Scal}f\|_{L^p(w)}\approx\|\Scal_{\hh}f\|_{L^p(w)}, \textrm{ for all } p\in \mathcal{W}_w(0,p_+(L)^*).
\end{align*}
\end{proposition}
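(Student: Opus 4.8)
The strategy is to compare the two conical square functions by rewriting each operator $t^2Le^{-t^2L}$ and $t\sqrt{L}e^{-t^2L}$ in terms of the other via the functional calculus of $L$, and then control the resulting ``error'' operators by off-diagonal estimates together with the $L^p(w)$ boundedness results already available. Concretely, for part (a) I would use the elementary algebraic identity
\begin{align*}
t^2Le^{-t^2L}=\bigl(t\sqrt{L}e^{-t^2L/2}\bigr)\cdot\bigl(t\sqrt{L}e^{-t^2L/2}\bigr),
\end{align*}
which expresses $\Scal_{\hh}$-type operators as a composition of two $\widetilde{\Scal}$-type pieces at a rescaled time; more useful in practice is the subordination-type formula writing $t\sqrt{L}e^{-t^2L}$ as an average of $s^2Le^{-s^2L}$ against an integrable kernel in $s/t$, i.e.
\begin{align*}
t\sqrt{L}e^{-t^2L}f=\int_0^\infty \varphi(s)\,(s^2t^2L)e^{-s^2t^2L}f\,\frac{ds}{s},
\end{align*}
for a suitable $\varphi$ with fast decay at $0$ and $\infty$ coming from the Cauchy/Laplace representation of $\sqrt{L}$. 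Plugging this into the cone integral defining $\widetilde{\Scal}f$, applying Minkowski's integral inequality in $s$ and the change of aperture \eqref{tentcomparison}, one reduces \eqref{square-riesz:1}-type bounds to the boundedness of $\Scal_{m,\hh}$ (for a possibly larger $m$) on $L^p(w)$. For this last step one invokes Theorem \ref{thm:SF-heat-III}, which gives boundedness of $\Scal_{m,\hh}$ on $L^p(w)$ for all $p\in(\widehat{p}_-(L),\infty)$; combined with the extrapolation Theorem \ref{thm:extrapol} one can descend to a vertical-square-function inequality and land in the range $\mathcal{W}_w(0,p_+(L)^{2,*})$, the exponent $p_+(L)^{2,*}$ appearing precisely because the relevant Gaffney/off-diagonal gain for $(t^2L)^2e^{-t^2L}$ is of order $2$ (hence $N=2$ in \eqref{qN*}).

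For part (b) I would run the symmetric argument, now writing
\begin{align*}
t^2Le^{-t^2L}=\bigl(t\sqrt{L}\bigr)\cdot\bigl(t\sqrt{L}e^{-t^2L}\bigr)\cdot(\text{harmless bounded factor}),
\end{align*}
or better expressing $s^2Le^{-s^2L}$ against $t\sqrt{L}e^{-t^2L}$ through $\sqrt{L}^{-1}$-type manipulations so that the composition is $t\sqrt{L}e^{-t^2L}$ followed by an auxiliary family of operators with good off-diagonal decay. The key technical input is that the half-heat families $\{t\sqrt{L}e^{-t^2L}\}$ and $\{(t^2L)^m e^{-t^2L}\}$ satisfy $L^{p_0}(\R^n)-L^{q_0}(\R^n)$ off-diagonal estimates on the full interval $(p_-(L),p_+(L))$ (Section \ref{sec:od}); feeding these into a Fefferman--Stein / change-of-angle argument as in \cite[Proposition 3.30, Proposition 3.2]{MartellPrisuelos}, then extrapolating, yields $\|\widetilde{\Scal}f\|_{L^p(w)}\lesssim\|\Scal_{\hh}f\|_{L^p(w)}$ for $p\in\mathcal{W}_w(0,p_+(L)^*)$. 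Here only a first-order gain is available for $t\sqrt{L}e^{-t^2L}$ relative to $e^{-t^2L}$, which is why the admissible upper exponent drops from $p_+(L)^{2,*}$ to $p_+(L)^*$. Since $\mathcal{W}_w(0,p_+(L)^*)\subset\mathcal{W}_w(0,p_+(L)^{2,*})$, combining (a) and (b) gives the asserted two-sided equivalence on $\mathcal{W}_w(0,p_+(L)^*)$.

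The main obstacle I anticipate is keeping track of the exact ranges of $p$: each time one passes from a conical square function estimate to a vertical one (to apply extrapolation), and each time one changes the aperture of the cone, one pays a factor controlled by $r_w$ and $s_w$, and one must verify that the $RH$ condition needed to run Theorem \ref{thm:extrapol}(a) at the endpoint exponent $p_+(L)^{2,*}$ (resp.\ $p_+(L)^*$) is exactly the hypothesis $p\in\mathcal{W}_w(0,p_+(L)^{2,*})$ (resp.\ $\mathcal{W}_w(0,p_+(L)^*)$). A secondary technical point is the justification of the subordination-type integral representation and the interchange of the cone integral with the $s$-integral on $L^q(w)$ for $f\in L^q(w)$ (not merely $f\in L^2$); this is handled exactly as in Remark \ref{remark:extenssion} and Remark \ref{remark:calderonreproducing}, working with the generator $L_{q,w}$ and using the density of $L^q(w)\cap L^2(\R^n)$. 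The rest is a routine, if lengthy, combination of Minkowski's inequality, Gaffney estimates, \eqref{tentcomparison}, and the extrapolation theorem.
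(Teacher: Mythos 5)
There is a genuine gap in the core reduction. The heart of this proposition is that the two \emph{conical} square functions are compared to each other on a range of $p$ (all of $\mathcal{W}_w(0,p_+(L)^{2,*})=(0,p_+(L)^{2,*}/s_w)$, resp.\ $\mathcal{W}_w(0,p_+(L)^{*})$) on which neither of them need be bounded on $L^p(w)$. Your plan, after inserting the subordination formula and applying Minkowski, ``reduces \dots to the boundedness of $\Scal_{m,\hh}$ on $L^p(w)$'' and invokes Theorem \ref{thm:SF-heat-III}. That step proves a bound by $\|f\|_{L^p(w)}$, not by the other square function, and it only works for $p>\widehat{p}_-(L)$ (and requires $\mathcal{W}_w(p_-(L),p_+(L))\neq\emptyset$), which is far from the stated range; for small $p$ the quantity $\|\widetilde{\Scal}f\|_{L^p(w)}$ can be much smaller than $\|f\|_{L^p(w)}$, so this reduction cannot yield the claim. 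The correct argument must never ``unwind'' back to $f$: one first extrapolates the pair $(\Scal_{\hh}f,\widetilde{\Scal}f)$ (or its reverse) down to $p=2$ with $w\in RH_{((p_+(L)^{2,*}/2))'}$ (resp.\ $RH_{((p_+(L)^{*}/2))'}$), then uses \eqref{representationsquarerootofL} to write, e.g., $t^2Le^{-t^2L}f=c\int_0^\infty sLe^{-s^2L}\bigl(t^2\sqrt{L}e^{-t^2L}f\bigr)\tfrac{ds}{s}$, splits $s<t$ and $s>t$, and treats $\widetilde f(y,s)=s\sqrt{L}e^{-s^2L/2}f(y)$ as fixed tent-space data, controlling the outer family by $L^2$--$L^{q_0}$ off-diagonal estimates and the change-of-angle results of \cite{MartellPrisuelos}; the right-hand side then remains $\|\widetilde{\Scal}f\|_{L^2(w)}$ throughout. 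You also have the directions tangled in part (a): plugging the representation of $t\sqrt{L}e^{-t^2L}$ as an average of $s^2Le^{-s^2L}$ into $\widetilde{\Scal}$ is the mechanism for part (b), not (a).

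On the positive side, several of your surrounding observations are exactly right and match the paper: the reduction to $p=2$ via Theorem \ref{thm:extrapol} with the reverse H\"older hypothesis encoding the upper endpoint; the role of the off-diagonal estimates of Section \ref{sec:od} and of the change-of-angle propositions; and the heuristic that the endpoint is $p_+(L)^{2,*}$ in (a) because the composed family gains a factor $(t/s)^2$ for $s>t$, versus only $t/s$ in (b), whence $p_+(L)^{*}$. What is missing is the correct object on the right-hand side at every stage of the estimate; without that, the argument proves a different (and weaker) statement.
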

\begin{proof}
We first prove part ($a$). Note that since $2<p_+(L)^{2,*}$, in view of Theorem \ref{thm:extrapol}, part ($a$), (or  part ($b$) if $p_+(L)^{2,*}=\infty$), it is enough to prove it for $p=2$ and all $w\in RH_{\left(\frac{p_+(L)^{2,*}}{2}\right)'}$.

Assuming this, note that, proceeding as in the estimate of term $II$ when proving \eqref{S_HF(tent)}, given $w\in RH_{\left(\frac{p_+(L)^{2,*}}{2}\right)'}$,
we can find $q_0$ and $r$, so that $2<q_0<p_+(L)$, $q_0/2\leq r<\infty$, $w\in RH_{r'}$, and
\begin{align}\label{positive-III-1}
2+\frac{n}{2r}-\frac{n}{q_0}
>
 0.
\end{align}
After this observation we show the desired estimate.
By \eqref{representationsquarerootofL} and Minkowski's integral inequality, we obtain that
\begin{multline*}
\Scal_{\hh}f(x)
\lesssim
\left(\int_0^{\infty}\left(\int_{0}^{t}\left(\int_{B(x,t)}|sLe^{-s^2L}t^2\sqrt{L}e^{-t^2L}f(y)|^2dy\right)^{\frac{1}{2}}\frac{ds}{s}\right)^2\frac{dt}{t^{n+1}}\right)^{\frac{1}{2}}
\\
+
\left(\int_0^{\infty}\left(\int_{t}^{\infty}\left(\int_{B(x,t)}|sLe^{-s^2L}t^2\sqrt{L}e^{-t^2L}f(y)|^2dy\right)^{\frac{1}{2}}\frac{ds}{s}\right)^2\frac{dt}{t^{n+1}}\right)^{\frac{1}{2}}
=:I+II.
\end{multline*}
In the case that $s<t$, use the $L^2(\R^n)-L^2(\R^n)$ off-diagonal estimates satisfied by the families $\{t^2Le^{-t^2L}\}_{t>0}$ and  $\{e^{-s^2L}\}_{s>0}$, and apply \cite[Lemma 2.3]{HofmannMartell} to get
\begin{multline*}
I
\leq 
\left(\int_0^{\infty}\left(\int_{0}^{t}\frac{s}{t}\left(\int_{B(x,t)}|e^{-s^2L}t^2Le^{-\frac{t^2}{2}L}t\sqrt{L}e^{-\frac{t^2}{2}L}f(y)|^2dy\right)^{\frac{1}{2}}\frac{ds}{s}\right)^2\frac{dt}{t^{n+1}}\right)^{\frac{1}{2}}
\\
\lesssim
\sum_{j\geq 1}e^{-c4^{j}}\left(
\int_0^{\infty}\left(\int_{0}^{t}\frac{s}{t}\frac{ds}{s}\right)^{2}\int_{B(x,2^{j+1}t)}|t\sqrt{L}e^{-\frac{t^2}{2}L}f(y)|^2\frac{dy\,dt}{t^{n+1}}\right)^{\frac{1}{2}}.
\end{multline*}
Then, changing the variable $t$ into $\sqrt{2}t$ and
applying change of angles (\cite[Proposition 3.2]{MartellPrisuelos}), we conclude that
\begin{multline*}
\|I\|_{L^2(w)}
\lesssim \sum_{j\geq 1}e^{-c4^j}\left(\int_{\R^n}
\int_0^{\infty}\int_{B(x,2^{j+2}t)}|t\sqrt{L}e^{-t^2L}f(y)|^2\frac{dy\,dt}{t^{n+1}}w(x)dx\right)^{\frac{1}{2}}
\\
\lesssim
\sum_{j\geq 1}e^{-c4^j}\|\widetilde{\Scal}f\|_{L^2(w)}
\lesssim\|\widetilde{\Scal}f\|_{L^2(w)}.
\end{multline*}
As for the estimate of $II$, consider $\widetilde{f}(y,s):=s\sqrt{L}e^{-\frac{s^2}{2}L}f(y)$,  apply the $L^2(\R^n)-L^2(\R^n)$ off-diagonal estimates satisfied by the family $\{e^{-t^2L}\}_{t>0}$ and Jensen's inequality. Besides, change the  variable $s$ into $st$, apply Minkowski's integral inequality, and then change the variable $t$ into $t/s$. Hence, we have
\begin{align*}
&II
\lesssim
\sum_{j\geq 1}e^{-c4^j}
\left(\int_0^{\infty}\left(\int_{t}^{\infty}\frac{t^2}{s^2}\left(\int_{B(x,2^{j+1}t)}|s^2Le^{-\frac{s^2}{2}L}\widetilde{f}(y,s)|^2dy\right)^{\frac{1}{2}}\frac{ds}{s}\right)^2\frac{dt}{t^{n+1}}\right)^{\frac{1}{2}}
\\&
\lesssim
\sum_{j\geq 1}e^{-c4^j}
\left(\int_0^{\infty}\left(\int_{t}^{\infty}\frac{t^2}{s^2}\left(\int_{B(x,2^{j+1}t)}|s^2Le^{-\frac{s^2}{2}L}\widetilde{f}(y,s)|^{q_0}\frac{dy}{t^n}\right)^{\frac{1}{q_0}}\frac{ds}{s}\right)^2\frac{dt}{t}\right)^{\frac{1}{2}}
\\&
\lesssim
\sum_{j\geq 1}e^{-c4^j}\int_{1}^{\infty}s^{-2}
\left(\int_0^{\infty}\left(\int_{B(x,2^{j+1}t)}|(st)^2Le^{-\frac{(st)^2}{2}L}\widetilde{f}(y,st)|^{q_0}\frac{dy}{t^n}\right)^{\frac{2}{q_0}}\frac{dt}{t}\right)^{\frac{1}{2}}\frac{ds}{s}
\\&
\lesssim
\sum_{j\geq 1}e^{-c4^j}\int_{1}^{\infty}s^{-2+\frac{n}{q_0}}
\left(\int_0^{\infty}\left(\int_{B(x,2^{j+1}t/s)}|t^2Le^{-\frac{t^2}{2}L}\widetilde{f}(y,t)|^{q_0}\frac{dy}{t^n}\right)^{\frac{2}{q_0}}\frac{dt}{t}\right)^{\frac{1}{2}}\frac{ds}{s}
\\&
=:
\sum_{j\geq 1}e^{-c4^j}\int_{1}^{\infty}s^{-2+\frac{n}{q_0}}
\mathcal{J}(x,s)\frac{ds}{s}
.
\end{align*}
In order to estimate the norm in  $L^2(w)$ of the above integral, 
we first
apply Minkowski's inequality, \cite[Proposition 3.30]{MartellPrisuelos}, and change the variable $t$ into $\sqrt{2}t$. Next, we apply the $L^{2}(\R^n)-L^{q_0}(\R^n)$ off-diagonal estimates satisfied by the family $\{t^2Le^{-t^2L}\}_{t>0}$, and recall that $q_0$ and $r$ satisfy $2< q_0<p_+(L)$, $\frac{q_0}{2}\leq r$, $w\in RH_{r'}$, and \eqref{positive-III-1}. Finally, we apply \cite[Proposition 3.2]{MartellPrisuelos}. Thus, we have, for $r_0>r_w$,
\begin{align*}
&\left(\int_{\R^n}\left(\int_{1}^{\infty}s^{-2+\frac{n}{q_0}}\mathcal{J}(x,s)\frac{ds}{s}\right)^2w(x)dx\right)^{\frac{1}{2}}
\\
&\lesssim
\left(\int_1^{\infty}s^{-2-\frac{n}{2r}+\frac{n}{q_0}}\frac{ds}{s}\right)\left(\int_0^{\infty}\int_{\R^n}\left(\int_{B(x,2^{j+2}t)}|t^2Le^{-t^2L}t\sqrt{L}e^{-t^2L}f(y)|^{q_0}\frac{dy}{t^n}\right)^{\frac{2}{q_0}}w(x)dx\frac{dt}{t}\right)^{\frac{1}{2}}
\\
&
\lesssim \sum_{l\geq 1} e^{-c4^l}
\left(\int_{\R^n}\int_0^{\infty}\int_{B(x,2^{j+l+3}t)}|t\sqrt{L}e^{-t^2L}f(y)|^{2}\frac{dy\,dt}{t^{n+1}}w(x)dx\right)^{\frac{1}{2}}
\\
&\lesssim 2^{j\frac{nr_0}{2}}\sum_{l\geq 1} e^{-c4^l}\|\widetilde{\Scal}f\|_{L^2(w)}
\lesssim 2^{j\frac{nr_0}{2}}
\|\widetilde{\Scal}f\|_{L^2(w)}.
\end{align*}
Consequently, 
$
\|II\|_{L^2(w)}
\lesssim
\|\widetilde{\Scal}f\|_{L^2(w)},
$
which, together with the estimate obtained for $\|I\|_{L^2(w)}$, gives us the desired inequality.

\medskip
As for proving part ($b$), note that again it is enough to consider the case $p=2$ and $w\in RH_{\left(\frac{p_+(L)^*}{2}\right)'}$. In this case we proceed as in the proof of part $(a)$, so we skip some details. For $n>p_+(L)$, note that (as in the proof of \eqref{S_HF(tent)}) we can take $\varepsilon_0>0$ small enough and $2<q_0<p_+(L)$, close enough to $p_+(L)$
so that for $r:=\frac{q_0n}{2(1+\varepsilon_0)(n-q_0)}$, we have that
$2<q_0<p_+(L)$, $q_0/2\leq r<\infty$, $w\in RH_{r'}$, and
\begin{align*}
1+\frac{n}{2r}-\frac{n}{q_0}>0.
\end{align*}
If now  $n\leq p_+(L)$, our condition over the weight $w$ becomes $w\in A_{\infty}$. Then, we take $r>s_w$, and $q_0$ satisfying
$\max\left\{2,\frac{2rp_+(L)}{p_+(L)+2r}\right\}<q_0<\min\left\{p_+(L),2r\right\}$ if $p_+(L)<\infty$ and $q_0=2r$ if $p_+(L)=\infty$. Therefore, we have that $2<q_0<p_+(L)$, $q_0/2\leq r<\infty$, and $w\in RH_{r'}$. Besides, 
\begin{align*}
1+\frac{n}{2r}-\frac{n}{q_0}
>
1-\frac{n}{p_+(L)}\geq 0.
\end{align*}
Hence, we have found $q_0$ and $r$ so that $2<q_0<p_+(L)$, $q_0/2\leq r<\infty$, $w\in RH_{r'}$, and
\begin{align}\label{positive-III-2}
1+\frac{n}{2r}-\frac{n}{q_0}>0.
\end{align}
Keeping these choices of $q_0$ and $r$ we prove part ($b$).
Using again \eqref{representationsquarerootofL} and Minkowski's integral inequality, we obtain
\begin{multline*}
\widetilde{\Scal}f(x)\lesssim
\left(\int_0^{\infty}\left(\int_0^{t}\left(\int_{B(x,t)}|tsLe^{-s^2L}e^{-t^2L}f(y)|^2dy\right)^{\frac{1}{2}}\frac{ds}{s}\right)^{{2}}\frac{dt}{t^{n+1}}\right)^{\frac{1}{2}}
\\
+
\left(\int_0^{\infty}\left(\int_{t}^{\infty}\left(\int_{B(x,t)}|tsLe^{-s^2L}e^{-t^2L}f(y)|^2dy\right)^{\frac{1}{2}}\frac{ds}{s}\right)^{{2}}\frac{dt}{t^{n+1}}\right)^{\frac{1}{2}}
=:I+II.
\end{multline*}
We first estimate $I$. Using that $s<t$ and applying the $L^2(\R^n)-L^2(\R^n)$ off-diagonal estimates satisfied by the family $\{e^{-s^2L}\}_{s>0}$, we have
\begin{multline*}
I\leq
\left(\int_0^{\infty}\left(\int_0^{t}\frac{s}{t}\left(\int_{B(x,t)}|e^{-s^2L}t^2Le^{-t^2L}f(y)|^2dy\right)^{\frac{1}{2}}\frac{ds}{s}\right)^{{2}}\frac{dt}{t^{n+1}}\right)^{\frac{1}{2}}
\\
\lesssim
\sum_{j\geq 1}e^{-c4^j}\left(\int_0^{\infty}\left(\int_0^{t}\frac{s}{t}\frac{ds}{s}\right)^2\int_{B(x,2^{j+1}t)}|t^2Le^{-t^2L}f(y)|^2\frac{dy\,dt}{t^{n+1}}\right)^{\frac{1}{2}}
\\
\lesssim
\sum_{j\geq 1}e^{-c4^j}
\left(\int_0^{\infty}\int_{B(x,2^{j+1}t)}|t^2Le^{-t^2L}f(y)|^2\frac{dy\,dt}{t^{n+1}}\right)^{\frac{1}{2}}.
\end{multline*}
Therefore, applying change of angles (\cite[Proposition 3.2]{MartellPrisuelos}), we get
$$
\|I\|_{L^2(w)}
\lesssim
\sum_{j\geq 1}e^{-c4^j}\|\Scal_{\hh}f\|_{L^2(w)}\lesssim \|\Scal_{\hh}f\|_{L^2(w)}.
$$
As for the second term, we first apply the $L^2(\R^n)-L^2(\R^n)$ off-diagonal estimates satisfied by the family $\{e^{-t^2L}\}_{t>0}$, change the variable $s$ into $st$, and apply Jensen's inequality. Next, we apply Minkowski's integral inequality and change the variable $t$ into $t/s$. Hence, we have
\begin{align*}
II
&\lesssim
\sum_{j\geq 1}e^{-c4^j}
\left(\int_0^{\infty}\left(\int_{1}^{\infty}s^{-1}\left(
\int_{B(x,2^{j+1}t)}|(st)^2Le^{-(st)^2L}f(y)|^2dy\right)^{\frac{1}{2}}\frac{ds}{s}\right)^{{2}}\frac{dt}{t^{n+1}}\right)^{\frac{1}{2}}
\\
&\lesssim
\sum_{j\geq 1}e^{-c4^j}
\int_{1}^{\infty}s^{-1+\frac{n}{q_0}}\left(\int_0^{\infty}\left(\int_{B(x,2^{j+1}t/s)}|t^2Le^{-t^2L}f(y)|^{q_0}\frac{dy}{t^n}\right)^{\frac{2}{q_0}}\frac{dt}{t}\right)^{\frac{1}{2}}\frac{ds}{s}
\\&
=:
\sum_{j\geq 1}e^{-c4^j}
\int_{1}^{\infty}s^{-1+\frac{n}{q_0}}\mathcal{J}(x,s)\frac{ds}{s}.
\end{align*}
Thus, applying first Minkowski's integral inequality, \cite[Proposition 3.30]{MartellPrisuelos}, and changing the variable $t$ into $\sqrt{2}t$; next, applying the $L^2(\R^n)-L^{q_0}(\R^n)$ off-diagonal estimates satisfied by the family $\{e^{-t^2L}\}_{t>0}$,  recalling our choices of $q_0$ and $r$ and \eqref{positive-III-2}, and applying \cite[Proposition 3.2]{MartellPrisuelos}, we obtain, for $r_0>r_w$,
\begin{multline*}
\left(\int_{\R^n}\Bigg(\int_{1}^{\infty}s^{-1+\frac{n}{q_0}}\mathcal{J}(x,s)\frac{ds}{s}\Bigg)^2w(x)dx\right)^{\frac{1}{2}}
\\
\lesssim
\left(\int_{1}^{\infty}s^{-1-\frac{n}{2r}+\frac{n}{q_0}}\frac{ds}{s}\right)\left(\int_{\R^n}\int_0^{\infty}
\left(\int_{B(x,2^{j+2}t)}|e^{-t^2L}t^2Le^{-t^2L}f(y)|^{q_0}\frac{dy}{t^n}\right)^{\frac{2}{q_0}}\frac{dt}{t}w(x)dx\right)^{\frac{1}{2}}
\\
\lesssim  2^{j\frac{nr_0}{2}}\sum_{l\geq 1}e^{-c4^l}
\|\Scal_{\hh}f\|_{L^2(w)}
\lesssim  2^{j\frac{nr_0}{2}}
\|\Scal_{\hh}f\|_{L^2(w)}.
\end{multline*}
Using this, we obtain
$
\|II\|_{L^2(w)}
\lesssim\|\Scal_{\hh}f\|_{L^2(w)}.
$
Gathering this and the estimate obtained for $\|I\|_{L^2(w)}$ gives us that,
for all $w\in RH_{\left(\frac{p_+(L)^*}{2}\right)'}$, 
$$
\|\widetilde{\Scal}f\|_{L^2(w)}\lesssim\|\Scal_{\hh}f\|_{L^2(w)},
$$
which, from the observations made at the beginning, finishes the proof.
\end{proof}
\subsection{Proof of Proposition \ref{prop:1-Riesz}}
First of all note that if $f$ is such that $\|\nabla L^{-\frac{1}{2}}f\|_{L^{p}(w)}<\infty$, then for
 $h:=L^{-\frac{1}{2}}f$, we have that $h\in \dot{W}^{1,p}(w)$ (the space
 $\dot{W}^{1,p}(w)$ is defined as the completion of $
 \{h\in C_0^{\infty}(\R^n): \nabla h\in L^p(w)\}$ under the semi-norm $\|h\|_{\dot{W}^{1,p}(w)}:=\|\nabla h\|_{L^p(w)}$). Additionally, note that applying Proposition \ref{prop:widetildeS-heatS},  Theorem \ref{thm:SF-heat-III}, and \cite[Theorem 6.2]{AuscherMartell:III}, for all $w\in A_{\infty}$ such that $\mathcal{W}_{w}(p_-(L),p_+(L))\neq \emptyset$ and $\max\{r_w,\widehat{p}_-(L)\}<p<\frac{p_+(L)}{s_w}$, we have that
 $$
\|\widetilde{\Scal}\sqrt{L}h\|_{L^p(w)}\approx \|\Scal_{\hh}\sqrt{L}h\|_{L^p(w)}\lesssim \|\sqrt{L}h\|_{L^p(w)}\lesssim\|\nabla h\|_{L^p(w)}.
 $$
 This gives us that 
 \begin{align}\label{Sobolev}
 \widetilde{\Scal}\sqrt{L}:\dot{W}^{1,p}(w)\rightarrow L^p(w), \quad \forall\, \max\{r_w,\widehat{p}_-(L)\}<p<\frac{p_+(L)}{s_w}.
 \end{align}
 Therefore, if we show that, for every  $\widehat{p}_-(L)<\widetilde{p}<\frac{q_+(L)}{s_w}$, $r_0>r_w$, so that $r_wq_-(L)<r_0q_-(L)<\frac{q_+(L)}{s_w}$, and for $p_0:=\max\left\{r_0,\frac{nr_0\widetilde{p}}{nr_0+\widetilde{p}}\right\}$,
 \begin{align}\label{weakinterpolation}
 \widetilde{\Scal}\sqrt{L}:\dot{W}^{1,p_0}(w)\rightarrow L^{p_0,\infty}(w),
 \end{align}
then, by interpolation (see \cite{Bard}), applying Proposition \ref{prop:widetildeS-heatS}, and by the observation made at the beginning of the proof, we will conclude \eqref{square-riesz:1}. Besides, note that  $\mathcal{W}_w(q_-(L),q_+(L))\neq \emptyset$ implies   
$\mathcal{W}_w(p_-(L),p_+(L))\neq \emptyset$ (recall that $\mathcal{W}_w(q_-(L),q_+(L))\subset \mathcal{W}_w(p_-(L),p_+(L))$).

We fix  $\widetilde{p}$ and $r_0$ satisfying the above restrictions. Additionally, we take  $r$,  $q_-(L)<r<2$, close enough to $q_-(L)$ so that $rr_0<\frac{q_+(L)}{s_w}$. Then, if we consider $p_1$ so that $\max\{rr_0,\widetilde{p}\}<p_1<\frac{q_+(L)}{s_w}$, we have that  $w\in A_{\frac{p_1}{r}}\cap RH_{\left(\frac{q_+(L)}{p_1}\right)'}$, and  $p_1>p_0$.

Recalling these choices of $\widetilde{p}$, $r_0$, $r$, $p_1$, and $p_0$, note that in order to prove \eqref{weakinterpolation} it suffices to show that, for every $\alpha>0$ and $h\in \dot{W}^{1,p_0}(w)$,
$$
 w\left(\left\{x\in \R^{n}:\widetilde{\Scal}\sqrt{L}h(x)>\alpha\right\}\right)\lesssim \frac{1}{\alpha^{p_0}}\int_{\R^n}|\nabla h(x)|^{p_0}w(x)dx.
$$
To this end, consider the following Calder\'on-Zygmund decomposition of $h$ (see \cite[Lemma 6.6]{AuscherMartell:III}).
\begin{lemma}\label{C-Z-decomposition}
Let $n\geq 1$, $w\in A_{\infty}$, $\mu:=wdx$, and $r_w<p_0<\infty$ (with the possibility of taking $p_0=1$ if $r_w=1$). Assume that $h\in \dot{W}^{1,p_0}(w)$, and let $\alpha>0$. Then, one can find a collection of balls $\{B_i\}_{i\in \N}$ (with radii $r_{B_i}$), smooth functions $b_i$, and a function $g\in L^1_{loc}(w)$ such that
$$
h=g+\sum_{i\in \N}b_i
$$
and the following properties hold
\begin{align}\label{C-Z:g}
|\nabla g(x)|\leq C\alpha, \textrm{ for } \mu-	\textrm{a.e. } x,
\end{align}

\begin{align}\label{C-Z:b}
\supp b_i\subset B_i\quad \textrm{and}\quad \int_{B_i}|\nabla b_i(x)|^{p_0}w(x)dx\leq C\alpha^{p_0}w(B_i),
\end{align}

\begin{align}\label{C-Z:sum}
\sum_{i\in \N} w(B_i)\leq \frac{C}{\alpha^{p_0}}\int_{\R^n}|\nabla h(x)|^{p_0}w(x)dx,
\end{align}

\begin{align}\label{C-Z:sumoverlap}
\sum_{i\in \N}\chi_{B_i}\leq N,
\end{align}
where $C$ and $N$ depend only on the dimension, the doubling constant of $\mu$, and $p_0$. In addition, for $1\leq q<(p_0)_w^*$, where $(p_0)_w^*$ is defined in \eqref{q_wstar}, we have
\begin{align}\label{C-Z:extrab}
\left(\dashint_{B_i}|b_i(x)|^qdw\right)^{\frac{1}{q}}\lesssim \alpha r_{B_i}.
\end{align}
\end{lemma}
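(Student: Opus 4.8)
This statement is \cite[Lemma 6.6]{AuscherMartell:III}; the plan is to reproduce the Calder\'on--Zygmund stopping-time argument adapted to the measure $\mu:=w\,dx$, which is doubling by \eqref{doublingcondition}. First I would set $F:=|\nabla h|^{p_0}\in L^1(w)$ and, for the given $\alpha>0$, introduce the open set $\Omega:=\{x\in\R^n:\mathcal{M}^wF(x)>\alpha^{p_0}\}$, where $\mathcal{M}^w$ is the weighted maximal operator \eqref{weightedmaximal} (taken over balls, which is equivalent). If $\Omega=\R^n$ there is nothing to prove; otherwise $\Omega$ is a proper open set and the weak type $(1,1)$ bound for $\mathcal{M}^w$ on $L^1(\mu)$ (which holds since $\mu$ is doubling) yields $w(\Omega)\lesssim\alpha^{-p_0}\int_{\R^n}|\nabla h|^{p_0}\,dw$. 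Next I would take a Whitney covering $\{B_i\}_{i\in\N}$ of $\Omega$ by balls with finite overlap \eqref{C-Z:sumoverlap} and with $r_{B_i}\approx d(B_i,\R^n\setminus\Omega)$ --- so that a fixed dilate $\widehat B_i$ still meets $\R^n\setminus\Omega$ --- together with a subordinate partition of unity $\{\eta_i\}$ satisfying $\sum_i\eta_i=\chi_\Omega$, $\supp\eta_i\subset B_i$ and $\|\nabla\eta_i\|_\infty\lesssim r_{B_i}^{-1}$. Then \eqref{C-Z:sum} is immediate from the bound on $w(\Omega)$ and the finite overlap, and \eqref{C-Z:sumoverlap} is built into the Whitney choice.

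I would then set $b_i:=(h-\overline h_{B_i})\eta_i$ with $\overline h_{B_i}:=\dashint_{B_i}h\,dw$, and $g:=h-\sum_i b_i$, which already gives the support condition in \eqref{C-Z:b}. For the gradient bound in \eqref{C-Z:b}, write $\nabla b_i=\eta_i\nabla h+(h-\overline h_{B_i})\nabla\eta_i$: the first term contributes $\int_{B_i}|\nabla h|^{p_0}\,dw\le\alpha^{p_0}w(\widehat B_i)\lesssim\alpha^{p_0}w(B_i)$ (evaluate the $w$-average of $F$ over $\widehat B_i$ at a point $\xi_i\in\widehat B_i\setminus\Omega$ and use doubling), while the second is $\lesssim r_{B_i}^{-p_0}\int_{B_i}|h-\overline h_{B_i}|^{p_0}\,dw$, which by the weighted Poincar\'e inequality --- available because $p_0>r_w$, that is, $w\in A_{p_0}$ --- is again $\lesssim\int_{B_i}|\nabla h|^{p_0}\,dw\lesssim\alpha^{p_0}w(B_i)$. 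For \eqref{C-Z:g} I would argue on the two pieces: on $\R^n\setminus\Omega$ one has $g=h$ and the Lebesgue differentiation theorem for $\mu$ gives $|\nabla h|\le(\mathcal{M}^wF)^{1/p_0}\le\alpha$ $\mu$-a.e.; on $\Omega$, $g=\sum_i\overline h_{B_i}\eta_i$ and $\sum_i\nabla\eta_i\equiv0$, so for $x\in B_j$ we may write $\nabla g(x)=\sum_{i:\,B_i\cap B_j\ne\emptyset}(\overline h_{B_i}-\overline h_{B_j})\nabla\eta_i(x)$, where $|\overline h_{B_i}-\overline h_{B_j}|\lesssim\alpha r_{B_j}$ by comparing weighted averages over overlapping balls of comparable radii via Poincar\'e; combined with the finite overlap this yields $|\nabla g|\lesssim\alpha$ on $\Omega$ as well.

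Finally, for \eqref{C-Z:extrab}: since $b_i=(h-\overline h_{B_i})\eta_i$ with $\eta_i$ bounded and supported in $B_i$, this reduces to the weighted Sobolev--Poincar\'e inequality $\big(\dashint_{B_i}|h-\overline h_{B_i}|^{q}\,dw\big)^{1/q}\lesssim r_{B_i}\big(\dashint_{B_i}|\nabla h|^{p_0}\,dw\big)^{1/p_0}$ for $1\le q<(p_0)_w^*$, with $(p_0)_w^*$ as in \eqref{q_wstar}, after which bounding the right-hand side exactly as in the proof of \eqref{C-Z:b} gives $\lesssim\alpha r_{B_i}$. I expect this last inequality to be the only genuinely delicate ingredient: one must verify that the admissible upper exponent is the \emph{weighted} Sobolev exponent $(p_0)_w^*=p_0nr_w/(nr_w-p_0)$ rather than the Euclidean $p_0^*$. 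The route I would take is the pointwise estimate $|h(x)-\overline h_{B_i}|\lesssim\int_{B_i}|x-y|^{1-n}|\nabla h(y)|\,dy$ together with the mapping properties on $A_\infty$ weights of the fractional integral of order one (equivalently, invoking the weighted Poincar\'e--Sobolev inequalities of Fabes--Kenig--Serapioni and Chiarenza--Frasca); all remaining steps are the standard Whitney and partition-of-unity bookkeeping.
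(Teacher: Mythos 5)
Your proposal is correct, and it coincides with the paper's treatment: the paper does not prove this lemma but simply quotes it as \cite[Lemma 6.6]{AuscherMartell:III}, which is exactly the source you identify. The argument you sketch (weighted maximal level set, Whitney covering with partition of unity, $b_i=(h-\overline h_{B_i})\eta_i$, weighted Poincar\'e for \eqref{C-Z:g}--\eqref{C-Z:b} and the Fabes--Kenig--Serapioni weighted Poincar\'e--Sobolev inequality with exponent $(p_0)_w^*$ for \eqref{C-Z:extrab}) is precisely the proof given in that reference.
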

Applying this lemma to our function $h$ and to our choice of $p_0$, and considering for $M\in \N$, arbitrarily large, and for every $i\in \N$,   $B_{r_{B_i}}:=(I-e^{-r_{B_i}^2L})^M$ and $A_{r_{B_i}}:=I-B_{r_{B_i}}$, we can write 
$
b_i=B_{r_{B_i}}b_i+A_{r_{B_i}}b_i
$.
Hence,
$$
h= g+\sum_{i\in \N}B_{r_{B_i}}b_i+\sum_{i\in \N}A_{r_{B_i}}b_i.
$$ Then,
\begin{multline}\label{spplitingriesz}
w\left(\left\{x\in \R^n: \widetilde{\Scal}\sqrt{L}h(x)>\alpha\right\}\right)
\leq
w\left(\left\{x\in \R^n: \widetilde{\Scal}\sqrt{L}g(x)>\frac{\alpha}{3}\right\}\right)
\\
+
w\left(\left\{x\in \R^n: \widetilde{\Scal}\sqrt{L}\left(\sum_{i\in \N}A_{r_{B_i}}b_i\right)(x)>\frac{\alpha}{3}\right\}\right)
\\
+
w\left(\left\{x\in \R^n: \widetilde{\Scal}\sqrt{L}\left(\sum_{i\in \N}B_{r_{B_i}}b_i\right)(x)>\frac{\alpha}{3}\right\}\right)
=:I+II+III.
\end{multline}
By our choice of $p_1$, we have that $p_1\in \mathcal{W}_w(q_-(L),q_+(L))\subset \mathcal{W}_w(p_-(L),p_+(L))$. Then, applying Chebychev's inequality, \eqref{Sobolev}, \eqref{C-Z:g}, \eqref{C-Z:b}, and \eqref{C-Z:sum}, we obtain
\begin{multline}\label{Itermriesz}
I\lesssim \frac{1}{\alpha^{p_1}}\int_{\R^n}|\widetilde{\Scal}\sqrt{L}g(x)|^{p_1}w(x)dx
\lesssim \frac{1}{\alpha^{p_1}}
\int_{\R^n}|\nabla g(x)|^{p_1}w(x)dx
\\
\lesssim
\frac{1}{\alpha^{p_0}}\left(
\int_{\R^n}|\nabla h(x)|^{p_0}w(x)dx+\alpha^{p_0}\sum_{i\in \N}w(B_i)\right)
\lesssim
\frac{1}{\alpha^{p_0}}
\int_{\R^n}|\nabla h(x)|^{p_0}w(x)dx.
\end{multline}
In order to estimate the remaining terms, we take 
 $1< p<\infty$ and $u\in L^{p'}(w)$ such that $\|u\|_{L^{p'}(w)}= 1$. Besides, we denote by $\mathcal{M}^w$  the weighted maximal operator defined as in \eqref{weightedmaximal} but  taking the supremum over balls instead of  over cubes. Then,
using a Kolmogorov type inequality and \eqref{C-Z:sum},
we have that
\begin{multline}\label{maximal-u}
\left(\sum_{i\in \N} \int_{B_i}\left(\mathcal{M}^w(|u|^{p'})(x)\right)^{\frac{1}{p'}}w(x)dx\right)^{p}
\lesssim
\left(\int_{\cup_{i\in \N}B_i}\left(\mathcal{M}^w(|u|^{p'})(x)\right)^{\frac{1}{p'}}w(x)dx\right)^{p}
\\
\lesssim
w(\cup_{i\in \N}B_i)\|u\|_{L^{p'}(w)}^p\lesssim \frac{1}{\alpha^{p_0}}\int_{\R^n}|\nabla h(x)|^{p_0}w(x)dx.
\end{multline}
Moreover, note that for $p_2:=\max\{r_0,\widetilde{p}\}$, we have that $1< p_2<(p_0)_w^*$ and hence by \eqref{C-Z:extrab},
\begin{align}\label{CZ-ptildeexta}
\left(\dashint_{B_i}|b_i(x)|^{p_2}dw\right)^{\frac{1}{p_2}}\lesssim \alpha r_{B_i}.
\end{align}
Additionally, note that since $p_2\geq \widetilde{p}$, applying H\"older's inequality we also have the above inequality replacing $p_2$ with $\widetilde{p}$.

Let us next prove \eqref{CZ-ptildeexta}. To this end, first assume that $r_0>\widetilde{p}$. Then, $p_0=r_0$ and $p_2=r_0$, and it is easy to see that 
$1< r_0<(r_0)_w^*$. 

On the other hand, if $\widetilde{p}\geq r_0$, in other words, $p_2=\widetilde{p}>1$, we assume that $nr_w>p_0$, otherwise $(p_0)_w^*=\infty>\widetilde{p}=p_2$. Besides, note that
$nr_w>p_0$ implies $nr_wr_0>\widetilde{p}(r_0-r_w)$, consequently 
$$
(p_0)_w^*=\frac{nr_wp_0}{nr_w-p_0}\geq \frac{nr_wr_0\widetilde{p}}{nr_wr_0-\widetilde{p}(r_0-r_w)}>\widetilde{p}=p_2.
$$

\medskip

Therefore, in order to estimate $II$, we first apply Chebychev's inequality. Next,
by \eqref{Sobolev}, expanding the binomial,  using that $\{\sqrt{t}\nabla_y e^{-tL}\}_{t>0}$ satisfies $L^{p_2}(w)-L^{p_2}(w)$ off-diagonal estimates on balls (see \cite{AuscherMartell:II, AuscherMartell:III}), by \eqref{doublingcondition}, \eqref{CZ-ptildeexta}, and \eqref{maximal-u} with $p=p_2$, we have
\begin{align}\label{IItermriesz}
&II\lesssim\frac{1}{\alpha^{p_2}}\int_{\R^n} \left|\widetilde{\Scal}\sqrt{L}\left(\sum_{i\in \N}A_{r_{B_i}}b_i\right)(x)\right|^{p_2}w(x)dx
\\\nonumber
&\lesssim\frac{1}{\alpha^{p_2}}\int_{\R^n} \left|\nabla\left(\sum_{i\in \N}\sum_{k=1}^{M}{C_{k,M}}e^{-kr_{B_i}^2L}b_i\right)(x)\right|^{p_2}w(x)dx
\\\nonumber
&\lesssim\frac{1}{\alpha^{p_2}}\sup_{\|u\|_{L^{p_2'}(w)}=1}\left(\sum_{k=1}^{M}{C_{k,M}}\sum_{i\in \N}\int_{\R^n} \left|\sqrt{k}r_{B_i}\nabla_y e^{-kr_{B_i}^2L}\left(\frac{b_i}{r_{B_i}}\right)(x)\right|\,|u(x)|w(x)dx\right)^{p_2}
\\\nonumber
&\lesssim\!\!\frac{1}{\alpha^{p_2}}\sup_{\|u\|_{L^{p_2'}(w)}\!=1}\!\!
\left(\!\sum_{k=1}^{M}{C_{k,M}}\!\sum_{i\in \N}\!\sum_{j\geq 1}2^{\frac{jnp_1}{r}}w(B_i)\!\left(\dashint_{C_j(B_i)}\!\left|\!\!\sqrt{k}r_{B_i}\nabla_y e^{-kr_{B_i}^2L}\!\left(\frac{b_i}{r_{B_i}}\right)\!\!(x)\right|^{p_2}\!\!\!\!dw\!\right)^{\frac{1}{p_2}}\!\!\frac{\|u\chi_{C_j(B_i)}\|_{L^{p_2'}(w)}}{w(2^{j+1}B_i)^{\frac{1}{p_2'}}}\!\!\right)^{\!p_2}
\\\nonumber
&\lesssim\frac{1}{\alpha^{p_2}}\sup_{\|u\|_{L^{p_2'}(w)}=1}\left(\sum_{i\in \N}\sum_{j\geq 1}e^{-c4^j}w(B_i)\left(\dashint_{B_i} \left|\frac{b_i(x)}{r_{B_i}}\right|^{p_2}dw\right)^{\frac{1}{p_2}}\,\inf_{x\in B_i}\left(\mathcal{M}^w(|u|^{p_2'})(x)\right)^{\frac{1}{p_2'}}\right)^{p_2}
\\\nonumber
&\lesssim\sup_{\|u\|_{L^{p_2'}(w)}=1}\left(\sum_{i\in \N}\int_{B_i}\left(\mathcal{M}^w(|u|^{p_2'})(x)\right)^{\frac{1}{p_2'}}w(x)dx\right)^{p_2}
\\\nonumber
&\lesssim \frac{1}{\alpha^{p_0}}\int_{\R^n}|\nabla h(x)|^{p_0}w(x)dx.
\end{align}
Next, we estimate $III$.  Note that,
\begin{align}\label{III-Riesz}
III\lesssim 
w\left(\bigcup_{i\in \N}16B_i\right)
+
w\left(\left\{x\in \R^n\setminus \bigcup_{i\in \N}16B_i: \widetilde{\Scal}\sqrt{L}\left(\sum_{i\in \N}B_{r_{B_i}}b_i\right)(x)>\frac{\alpha}{3}\right\}\right)=III_1+III_2.
\end{align}
Applying \eqref{C-Z:sum} we have that 
\begin{align}\label{III1-Riesz}
III_1\lesssim \frac{1}{\alpha^{p_0}}\int_{\R^n}|\nabla h(x)|^{p_0}w(x)dx.
\end{align}
Hence it just remains to control $III_2$. Applying Chebychev's inequality, we obtain
\begin{align}\label{III2-Riesz}
 &III_2
 \\
&\nonumber
\lesssim
\frac{1}{\alpha^{p_1}}\int_{\R^n\setminus \bigcup_{i\in \N} 16B_i}\left(\int_0^{\infty}\int_{B(x,t)}\left|tLe^{-t^2L}\left(\sum_{i\in \N}B_{r_{B_i}}b_i\right)(y)\right|^{2}\frac{dy\,dt}{t^{n+1}}\right)^{\frac{p_1}{2}}w(x)dx
\\\nonumber
&
\lesssim\!
\frac{1}{\alpha^{p_1}}\!\!\sup_{\|u\|_{L^{p_1'}(w)}\!\!=1}\!\left(\sum_{i\in \N}\!\sum_{j\geq 4}\!\!\left(\!\int_{C_j(B_i)}\!\!\left(\int_0^{\infty}\!\!\!\!\int_{B(x,t)}\!\left|tLe^{-t^2L}\!\left(B_{r_{B_i}}b_i\right)\!(y)\right|^{2}\!\frac{dydt}{t^{n+1}}\!\right)^{\!\!\frac{p_1}{2}}\!\!\!\!\!w(x)dx\!\right)^{\!\!\frac{1}{p_1}}\!\!
\!\|u\chi_{C_j(B_i)}\|_{L^{p_1'}(w)}
\!\!\right)^{\!\!p_1} 
\\\nonumber&
=:\frac{1}{\alpha^{p_1}}\sup_{\|u\|_{L^{p_1'}(w)}=1}\left(\sum_{i\in \N}\sum_{j\geq 4}\,\mathcal{III}_{ij}\,\|u\chi_{C_j(B_i)}\|_{L^{p_1'}(w)}\right)^{p_1}.
\end{align}
Splitting the integral in $t$ (recall that $j\geq 4$), we have
\begin{multline*}
\mathcal{III}_{ij}
\lesssim
\left(\int_{C_j(B_i)}\left(\int_0^{2^{j-2}r_{B_i}}\int_{B(x,t)}\left|tLe^{-t^2L}\left(B_{r_{B_i}}b_i\right)(y)\right|^{2}\frac{dy\,dt}{t^{n+1}}\right)^{\frac{p_1}{2}}w(x)dx\right)^{\frac{1}{p_1}}
\\
+
\left(\int_{C_j(B_i)}\left(\int_{2^{j-2}r_{B_i}}^{\infty}\int_{B(x,t)}\left|t^2Le^{-t^2L}\left(B_{r_{B_i}}\left(\frac{b_i}{r_{B_i}}\right)\right)(y)\right|^{2}\frac{dy\,dt}{t^{n+1}}\right)^{\frac{p_1}{2}}w(x)dx\right)^{\frac{1}{p_1}}=
\mathcal{III}_{ij}^1+\mathcal{III}_{ij}^2.
\end{multline*}
We first estimate $\mathcal{III}_{ij}^1$. Recall that $w\in A_{\frac{p_1}{r}}\cap RH_{\left(\frac{q_+(L)}{p_1}\right)'}$. Hence,   we can take $q_0$, $\max\{2,p_1\}<q_0<q_+(L)$, close enough to $q_+(L)$ so that $w\in RH_{\left(\frac{q_0}{p_1}\right)'}$.  Then, applying Jensen's inequality, Fubini's theorem, and noticing that for $x\in C_j(B_i)$ and $0<t\leq 2^{j-2}r_{B_i}$  we have that $B(x,t)\subset 2^{j+2}B_i\setminus 2^{j-1}B_i$, we get
\begin{align*}
&\mathcal{III}_{ij}^1\lesssim
|2^{j+1}B_i|^{-\frac{1}{q_0}}w(2^{j+1}B_i)^{\frac{1}{p_1}}
\left(\int_{C_j(B_i)}\left(\int_0^{2^{j-2}r_{B_i}}\int_{B(x,t)}\left|tLe^{-t^2L}\left(B_{r_{B_i}}b_i\right)(y)\right|^{2}\frac{dy\,dt}{t^{n+1}}\right)^{\frac{q_0}{2}}dx\right)^{\frac{1}{q_0}}
\\
&\lesssim
|2^{j+1}B_i|^{-\frac{1}{q_0}}w(2^{j+1}B_i)^{\frac{1}{p_1}}
\left(\int_{C_j(B_i)}\int_0^{2^{j-2}r_{B_i}}\left(\frac{2^jr_{B_i}}{t}\right)^{\frac{q_0}{2}-1}\!\!\int_{B(x,t)}\left|tLe^{-t^2L}\left(B_{r_{B_i}}b_i\right)(y)\right|^{q_0}\frac{dy\,dt}{t^{n+1}}dx\right)^{\frac{1}{q_0}}
\\
&\lesssim
|2^{j+1}B_i|^{-\frac{1}{q_0}}w(2^{j+1}B_i)^{\frac{1}{p_1}}
\left(\int_0^{2^{j-2}r_{B_i}}\left(\frac{2^jr_{B_i}}{t}\right)^{\frac{q_0}{2}-1}\!\!\!t^{-q_0}\!\!\int_{2^{j+2}B_i\setminus 2^{j-1}B_i}\left|t^2Le^{-t^2L}\!\left(B_{r_{B_i}}b_i\right)\!(y)\right|^{q_0}\frac{dy\,dt}{t}\right)^{\!\frac{1}{q_0}}\!.
\end{align*}
We estimate the integral in $y$ by using functional calculus. We use the notation in \cite{Auscher} and \cite[Section 7]{AuscherMartell:III}.
We write $\vartheta\in[0,\pi/2)$ for the supremum of $|{\rm arg}(\langle Lf,f\rangle_{L^2(\R^n)})|$ over all $f$ in the domain of $L$.
Let $0<\vartheta <\theta<\nu<\mu<\pi /2$ and note that, for a fixed $t>0$, $\phi(z,t):=e^{-t^2 z}(1-e^{-r_{B_i}^2 z})^M$ is holomorphic in the open sector $\Sigma_\mu=\{z\in\mathbb{C}\setminus\{0\}:|{\rm arg} (z)|<\mu\}$ and satisfies $|\phi(z,t)|\lesssim |z|^M\,(1+|z|)^{-2M}$ (with implicit constant depending on $\mu$, $t>0$, $r_{B_i}$, and $M$) for every $z\in\Sigma_\mu$. Hence, we can write
\[
\phi(L,t )=\int_{\Gamma } e^{-zL }\eta (z,t)dz,
\qquad \text{where} \quad 
\eta(z,t) = \int_{\gamma} e^{\zeta z} \phi(\zeta,t ) d\zeta.
\]
Here $\Gamma=\partial\Sigma_{\frac\pi2-\theta}$ with positive orientation (although orientation is irrelevant for our computations) and 
$\gamma=\R_+e^{i\,{\rm sign}({\rm Im} (z))\,\nu}$. It is not difficult to see that for every $z\in \Gamma$,
$$
|\eta(z,t)| \lesssim \frac{r_{B_i}^{2M}}{(|z|+t^2)^{M+1}}.
$$
Consequently, we can write
\begin{align*}
\left(\int_{2^{j+2}B_i\setminus 2^{j-1}B_i}\right.&\left.\left|t^2Le^{-t^2L}B_{r_{B_i}}\left(b_i\right)(y)\right|^{q_0}dy\right)^{\frac{1}{q_0}}
\\
&
\lesssim
\int_{\Gamma}\left(\int_{2^{j+2}B_i\setminus 2^{j-1}B_i}\left|\frac{z}{2}Le^{-\frac{z}{2}L}\left(e^{-\frac{z}{2}L}b_i\right)(y)\right|^{q_0}dy\right)^{\frac{1}{q_0}}
\frac{t^2}{|z|}\frac{r_{B_i}^{2M}}{(|z|+t^2)^{M+1}}|dz|
\\
&\lesssim 
\sum_{l= 1}^{j-3}\int_{\Gamma}\left(\int_{2^{j+2}B_i\setminus 2^{j-1}B_i}\left|\frac{z}{2}Le^{-\frac{z}{2}L}\left(\chi_{C_l(B_i)}e^{-\frac{z}{2}L}b_i\right)(y)\right|^{q_0}dy\right)^{\frac{1}{q_0}}
\frac{t^2}{|z|}\frac{r_{B_i}^{2M}}{(|z|+t^2)^{M+1}}|dz|
\\
&\quad+
\sum_{l\geq j-2}\int_{\Gamma}\left(\int_{2^{j+2}B_i\setminus 2^{j-1}B_i}\left|\frac{z}{2}Le^{-\frac{z}{2}L}\left(\chi_{C_l(B_i)}e^{-\frac{z}{2}L}b_i\right)(y)\right|^{q_0}dy\right)^{\frac{1}{q_0}}
\frac{t^2}{|z|}\frac{r_{B_i}^{2M}}{(|z|+t^2)^{M+1}}|dz|.
\end{align*} 
Note now that since
$j\geq 4$, for $1\leq l\leq j-3$ we have that $d(2^{j+2}B_i\setminus 2^{j-1}B_i,C_l(B_i))\geq 2^{j-2}r_{B_i}\geq 2^{l+1}r_{B_i}$.
Then, in that case, applying the fact that $\frac{z}{2}Le^{-\frac{z}{2}L}$ satisfies $L^r(\R^n)- L^{q_0}(\R^n)$ off-diagonal estimates (see \cite{Auscher}), splitting the exponential term, using that $w\in A_{\frac{p_1}{r}}$,  changing the variable $s$ into $4^jr_{B_i}^2/s^2$, and applying that $e^{-\frac{z}{2}L}$ satisfies  $L^{\widetilde{p}}(w)-L^{p_1}(w)$ off-diagonal estimates on balls (see \cite{AuscherMartell:II, AuscherMartell:III}), and  by \eqref{CZ-ptildeexta}, we obtain
\begin{align*}
&
\int_{\Gamma}\left(\int_{2^{j+2}B_i\setminus 2^{j-1}B_i}\left|\frac{z}{2}Le^{-\frac{z}{2}L}\left(\chi_{C_l(B_i)}e^{-\frac{z}{2}L}b_i\right)(y)\right|^{q_0}dy\right)^{\frac{1}{q_0}}
\frac{t^2}{|z|}\frac{r_{B_i}^{2M}}{(|z|+t^2)^{M+1}}|dz|
\\
&\quad\quad\lesssim
\int_{\Gamma}\left(\int_{C_l(B_i)}\left|e^{-\frac{z}{2}L}b_i(y)\right|^{r}dy\right)^{\frac{1}{r}}|z|^{-\frac{n}{2}\left(\frac{1}{r}-\frac{1}{q_0}\right)}e^{-c\frac{4^jr_{B_i}^2}{|z|}}
\frac{t^2}{|z|}\frac{r_{B_i}^{2M}}{(|z|+t^2)^{M+1}}|dz|
\\
&\quad\quad\lesssim (2^{l}r_{B_i})^{\frac{n}{r}}
\int_{\Gamma}\left(\dashint_{C_l(B_i)}\left|e^{-\frac{z}{2}L}b_i(y)\right|^{p_1}dw\right)^{\frac{1}{p_1}}|z|^{-\frac{n}{2}\left(\frac{1}{r}-\frac{1}{q_0}\right)}e^{-c\frac{4^j r_{B_i}^2}{|z|}}e^{-c\frac{4^l r_{B_i}^2}{|z|}}
\frac{t^2}{|z|}\frac{ r_{B_i}^{2M}}{(|z|+t^2)^{M+1}}|dz|
\\
&\quad\quad
\lesssim 2^{l\theta_1}
(2^{l} r_{B_i})^{\frac{n}{r}}\left(\dashint_{B_i}\left|b_i(y)\right|^{\widetilde{p}}dw\right)^{\frac{1}{\widetilde{p}}}\int_{0}^{\infty}\Upsilon\left(\frac{2^{l} r_{B_i}}{s^{\frac{1}{2}}}\right)^{\theta_2}s^{-\frac{n}{2}\left(\frac{1}{r}-\frac{1}{q_0}\right)}e^{-c\frac{4^j r_{B_i}^2}{s}}e^{-c\frac{4^l r_{B_i}^2}{s}}
t^2\frac{ r_{B_i}^{2M}}{(s+t^2)^{M+1}}\frac{ds}{s}
\\
&\quad\quad
\lesssim \alpha  r_{B_i}2^{l\left(\theta_1+\frac{n}{r}\right)}
 r_{B_i}^{\frac{n}{q_0}}2^{-jn\left(\frac{1}{r}-\frac{1}{q_0}\right)}\int_{0}^{\infty}\Upsilon\left(\frac{2^{l}s}{2^{j}}\right)^{\theta_2}s^{n\left(\frac{1}{r}-\frac{1}{q_0}\right)}e^{-cs^2}e^{-c\frac{4^ls^2}{4^j}}
t^2\frac{ r_{B_i}^{2M}}{(4^{j} r_{B_i}^2/s^2+t^2)^{M+1}}\frac{ds}{s},
\end{align*}
recall that $\Upsilon(u)=\max\{u,u^{-1}\}$.

If we now consider $l\geq j-2$, in this case, we do not have distance between $2^{j+2}B_i\setminus 2^{j-1}B_i$ and $C_l(B_i)$, but we do have between $C_l(B_i)$ and $B_i$. Indeed, since $l\geq j-2\geq 2$, we have that $d(C_l(B_i),B_i)> 2^{l-1} r_{B_i}\geq 2^{j-3} r_{B_i}$. Hence, proceeding as in the above computation, we obtain
\begin{align*}
&
\int_{\Gamma}\left(\int_{2^{j+2}B_i\setminus 2^{j-1}B_i}\left|\frac{z}{2}Le^{-\frac{z}{2}L}\left(\chi_{C_l(B_i)}e^{-\frac{z}{2}L}b_i\right)(y)\right|^{q_0}dy\right)^{\frac{1}{q_0}}
\frac{t^2}{|z|}\frac{ r_{B_i}^{2M}}{(|z|+t^2)^{M+1}}|dz|
\\
&\quad\quad\lesssim
\int_{\Gamma}\left(\int_{C_l(B_i)}\left|e^{-\frac{z}{2}L}b_i(y)\right|^{r}dy\right)^{\frac{1}{r}}|z|^{-\frac{n}{2}\left(\frac{1}{r}-\frac{1}{q_0}\right)}
\frac{t^2}{|z|}\frac{ r_{B_i}^{2M}}{(|z|+t^2)^{M+1}}|dz|
\\
&\quad\quad\lesssim (2^{l} r_{B_i})^{\frac{n}{r}}
\int_{\Gamma}\left(\dashint_{C_l(B_i)}\left|e^{-\frac{z}{2}L}b_i(y)\right|^{p_1}dw\right)^{\frac{1}{p_1}}|z|^{-\frac{n}{2}\left(\frac{1}{r}-\frac{1}{q_0}\right)}
\frac{t^2}{|z|}\frac{ r_{B_i}^{2M}}{(|z|+t^2)^{M+1}}|dz|
\\
&\quad\quad
\lesssim 2^{l\theta_1}
(2^{l} r_{B_i})^{\frac{n}{r}}\left(\dashint_{B_i}\left|b_i(y)\right|^{\widetilde{p}}dw\right)^{\frac{1}{\widetilde{p}}}\int_{0}^{\infty}\Upsilon\left(\frac{2^{l} r_{B_i}}{s^{\frac{1}{2}}}\right)^{\theta_2}s^{-\frac{n}{2}\left(\frac{1}{r}-\frac{1}{q_0}\right)}e^{-c\frac{4^l r_{B_i}^2}{s}}
t^2\frac{ r_{B_i}^{2M}}{(s+t^2)^{M+1}}\frac{ds}{s}
\\
&\quad\quad
\lesssim \alpha r_{B_i}2^{l\theta_1}
(2^{l} r_{B_i})^{\frac{n}{r}}\int_{0}^{\infty}\Upsilon\left(\frac{2^{l} r_{B_i}}{s^{\frac{1}{2}}}\right)^{\theta_2}s^{-\frac{n}{2}\left(\frac{1}{r}-\frac{1}{q_0}\right)}e^{-c\frac{4^j r_{B_i}^2}{s}}e^{-c\frac{4^l r_{B_i}^2}{s}}
t^2\frac{ r_{B_i}^{2M}}{(s+t^2)^{M+1}}\frac{ds}{s}
\\
&\quad\quad
\lesssim \alpha  r_{B_i}2^{l\left(\theta_1+\frac{n}{r}\right)}
 r_{B_i}^{\frac{n}{q_0}}2^{-jn\left(\frac{1}{r}-\frac{1}{q_0}\right)}\int_{0}^{\infty}\Upsilon\left(\frac{2^{l}s}{2^{j}}\right)^{\theta_2}s^{n\left(\frac{1}{r}-\frac{1}{q_0}\right)}e^{-cs^2}e^{-c\frac{4^ls^2}{4^j}}
t^2\frac{ r_{B_i}^{2M}}{(4^{j} r_{B_i}^2/s^2+t^2)^{M+1}}\frac{ds}{s}.
\end{align*}
Next, changing the variable $t$ into $2^j r_{B_i}t$, we have for  $\widetilde{M}>0$ large enough to be chosen later,
\begin{align*}
&
\left(\int_0^{ 2^{j-2}r_{B_i}}\!\!\!
\left(\frac{ 2^jr_{B_i}}{t}\right)^{\frac{q_0}{2}-1}\!\!\!\!t^{-q_0}
\!\left(\int_{0}^{\infty}\Upsilon\left(\frac{2^{l}s}{2^{j}}\right)^{\theta_2}s^{n\left(\frac{1}{r}-\frac{1}{q_0}\right)}e^{-cs^2}e^{-c\frac{4^ls^2}{4^j}}
t^2\frac{ r_{B_i}^{2M}}{(4^{j} r_{B_i}^2/s^2+t^2)^{M+1}}\frac{ds}{s}\right)^{q_0}\frac{dt}{t}\right)^{\frac{1}{q_0}}
\\
&
\lesssim 2^{-j\left(2M+1\right)} r_{B_i}^{-1}
\left(\int_0^{1}\!\!\!
t^{1+\frac{q_0}{2}}\left(\int_{0}^{\infty}\Upsilon\left(\frac{2^{l}s}{2^{j}}\right)^{\theta_2}s^{n\left(\frac{1}{r}-\frac{1}{q_0}\right)}e^{-cs^2}e^{-c\frac{4^ls^2}{4^j}}
\frac{1}{(1/s^2+t^2)^{M+1}}\frac{ds}{s}\right)^{q_0}\frac{dt}{t}\right)^{\frac{1}{q_0}}
\\
&
\lesssim 2^{-l\left(2\widetilde{M}-\theta_2\right)}2^{-j\left(2M+1-\theta_2-2\widetilde{M}\right)} r_{B_i}^{-1}
\left(\left(\int_0^{1}\!\!\!
t^{1+\frac{q_0}{2}}\left(\int_{0}^{1}s^{n\left(\frac{1}{r}-\frac{1}{q_0}\right)-2\widetilde{M}+2M+2-\theta_2}
\frac{ds}{s}\right)^{q_0}\frac{dt}{t}\right)^{\frac{1}{q_0}}\right.
\\
&
\qquad\qquad\qquad\qquad\qquad\qquad\qquad\left.+
\left(\int_0^{1}\!\!\!
t^{1+\frac{q_0}{2}}\left(\int_{1}^{\infty}s^{n\left(\frac{1}{r}-\frac{1}{q_0}\right)-2\widetilde{M}+2M+2+\theta_2}e^{-cs^2}
\frac{ds}{s}\right)^{q_0}\frac{dt}{t}\right)^{\frac{1}{q_0}}\right).
\end{align*}
Therefore, taking $2\widetilde{M}=\theta_1+\theta_2+\frac{n}{r}+1$, and $2M>2\theta_2+\theta_1+\frac{n}{r}$, we have
\begin{align}\label{termcalIII}
\mathcal{III}_{ij}^1
\lesssim \alpha w(2^{j+1}B_i)^{\frac{1}{p_1}}2^{-j\left(2M+\frac{n}{r}+1-\theta_2-2\widetilde{M}\right)}\sum_{l\geq 1}2^{-l}
\lesssim \alpha w(2^{j+1}B_i)^{\frac{1}{p_1}}
2^{-j\left(2M-\theta_1-2\theta_2\right)}.
\end{align}
In order to estimate $\mathcal{III}_{ij}^2$, we consider $\theta_M:=\sqrt{M+2}$ and $B_{r_{B_i},t}:=(e^{-t^2L}-e^{-(t^2+r_{B_i}^2)L})^M$. Hence, applying the fact that $\{t^2Le^{-t^2L}\}_{t>0}\in \mathcal{F}(L^{r}-L^2)$, Proposition \ref{prop:lebesgueoff-dBQ} with $s=r_{B_i}$ and $p=r$, \cite[Lemma 2.1]{MartellPrisuelos}, and next using that $w\in A_{\frac{p_1}{r}}$, applying that $\{e^{-tL}\}_{t>0}$ satisfies $L^{\widetilde{p}}(w)-L^{p_1}(w)$ off-diagonal estimates on balls, and by \eqref{CZ-ptildeexta}, we obtain
\begin{align*}
\left(\int_{B(x,\theta_Mt)}\right.&\left.\left|t^2Le^{-t^2L}B_{r_{B_i},t}\left(e^{-t^2L}\left(\chi_{B(x,9\theta_{M}t)}\frac{b_i}{ r_{B_i}}\right)\right)(y)\right|^{2}\frac{dy}{t^{n}}\right)^{\frac{1}{2}}
\\&
\lesssim \left(\frac{ r_{B_i}^2}{t^2}\right)^{M}
\sum_{l\geq 1}
\left(\int_{C_{l}(B(x,9\theta_{M}t))}\left|e^{-t^2L}\left(\chi_{B(x,9\theta_{M}t)}\frac{b_i}{ r_{B_i}}\right)(y)\right|^{r}\frac{dy}{t^{n}}\right)^{\frac{1}{r}}
\\&
\lesssim \left(\frac{ r_{B_i}^2}{t^2}\right)^{M}
\sum_{l\geq 1}2^{\frac{ln}{r}}
\left(\dashint_{C_{l}(B(x,9\theta_{M}t))}\left|e^{-t^2L}\left(\chi_{B(x,9\theta_{M}t)}\frac{b_i}{ r_{B_i}}\right)(y)\right|^{p_1}dw\right)^{\frac{1}{p_1}}
\\&
\lesssim \left(\frac{ r_{B_i}^2}{t^2}\right)^{M}
\sum_{l\geq 1}e^{-c4^l}
\left(\dashint_{B(x,9\theta_M t)}\left|\frac{b_i(y)}{ r_{B_i}}\right|^{\widetilde{p}}dw\right)^{\frac{1}{\widetilde{p}}}
\\
&
\lesssim \alpha\left(\frac{ r_{B_i}^2}{t^2}\right)^{M}\left(\frac{w(B_i)}{w(B(x,9\theta_M t))}\right)^{\frac{1}{\widetilde{p}}}.
\end{align*}
Therefore, changing the variable $t$ into $t\theta_M$ and noticing that,  for $x\in C_{j}(B_i)$ and $t>\frac{2^{j-2} r_{B_i}}{\theta_M}$, we have that $B_i\subset B(x,9\theta_{M}t)$, using the estimate above, we get
\begin{multline*}
\mathcal{III}_{ij}^2
\lesssim\left(\int_{C_j(B_i)}
\left(\int_{\frac{2^{j-2} r_{B_i}}{\theta_M}}^{\infty}\int_{B(x,\theta_Mt)}\left|t^2Le^{-t^2L}B_{r_{B_i},t}\left(e^{-t^2L}\left(\frac{b_i}{ r_{B_i}}\right)\right)(y)\right|^{2}\frac{dy\,dt}{t^{n+1}}\right)^{\frac{p_1}{2}}w(x)dx\right)^{\frac{1}{p_1}}
\\
\lesssim \alpha
w(2^{j+1}B_i)^{\frac{1}{p_1}}
\left(\int_{\frac{2^{j-2} r_{B_i}}{\theta_M}}^{\infty}\left(\frac{ r_{B_i}^2}{t^2}\right)^{2M}
\frac{dt}{t}\right)^{\frac{1}{2}}
\lesssim \alpha w(2^{j+1}B_i)^{\frac{1}{p_1}}
2^{-j2M}.
\end{multline*}
This  and \eqref{termcalIII} imply that $\mathcal{III}_{ij}\lesssim \alpha w(2^{j+1}B_i)^{\frac{1}{p_1}}2^{-j(2M-\theta_1-2\theta_2)}$.
Therefore, in view of \eqref{III2-Riesz}, and by \eqref{doublingcondition} and \eqref{maximal-u} with $p=p_1$, taking $2M>\frac{np_1}{r}+\theta_1+2\theta_2$, we obtain that
\begin{multline*}
III_2\lesssim \sup_{\|u\|_{L^{p_1'}(w)}=1}\left(\sum_{i\in \N}w(B_i)\inf_{x\in B_i}\left(\mathcal{M}^w(|u|^{p_1'})(x)\right)^{\frac{1}{p_1'}}\sum_{j\geq 4}2^{-j\left(2M-\frac{np_1}{r}-\theta_1-2\theta_2\right)}\right)^{p_1}
\\
\lesssim \sup_{\|u\|_{L^{p_1'}(w)}=1}\left(\sum_{i\in \N}\int_{B_i}\left(\mathcal{M}^w(|u|^{p_1'})(x)\right)^{\frac{1}{p_1'}}w(x)dx\right)^{p_1}\lesssim \frac{1}{\alpha^{p_0}}\int_{\R^n}|\nabla h(x)|^{p_0}w(x)dx.
\end{multline*}
Plugging this and \eqref{III1-Riesz} into \eqref{III-Riesz} gives us $III\lesssim \alpha^{-p_0}\int_{\R^n}|\nabla h(x)|^{p_0}w(x)dx$. Hence, by this, \eqref{IItermriesz}, \eqref{Itermriesz}, and \eqref{spplitingriesz}, we conclude
\eqref{weakinterpolation}.

To complete the proof note that for $\max\left\{r_w,\frac{nr_w\widehat{p}_-(L)}{nr_w+\widehat{p}_-(L)}\right\}<p<\frac{p_+(L)}{s_w}$ and $q\in \mathcal{W}_w(q_-(L),q_+(L))$, if we take $f\in \mathbb{H}^{p}_{\nabla L^{-1/2},q}(w)$, by \eqref{square-riesz:1}, we have that 
$$
\|\Scal_{\hh}f\|_{L^p(w)}\lesssim\|\nabla L^{-\frac{1}{2}}f\|_{L^{p}(w)},
$$
consequently $f\in \mathbb{H}_{\Scal_{\hh},q}^p(w)$.
\qed
\subsection{Proof of Proposition \ref{prop:2-Riesz}}
Given $w\in A_{\infty}$ satisfying that $\mathcal{W}_w(q_-(L),q_+(L))\neq \emptyset$, and $q\in \mathcal{W}_w(q_-(L),q_+(L))\subset \mathcal{W}_w(p_-(L),p_+(L))$, if we take $p\in \mathcal{W}_w(q_-(L),q_+(L))$ and $f\in \mathbb{H}^{p}_{\Scal_{\hh},q}(w)$, applying Theorems \ref{RieszboundednessAM} and \ref{theorem:hp=lp}, we obtain
\begin{align}\label{riesz-square:1}
\|\nabla L^{-\frac{1}{2}} f\|_{L^p(w)}\lesssim \|f\|_{L^p(w)}\approx \|\Scal_{\hh}f\|_{L^p(w)}=\|f\|_{\mathbb{H}^{p}_{\Scal_{\hh},q}(w)}.
\end{align}

On the other hand, for $0<p\leq 1$ by Propositions \ref{prop:contro-mol-Riesz}, part $(b)$ and  \ref{lema:SH-1}, part $(a)$ (see also \cite[Proposition 5.1, part ($a$)]{MartellPrisuelos:II}, where the case $p=1$ was considered), we have 
$$
\|\nabla L^{-\frac{1}{2}}f\|_{L^p(w)}\lesssim \|f\|_{\mathbb{H}^{p}_{\Scal_{\hh},q}(w)}.
$$
Therefore, applying Theorem \ref{thm:interpolationhardy} with $p_0=1$ and $p_1\in \mathcal{W}_w(q_-(L),q_+(L))$, we conclude,  for all $0<p<\frac{q_+(L)}{s_w}$, $q\in \mathcal{W}_w(q_-(L),q_+(L))$, and $f\in \mathbb{H}^p_{\Scal_{\hh},q}(w)$,
$$
\|\nabla L^{-\frac{1}{2}} f\|_{L^p(w)}\lesssim  \|\Scal_{\hh}f\|_{L^p(w)},
$$
consequently
$f\in \mathbb{H}^p_{\nabla L^{-1/2},q}(w)$.
\qed




\begin{thebibliography}{10}

\bibitem{Amenta}
A. Amenta,{\em Tent spaces over metric measure spaces under doubling and
related assumptions}, Operator Theory: Advances and Applications, ``Operator Theory in Harmonic and Non-commutative Analysis'' 
vol. 240, 1-29,
(2014).

\bibitem{Auscher}
P. Auscher, {\em On necessary and sufficient conditions for $L^p$
estimates of Riesz transform associated to elliptic operators on
$\mathbb{R}^n$ and related estimates}, Mem. Amer. Math. Soc., vol. 186, no.
871, (March 2007).

\bibitem{AuscherDuongMcIntosh}
P. Auscher, X.T. Duong, A. McIntosh, {\em Boundedness of Banach space valued singular integral operators and applications to Hardy spaces}, unpublished manuscript.


\bibitem{AuscherMartell:I}
P. Auscher, J.M. Martell, {\em Weighted norm inequalities,
off-diagonal estimates and elliptic operators. Part I: General operator theory and weights}, Advances in Mathematics 212 (2007), 225-276.

\bibitem{AuscherMartell:II}
P. Auscher, J.M. Martell, {\em Weighted norm inequalities,
off-diagonal estimates and elliptic operators. Part II: off-diagonal
estimates on spaces of homogeneous type}, J. Evol. Equ. 7 (2007),
no. 2, 265-316.


\bibitem{AuscherMartell:III}
P. Auscher, J.M. Martell,
{\em Weighted norm inequalities, off-diagonal estimates
and elliptic operators.
Part III: harmonic analysis of elliptic operators}, J. Funct. Anal. 241 (2006), 703-746.

\bibitem{Auscher-McIntosh-Russ}  P. Auscher, A. McIntosh, E. Russ, {\em Hardy spaces of differential forms on
Riemannian manifolds}, J. Geom. Anal. { 18} (2008), no. 1, 192-248. 



\bibitem{Auscher-Russ} P. Auscher, E. Russ, {\em Hardy spaces and divergence operators on strongly Lipschitz domain of $\R^n$}, J. Funct. Anal. { 201} (2003), no. 1, 148-184. 


\bibitem{AuscherTchamitchian}
P. Auscher, P. Tchamitchian, {\em Calcul fonctionnel pr\'ecis\'e pour des op\'erateurs elliptiques
complexes en dimension un (et applications \`a certaines  \'equations elliptiques complexes
en dimension deux),}
Ann. Inst. Fourier
, 45, 3, 721-778, (1995).


\bibitem{Bard}
N. Bard
{\em Real interpolation of Sobolev spaces}, Mathematica Scandinavica, vol. 31, no4, 235-264, (2009).


\bibitem{BerghLofstrom}
J. Bergh, J. L\"ofstr\"om, {\em Interpolation spaces. An introduction}. Grundlehren der
mathematischen Wissenschaften, no. 223. Springer-Verlag
Berlin Heidelberg, New York, (1976). 

\bibitem{Bernal}
A. Bernal, {\em Some results on complex interpolation of $T^p_q$ spaces}, Interpolation spaces and related topics
(Ramat-Gan), Israel Mathematical Conference Proceedings
, vol. 5, (1992), pp. 1-10.


\bibitem{BuiCaoKyYangYang}T.A. Bui, J. Cao, L.D. Ky, D. Yang, S. Yang, {\em Weighted Hardy spaces associated with operators satisfying reinforced off-diagonal estimates}, Taiwanese Journal of Mathematics,
vol. 17, no. 4, pp. 1127-1166, August 2013.

\bibitem{BuiCaoKyYangYang:II}T.A. Bui, J. Cao, L.D. Ky, D. Yang, S. Yang, {\em Musielak-Orlicz-Hardy Spaces Associated with
Operators Satisfying Reinforced Off-Diagonal
Estimates}, Analysis a Geometry in Metric Spaces, (2013), 69-129.




\bibitem{Calderon}
A.P. Calder\'on, {\em Intermediate spaces and interpolation, the complex method}, Studia Math. 24 (1964), 113-190.

\bibitem{Caoetal}
J. Cao, D-C. Chang, Z. Fu, D. Yang. {\em Real interpolation of weighted tent spaces}, Applicable Analysis. 95(11) (2015), 2415-2443.


\bibitem{CoifmanMeyer}
R.R. Coifman, Y. Meyer, {\em Ondelettes et op\'erateurs}. Hermann, volume 3, (1990).


\bibitem{CoifmanMeyerStein}
R.R. Coifman, Y. Meyer, E.M. Stein, {\em Some new function spaces
and their applications to harmonic analysis}. J. Funct. Anal.
62(2), 304-335(1985).

\bibitem{CruzMartellPerez}
D.V. Cruz-Uribe, J.M. Martell, C. P\'erez, {\em Weights extrapolation and the theory of Rubio de Francia}, Operator Theory: Advances and Applications, {\bf 215}. Birkh\"auser/Springer Basel AG, Basel, (2011). 


\bibitem{Duo} J.~Duoandikoetxea, {\em Fourier Analysis},
Grad. Stud. Math.\ 29, American Math.\ Soc., Providence, (2000).



\bibitem{GCRF85} J.\,Garc{\'\i}a-Cuerva and J.\,Rubio de Francia, {\it Weighted Norm
Inequalities and Related Topics}, North Holland, Amsterdam, (1985).


\bibitem{Grafakos}
L. Grafakos, {\em Modern Fourier Analysis, second edition}. Graduate Texts in Mathematics, vol. 250,
Springer, New York, (2009).

\bibitem{HarboureTorreaViviani}
E. Harboure, J. Torrea, and B. Viviani,
{\em A vector-valued approach to tent spaces}
, J. Anal. Math.
56 (1991), 125-140.


\bibitem{HofmannLuMitreaMitreaYan}
S. Hofmann, G. Lu, D. Mitrea, M. Mitrea, and L. Yan, {\em Hardy spaces associated to non-negative
self-adjoint operators satisfying
Davies-Gaffney estimates.} Mem. Amer. Math. Soc., vol. 214, no.
1007 (third of 5 numbers), (November 2011).

\bibitem{HofmannMartell}
S. Hofmann, J.M. Martell, {\em $L^p$ bounds for Riesz Transforms and Square Roots Associated to Second Order Elliptic Operators}. Publ. Mat. 47 (2003), no. 2, 497-515.


\bibitem{HofmannMayboroda}
S. Hofmann, S. Mayboroda, {\em Hardy and $BMO$ spaces to divergence
form elliptic operators.} Math. Ann. 344 (2009), no. 1, 37-116.

\bibitem{HofmannMayborodaMcIntosh}
S. Hofmann, S. Mayboroda, A. McIntosh, {\em Second order elliptic operators with complex bounded measurable coefficients in $L^p$, Sobolev and Hardy spaces.} Ann. Sci. \'Ecole. Norm. Sup., vol. 44, no. 5, 723-800, (2011).

\bibitem{JiangYang}
R. Jiang, D. Yang, {\em New Orlicz-Hardy spaces associated with divergence form elliptic operators}, J. Funct. Anal. 258 (2010), no. 4, 1167-1224.


\bibitem{Kato}
T. Kato, {\em Perturbation theory for linear operators}. Springer Verlag, New York, (1966).

\bibitem{Latter} R.H. Latter {\em A characterization of $H^p(\R^n)$ in terms of atoms}, Studia Math. 62
(1978), 93-101.


\bibitem{MartellPrisuelos}
J.M. Martell, C. Prisuelos-Arribas, {\em Weighted Hardy spaces associated with elliptic operators.
Part I: weighted norm inequalities for conical square functions}, to appear in Trans. Amer. Soc., arXiv:1406.6285.

\bibitem{MartellPrisuelos:II}
J.M. Martell, C. Prisuelos-Arribas, {\em Weighted Hardy spaces associated with elliptic operators. Part II: Characterizations of $H^1_L(w)$
}, to appear in Publ. Mat., arXiv:1701.00920.

\bibitem{Russ}
E. Russ, {\em The atomic decomposition for tent spaces on
spaces of homogeneous type.} Proc. Center Math. Appl., Australian National University,
42. Australian National University, Canberra, (2007).

\end{thebibliography}
\end{document}